\DeclareSymbolFont{cyrletters}{OT2}{wncyr}{m}{n}
\DeclareMathSymbol{\Sha}{\mathalpha}{cyrletters}{"58}
\theoremstyle{plain} 
\newtheorem{theorem}{Theorem}[section]
\newtheorem{corollary}[theorem]{Corollary}
\newtheorem{conjecture}[theorem]{Conjecture}
\newtheorem{lemma}[theorem]{Lemma}
\theoremstyle{definition}
\newtheorem{definition}[theorem]{Definition}
\theoremstyle{remark}
\newtheorem*{remark}{Remark}
\numberwithin{equation}{section}
\newcommand{\ZZ}{{\mathbb Z}}
\newcommand{\RR}{{\mathbb R}}
\newcommand{\QQ}{{\mathbb Q}}
\newcommand{\CC}{{\mathbb C}}
\newcommand{\bU}{{\bf U}}
\newcommand{\bV}{{\bf V}}
\newcommand{\bu}{{\bf u}}
\newcommand{\bv}{{\bf v}}
\newcommand{\sA}{{\mathcal A}}
\newcommand{\sI}{{\mathcal I}}
\newcommand{\sL}{{\mathcal L}}
\newcommand{\sM}{{\mathcal M}}
\newcommand{\XX}{{\mathbb X}}
\newcommand{\UU}{{\mathbb U}}
\newcommand{\Z}{\mathbb Z}
\newcommand{\ir}{\rho}
\renewcommand{\th}{\theta}
\def\cM{\mathcal M}
\def\cM{\mathcal M}
\newcommand{\con}{\operatorname{Conf}}
\newcommand{\bcon}{\operatorname{BConf}}
\newcommand{\DOD}{\operatorname{DOD}}
\newcommand{\FCC}{\operatorname{FCC}}
\newcommand{\HCP}{\operatorname{HCP}}
\newcommand{\ball}[1]{\operatorname{#1}}
\def\SS{\mathbb{S}}
\let\originalleft\left
\let\originalright\right
\renewcommand{\left}{\mathopen{}\mathclose\bgroup\originalleft}
\renewcommand{\right}{\aftergroup\egroup\originalright}
\begin{document}

%\vspace*{\fill}
%\begingroup
%
%\centering
%{In celebration of L\'{a}szl\'{o} Fejes T\'{o}th}
%
%\endgroup
%\vspace*{\fill}
%\thispagestyle{empty} \vspace{.5cm}
%\newpage

\title[The Twelve Spheres Problem]{Configuration Spaces of Equal Spheres
Touching a Given Sphere: The Twelve Spheres Problem}

\author{Rob Kusner }
\address{Dept. of Mathematics \& Statistics, University of Massachusetts, Amherst, MA 01003, USA}
\email{profkusner@gmail.com}

\author{ W\"oden Kusner}
\address{Inst. of Analysis and Number Theory, Graz University of Technology, 8010 Graz.
AUSTRIA}
\email{wkusner@tugraz.at \\ wkusner@gmail.com}

\author{Jeffrey C. Lagarias}
\address{Dept. of Mathematics, University of Michigan, Ann Arbor, MI 48109-1043, USA}
\email{lagarias@umich.edu}

\author{Senya Shlosman}
\address{Skolkovo Institute of Science and Technology, Moscow; Aix Marseille Universit\'{e}, Universit\'{e} de Toulon, CNRS, CPT, UMR 7332, 13288, Marseille, FRANCE; 
Inst. for Information Transmission Problems, RAS, Moscow, RUSSIA
}
\email{senya.shlosman@univ-amu.fr \\ shlos@iitp.ru}

\date{February 26, 2018 version}
%(11H31, 49K35, 52C17, 52C25, 53C22, 55R80, 57R70, 58E05, 58K05, 70G10,  82B05)
\subjclass[2010]
{
11H31, %Lattice packing and covering
49K35, %	Minimax problems
52C17, %Packing and covering in $n$ dimensions
52C25, %	Rigidity and flexibility of structures
53C22, %Geodesics
55R80, %Discriminantal varieties, configuration spaces
57R70, %Critical points and critical submanifolds
58E05, %Abstract critical point theory
58K05, %Critical points of functions and mappings
70G10, %Generalized coordinates; event, impulse-energy, configuration, state, or phase space
82B05. %Classical equilibrium statistical mechanics
}
\keywords{configuration spaces, discrete geometry, Morse theory, constrained optimization, criticality, materials science}

%%%%%%%%%%%%%%%%%%%%%%%%%%%%%%%%%%%%%%%%%%%%%%%%%%%%%%%%%%%%%%%%%%%%%%%%
%%                                          %%                                                        %%                                                        %%                                                                   
%%   0  ABSTRACT      %%                                                         %%                                                        %%                                                                        
%%                                          %%                                                         %%                                                         %%                                                                        
%%%%%%%%%%%%%%%%%%%%%%%%%%%%%%%%%%%%%%%%%%%%%%%%%%%%%%%%%%%%%%%%%%%%%%%%
\begin{abstract}
The problem of twelve spheres is to understand, as a function of $r \in (0,r_{max}(12)]$, 
the configuration space of $12$ non-overlapping equal spheres of radius $r$ touching a central unit sphere.  
It considers to what extent, and in what fashion, touching spheres can be varied,
subject to the constraint of always touching the central sphere.  
Such constrained motion problems are of interest in physics and materials science, and the problem involves topology and geometry.  
This paper reviews the history of work on this problem, presents some new results, and formulates some conjectures. 
It also presents general results on configuration spaces of $N$ spheres of radius $r$ touching a central unit sphere, with emphasis on $3 \le N \le 14$. 
The problem of determining the maximal radius  $r_{max}(N)$ is a version of  the Tammes problem, to which L\'{a}szl\'{o} Fejes T\'{o}th made significant contributions.
\end{abstract}

\maketitle
\tableofcontents
%%%%%%%%%%%%%%%%%%%%%%%%%%%%%%%%%%%%%%%%%%%%%%%%%%%%%%%%%%%%%%%%%%%%%%%%
%%                                          %%                                                        %%                                                        %%                                                                     
%%   1  INTRODUCTION       %%                                                         %%                                                        %%                                                                      
%%                                          %%                                                         %%                                                         %%                                                                       
%%%%%%%%%%%%%%%%%%%%%%%%%%%%%%%%%%%%%%%%%%%%%%%%%%%%%%%%%%%%%%%%%%%%%%%%
\section{Introduction}\label{sec:Intro}
This paper studies 
constrained configuration spaces of $N$ equal spheres of radius $r$ touching a central sphere of radius $1$,
with emphasis on small values of $N$, particularly $N=12$.

The  ``problem of  the $13$ spheres,''  so named by Sch\"{u}tte and van der Waerden \cite{SvdW53} and 
Leech \cite{Leech56}, 
asks whether there  exists any configuration of $13$ non-overlapping unit spheres that all
touch a central unit sphere.  It was raised in the time of Newton by David Gregory, 
and eventually resolved mathematically as impossible.
Its resolution established that the ``kissing number'' of equal $3$-dimensional spheres is $12$.
Quantitatively, one can  ask what is the maximum radius for $13$ spheres all touching a central sphere of unit radius.
It is less than $1$ and its exact value was determined by Musin and Tarasov \cite{MusT:2013} in 2013. 

This paper treats a related problem:  
{\em How  can $N$ spheres of equal radius $r$  touch a given central sphere of radius $1$, in what patterns, and how are these patterns related?  
What is the topology of the corresponding constrained configuration space of  such spheres?} 
One may also ask  how the topology
changes as the radius $r$ varies. 
In this paper we review the remarkable history of this problem
for  radius $r=1$, the sphere packing case,
and $N=12$, the kissing number. 
We prove results on the structure of this configuration space 
and formulate several conjectures.
 
This problem has come up in physics and materials science.  
Many atoms and molecules are roughly spherical, and their local interactions are governed by how many of them can get close to a single atom. 
The arrangements possible for $13$ nearby spheres, and allowable motions between them, are relevant to the nature of local interactions, to measuring  the entropy of local configurations, and to phase changes in certain materials. We are especially motivated by a statement that Frank (1952) made in the context of supercooling of fluids, given in Section ~\ref{sec:27}. 
By insisting that exactly $12$ equal spheres touch a $13$-th central sphere, possibly of a different radius, 
we obtain  a mathematical toy problem that can be subjected to careful analysis.

As a mathematical problem, the twelve spheres problem has both a metric geometry
 aspect and a topology aspect.  
L\'{a}szl\'{o} Fejes T\'{o}th made major contributions to the metric geometry of the problem, which concerns extremal questions, formulated as densest packing problems.
In connection with the Tammes problem, described in Section ~\ref{sec:3}, he found the largest radius of $12$ spheres that can touch a central sphere of radius $1$, realized by the dodecahedral configuration $\DOD$, and found other extremal configurations of touching spheres for smaller ~$N$. 
He posed the Dodecahedral Conjecture concerning the minimal volume Voronoi cell in a unit sphere packing, and posed another conjecture characterizing all configurations that pack space with every sphere 
touching exactly $12$ neighboring spheres. 
Both of these conjectures are now proved. This paper focuses on
the  topological  aspect of the twelve spheres  problem;
more generally we discuss the topology of configuration spaces for
general $N$, and the allowable motions and rearrangements of such configurations.
We survey what is known  for small  $N$, in the range $3 \le N \le 14$.

%***************************
% Subsection 1.1
%***************************
\subsection{Configuration Spaces}\label{sec:11a}
The arrangements of the $N$ touching spheres are encoded in the associated {\em configuration space} of $N$-tuples of points on the surface of a unit sphere that remain at a suitable distance from each other. 
This space has nontrivial topology and geometry. 
In topology the general subject of configuration spaces  started in the 1960s with the consideration of topological spaces whose points denote configurations of a fixed number ~$N$ of labeled points on a manifold. 
 
This paper considers the {\em constrained} configuration space $\con(N)[r] $ of $N$ non-overlapping spheres of radius $r$ which touch a central sphere $\SS^2$ of radius $1$, centered at the origin. 
(Here ``non-overlapping'' means the spheres have disjoint interiors.) 
It can also be visualized as the space of $N$ spherical caps on the sphere, which are obtained as the radial projection of the external spheres onto the surface of the central sphere, whose {\em angular diameter} $\theta = \theta(r)$ is a known function of $r$.   

The centers of these caps define a constrained $N$-configuration on $\SS^2$ where no pair of points can approach closer than angular separation $\theta$. 
For generic (``non-critical'') values of ~$r$ for a range of values $0< r < r_{max}(N)$, this space is a compact $2N$-dimensional manifold with boundary, not necessarily connected.

The group $SO(3)$ acts as global symmetries of $\con(N)[r]$ by rigidly rotating the $N$-configuration of spheres touching the central sphere. 
The {\em reduced constrained configuration space} $\bcon(N)[r] = \con(N)[r]/SO(3)$ is obtained by identifying rotationally equivalent configurations. 
For generic values of $r$ it is a compact $(2N-3)$-dimensional manifold with boundary; for the case of $12$ spheres this is a $21$-dimensional manifold.
The subject of constrained configuration spaces has in part  been developed for applications to fields such as robotics.
For an introduction to the robotics aspect, see generally Abrams and Ghrist  ~\cite{AbramsG:2002} or Farber ~\cite{Farber:2008}. 

This  paper surveys results for small $N$  on the metric geometry problem of determining the maximum allowable radius $r_{max}(N)$ for $\con(N)[r]$ (equivalently $\bcon( N)[r]$) to be nonempty; this is a variant of the Tammes problem, also treated in the literature under the name {\em optimal spherical codes} (see Section ~\ref{sec:3}).

This paper also studies the topology of  configuration spaces of a fixed radius $r$, and the  changes in topology in such spaces as the radius $r$ is varied. 
In the latter case the configuration space changes topology at a set of {\em critical radius values}.
Associated to these special values are {\em critical configurations}, which are extremal in a suitable sense.
The change in topology is described by a generalization of Morse theory applicable to the radius function ~$r$, which we discuss in Section ~\ref{sec:4}. To determine these changes one studies the occurrence and structure of the critical configurations.
The simplest example of such topology change concerns the connectivity of the space of configurations as a function of $r$, reported by the rank of the $0$-th homology group of the configuration space.

The $12$ spheres problem includes as its most important special case that of unit spheres, where the sphere radius $r=1$. 
This special case is the one relevant to  sphere packing in dimension $3$. 
We treat the topological space $\bcon(12)[1]$  in Sections ~\ref{sec:5} and ~\ref{sec:6}, and formulate several conjectures related to it.  
The radius $r=1$ is a critical radius, and two configurations $\FCC$ and $\HCP$ on the boundary of the space $\bcon(12)[1]$ are critical configurations.
The topology of $\bcon(12)[1]$ appears to be very complicated, and its cohomology groups have not been determined.  
In Section ~\ref{sec:6} we describe how it is possible to move in the space $\bcon(12)[1]$ to deform any dodecahedral configuration $\DOD$ of $12$ labeled spheres to any other labeled $\DOD$ configuration, permuting the $12$ spheres arbitrarily, a result due to Conway and Sloane.
This suggests the (folklore) conjecture asserting  that $r=1$ is the largest radius value for which the configuration space $\bcon(12)[r]$ is connected, i.e. it is the largest ~$r$ for which the $0$-th cohomology group of $\bcon(12)[r]$ has rank $1$. 

This paper establishes some new results.
It makes the observation (in Section ~\ref{sec:43a}) that the family of $5$-configurations of spheres achieving $r_{max}(5)$ (see Figure  ~\ref{fig:example5}) is topologically complex .  
It completely determines (in Section ~\ref{sec:47}) the cohomology of $\bcon(\SS^2, 4)[r]$ for allowable $r$.  
It makes precise the notion of $N$-configurations being {\em critical for maximizing} the injectivity radius on $\bcon(\SS^2,N)$, and provides a necessary and sufficient {\em balancing condition} (Theorem ~\ref{thm:converse}) for criticality, prefatory to a ``Morse theory'' for such min-type functions ~\cite{KKLS16+}.  
And it formulates several new conjectures in Sections ~\ref{sec:65} and ~\ref{sec66}. 

%***************************
% Subsection 1.2
%***************************
\subsection{Physics and Materials Science}\label{sec:12} 
Configuration spaces are of interest in physics and materials science.
Jammed configurations are  a granular materials criterion for a stable packing.
According to Torquato and Stillinger ~\cite[p. 2634]{TorquatoS:2010} they are:
 ``particle configurations in which each particle is in contact with its nearest neighbors in such a way that mechanical stability of a specific type is conferred to the packing.''
Packings of rigid disks and spheres have been studied extensively by simulation (Lubachevsky and Stillinger ~\cite{LubachevskyS:1990}, Donev ~et ~al. ~\cite{DonevTSC:2004}).
It has been empirically discovered that randomly ordered hard spheres achieve in random close packing a density around $66$ percent ~\cite{ScottK:1969}, and pass through a jamming transition around $64$ percent ~\cite[p. 355]{LiuN:2010}.  
The appearance of a jamming phase transition, signaled by  a  change in shear modulus, and the formation of a glass state, is relevant in studying the  behavior of colloidal suspensions and granular materials. 
The large rearrangement of structure required in making a phase transition is relevant in the phenomenon of supercooling of liquids (see Section ~\ref{sec:27}).  
The nature of glass transitions has been called ``the deepest and most interesting unsolved problem in solid state theory'' (Anderson ~\cite{Anderson:1995}).
For articles and reviews of these topics, see generally Ediger ~et ~al. ~\cite{EdigerAN:1996}, O'Hern ~et ~al. ~\cite{OhernSLN:2003}, and Liu and Nagel ~\cite{LiuN:2010}.  
For a survey of hard sphere models, including the idea of a liquid-solid phase transition in packings, see generally L\"{o}wen ~\cite{Lowen:1999}.

One may make an analogy between the configuration spaces $\bcon(N)[r]$  treated here and a sphere packing model for jamming studied in ~\cite{OhernSLN:2003}, which treats spheres having repulsive local potential at zero density and zero applied stress, and includes hard spheres for one model parameter value.  
In the latter model, the order parameter is the packing fraction of the spheres.  
In the configuration space model, a proxy value for the packing fraction is the radius parameter $r$, which determines the fraction of surface area of $\SS^2$ covered by the ~$N$ spherical caps.  
An analogue of the jamming transition value in the configuration space model is then the maximal radius $r_{conn}(N)$ at which the constrained configuration space $\bcon(N)[r]$ remains connected; this property is detected by the $0$-th cohomology group.  
Finer topological invariants of this kind are then supplied by the various critical values $r_j$ at which  the ranks of the individual cohomology groups $H^{k}(\bcon(N)[r], \QQ)$ change.  
Our configuration model is simplified in being $2$-dimensional, with constrained configurations on the surface of a $2$-sphere $\SS^2$, which, however, has the new feature of positive curvature, giving a compact constrained configuration space.  
For the jamming problem itself, the space of (constrained) configurations of hard spheres in a large $3$-dimensional box seems a more appropriate space.  
The general direction of inquiry investigating the transition of topological invariants (like Betti numbers) of configuration spaces as the radius parameter is varying could shed new light on the nature of jamming transitions.  
For further remarks, see Section ~\ref{sec:7}.
 
%***************************
% Subsection 1.3
%***************************
\subsection{Roadmap}\label{sec:13}
The sections of the paper have been written with the aim to be independently readable.  
We prove some results for general $N$, but
Sections ~\ref{sec:2}, \ref{sec:5} and \ref{sec:6} focus  on the case $N=12$.
Section ~\ref{sec:2} gives a brief history of results on the twelve
 spheres problem, stemming from its special role in connection
with sphere packing 
in dimension $3$.  
Section ~\ref{sec:3} surveys results on the maximal radius ~$r_{\max}(N)$ possible
for a configuration of $N$ equal spheres touching  a central sphere of radius $1$, for small $N$.  
This problem  is a version of  the Tammes problem.  
Section ~\ref{sec:4} begins with the topology of configuration spaces of $N$ points in $\RR^2$ and on the $2$-sphere $\SS^2$, corresponding to radius $r=0$.  
It then considers spaces of configurations of equal spheres of radius $r$ touching a sphere of radius $1$ for variable $0 < r \le r_{\max}(N)$.  
It defines a notion of  critical configuration in the spirit of min-type Morse theory.  
Section ~\ref{sec:5} discusses the special configuration space of $12$ unit spheres touching a $13$-th central sphere, i.e. the case  $r=1$.  
It focuses on properties of the $\FCC$ configuration, the $\HCP$ configuration and the dodecahedral configuration $\DOD$.  
It shows that the $\FCC$ and $\HCP$ configurations are critical  (in the sense of Section ~\ref{sec:42}) in the reduced configuration spaces $\bcon(12)[1]$.  
It also shows that there are continuous deformations  in $\bcon(12)[1]$
moving a dodecahedral configuration to an $\FCC$ configuration, resp. moving it  to an $\HCP$ configuration. 
Section ~\ref{sec:6} considers  the problem of permutability of the spheres of the dodecahedral configuration for $r=1$, conjecturing that  $\bcon(12)[1]$ is connected, and that this is the largest value of $r$ where connectedness holds.  
It also considers the $r>1$ case and formulates  several conjectures about disconnectedness.  
Section ~\ref{sec:7} makes some concluding remarks.

%%%%%%%%%%%%%%%%%%%%%%%%%%%%%%%%%%%%%%%%%%%%%%%%%%%%%%%%%%%%%%%%
%%                                          %%                                                        %%                                                        %%                                                                
%%   2  HISTORY                  %%                                                         %%                                                        %%                                                                       
%%                                          %%                                                         %%                                                         %%                                                          
%%%%%%%%%%%%%%%%%%%%%%%%%%%%%%%%%%%%%%%%%%%%%%%%%%%%%%%%%%%%%%%%
\section{The Twelve Spheres Problem: History }\label{sec:2}
We begin with some historical vignettes  concerning  configurations of $12$ spheres touching a central sphere,
as they have come up in physics, astronomy, biology and materials science. 

%***************************
% Subsection 2.1
%***************************
\subsection{Kepler (1611) }\label{sec:21}
Johannes Kepler ~(1571--1630) studied packings and crystals in his 1611 pamphlet ``The Six-cornered Snowflake'' ~\cite{Kepler:1611}.
In it he asserts that the densest sphere packing of equal spheres is the $\FCC$ packing, or ``cannonball packing.'' 
He states that this packing has $12$ unit spheres touching each central sphere: 

\begin{quote}
In the second mode, not only is every pellet touched by its four neighbors in
the same plane, but also by four in the plane above and four below, so throughout
one will be touched by twelve, and under pressure spherical pellets will become
rhomboid. This arrangement will be more compatible to the octahedron and
the pyramid. The packing will be the tightest possible, so that in no other
arrangement could more pellets be stuffed into the same container.
\footnote{\ ``Iam si ad structuram solidorum quam potest fieri arctissimam
progredaris, ordinesque ordinibus superponas, in plano prius coaptatos
aut ii erunt quadrati A aut trigonici B: si quadrati aut singuli globi ordinis
superioris singulis superstabunt ordinis inferioris aut contra
singuli ordinis superioris sedebunt inter quaternos ordinis inferioris.
Priori modo tangitur quilibit globis a quattuour cirucmstantibus in eodem plano,
ab uno supra se, et ab uno infra se: et sic in universum a six aliis, eritque ordo
cubicus, et compressione facta fient cubi: sed no erit arctissima coaptatio.
Posteriori modo praeterquam quod quileibet globus a quattuor circumstantibus
in eodem plano tangitur etiam a quattuor infra se, et a quattuor supra se, et sic
in universum a duodecim tangetur; fientque compressione ex globosis rhombica.
Ordo hic magis assimilabitur octahedro et pyramidi. Coaptatio fiet arctissima,
ut nullo praetera ordine plures globuli in idem vas compingi queant.''
[Translation: Colin Hardie ~\cite[p. 15]{Kepler:1611}]}
\end{quote}

He expands on the construction as follows: 

\begin{quote}
\begin{minipage}{\linewidth}
\begin{minipage}{.65\linewidth}
Thus, let $B$ be a group of three balls; set one $A$, on it as apex;
let there be also another group $C$, of six balls, and another $D$, of ten, and
another $E$, of fifteen. Regularly superpose the narrower on the wider
to produce the shape of a pyramid. Now, although in this
construction each one in the upper layer is seated between three in the
lower, yet if you turn the figure round so that not the apex but the whole
side of the pyramid is uppermost, you will find, whenever you peel off
one ball from the top, four lying below it in  square pattern. Again as before,
one ball will be touched by twelve others, to with, by six neighbors in
the same plane, and by three above and three below. Thus in the closest
pack in three dimensions, the triangular pattern cannot exist without
the square, and vice versa. It is therefore obvious that the loculi of the
pomegranate are squeezed into the shape of a solid rhomboid....
\footnotemark
\end{minipage}
\begin{minipage}{.34\linewidth}
%%**************************************************************
%% Figure 1: Kepler Woodcut
%%*************************************************************
\begin{figure}[H] 
   \centering
   \vspace{-1em}
    \includegraphics[width = .7\linewidth]{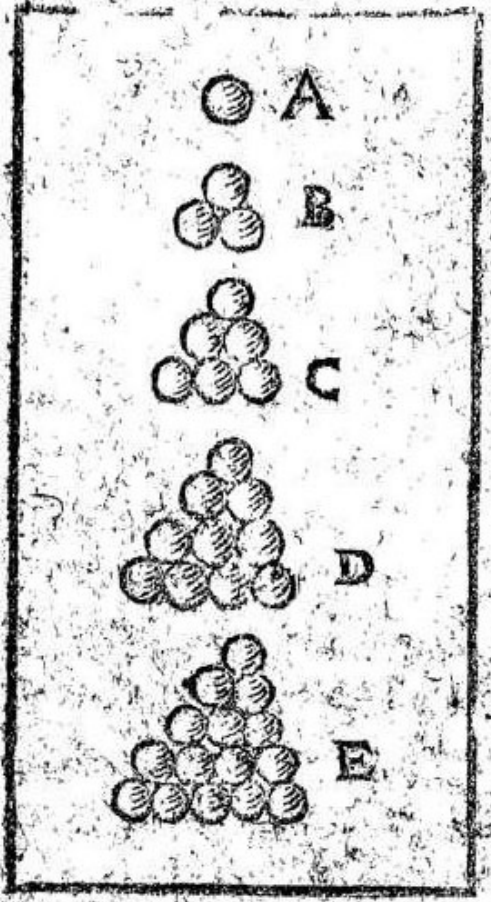}\\ \vspace{1em}
\captionsetup{justification=centering}
   \caption{\newline Woodcut of \newline Kepler\,sphere \newline arrangements}
   \label{fig:0}
\end{figure}
\end{minipage}
\end{minipage}
\end{quote}
\footnotetext{\ ``Esto enim $B$ copula trium globorum. Ei superpone $A$ 
unum pro apice; esto et alia copula senum globorum $C$, et alia
denum $D$ et alia quindenum $E.$ Impone semper angustiorem
latiori, ut fiat figura pyramidis. Etsi igitur per hanc impositionem singuli
superiores sederunt into trinos inferiores: tamen iam versa figura, ut non
apex sed integrum latus pyramidis sit loc superiori, quoties unum globulum
deglberis e summis, infra stabunt quattuor ordine quadrato. Et rursum
tangetur unus globus ut prius, et duodecim aliis, a sex nempe circumstantibus
in eodem plano tribus supra et tribus infra. Ita in solida coaptatione arctissima
non potest ess ordo triangularis sine quadrangulari, nec vicissim.
Patet igitur,
acinos punici mali, materiali necessitate concurrente cum rationaibus
incrementi acinorum, exprimi in figuri rhombici corporis...''
[Translation by Colin Hardie ~\cite[p. 17]{Kepler:1611}]}

The cannonball packing had been studied earlier by the English mathematician Thomas Hariot ~[Harriot] ~(1560--1621).  
Hariot was mathematics tutor to Sir Walter Raleigh, designed some of his ships, wrote a treatise on navigation, and went on an expedition to Virginia in 1585--1587 as surveyor, reporting on it in 1590 in ~\cite{Harriott:1590}, his only published book. 
He computed a chart in 1591 on how to most efficiently stack cannonballs using the $\FCC$ packing, and computed a table of the number of cannonballs in such stacks (see Shirley ~\cite[pp. 242--243]{Shi83}). 
Hariot supported the atomic theory of matter, in which case macroscopic objects may be packed in arrangements of very tiny spherical objects, i.e. atoms ~\cite[Chap. III]{Kar66}.  
He corresponded with Kepler in 1606--1608 on optics, and mentioned the atomic theory in a December 1606 letter as a possible way of explaining why some light is reflected, and some refracted, at the surface of a liquid.  
Kepler replied in 1607, not supporting the atomic theory.  
The known correspondence of Hariot with Kepler does not deal directly with sphere packing.

The statement  that the maximal density of a sphere packing in $3$-dimensional space equals $\frac{\pi}{\sqrt{18}} \approx 0.74048$, which is attained by the $\FCC$ packing, is called the  {\em Kepler Conjecture.}  
It was settled affirmatively in the period 1998--2004 by Hales with Ferguson ~\cite{Lagarias:2011}.  
A second generation proof, which is a formal proof checked entirely by computer, was recently completed in a project led by Hales ~\cite{HalesA:2015}.

%***************************
% Subsection 2.2
%***************************
\subsection{Newton and Gregory (1694) }\label{sec:23}
In 1694  Isaac Newton (1642--1727) and David Gregory (1659?- 1708)
had a discussion of touching spheres related to preparing a second edition of Newton's {\em Principia}.  
It concerned  the question whether the ``fixed stars'' are subject to gravitational attraction. What force is ``balancing'' their apparent fixed positions?

Gregory ~\cite[Vol III, p. 317]{Newt} summarized in a memorandum a conversation with Newton on 4 May 1694 concerning the
brightest stars as:  

\begin{quote}
To discover how many stars there are of a given magnitude, he [Newton] considers how many spheres, nearest, second from them, third etc. surround a sphere in a space of three dimensions, there will be $13$ of first magnitude, $4 \times 13$ of second, $9 \times 4 \times 13$ of third.\footnote{\ ``Ut noscatur quot sunt stellae magnitudinis 1 ae, 2 dae, 3 ae \& c. considerando quot spherae proximae,  
seundae ab his 3 ae \& c. spheram in spatio trium dimensionis circumstent: erunt 13 primae, $4 \times 13$ $2$-dae, $9 \times 4 \times  13$ 3 ae.''}   
\end{quote} 

\noindent Newton's own star table ``A Table of ye fixed Starrs for ye yeare 1671'' records $13$ first magnitude stars, $43$ of the second magnitude, $174$ of third magnitude (see ~\cite[Vol ~II, ~p. ~394]{Newt}).  

Newton  drafted a new Proposition to be included in a second edition of the {\em Principia}, stating~\cite[p. 81, in translation]{Hoskin:1977}: %rbk6b

\begin{quote}
Proposition XV. Theorem XV. The fixed stars are at rest in the heavens and are separated by enormous distances from our Sun and from each other.
 \end{quote}
 
In a draft proof he wrote~\cite[p. 85, in translation]{Hoskin:1977}: 

\begin{quote}
That the stars are at huge distances from our Sun is clear enough from the absence of parallax; and that they lie at no less distances
from each other may be inferred from their differing apparent magnitudes. For there are $13$ stars of the first magnitude and roughly
the same number of equal spheres can be arranged about a central sphere equal to them.
\end{quote}

and:

\begin{quote}
For if around some sphere there are arranged more spheres of about the same size, the number of spheres which surround
it closely will be $12$ or $13$; at the second stage about $50$; at the third about $110$ [roughly $9 \times 12$];
at the fourth, $200$ [$16 \times 12 \frac{1}{2}$], ...
\end{quote}
This argument is similar to one of Kepler ~\cite[Liber I, Pars II, p. 138]{Kepler:1618} (translation in Koyr\'{e} ~\cite[p. 80]{Koyre:1957}), with roots in the claim of Giordano Bruno, that all stars are suns.  

After further work, through several drafts, Newton abandoned this Proposition (according to Hoskin ~\cite{Hoskin:1977}).  
It was not included in the second edition of the {\em Principia}, when it  was later published in 1713.

Gregory continued study of  the geometric problem underlying the spacing of stars.  
In an (unpublished) notebook\footnote{\ This notebook is at Christ Church, Oxford, according to J. Leech ~\cite{Leech56}.} he considered the packing problem of $2$-dimensional disks in concentric rings and, in $3$ dimensions, that of equal spheres, noting that $13$ spheres might touch a given equal sphere ~\cite[Vol III, Letter 441, Note (10), p. 321]{Newt}.  
He considered the $13$ sphere question in later years, making the following memorandum in 1704~\cite[p. 21]{Hiscock:1937}: 
\begin{quote}
{\em Oxon. 23 Nov$^{r}$ 1704.} Mr. Kyl\footnote{\ John Keill (1671--1721) succeeded Gregory as Savilian Professor.} said that if $13$ equal spheres touch an equal inmost sphere, $9 \times 13$ must touch one that include these former $14$, because there is nine times as much surface to stand on. I told him that we must reckon by the surface passing through their centers. 
\end{quote}
A manuscript of Gregory on Astronomy, translated into
English and  posthumously 
published  in 1715, states \cite[p. 289, sic]{Gregory:1715}: 
\begin{quote}
For if every Fix'd Star did the office of a Sun, to a portion of the Mundane space nearly equal to this that our Sun commands, there will be as many Fix'd Stars 
of the first Magnitude, as there can be Systems of this sort touching and surrounding ours; that is, as many equal Spheres as can touch an equal one in the middle of them. Now, 'tis certain from Geometry, that thirteen Spheres can touch and surround one in the middle equal to them, (for {\em Kepler} is wrong in asserting, in {\em B.} {\sc I} of the 
{\em Epit.}\footnote{This is Kepler \cite{Kepler:1618}.} that there may be twelve such, according to the number of Angles of an {\em Icosaedrum},) 
\end{quote}
Thus Gregory expressed a definite opinion that $13$ spheres might touch.

%***************************
% Subsection 2.3
%***************************
\subsection{Bender, Hoppe, G\"{u}nther (1874)}\label{sec:24}
The issue of whether $13$ equal spheres  might touch a central equal sphere was discussed in the physics literature in the period 1874--1875, with contributions by C. Bender ~\cite{Bender:1874}, Reinhold Hoppe ~\cite{Hoppe:1874} and Siegmund G\"{u}nther ~\cite{Gunther:1875}.  
Hoppe noted a mathematical gap in the argument of Bender.  
 G\"unther offered a physical intuition, but no proof.  
They all concluded that at most $12$ unit spheres could touch a central unit sphere.  
In 1994 Hales ~\cite{Hales:1994} noted a mathematical gap in the argument of Hoppe.

%***************************
% Subsection 2.4
%***************************
\subsection{Barlow (1883)}\label{sec:25}
In another context the crystallographer William Barlow (1845--1934) noted another optimal sphere packing, the {\em Hexagonal Close Packing} ($\HCP$).  
In a paper ``Probable nature of the internal symmetry of crystals'' ~\cite[p. 186]{Barlow:1883} he considered five symmetry types for crystal structure.  
The third kind of symmetry he describes is the $\FCC$ packing (Fig. 4 and 4a).  
He then stated:

\begin{quote}
A fourth kind of symmetry, which resembles the third in that each point
is equidistant from the twelve nearest points, but which is of a widely
different character than the three former kinds, is depicted if layers of spheres in
contact arranged in the triangular pattern (plan d) are so placed that the
sphere centers of the third layer are over those of the first, those of the fourth
layer over those of the second, and so on. The symmetry produced is
hexagonal in structure and uniaxial (Figs. 5 and 5a).
\end{quote}
Here ``plan d'' is the two-dimensional hexagonal packing, and Figs. 5 and 5a depict the $\HCP$ packing.  
He suggested that the atoms in a crystal of quartz ($Si O_2$) occur with the fourth kind of symmetry (see Figure ~\ref{fig:0-2}).

%***************************
% Figure 2: Barlow: FCC/HCP
%***************************
\begin{figure}[htbp]
   \centering
   \vspace{-1em}
      \includegraphics[scale=.75]{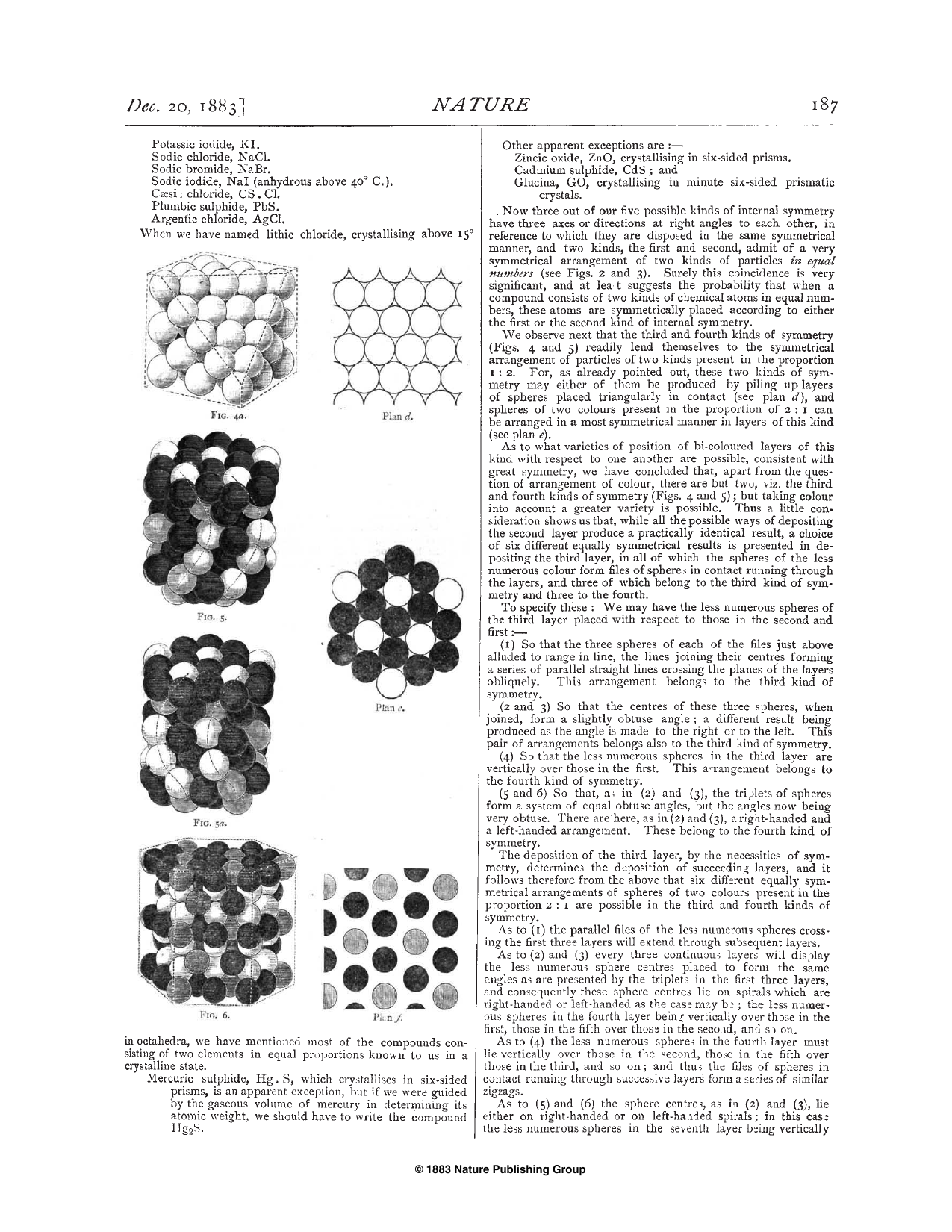}\quad\quad\quad
   \includegraphics[scale=.75]{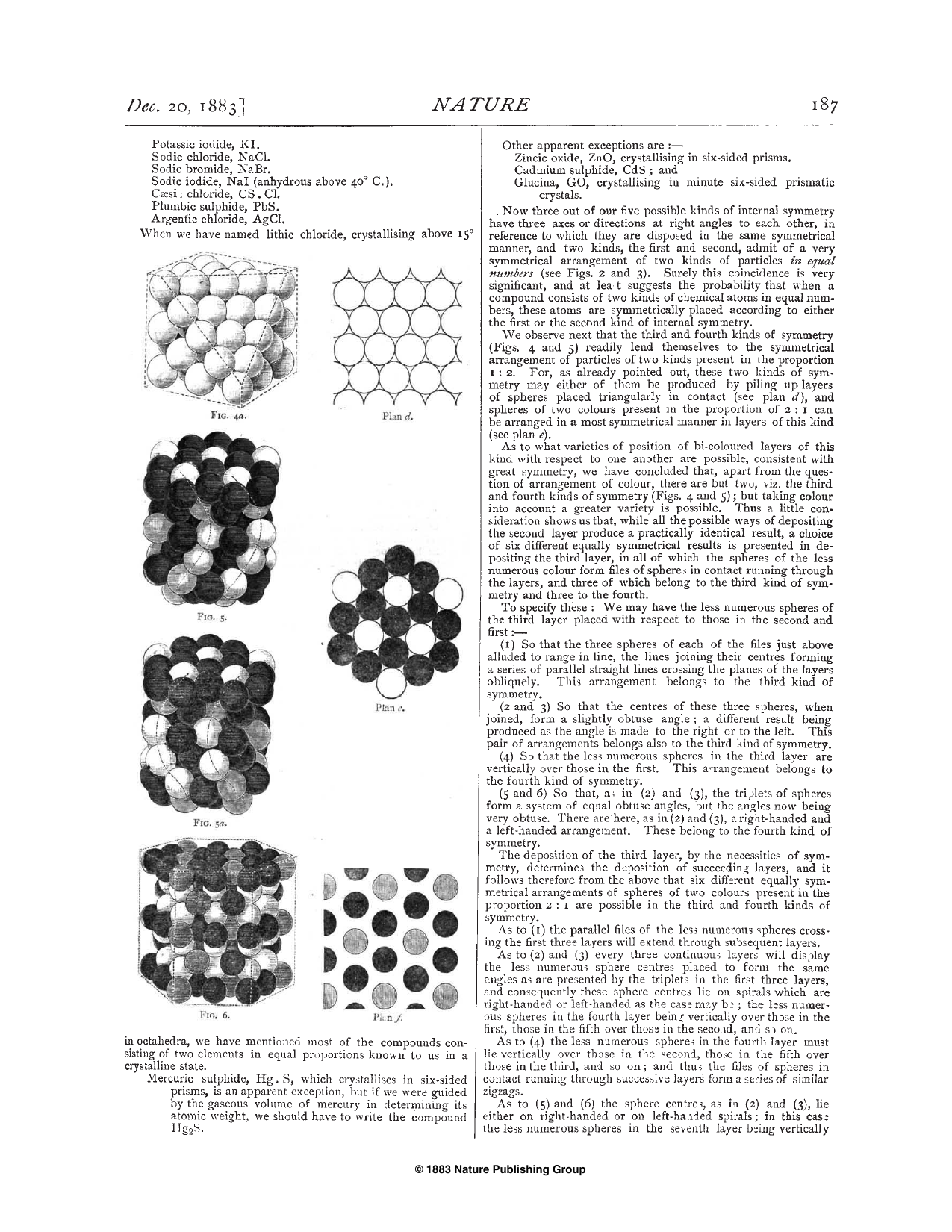}\vspace{-.5em}\\
   \includegraphics[scale=.75]{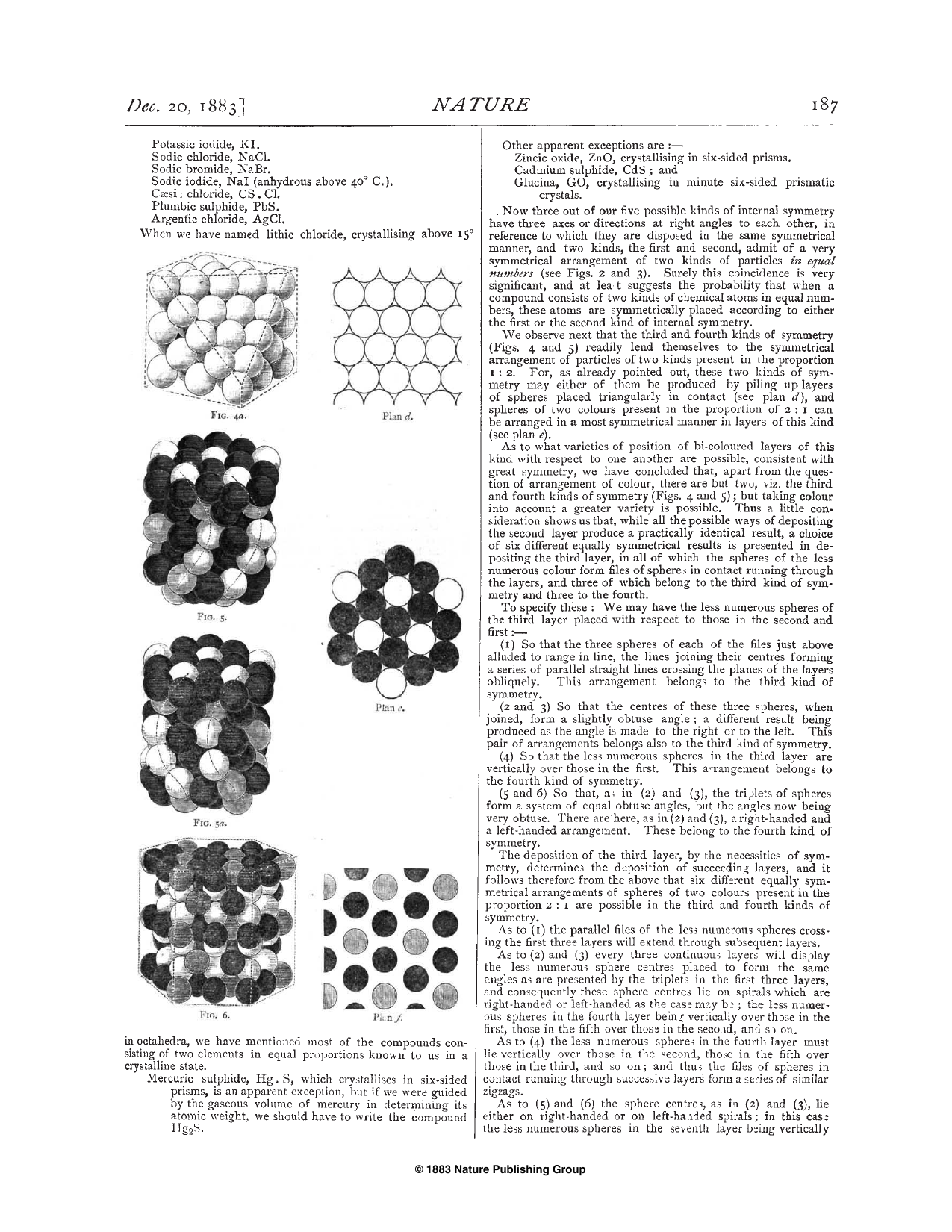}\quad\quad\quad
   \includegraphics[scale=.75]{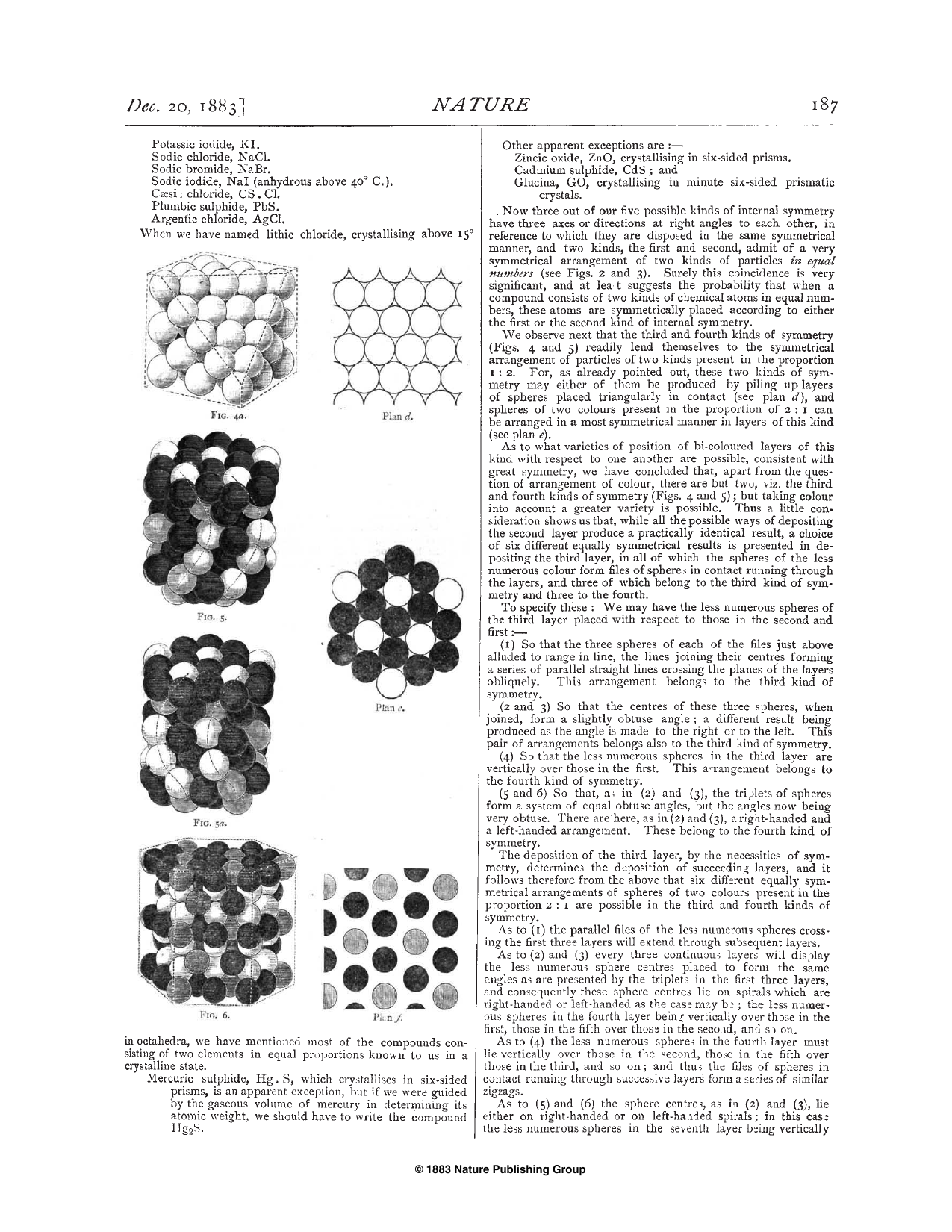}\quad\quad\quad
      \includegraphics[scale=.75]{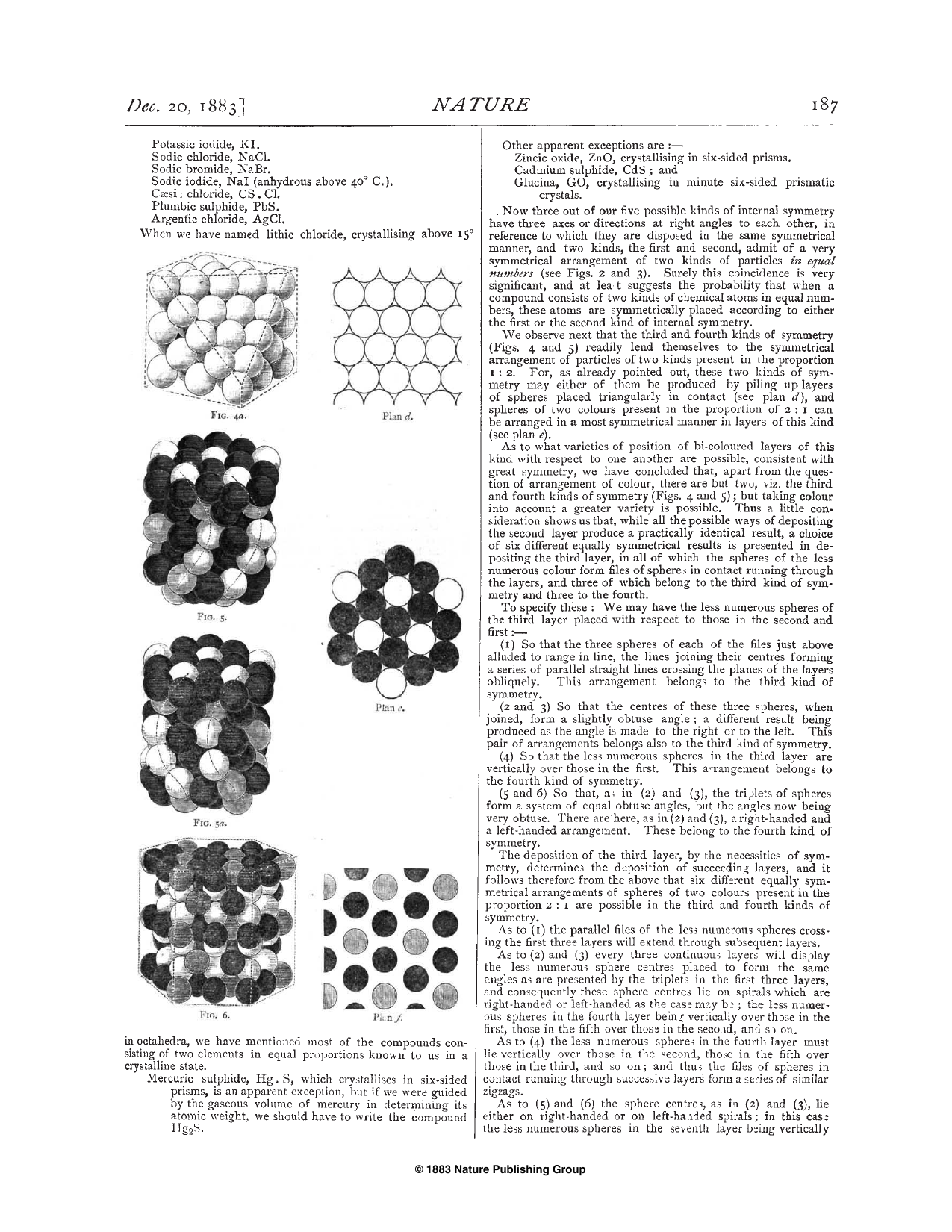}\vspace{-.9em}
   \caption{Barlow $\FCC$ and $\HCP$ packings}
   \label{fig:0-2}
\end{figure}

Barlow also stated later in the paper ~\cite[Figs. 7 and 8, p.207]{Barlow:1883} the following about twinned crystal arrays with a connecting layer: 

\begin{quote}
The peculiarities of {\em crystal-grouping}
displayed in twin crystals can be shown to favour the supposition that we
have in crystals symmetrical arrangement rather than symmetrical shape of atoms
or small particles. Thus if an octahedron be cut in half by a plane parallel to two
opposite faces, and the hexagonal faces of separation, while kept in contact and their
centres coincident, are turned one upon the other through $60^{\circ}$, we know that
we get a familiar example of a form found in some twin crystals. And a stack can be
made of layers of spheres placed triangularly in contact to depict this form as
readily as to depict a regular octahedron, the only modification necessary being for
the layers above the centre layer to be placed as though turned bodily through $60^{\circ}$,
from the position necessary to depict an octahedron (compare Figs. 7 and 8). The modification,
as we see, involves {\em no departure from the condition that each particle is equidistant
from the twelve nearest particles.}
\end{quote}

%*****************************************
% Figure 3: Barlow: TWINNED CRYSTAL
%******************************************
\begin{figure}[htbp] 
   \centering
   \vspace{-1em}
   \includegraphics[scale=.75]{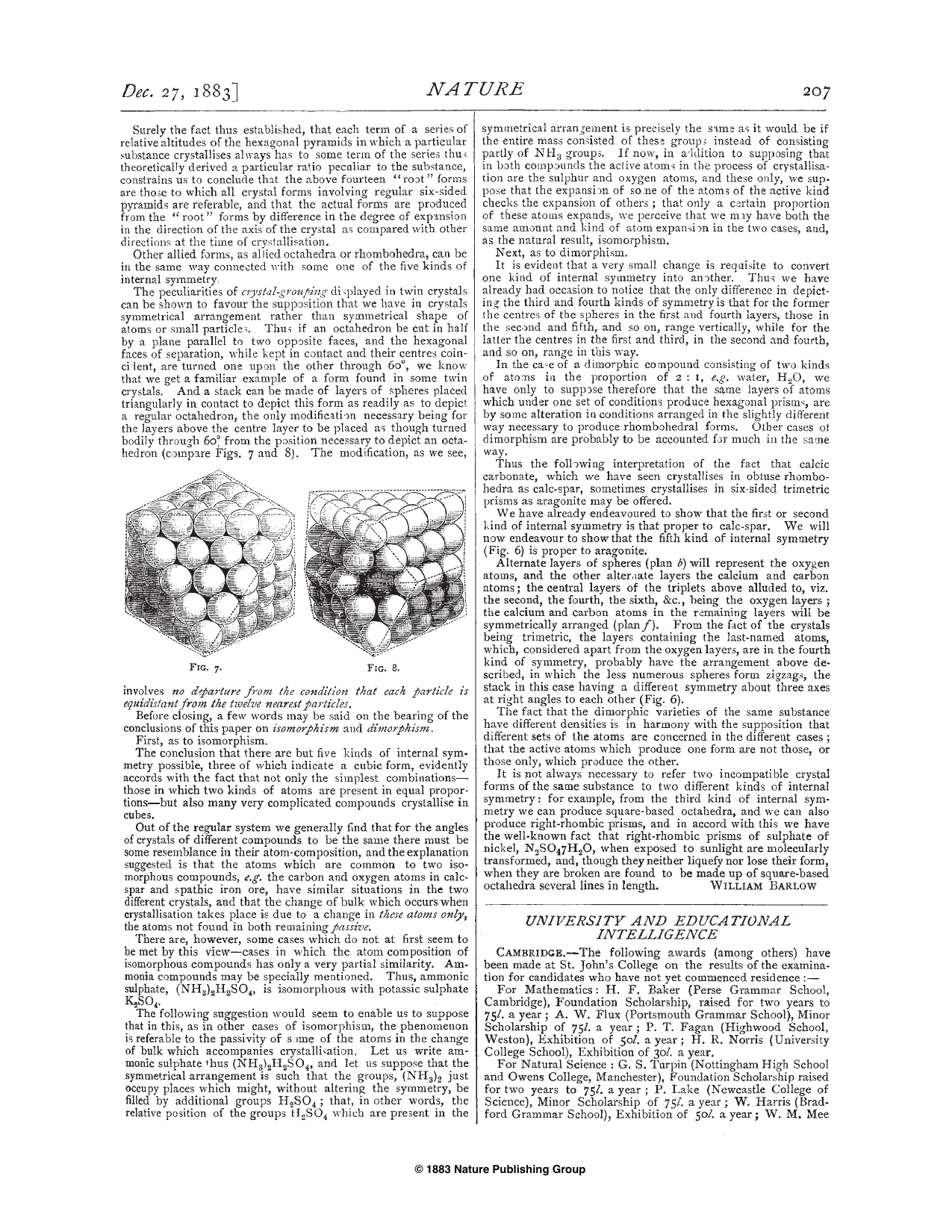}\quad\quad\quad
   \includegraphics[scale=.75]{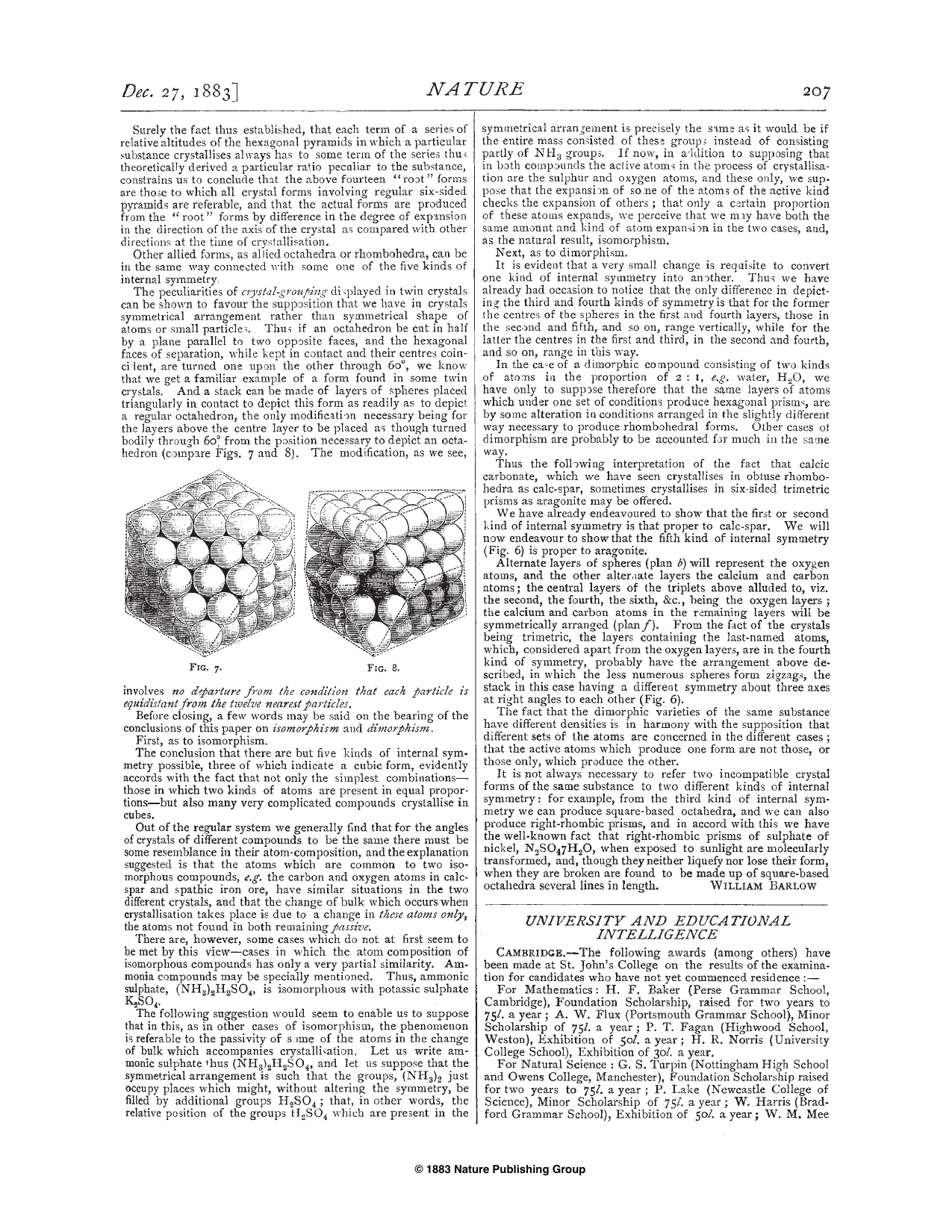}\vspace{-.9em}
   \caption{Barlow Twinned Crystal Packing}
   \label{fig:0-3}
\end{figure}

%***************************
% Subsection 2.5
%***************************
\subsection{Tammes (1930)}\label{sec:26}  
The Dutch botanist {Pieter Merkus Lambertus Tammes} made in 1930 a study of the equidistribution of pores on pollen grains ~\cite{Tam30}.  
He asked the question: What is the maximum number of circular caps $N(\theta)$ of angular diameter $\theta$ that can be placed without overlap on a unit sphere?  
Here $\theta$ is measured from the  center of the unit sphere $\SS^2$ in $\mathbb{R}^{3}$.  
Tammes ~\cite[Chap. 3]{Tam30} empirically determined that $N(\frac{\pi}{2}) = 6$, while $N(\theta) \le 4$ for $\theta > \frac{\pi}{2}$.  
Let $\theta= \theta(N)$ denote the maximal value of $\theta$ having $N(\theta) =N$.  
He concluded that $\theta(5)= \theta(6) = \frac{\pi}{2}$. 

The problem of determining various values of $N(\theta)$ is now called the {\em Tammes problem}.  
It is related to a  dual question of determining the maximal radius $r(N)$ possible for $N$ equal spheres all touching a central sphere of radius $1$.  
Namely, the maximal value of $\theta := \theta(N)$ having $N(\theta)=N$, determines the maximal allowable radius $r(N)$ of $N$  spheres touching a central unit sphere by a formula given in Lemma ~\ref{lemma:31} below.

%***************************
% Subsection 2.6
%***************************
\subsection{Fejes T\'{o}th (1943)}\label{sec:26b}
In 1943 L\'{a}szl\'{o} Fejes T\'{o}th ~\cite{FT:1943b} conjectured that the volume of any Voronoi cell of any sphere packing of $\RR^3$ by unit spheres is minimized by the dodecahedral configuration of $12$ unit spheres touching a central sphere.  
The Voronoi cell of the central sphere is then a regular dodecahedron circumscribed about the sphere.  The packing density of the dodecahedron is approximately $0.7546$, which is larger than the density of the known FCC packing of $\RR^3$.  
This conjecture became known as the {\em Dodecahedral Conjecture} and was settled affirmatively in 2010 (see Section ~\ref{sec:53}). 

%***************************
% Subsection 2.7
%***************************
\subsection{Frank (1952)}\label{sec:27}
The problem of molecular rearrangement in the liquid-solid phase transition is relevant in materials. 
The structure of ordinary ice, the $H_2 O$  phase labeled ice $I_{h}$, has an $\HCP$ packing
of its oxygen atoms, as observed in 1921 by Dennison ~\cite{Dennison:1921}. 
Note that the hydrogen atoms are free to change their orientations to some extent (Pauling ~\cite{Pauling:1935}).
Water exhibits a phenomenon of supercooling at standard pressure down to $-48^{\circ} C$; under special rapid cooling it can avoid freezing down to  $-137^{\circ} C$, and enter a glassy phase (Angell ~\cite{Angell:2008}). 

In 1952 Frederick Charles Frank ~\cite{Frank:1952}  argued that supercooling can occur because the  common arrangements of molecules in liquids assume configurations far from what they would assume if frozen.  
He wrote:

\begin{quote}
 Consider the
question of how many different ways one can put twelve billiard balls in
simultaneous contact with another one, counting as different the arrangements
which cannot be transformed into each other without breaking contact with
the centre ball?' The answer is {\em three}. Two  which come to the mind
of any crystallographer occur in the face-centred cubic and hexagonal close
packed lattices. The third comes to the mind of any good schoolboy, and it
is to put one at the centre of each face of a regular dodecahedron.
That body has five-fold axes, which are abhorrent to crystal symmetry:
unlike the other two packings, this one cannot be continuously extended in
three dimensions. You will find that the outer twelve in this packing do not
touch each other. If we have mutually interacting deformable spheres, like
atoms, they will be a little closer to the centre in this third kind of packing;
and if one assumes they are argon atoms (interacting in pairs with attractive
and repulsive potentials proportional to $r^{-6}$ and $r^{-12}$) one may
calculate that the binding energy of the group of thirteen is $8.4 \%$ greater
than for the other two packings. This is $40 \%$ of the lattice energy per
atom in the crystal. I infer that this will be a very common grouping in
liquids, that most of the groups of twelve atoms around one will be of this form,
that freezing involves a substantial rearrangement, and not merely an
extension of the same kind of order from short distances to long ones;
a rearrangement which is quite costly of energy in small localities, and which only becomes
 economical when extended over a considerable volume, because unlike
the other packing it can be so extended without discontinuities.
\end{quote}

The three local arrangements Frank specifies we shall label as $\FCC$ (face-centered-cubic), $\HCP$ (hexagonal close packing)  and $\DOD$ (dodecahedral), for convenience.  
The crystalline arrangements of $\FCC$ and $\HCP$ are ``extremal'' (i.e.~on the boundary of the configuration space), while the balls in $\DOD$ configuration are free to move independently. 

Frank's assertion that there are exactly three possible arrangements is  {\em false} if taken literally.  
There are continuous deformations between any arrangement of types $\FCC$, $\HCP$ and $\DOD$ and any of the other types (see Section ~\ref{sec:53}).  
There is however an important kernel of truth in Frank's statement, which buttresses his argument made concerning the existence of supercooling:  
each of the three arrangements above is ``remarkable'' in some sense (see Section ~\ref{sec:5}).  
To move from a large arrangement of spheres having many $\DOD$ configurations to one frozen in the $\HCP$ packing requires substantial motion of the spheres. 

%***************************
% Subsection 2.8
%***************************
\subsection{Sch\"{u}tte and van der Waerden (1953)}\label{sec:27b}
In a paper titled ``Das Problem der dreizehn Kugeln'' [``The problem of the thirteen balls''] Kurt Sch\"{u}tte and  Bartel Leendert van der Waerden ~\cite{SvdW53} gave a rigorous proof that one cannot have $13$ unit spheres touching a given central sphere.

There has been much further work on this problem.  
In his 1956 paper titled ``The problem of $13$ spheres'' John Leech ~\cite{Leech56} gave a two page proof of the impossibility of $13$ unit spheres touching a unit sphere.  
More accurately he stated: ``In the present paper I outline an independent proof of this impossibility, certain details which are tedious rather than difficult have been omitted.''  
Various authors have written to fill in such details,  which balloon the length of the proof.  
These include  work of  Maehara ~\cite{Maeh:2001} in 2001, who gave in 2007 a simplified proof  ~\cite{Maeh:2007}.  
Other proofs of the thirteen spheres problem were  given by Anstreicher ~\cite{Anst:2004} in 2004 and Musin ~\cite{Mus:2006} in 2006.

%***************************
% Subsection 2.9
%***************************
\subsection{Fejes T\'{o}th (1969)}\label{sec:28}
In 1969 L\'aszl\'o Fejes T\'{o}th ~\cite{FT:1969} discussed the problem of characterizing those sphere packings in space that have the property that every sphere in the packing touches exactly $12$ neighboring spheres.  
The $\FCC$ and $\HCP$ packing both have this property, as already noted by Barlow (1883).  
There are in addition uncountably many other packings, obtained by stacking plane layers of hexagonally packed spheres (``penny packing''), where there are two choices at each level of how to pack the next level.  
Fejes T\'{o}th conjectured that all such packings are obtained in this way. 

This conjecture of Fejes T\'{o}th's was settled affirmatively by  Hales ~\cite{Hales:2013} in 2013. 

%***************************
% Subsection 2.10
%***************************
\subsection{Conway and Sloane (1988)}\label{sec:29}
In their book: {\em  Sphere Packings, Lattices and Groups,} John H. Conway and Neil J. A. Sloane  considered the question: 
{\em What  rearrangements of the $12$ unit spheres are possible using motions that maintain contact with the central unit sphere at all times?}  
In ~\cite[Chap. 1, Appendix: Planetary Perturbations]{Splag3} they sketch a  result asserting:  
The configuration space of $12$ unit spheres touching a $13$-th allows arbitrary permutations of all $12$ touching spheres in the configuration.  
That is, if the spheres are labeled and in the DOD configuration, it is possible, by moving them on the surface of the central sphere, to arbitrarily permute the spheres in a DOD configuration.

We will describe the motions in detail to obtain such permutations in Section ~\ref{sec:6}. 

%%%%%%%%%%%%%%%%%%%%%%%%%%%%%%%%%%%%%%%%%%%%%%%%%%%%%%%%%%%%%%%%%%%%%%%%%%%%%%%%%
%%                                          %%                                                        %%                                                        %%                                                                         %%
%%   3 TAMMES PROBLEM           %%                                                         %%                                                        %%                                                                  
%%                                          %%                                                         %%                                                         %%                                                                          %%
%%%%%%%%%%%%%%%%%%%%%%%%%%%%%%%%%%%%%%%%%%%%%%%%%%%%%%%%%%%%%%%%%%%%%%%%%%%%%%%%%%
\section{Maximal Radius Configurations of $N$ Spheres: The Tammes Problem}\label{sec:3}
What is the maximal radius $r(N)$ possible for $N$ equal spheres all touching a central sphere of radius $1$?  
This problem is closely related to the {\em Tammes problem} discussed above, which concerns instead the maximum number of circular caps $N(\theta)$ of angular diameter $\theta$ that can be placed without overlap on a sphere.  
The latter problem is also the problem of constructing good spherical codes (see ~\cite[Chap. 1, Sec. 2.3]{Splag3}).

%***************************
% Subsection 3.1
%***************************
\subsection{Radius versus Angular Diameter Parameterization}\label{sec:30} 
One can convert the angular measure $\theta$ into the radius of touching spheres;  
for a sphere touching a central unit sphere, its associated spherical cap on the central sphere is the radial projection of its points onto the surface of the central sphere. 

%***********************************************************
%
% Figure 4: radius-angle
%
%************************************************************
\begin{figure}[htbp] 
   \centering
   \includegraphics[scale=.8]{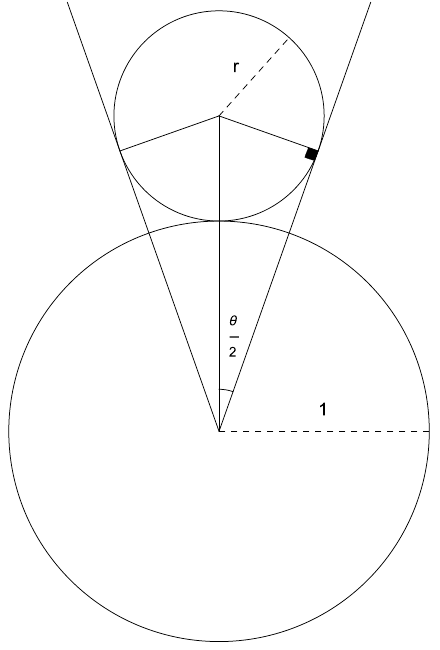}
   \caption{Angular measure $\theta$ related to  radius $r$ }
   \label{fig:7-2}
\end{figure}

%***************************
% Lemma 3.1: radius-angle
%***************************
\begin{lemma}\label{lemma:31}
For a fixed $N>2$, the maximal value of\, $\theta := \theta_{max}(N)$ having $N(\theta)=N$ determines the maximal allowable radius $r_{max}(N)$ of $N$  spheres touching a central unit sphere, using the formula
\[
r_{max}(N) = \frac{\sin\left(\frac{\theta(N)}{2}\right)}{1- \sin \left(\frac{\theta(N)}{2}\right)}.
\]
Conversely, given $r=r_{max}(N)$, we obtain
\[
\theta_{max}(N) = 2 \arcsin\left(\frac{r}{1+r}\right),
\]
choosing $0  < \theta(N) < \pi$.
\end{lemma}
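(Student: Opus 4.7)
The plan is to reduce the formula to an elementary right-triangle computation about the tangent cone from the origin to a single external sphere, then verify that the non-overlap condition for spheres translates exactly into the non-overlap condition for their projected caps.

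First I would set up coordinates so that the central unit sphere is centered at $O$, the origin, and pick one external sphere of radius $r$; since it touches the central sphere, its center $C$ lies at distance $\abs{OC} = 1+r$ from the origin. The spherical cap on $\SS^2$ associated with this external sphere is, by definition, its radial projection from $O$; equivalently, it is the intersection of $\SS^2$ with the solid tangent cone from $O$ to the external sphere. If $T$ is a point of tangency of a line through $O$ to the external sphere, then triangle $OTC$ is right-angled at $T$ with $\abs{CT} = r$ and hypotenuse $\abs{OC}=1+r$, so the half-aperture $\alpha$ of the cone satisfies $\sin \alpha = \frac{r}{1+r}$. Since the angular diameter of the projected cap equals twice this half-aperture, $\theta = 2\alpha$, which gives
\[
\sin\left(\tfrac{\theta}{2}\right) = \frac{r}{1+r}.
\]
Solving for $r$ (clearing the denominator and grouping) yields the first formula, and applying $\arcsin$ on $0 < \theta < \pi$ yields the second.

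Next I would justify the equivalence between the $N$-sphere packing problem and the $N$-cap Tammes problem. Two external spheres of radius $r$, both tangent to $\SS^2$, have centers $C_1, C_2$ on the sphere of radius $1+r$ about $O$; they have disjoint interiors if and only if $\abs{C_1C_2} \ge 2r$. If $\phi$ denotes the angle $\angle C_1 O C_2$, the chord length is $2(1+r)\sin(\phi/2)$, so the non-overlap condition becomes $\sin(\phi/2) \ge r/(1+r) = \sin(\theta/2)$, i.e.\ $\phi \ge \theta$. Since $\phi$ is precisely the angular distance between the cap centers on $\SS^2$, this is exactly the condition that two caps of angular diameter $\theta$ do not overlap. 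Hence a configuration of $N$ non-overlapping spheres of radius $r$ around the central unit sphere exists if and only if a configuration of $N$ non-overlapping caps of angular diameter $\theta = \theta(r)$ exists on $\SS^2$, and the bijection $r \leftrightarrow \theta$ above is monotone increasing on $(0, \infty) \leftrightarrow (0,\pi)$.

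Finally I would conclude: under this monotone bijection, the supremum $r_{max}(N)$ of admissible radii corresponds to the supremum $\theta_{max}(N)$ of admissible angular diameters, and plugging one into the formula yields the other. There is no real obstacle: the only substantive step is the right-triangle identity $\sin(\theta/2) = r/(1+r)$, and the rest is algebraic inversion and the monotonicity observation that makes the ``maximal'' statements transport across the bijection.
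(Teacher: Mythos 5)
Your proof is correct and rests on the same key identity $\sin(\theta/2) = r/(1+r)$ that the paper derives from the right triangle $OTC$ in its Figure; you simply spell out the tangent-cone geometry that the paper leaves to the picture and add the (true, and worth noting) verification that sphere non-overlap is equivalent to cap non-overlap. This is essentially the paper's argument with more of the routine details filled in.
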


\begin{proof}
From the right triangle in Figure ~\ref{fig:7-2}
we have
\[
\sin\frac{\theta}{2}= \frac{r}{1+r}.
\]
This relation gives a bijection of the interval $ 0 \le \theta < \pi$ to the interval $0 \le r < \infty.$
\end{proof}

%***************************
% Subsection 3.2
%***************************
\subsection{Rigorous Results for Small $N$}
The Tammes problem has been solved exactly for only a few values of $N$, including $3 \le N \le 14$ and $N=24$.

%***************************
% Subsubsection 3.2.1
%***************************
\subsubsection{ {\bf Fejes T\'{o}th:} \!\!$N\!\!=\! 3, 4, 6,12$}
The Tammes problem was solved for $N\!\!=\! 3, 4, 6$ and~$12$ by L\'aszl\'o Fejes T\'{o}th ~\cite{FT:1943} in 1943, 
where extremal configurations of touching points for $N\!=3$ are attained by vertices of an equilateral triangle arranged around the equator, 
and for $N\!\!=\!4,6,12$ by vertices of regular polyhedra (tetrahedron, octahedron and icosahedron) inscribed in the unit sphere.  
Fejes T\'{o}th  proved the following inequality:  
For $N$ points on the surface of the unit sphere, at least two points can always be found with spherical distance
\[
d \le \arccos \Big( \frac{ (\cot \omega)^2 -1}{2}\Big), \quad \mbox{with} \quad \omega = \Big(\frac{N}{N-2} \Big) \frac{\pi}{6}.
\]
\noindent
Note that $d$ is the edge-length of a spherical equilateral triangle with the expected area for an element of an $N$-vertex triangulation of $\SS^2$.  
The inequality is sharp for $N\!=3, 4, 6$ and~$12$ for the specified configurations above.

In 1949 Fejes T\'{o}th ~\cite{FT:1949} gave another proof of his inequality.  
His result was re-proved by Habicht and van der Waerden ~\cite{HvdW:1951} in 1951.  
After converting this result to the $r$-parameter using Lemma ~\ref{lemma:31}, we may re-state his result for $N= 12$ as follows. 

%***************************
% Theorem 3.2: Tammes N=12
%***************************
\begin{theorem}\label{th72}
{\rm (Fejes T\'{o}th (1943))}\ 

$(1)$ The maximum radius of $12$ equal spheres touching a central sphere of radius $1$ is:

\[
r_{max}(12) = \frac{1}{\sqrt{\frac{5+ \sqrt{5}}{2}} -1} \approx 1.1085085.
\]

\noindent 
Here $r_{max}(12)$ is a real root of the fourth degree equation  $x^4 -6x^3 +x^2+4x+1=0$.

$(2)$ An extremal configuration achieving this radius is the $12$ vertices of an inscribed regular icosahedron (equivalently, face-centers of a circumscribed regular dodecahedron).
\end{theorem}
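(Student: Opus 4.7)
The plan is to invoke the general Fejes Tóth inequality
$$
d \le \arccos\!\left(\frac{\cot^2\omega -1}{2}\right), \qquad \omega = \frac{N}{N-2}\cdot\frac{\pi}{6},
$$
stated just above the theorem, specialize it to $N=12$, and then translate the resulting bound on the minimum spherical distance into a bound on the touching-sphere radius via Lemma 3.1. The bulk of the work is trigonometric and algebraic; the subtlest step is the uniqueness/extremality of the icosahedral realizer in part (2).

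First I would specialize at $N=12$, obtaining $\omega = \pi/5$. Using the classical closed forms $\cos(\pi/5) = (1+\sqrt{5})/4$ and $\sin^2(\pi/5) = (10-2\sqrt{5})/16$, a short computation gives
$$
\cot^2(\pi/5) = \frac{6+2\sqrt{5}}{10-2\sqrt{5}} = 1 + \frac{2}{\sqrt{5}},
$$
so $(\cot^2\omega - 1)/2 = 1/\sqrt{5}$, and Fejes Tóth's inequality forces the minimum spherical distance $d$ of any $12$-point configuration on $\SS^2$ to satisfy $d \le \arccos(1/\sqrt{5})$. Equivalently, the largest admissible cap diameter is $\theta_{max}(12) = \arccos(1/\sqrt{5})$.

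Next I would convert to the radius parameter via Lemma 3.1. From $\cos\theta = 1/\sqrt{5}$ one gets
$$
\sin^2(\theta/2) \;=\; \frac{1-\cos\theta}{2} \;=\; \frac{5-\sqrt{5}}{10},
$$
and rationalizing yields $\csc(\theta/2) = \sqrt{(5+\sqrt{5})/2}$. Since Lemma 3.1 gives $\sin(\theta/2) = r/(1+r)$, i.e.\ $1 + 1/r = \csc(\theta/2)$, the maximal admissible radius is
$$
r_{max}(12) \;=\; \frac{1}{\sqrt{(5+\sqrt{5})/2}-1},
$$
as claimed. To recover the quartic, set $u = 1 + 1/r$, so $u^2 = (5+\sqrt{5})/2$; squaring once more eliminates $\sqrt{5}$ and yields $u^4 - 5u^2 + 5 = 0$. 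Substituting $u = (r+1)/r$ and clearing the denominator $r^4$ produces the stated polynomial $r^4 - 6r^3 + r^2 + 4r + 1 = 0$.

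Finally, for part (2), I would exhibit the regular icosahedron inscribed in $\SS^2$: up to rotation and rescaling its twelve vertices are the cyclic permutations of $(0, \pm 1, \pm(1+\sqrt{5})/2)$, and a direct inner-product computation shows any two adjacent vertices subtend the angle $\arccos(1/\sqrt{5})$ at the origin, saturating the bound. The main obstacle is the uniqueness statement, which requires tracing the equality case of the Fejes Tóth triangulation argument: equality forces every face of the induced Delaunay triangulation of $\SS^2$ to be a congruent equilateral spherical triangle with edge length $\arccos(1/\sqrt{5})$, and the only such triangulation of $\SS^2$ on twelve vertices is the icosahedral one. This rigidity step, together with the geometric fact that the equilateral triangle maximizes area among spherical triangles with a given minimum edge, is where the delicate part of the argument lies.
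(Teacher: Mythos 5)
Your proof is correct and follows exactly the route the paper itself points to: specialize the Fejes T\'{o}th inequality (quoted just above the theorem) to $N=12$, obtain $\theta_{\max}(12)=\arccos(1/\sqrt{5})$, convert to the radius parameter via Lemma~\ref{lemma:31}, recover the quartic by clearing radicals from $u=1+1/r$, and exhibit the icosahedron as a saturating configuration; the paper states no independent proof and simply cites Fejes T\'{o}th, so your reliance on his inequality without re-deriving it is the same posture the paper takes. One remark: the theorem as written asserts only that the icosahedron is \emph{an} extremal configuration, not the unique one (uniqueness up to isometry is the content of Danzer's Theorem~\ref{th73}), so the equality-case rigidity analysis you flag as the ``delicate part'' in your final paragraph is more than this particular statement requires, though you should at least note that non-adjacent icosahedral vertex pairs subtend strictly larger angles so that $\arccos(1/\sqrt5)$ really is the minimum.
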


%***************************
% Subsubsection 3.2.2
%***************************
\subsubsection{ {\bf Sch\"{u}tte and van der Waerden:} $N=5,7,8,9$}
The Tammes problem was  solved for $N=5$ in 1950 by van der Waerden, building on work of  Habicht and van der Waerden ~\cite{HvdW:1951}.  
It was solved for $N=7$ by Sch\"{u}tte. These solutions, plus those of van der Waerden for $N=8$ and Sch\"{u}tte for  $N=9$ appear in Sch\"{u}tte and van der Waerden ~\cite{SvdW51}.  
They give a history of these developments in ~\cite[p. 97]{SvdW51}.

Their paper used geometric methods, introducing and studying the allowed structure of the graphs describing the touching patterns of arrangements of $N$ equal circles on $\SS^2$.  
These graphs are now called {\em contact graphs}, and Sch\"{u}tte and van der Waerden credit their introduction to Habicht.  
Sch\"{u}tte also conjectured candidates for optimal configurations  for $N=10,  13, 14, 15, 16$ and van der Waerden conjectured candidates for $N=11, 24, 32$ (see ~\cite{SvdW51}).

 L.~Fejes T\'{o}th presented the work of Sch\"{u}tte and van der Waerden in his 1953 book on sphere-packing ~\cite[Chapter VI]{FT:1953}.  
 This book uses the terminology of {\em maximal graph} for the graph of a configuration achieving the maximal radius for $N$.  
 In 1959 Fejes T\'{o}th ~\cite{FT:1959} noted that the set of vertices of a square antiprism gave an extremal $N=8$ configuration on the $2$-sphere.

%***************************
% Subsubsection 3.2.3
%***************************
\subsubsection{{\bf Danzer:} $N=6,7, 8, 9, 10, 11$}
In his 1963 Habilitationsschrift ~\cite{Danz:1963} (see the 1986 English translation ~\cite{Danz:1986}), Ludwig Danzer made a geometric study of the contact graph for a configuration of $N$ circles on the surface of a sphere.  
This graph has a vertex for each circle and an edge for each pair of touching circles.  
A contact graph is called {\em maximal} if it occurs for a set of circles achieving the maximal radius $r_{max}(N)$.  
It is called {\em optimal} if it has the minimum number of edges among all maximal contact graphs.  
A contact graph is called {\em irreducible} if the radius cannot be improved by altering a single vertex.  
For each small $N$, Danzer found a complete list of irreducible contact graphs.   
He used this analysis to  prove the conjectures of Sch\"{u}tte and van der Waerden ~\cite{SvdW51} above for the cases $N=10, 11$. 

%***************************
% Theorem 3.3: Tammes for N=7-11
%***************************
\begin{theorem}\label{th73}
{\rm (Danzer (1963))}\par~$(1)$ For $7 \le N \le 12$ there is, up to isometry, a unique $\theta$-maximizing unlabeled configuration of spheres with $N(\theta)=N$.

$(2)$ For $N=12$, the vertices of a regular icosahedron form the unique $\theta$-maximizing configuration.  
The $\theta$-maximizing configuration for $N=11$ is a regular icosahedron with one vertex removed.
\end{theorem}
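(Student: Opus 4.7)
The plan is to follow the contact-graph framework developed in Danzer's Habilitationsschrift~\cite{Danz:1963}. To each configuration of $N$ caps of common angular diameter $\theta$ on $\SS^2$ assign its \emph{contact graph} $G$: vertices are cap centers, and an edge joins two vertices at spherical distance exactly $\theta$ (equivalently, whose associated spheres touch). This is a planar graph embedded in $\SS^2$ with geodesic edges of equal length, and Euler's formula combined with the Fejes T\'oth inequality from Theorem~\ref{th72} restricts its combinatorial structure sharply.

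The key notion is \emph{irreducibility}: a configuration is irreducible if no single cap can be moved without forcing some pairwise distance below $\theta$. Any $\theta$-maximizer must be irreducible, since otherwise a free vertex could be perturbed and $\theta$ then rescaled upward. This forces minimum vertex degree at least $3$ in $G$, and a local analysis shows that the set of isomorphism types of irreducible contact graphs is finite for each $N$. Uniqueness assertion (1) then reduces to enumerating this finite list for each $N \in \{7,\ldots,11\}$, computing the $\theta$ realized by each candidate via spherical trigonometry, and checking that a unique maximum occurs.

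For $N = 12$, Theorem~\ref{th72} supplies both the bound and the uniqueness: equality in the Fejes T\'oth inequality forces $\SS^2$ to be triangulated by $20$ spherical equilateral triangles of edge $\theta$, i.e.\ the icosahedral pattern. For $N = 7, 8, 9, 10$ the irreducible graphs are few (the pentagonal bipyramid, the square antiprism, and the graphs identified by Sch\"utte--van der Waerden \cite{SvdW51}), and uniqueness follows by explicit angular computation on each. The delicate case is $N = 11$: deletion of any icosahedral vertex gives the lower bound $\theta_{max}(11) \geq \theta_{max}(12)$, but the Fejes T\'oth inequality applied directly at $N = 11$ yields an upper bound strictly larger than $\theta_{max}(12)$, so it alone does not close the gap.

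The main obstacle is therefore the $N = 11$ case. One must first restrict the contact graph of any irreducible $11$-configuration to a triangulation of $\SS^2$ with at most one non-triangular face, which by a face-counting argument must be a pentagon. Then a rigidity argument is needed to show that the five vertices bounding such a pentagonal hole cannot be jointly repositioned so as to enlarge $\theta$: each boundary vertex is pinned by its two neighbors in the triangulated region, and any coordinated deformation of the pentagonal face must shorten some other edge of $G$. This step is the genuinely non-trivial content of Danzer's argument; combined with explicit angular calculation it identifies the icosahedron-minus-vertex as the unique $11$-maximizer, yielding (2).
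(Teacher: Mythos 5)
The paper does not actually prove this theorem; it is cited from Danzer's Habilitationsschrift, with the surrounding text only sketching his methodology (enumeration of irreducible contact graphs). So there is no internal proof to compare against. Your outline does follow the same contact-graph route Danzer used, and the overall plan --- enumerate irreducible graphs, compute the realized $\theta$ for each via spherical trigonometry, identify the maximum --- is the correct one.

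There is, however, a genuine gap in your reduction step. You define a configuration to be irreducible if ``no single cap can be moved without forcing some pairwise distance below $\theta$'' (locally jammed), and then argue that every $\theta$-maximizer is irreducible because a free vertex ``could be perturbed and $\theta$ then rescaled upward.'' That implication is false: the Tammes maximizer for $N=5$ has a vertex that can move within a region without any pairwise distance dropping below $\theta_{max}(5)=\pi/2$, yet $\theta$ cannot be increased. Danzer's actual definition --- the minimal distance cannot be \emph{improved} by moving a single vertex --- is strictly weaker; every maximizer satisfies it trivially, and crucially it admits irreducible sets \emph{with} degrees of freedom. The paper notes that Danzer's enumeration explicitly includes several such non-rigid irreducible sets for $N=7,8,9,10$. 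Under your stronger ``locally jammed'' definition these would be discarded a priori, and the resulting enumeration would not be guaranteed exhaustive; under Danzer's definition they must be (and were) carried along and shown to be suboptimal. This is not a wording quibble: it determines which configurations one must actually enumerate. Relatedly, your $N=11$ step asserts one may restrict to contact graphs that are triangulations with a single pentagonal hole; Danzer instead enumerates all irreducible $11$-graphs (again, several with degrees of freedom) and verifies each realizes $\theta \le \theta_{max}(12)$, so the near-triangulation structure is a conclusion of the analysis, not a hypothesis you may impose at the outset. Your $N=12$ argument via the equality case of Fejes T\'oth's inequality is clean and correct, and is a somewhat more direct route to uniqueness than the enumeration that covers the remaining $N$.
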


In ~\cite[Theorem II]{Danz:1986} Danzer classified irreducible sets for $72^{\circ} = \frac{2 \pi}{5} < \theta < 90^{\circ} = \frac{2 \pi}{4}$.  
There are additional $N$-irreducible graphs for $N=7,8, 9, 10$ in these cases. For $N=6,7,8,9$ he finds one optimal set and one irreducible set with one degree of freedom.  
He also finds for $N=8$ an irreducible set with two degrees of freedom.  
For $N=10$ he finds one optimal set, two irreducible sets with no degrees of freedom, and five with at least one degree of freedom.  
Danzer states that the irreducible sets with no degrees of freedom (presumably) give relative optima.  
An irreducible graph having a degree of freedom fails to be relatively optimal, since deforming along its degree of freedom leads to a boundary graph with an additional edge, where the extrema is reached.

Danzer's work was not published in a journal until the 1980's.  
In the interm, B\"{o}r\"{o}czky ~\cite{Bor83} gave another solution for $N=11$, and H\'{a}rs ~\cite{Hars86} for $N=10$.

%***************************
% Subsubsection 3.2.4
%***************************
\subsubsection{ {\bf Musin and Tarasov:} $N= 13, 14$}
Very recently the Tammes problem  was solved for the cases $N=13$ and $N=14$ by Oleg Musin and Alexey Tarasov ~\cite{MusT:2012, MusT:2015}.  
Their proofs were computer-assisted, and made use of an enumeration of all irreducible configuration contact graphs (see ~\cite{MusT:2013}).  
Earlier work on configurations of up to $17$ points was done by B\"{o}r\"{o}czky and Szab\'{o} ~\cite{BorS03, BorS03b}).

%***************************
% Subsubsection 3.2.5
%***************************
\subsubsection{ {\bf Robinson:} $N=24$}
The case $N=24$ was solved  in 1961 by Raphael M. Robinson ~\cite{Rob:1961}.  
He proved a 1959 conjecture of Fejes T\'{o}th ~\cite{FT:1959b}, asserting  that the extremal $\theta(24) ~\approx ~43^{\circ} \,\, 42^{'}$ and that the extremal configuration of $24$ sphere centers are the vertices of a snub cube.  
Coxeter ~\cite[p. 326]{Cox62} describes the  snub cube.

%***************************
% Subsection 3.3
%***************************
\subsection{The Tammes Problem: Optimal Contact Graphs and Optimal Parameters}\label{sec:33}
Table ~\ref{tab711} summarizes optimal angular parameters and radius parameters on the Tammes problem for $3 \le N \le 14$ and $N=24$ (see Aste and Weaire ~\cite[Sect. 11.6]{AstW:2000}).  
The configuration name given is associated to the vertices in the corresponding polyhedron being inscribed in a sphere, e.g. an icosahedron has $N=12$ vertices (see  Melnyk et al. ~\cite[Table 2]{MelnykKS:1977}). 
In the case $N=5$ the polyhedron is any from a family of trigonal bipyramids, including the square pyramid as a degenerate case.  
For $N=11$ the polyhedron is a singly capped pentagonal antiprism, i.e. the icosahedron with one vertex deleted.  
The cases $N=7, 9$ and $10$ are described in ~\cite[pp. 1747--1749]{MelnykKS:1977}.
Figure ~\ref{fig:contact} shows schematically the optimal contact graphs for $3 \le N \le 14$.

%***************************
% Table 1: Tammes radii
%***************************
 \begin{table}[h]
 \centering
\renewcommand{\arraystretch}{.70}
\begin{tabular}{|r|c|l|l|l|}
\hline&&&&\\[-.7em]
\multicolumn{1}{|c|}{$N$} & \multicolumn{1}{c|}{$\theta_{max}(N)$} & \multicolumn{1}{c|}{$r_{max}(N)$} & \multicolumn{1}{c|}{Configuration} & \multicolumn{1}{c|}{Source}\\
[.3em]\hline&&&&\\[-.7em]
3 & $ \frac{2\pi}{3}= 120^{\circ}$ & \vtop{\hbox{\strut $ 3+ 2 \sqrt{3}$}\hbox{\strut $\approx 6.4641$}} & Equilateral Triangle & Fejes T\'{o}th (1943)\\
[.3em]\hline &&&&\\[-.7em]
4 & \vtop{\hbox{\strut\hspace{1.75em}   $\cos^{-1}(-\frac{1}{3})$ \quad}\hbox{\strut\hspace{3.25em}$\approx 109.4712^{\circ}$}} & \vtop{\hbox{\strut $ 2+ \sqrt{6}$ }\hbox{\strut $\approx 4.4495$}} & Regular Tetrahedron & Fejes T\'{o}th (1943)\\
[.3em] \hline&&&&\\[-.7em]
5 & $ \frac{2 \pi}{4}  = 90^{\circ}$ & \vtop{\hbox{\strut$1+\sqrt{2}$}\hbox{\strut$\approx 2.4142$ }} & \vtop{\hbox{\strut Triangular Bipyramid }\hbox{\strut }} & Fejes T\'{o}th (1943)\\
[.3em] \hline&&&&\\[-.7em]
6 & $\frac{2 \pi}{4} = 90^{\circ}$ & \vtop{\hbox{\strut$1+\sqrt{2}$}\hbox{\strut$\approx 2.4142$ }} & Regular Octahedron & Fejes T\'{o}th (1943)\\
[.3em] \hline&&&&\\[-.7em]
7 & $ \approx 77.8695^{\circ}$  &$\approx 1.6913$ & [No name] & \vtop{\hbox{\strut Schutte and}\hbox{\strut van der Waerden (1951) }}\\
[.3em]\hline&&&&\\[-.7em]
8 & $\approx 74.8585^{\circ}$ & $\approx 1.5496$ & Square Antiprism & \vtop{\hbox{\strut Schutte and}\hbox{\strut van der Waerden (1951) }}\\
 \hline&&&&\\[-.7em]
9 &  $\approx 70.5288^{\circ}$ &\vtop{\hbox{\strut  $\frac{1+\sqrt{3}}{2}$}\hbox{\strut $\approx 1.3660$ }}  & [No name] & \vtop{\hbox{\strut  Schutte and}\hbox{\strut  van der Waerden (1951) }}\\
[.3em]\hline&&&&\\[-.7em]
10 & $\approx 66.1468^{\circ}$ & $\approx 1.2013$ &  [No name]  & Danzer (1963) \\
 [.3em] \hline&&&&\\[-.7em]
11& $\approx 63.4349^{\circ}$ & \vtop{\hbox{\strut $ \frac{1}{\sqrt{\frac{5+ \sqrt{5}}{2}} -1}$}\hbox{\strut $\approx 1.1085$}} & \vtop{\hbox{\strut Regular Icosahedron}\hbox{\strut minus one vertex}} & Danzer (1963)\\
[.3em] \hline&&&&\\[-.7em]
12& $\approx 63.4349^{\circ}$ &  \vtop{\hbox{\strut $ \frac{1}{\sqrt{\frac{5+ \sqrt{5}}{2}} -1}$}\hbox{\strut $\approx 1.1085$}} & Regular Icosahedron & Fejes T\'{o}th (1943)\\
  [.3em]  \hline&&&&\\[-.7em]
13 & $\approx 57.1367^{\circ}$ & $\approx 0.9165 $ & [No name] & Musin and Tarasov (2013)\\
[.3em] \hline&&&&\\[-.7em]
14 & $\approx 55.6706^{\circ}$ & $\approx 0.8759$ & [No name] & Musin and Tarasov (2015)\\
[.3em]\hline&&&&\\[-.7em]
24 & $\approx 43.6908^{\circ}$ & $\approx 0.5926$ & Snub Cube & Robinson (1961)\\
[.3em]
\hline
\end{tabular}
\vspace{.5em}
	\captionsetup{justification=centering}
\caption{The Tammes Problem for small $N$ }
\label{tab711}
\end{table}

%*****************************************************
% Figure 5: optimal contact graphs   
%*****************************************************
\begin{figure}
\centering
\vspace{1em}
\begin{minipage}{\textwidth}
\begin{minipage}{.33\textwidth}
  \centering
\includegraphics[height=1.65in]{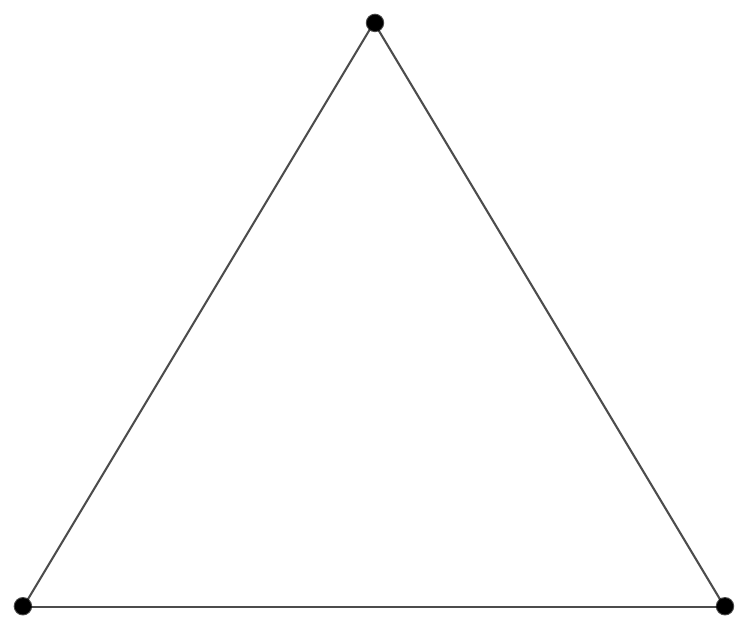}\\
$N=3$
\end{minipage}%
\begin{minipage}{.33\textwidth}
  \centering
\includegraphics[height=1.65in]{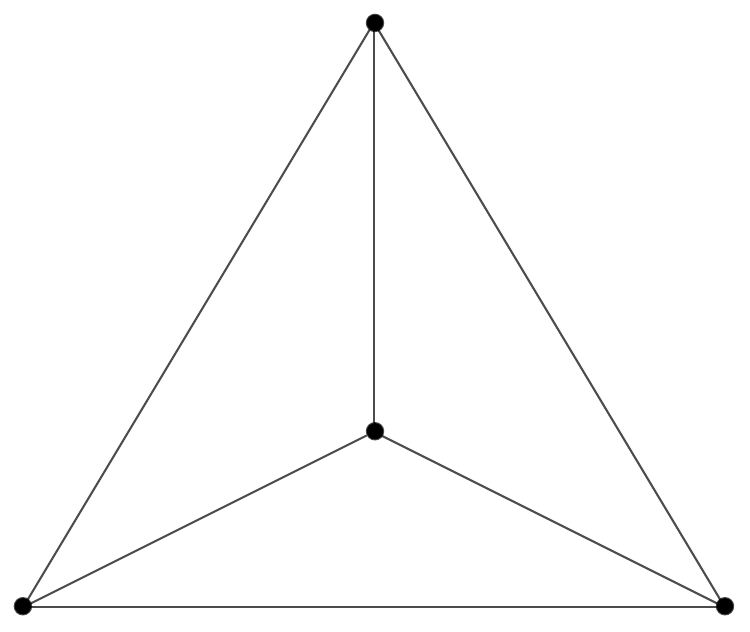}\\
$N=4$
\end{minipage}
\begin{minipage}{.33\textwidth}
  \centering
\includegraphics[height=1.65in]{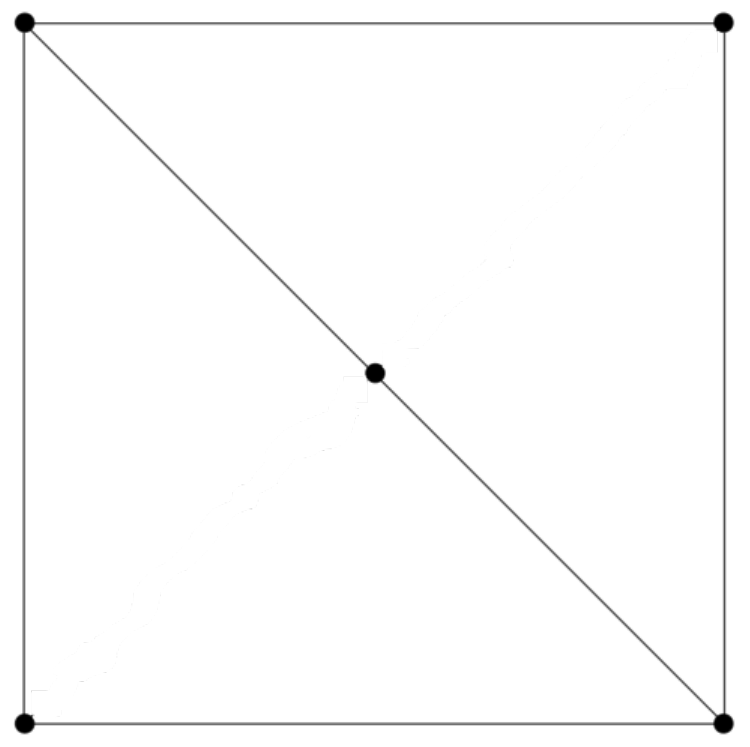}\\
$N=5$
\end{minipage}
\end{minipage}\vspace{1em}

\begin{minipage}{\textwidth}
\begin{minipage}{.33\textwidth}
  \centering
\includegraphics[height=1.65in]{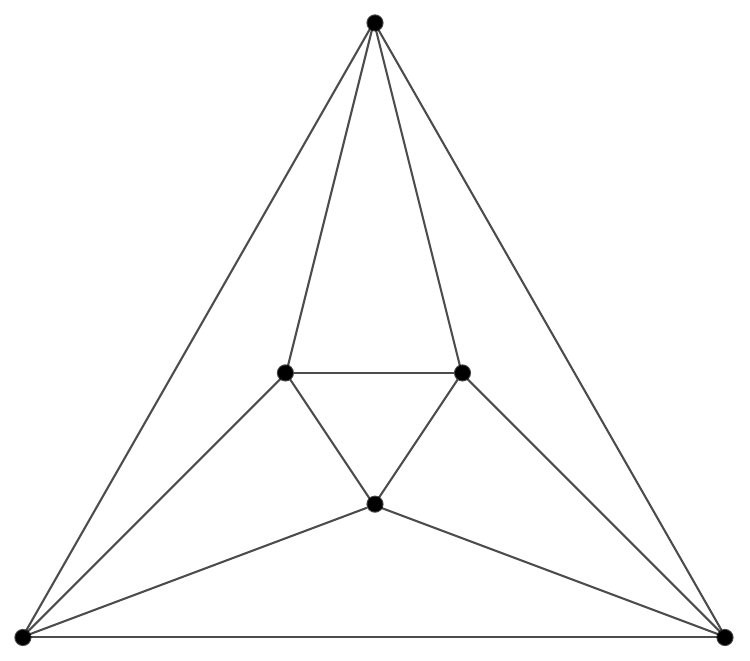}\\
$N=6$
\end{minipage}%
\begin{minipage}{.33\textwidth}
  \centering
\includegraphics[height=1.65in]{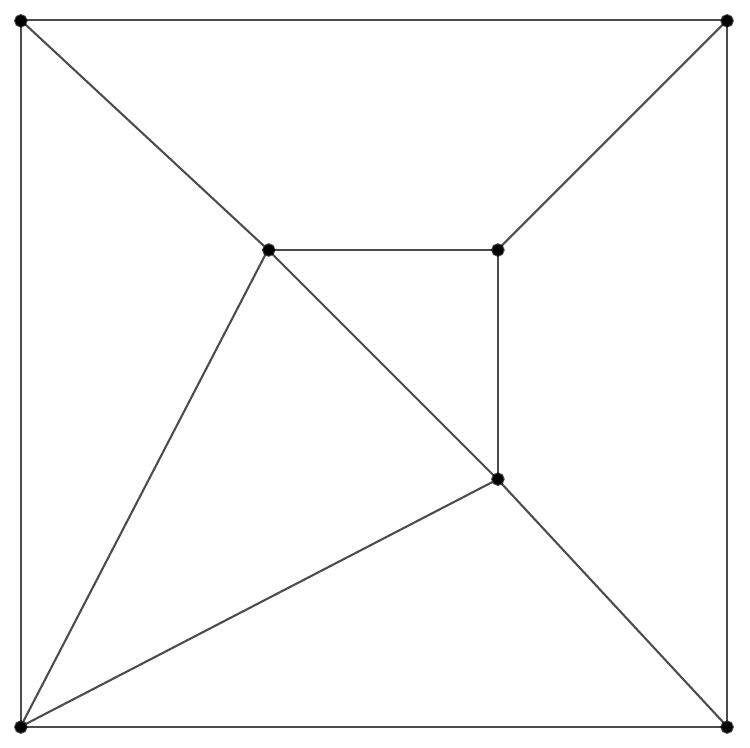}\\
$N=7$
\end{minipage}
\begin{minipage}{.33\textwidth}
  \centering
\includegraphics[height=1.65in]{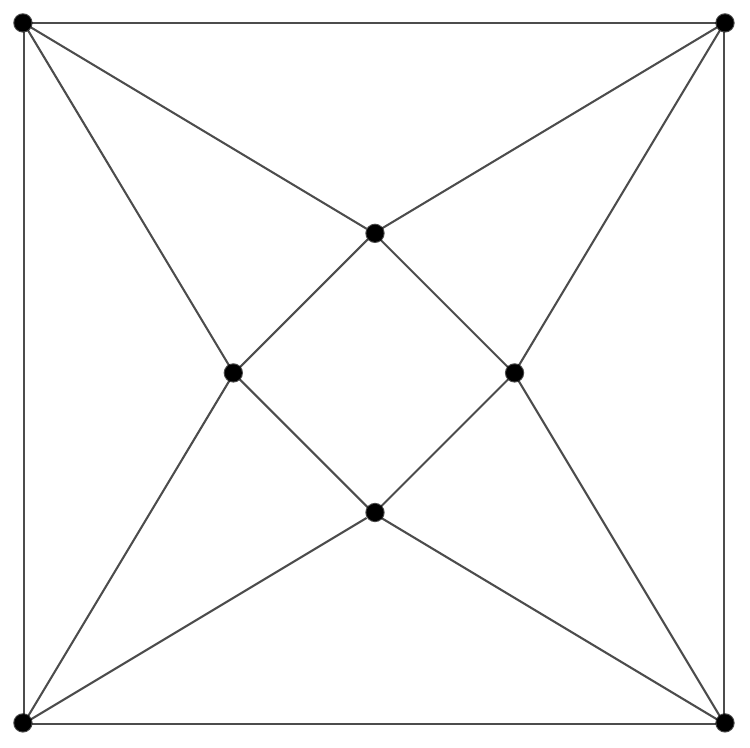}\\
$N=8$
\end{minipage}
\end{minipage}
\vspace{1em}

\begin{minipage}{\textwidth}
\begin{minipage}{.33\textwidth}
  \centering
\includegraphics[height=1.65in]{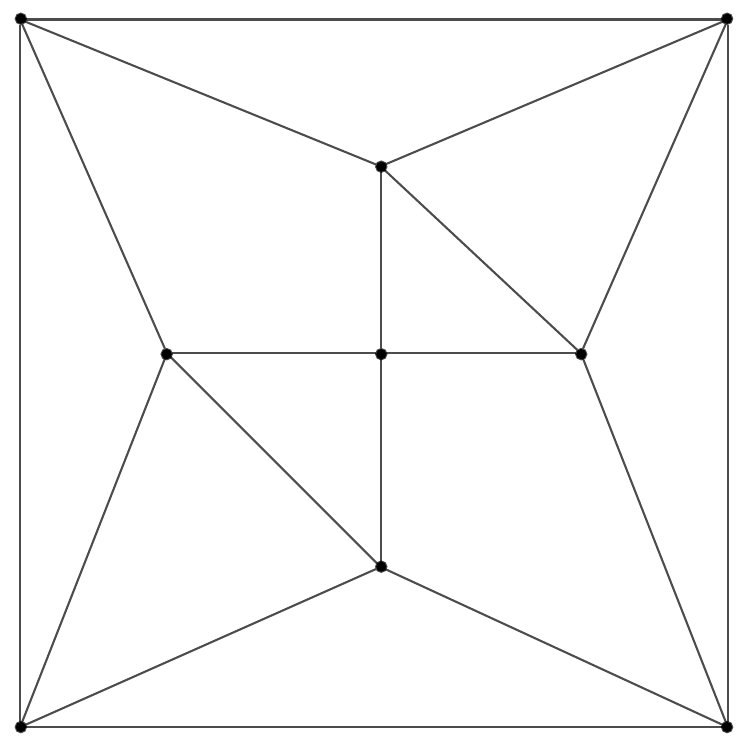}\\
$N=9$
\end{minipage}%
\begin{minipage}{.33\textwidth}
  \centering
\includegraphics[height=1.65in]{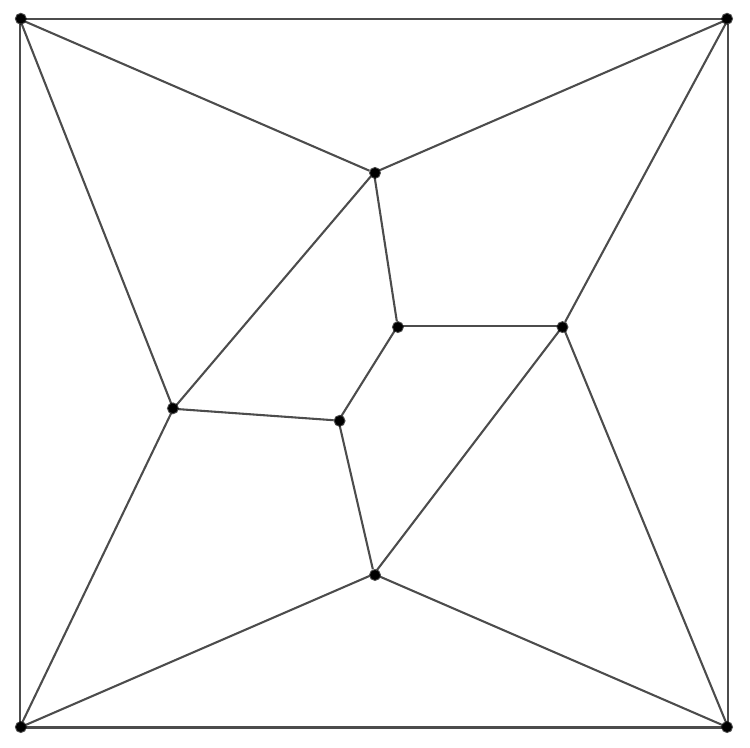}\\
$N=10$
\end{minipage}
\begin{minipage}{.33\textwidth}
  \centering
\includegraphics[height=1.65in]{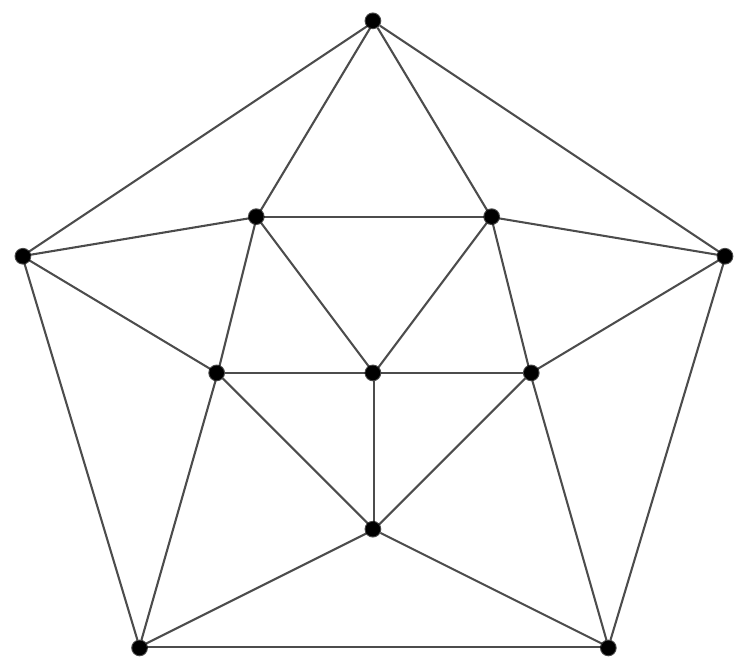}\\
$N=11$
\end{minipage}
\end{minipage}
\vspace{1em}

\begin{minipage}{.33\textwidth}
  \centering
\includegraphics[height=1.65in]{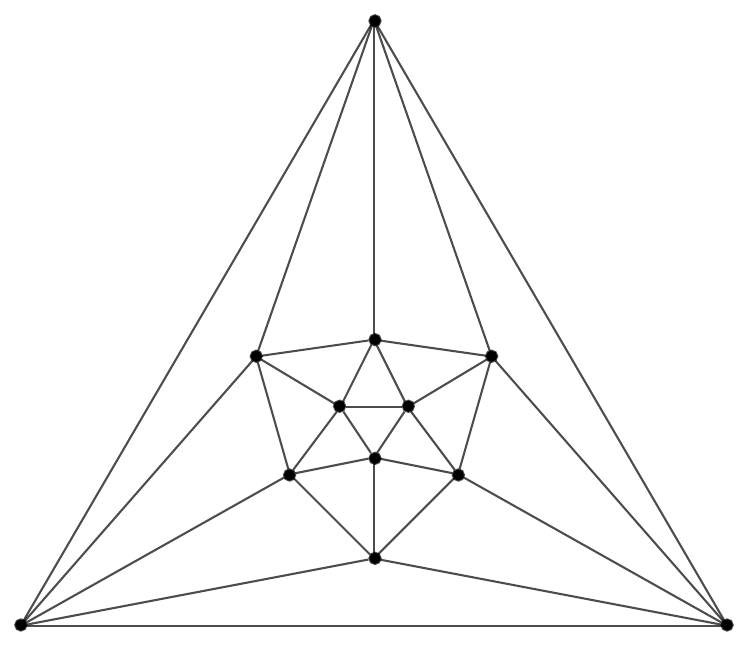}\\
$N=12$
\end{minipage}%
\begin{minipage}{.33\textwidth}
  \centering
\includegraphics[height=1.65in]{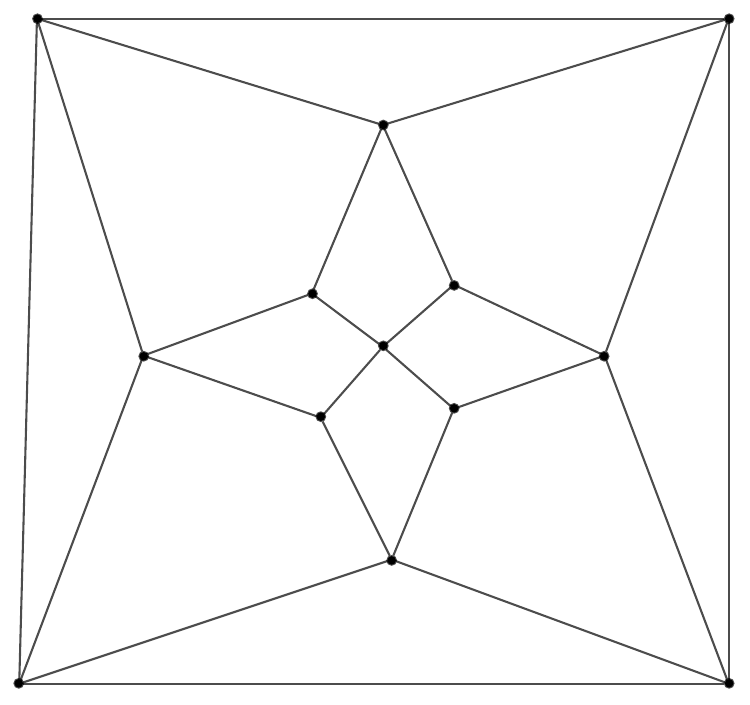}\\
$N=13$
\end{minipage}
\begin{minipage}{.33\textwidth}
  \centering
\includegraphics[height=1.65in]{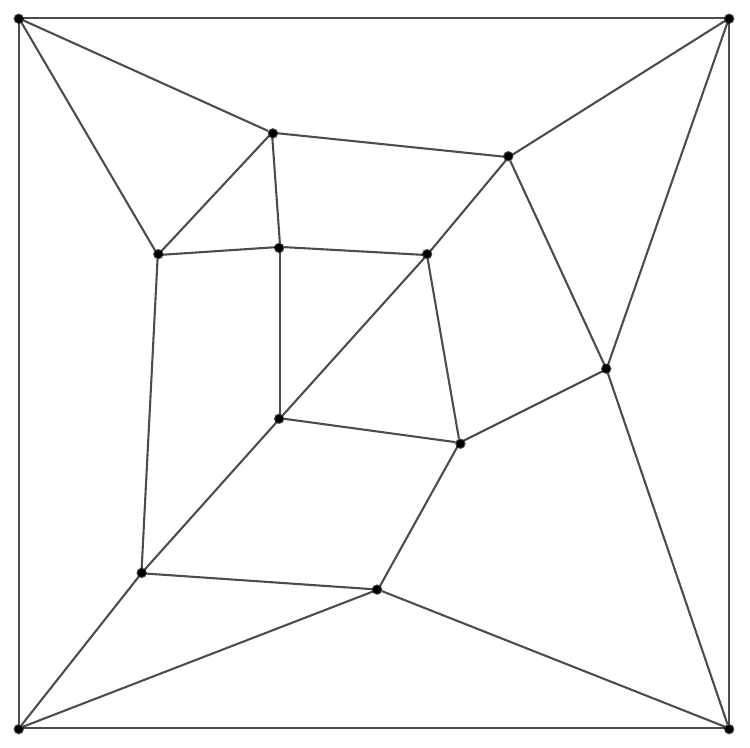}\\
$N=14$
\end{minipage}
\centering
\captionsetup{justification=centering}
   \caption{Optimal contact graphs associated to the Tammes configurations\newline for $N=3$ to $N=14$}
   \label{fig:contact}
\end{figure}

%***************************
% Subsection 3.4
%***************************
\subsection{The Tammes Problem Maximal Radius: General $N$}\label{sec:37}
The most basic question about the maximal radius in the Tammes problem concerns the distinctness of maximal values. The following conjecture was proposed by R. M. Robinson ~\cite[p. 297]{Robinson:1969}.

%***************************
%  Conjecture 3.4: Distinct radius conjecture
%***************************
\begin{conjecture}\label{conj:74a}
{\rm (Robinson (1969))} 
For all $N \ge 4$ the  maximal radius satisfies 
\[
r_{max}(N) < r_{max}(N-1)
\]
except possibly for $N = 6, 12, 24, 48, 60$ and $120$.
\end{conjecture}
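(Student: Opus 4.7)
The plan is to reduce the conjecture to an analysis of optimal contact graphs, building on the irreducibility framework developed by Danzer~\cite{Danz:1986} and extended by B\"or\"oczky--Szab\'o~\cite{BorS03} and Musin--Tarasov~\cite{MusT:2013}. The fundamental observation is: if an optimal $N$-point configuration $\mathcal{C}_N$ on $\SS^2$ (realizing $\theta_{max}(N)$) contains a point $p$ whose deletion yields an $(N-1)$-configuration $\mathcal{C}_N \setminus \{p\}$ that is \emph{not} itself Tammes-optimal, then $r_{max}(N-1) > r_{max}(N)$ follows immediately from the angular-diameter/radius bijection of Lemma~\ref{lemma:31}. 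So the task reduces to asking: for which $N$ does \emph{every} optimal $\mathcal{C}_N$ have the property that $\mathcal{C}_N \setminus \{p\}$ is optimal for every $p \in \mathcal{C}_N$?

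First I would formalize a \emph{vertex-removability} criterion on the contact graph $G(\mathcal{C}_N)$. Using the language of tensegrity frameworks on $\SS^2$, where contact edges are non-compressive struts fixed at length $\theta_{max}(N)$, a vertex $p$ is \emph{non-removable} precisely when the framework $G(\mathcal{C}_N) - p$ still admits a strictly positive equilibrium stress obstructing expansion. Non-removability of every vertex then forces $G(\mathcal{C}_N)$ to carry a globally supported positive self-stress that survives every single-vertex deletion, a very rigid constraint. I would attempt to show, using vertex-transitivity of the symmetry group acting on $G(\mathcal{C}_N)$ together with Euler-type inequalities on the faces of the contact graph, that this constraint forces $N$ to lie in Robinson's exceptional list $\{6,12,24,48,60,120\}$, which corresponds to the highly symmetric configurations (octahedron, icosahedron, snub cube, and three Archimedean-type configurations) where each vertex plays the same structural role.

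Second, for $N$ outside the exceptional list, I would exhibit a removable vertex in every optimal contact graph. For the range $3 \le N \le 14$ treated in the paper this can be verified directly from the optimal contact graphs displayed in Figure~\ref{fig:contact}: each such graph (for $N \ne 6,12$) contains at least one vertex of degree $\le 3$ whose local neighborhood manifestly admits an infinitesimal outward perturbation of its neighbors after deletion. For larger $N$, one would invoke the density-based estimates: combining a sharp form of Fejes T\'oth's inequality (Section~\ref{sec:26b}) for $N$ with the best known lower bounds for $r_{max}(N-1)$ from explicit constructions, one can separate the two values asymptotically for all $N$ large enough that neither is close to matching the Fejes T\'oth bound.

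The main obstacle is the classification step: proving that Robinson's list is not merely sufficient but \emph{exhaustive}. Without a universal structure theorem for optimal contact graphs, one needs either a delicate classification of vertex-transitive rigid spherical frameworks carrying nowhere-vanishing equilibrium stress, or an inductive argument leveraging the computer-assisted enumeration methodology of Musin--Tarasov~\cite{MusT:2013} beyond $N=14$. I would not expect a uniform closed-form argument; a realistic intermediate goal is to verify the conjecture for all $N \le N_0$ for moderately large $N_0$ and to establish the asymptotic inequality $r_{max}(N) < r_{max}(N-1)$ for all sufficiently large $N$, reducing the problem to a finite (but possibly still substantial) case analysis in the middle range.
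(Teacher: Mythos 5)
The statement you are trying to prove is an \emph{open conjecture} in the paper, attributed to R.~M.~Robinson (1969). The paper provides no proof; it only records that equality $r_{max}(N)=r_{max}(N-1)$ actually holds for $N=6$ and $N=12$, that Tarnai--G\'asp\'ar~\cite{TarnaiG:1991} settled $N=24$ with strict inequality, and that $N=48,60,120$ remain open with strict inequality expected. So there is no ``paper proof'' to compare against, and your text should be read as a research sketch rather than a proof attempt.

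Your opening observation is correct and is indeed the standard starting point: since deleting a point from an $N$-configuration can only increase the minimal separation, $r_{max}(N-1)\ge r_{max}(N)$ always holds, and if some optimal $\mathcal{C}_N$ has a vertex $p$ with $\mathcal{C}_N\setminus\{p\}$ not $(N-1)$-optimal then the inequality is strict. The genuine gap lies in everything after that. The claim that non-removability of every vertex forces the automorphism group of the contact graph to act vertex-transitively is asserted, not proved; there is no reason a priori why a configuration in which each vertex is locked by a locally supported positive stress must be vertex-transitive, and no mechanism is given for ruling out low-symmetry optimal contact graphs. Likewise, the step from ``vertex-transitive rigid spherical framework with nowhere-vanishing equilibrium stress'' to ``$N\in\{6,12,24,48,60,120\}$'' would itself require a classification theorem at least as hard as Robinson's conjecture. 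The Tarnai--G\'asp\'ar result is in fact a warning sign for your heuristic: the snub cube (the $N=24$ optimum) \emph{is} vertex-transitive, yet strict inequality holds, so vertex-transitivity is neither the right obstruction nor a reliable predictor of equality. Finally, the asymptotic argument comparing Fejes T\'oth's upper bound for $N$ against constructive lower bounds for $N-1$ has never been shown to separate consecutive values of $r_{max}$, and you yourself acknowledge that you expect only a finite verification plus an asymptotic estimate, not a complete argument. In short: this is a coherent research programme aimed at a known open problem, not a proof, and the key classification lemma on which the whole plan rests is missing.
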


One has $r_{max}(N) = r_{max}(N-1)$  for $N=6$ and $N=12$ by results already given above.  
In 1991 Tarnai and G\'{a}sp\'{a}r ~\cite{TarnaiG:1991} established that $r_{max}(24) < r_{max}(23)$.   
The remaining cases $N\!\!=\!48$, $60$ and $120$ are open, but since  strict inequality holds for $n=24$ we expect strict inequality to hold for these values too.  
However up to now it has been computationally difficult to determine $r_{max}(N)$ for such large $N$.

We turn  to a potentially easier question.  
The known exact values of $r_{max}(N)$ are  algebraic numbers, i.e. roots of some univariate polynomial  having integer coefficients.  
Table ~\ref{tab722} below presents algebraicity data for $3 \le N \le 14$. 

%***************************
%  Table 2: Algebraic Tammes
%***************************
 \begin{table}[h]
 \centering
\renewcommand{\arraystretch}{.85}
\begin{tabular}{|r|c|r|c|}
\hline &&&\\[-.7em]
\multicolumn{1}{|c|}{$N$} & \multicolumn{1}{c|}{$r_{max}(N)$} & \multicolumn{1}{c|}{Minimal Equation} & \multicolumn{1}{c|}{Figure}\\
[.3em]\hline&&&\\[-.7em]
3 & $3+ 2 \sqrt{3}$ & $X^2 -6X -3$ & Equilateral Triangle\\
~ & $\approx  6.4641$ &  ~ & ~ \\
[.3em]\hline&&&\\[-.7em]
4 &     $2 +\sqrt{6}$  &   $X^2-4X-2$  & Regular Tetrahedron\\
~ & $\approx 4.4495$ &     ~&  ~  \\
[.3em]\hline&&&\\[-.7em]
6 & $   1+ \sqrt{2}$ & $X^2 -2X-1$ & Regular Octahedron\\
~ &  $\approx 2.4142$ &     ~& ~ \\
[.3em]\hline&&&\\[-.7em]
7 &  $\approx 1.6913  $&$ X^6 - 6X^5- 3X^4 $ \quad\quad\quad\quad\quad& [No name]\\
~ &  ~ & $ + 8X^3 + 12X^2 + 6X +1$ ~& ~\quad\quad\\
[.3em]\hline&&&\\[-.7em]
8 & $ \approx 1.5496 $ & $X^4 - 8X^3 +4X^2 +8X+2$ & Square Antiprism\\
[.3em]\hline&&&\\[-.7em]
9 & $ \approx1.3660 $&$ 2X^2 - 2X - 1$ & [No name]  \\
[.3em]\hline&&&\\[-.7em]
10 & $ \approx 1.2013  $ &$ 4X^6 - 30X^5 +17X^4 $ \quad\quad\quad\quad& [No name]\\
~ &  ~& $+ 24X^3 -4X^2 - 6X - 1 $~&      \\
[.3em]\hline&&&\\[-.7em]
12& $  \frac{1}{\sqrt{\frac{5+ \sqrt{5}}{2}} -1}$ &  $X^4 - 6X^3 +X^2 +4X+1$ & Regular Icosahedron\\
~ & $\approx 1.10851$ & ~ & ~\\
[.3em]\hline&&&\\[-.7em]
24 & $ \approx  0.59262$ &   $X^6 -10X^5 +23X^4 $   \quad\quad\quad\quad & Snub Cube\\
~ & ~ & $+20X^3 -5X^2 -6X-1$ & ~\\
\hline
\end{tabular}
\vspace{.5em}
	\captionsetup{justification=centering}
\caption{The Tammes Problem radii given as algebraic numbers} 
\label{tab722}
\end{table}

\noindent We ask: {\em Is the optimal radius $r_{max}(N)$  an algebraic number for each $N \ge 3$?}
The reason to expect such an algebraicity result  to hold is that such a radius should be specified by at least one optimal graph that is {\em rigid}, i.e. it permits no local deformations preserving optimality, up to isometry.  
Danzer showed rigidity to be  the case for $7 \le N \le 12$.  
The equal length constraints of the edges of the contact graph give a system of polynomial equations with integer coefficients that the coordinates of the sphere centers must satisfy.  
One may expect that its real solution locus will  include some real algebraic solutions for the sphere centers, leading to algebraicity of the radius.  
Even if the rigidity result fails and deformations occur (as happens for $N=5$), it could still be the case that the optimal radius is algebraic. 

%%%%%%%%%%%%%%%%%%%%%%%%%%%%%%%%%%%%%%%%%%%%%%%%%%%%%%%%%%%%%%%%%%%%%%%%%%%%%%%%%
%%                                          %%                                                        %%                                                        %%                                                                         %%
%%   4 CONFIG-SPACES       %%                                                         %%                                                        %%                                                                      
%%                                          %%                                                         %%                                                         %%                                                                          %%
%%%%%%%%%%%%%%%%%%%%%%%%%%%%%%%%%%%%%%%%%%%%%%%%%%%%%%%%%%%%%%%%%%%%%%%%%%%%%%%%%%
\section{Configuration Spaces of $N$ Spheres Touching a Central Sphere}\label{sec:4}
We start with the classical {\em configuration space} $\con(N) := \con(\SS^2,N)$ of $N$ distinct labeled points on the unit $2$-sphere $\SS^2$.  
One may regard these as the points where $N$ surrounding spheres touch a central sphere.  
Note that $\con(N)$ is an open submanifold of the $N$-fold product $(\SS^2)^N$ of unit spheres.  
We will also consider the {\em reduced configuration space}
\[
\bcon(N) := \con(N) / SO(3),
\]
which divides out the space $\con(N)$ by the orientation-preserving isometry group $SO(3)$ of the unit $2$-sphere $\SS^2$ in $\RR^3$.  
The elements of $SO(3)$ move all configurations to isometric configurations, and these moves are permitted on any configuration.  
The space $\con(N)$  is a non-compact $(2N)$-dimensional manifold and the space $\bcon(N)$ is a non-compact $(2N-3)$-dimensional manifold.  
We assume $N \ge 3$ to avoid degenerate cases.

We denote a configuration $\bU:= (\bu_1, \bu_2, ..., \bu_N)$, where the  $\bu_j \in \SS^2$ are distinct points.  
The {\em angular distance} between points $\bu_1, \bu_2 \in \SS^2$ is the angle $\bu_1, 0, \bu_2$ subtended at the center of the unit sphere that the $N$ spheres all touch; 
its value is at most $\pi$.

%***************************
%  Definition 4.1
%***************************
\begin{definition}\label{def:41}
The {\em injectivity radius function} $\rho: \, \con(\SS^2, N) \to \RR_{+}$ assigns to a configuration $\bU:= (\bu_1, \bu_2, \dots, \bu_N) \in (\SS^2)^N$ the value 
\[
\rho(\bU) := \frac{1}{2} \big( \min_{i \ne j} \theta(\bu_i, \bu_j) \big),
\]
where $\theta(\bu_i, \bu_j)$ denotes the angular distance between $\bu_i$
and $\bu_j$.  In particular $0 < \rho(\bU) \le \frac{\pi}{2}.$  Since the function $\rho$ is invariant under the action of $SO(3)$,
it yields a well-defined function on $\bcon(N; \theta)$, which we also denote $\rho$.
\end{definition}

Our main topic in this section is the study of spaces which are {\em superlevel sets} for the injectivity radius function $\ir$, and how these change at configurations which are critical for maximizing $\ir$. 

%***************************
%  Definition 4.2
%***************************
\begin{definition}\label{def:42}
We define the  {\em (constrained) angular configuration spaces}
\[
\con(N; \theta) = \con( \SS^2, N ; \theta) := \{ \bU= (\bu_1, ..., \bu_N) : \,  \theta(\bu_i, \bu_j) \ge \theta \, \,\mbox{for} \,\, 1 \le i < j \le N \}
\]
for angles $0< \theta \le \pi$, whose points label configurations of $N$ distinct marked labeled points $\bu_i$ which are pairwise at {\em angular distance} at least $\theta$ from each other.
Equivalently
\[
\con(N; \theta) :=  \{ \bU= (\bu_1, ..., \bu_N) :  \, \rho(\bU) \ge \frac{\theta}{2} \}.
\]
The {\em reduced (constrained) angular configuration spaces} $\bcon(N; \theta)$ are
\[
\bcon(N; \theta) := \con(N; \theta) / SO(3).
\]
This space is well-defined since rotations preserve angular distance.
\end{definition}

The spaces $\con(N; \theta)$ and $\bcon(N; \theta)$ are compact topological spaces.
Away from critical values $\theta$ the spaces $\con(N; \theta)$ are closed manifolds with boundary; at critical points they need not be manifolds.
The descriptions as superlevel sets show that the spaces $\con(N; \theta)$ are ordered by set inclusion as decreasing functions of $\theta$.  
If $\theta_1 > \theta_2$ then we have
\[
\con(N; \theta_1) \subset \con(N; \theta_2) \subset \con(N).
\]
We have similar inclusions for $\bcon(N; \theta)$.
{The interiors of these spaces are
\[
\con^{+}(N; \theta) :=   \{ \bU= (\bu_1, ..., \bu_N) :  \, \rho(\bU) > \frac{\theta}{2} \} 
\]
and
\[
\bcon^{+}(N; \theta):= \con^{+}(N; \theta)/ SO(3), 
\]
respectively.  
They are open manifolds for all values of the parameter $\theta$.}

The (constrained) angular configuration space $\con(N; \theta)$ can be reparametrized as the {\em (constrained) radial configuration space}
\[
\con(N)[r] := \con(\SS^2,N)[r]
\]
which consists of $N$ marked labeled spheres of equal radius $r= r(\theta)$ in $\RR^3$ which all touch a given unit radius central sphere $\SS^2$, with the $N$ touching points being the labeled points of the configuration.  
The radius $r(\theta)$ is determined by the condition that the spherical cap on the central sphere $\SS^2$ obtained by radial projection of all the points of the given touching sphere has angular diameter exactly $\theta$.
A configuration belongs to $\con(N; \theta)$ exactly when the $N$ spheres of radius $r(\theta)$ have disjoint interiors.  
There is a maximal angle $\theta_{max}:= \theta_{max}(N)$ for which $N$ spherical caps of that angular measure can fit on the surface of $\SS^2$ without overlap of their interiors.

Recall from the proof of Lemma ~\ref{lemma:31} that the value $r= r(\theta)$ is implicitly given by the equation
\[
\sin \frac{\theta}{2} = \frac{r}{1+r}.
\]
For $N \ge 3$ the function $r(\theta)$ is monotone increasing in $\theta$ up to a maximal value $r_{max}(\theta)$, so we may use either $r$ or $\theta$ to parametrize the family of all spaces $\con( N; \theta)$ or $\con(N)[ r]$.  
Note that we can identify the configuration spaces $\con(N)$ with $\con(N; 0)$, as well as with $\cup_{\theta>0}\con(N; \theta)$ and $\cup_{r>0}\con(N)[r]$.

In the following subsections we review the topology and geometry of the spaces $\con(N; \theta)$ and $\bcon(N; \theta)$:

\begin{itemize}
\item
In Section ~\ref{sec:41} we describe results on the homotopy type and cohomology of the configuration spaces $\con(N)$ and reduced configuration spaces $\bcon(N)$, which are well studied.

\item
In Section ~\ref{sec:42} we use ideas from Morse theory, applied to the injectivity radius function $\ir$, to study general features of the change in topology for fixed $N$ as the angular parameter $\theta$ (or radius parameter $r$) is increased. 
The topology changes at certain {\em critical values} of $\theta$.  
Since the injectivity radius function $\ir$ is semi-algebraic, for each $N$ we expect there to be a finite set of critical values of $\ir$ on $\bcon(N)$.  
We give a balancing criterion for a configuration in $\bcon(N)$ to be critical. 

\item
In Section ~\ref{sec:43} we show that for small enough $\theta$, the angular configuration space $\con(N; \theta)$ has the same homotopy type as $\con(N)$ and hence the same cohomology.

\item
In Section ~\ref{sec:43a} we treat large $\theta$ near $\theta_{max}$, in terms of the radius parameter $r$.
For larger angular diameter $\theta$, the topology of $\con(N; \theta)$ may differ drastically from that of $\con(N)$.  
For example, in Table ~\ref{tab711} for the Tammes problem many of the maximal configurations are isolated, and the associated labeled spheres cannot be continuously interchanged for large $\theta$ near $\theta_{max}(N)$.  
In such situations, the space $\con(N; \theta)$ is disconnected for large $\theta$, while the space $\con(N)$ is connected.
We conjecture  that near $\theta_{max}(N)$ the cohomology of $\bcon(N)[r]$ is concentrated in dimension $0$ and discuss the associated Betti number.

\item
{In Section ~\ref{sec:46} we show by examples that the set of critical configurations at a critical  value can have many connected components and can have variable dimension.}

\item
In Section ~\ref{sec:47} we treat the case of topology change as $\theta$ varies for the simplest case $N=4$.
We determine all the critical values for the injectivity radius function $\ir$ on $\bcon(N)$.

\item
In Section ~\ref{sec:48} we briefly discuss  topology change in cases $N \ge 5$.
The study of properties of the  $N=12$ case is deferred to Sections ~\ref{sec:5} and ~\ref{sec:6}.
\end{itemize}

%***************************
%  Subsection 4.1
%***************************
\subsection{Topology of Configuration Spaces}\label{sec:41}

Configuration spaces $\con(\XX, N)$ of $N$ distinct, labeled points on a $d$-dimensional manifold $\XX$ have been studied as fundamental spaces in topology.  
Recall that the {\em configuration space}  of labeled $N$-tuples on a manifold $\XX$ is
\[
\con(\XX, N) :=\{ (x_1, x_2, ..., x_N) \in \XX^N :\, x_i \ne x_j  \, \textrm{{\mbox if}} \,\, i\ne j\}.
\]
The symmetric group $\Sigma_N$ acts freely on the space $\con(\XX, N)$ to permute the points, and
\[
B(\XX, N) := \con(\XX, N)/\Sigma_N
\]
is the configuration space of {\em unlabeled} (i.e. {unordered}) $N$-tuples of points on $\XX$. 
Configuration spaces of this type were first considered in the 1960's by Fadell and Neuwirth ~\cite{FadellN:1962, Fadell:1962}. 
The state of the art for $\XX =\RR^d$ and $\SS^d$ as of 2000 is given in Fadell and Husseini ~\cite{FadellH:2001}.
Other useful references are Totaro ~\cite{Totaro:1996} and Cohen ~\cite{Cohen:2010}.

We begin with the most well-known of these spaces, the configuration space $\con(\RR^2, N)$ of labeled $N$-tuples of points on  $\XX = \RR^2$, the plane. 
Fadell and Neuwirth ~\cite{FadellN:1962} showed that the unlabeled configuration space $B(\RR^2, N)$ is a classifying space (Eilenberg-Maclane space ~$K(\pi, 1)$), with fundamental group $\pi_1=\pi$ isomorphic to Artin's {\em braid group} $B_N$ on $N$ strings. 
Thus the cohomology of $B(\RR^2, N)$ is just the cohomology of $B_N$; it was computed by Fuks ~\cite{Fuks:1970} and Cohen ~\cite{Cohen:1988}.  
 
 The labeled configuration space $\con(\RR^2, N)$ is by definition the complement of a finite set of complex hyperplanes given by $x_i =x_j$ for $i \ne j$ in $\CC^N \cong (\RR^2)^N$.  
 This arrangement is sometimes called the (complexified) $A_{N-1}$-arrangement of hyperplanes (see Postnikov and Stanley ~\cite{PostnikovS:2000}), where $A_{N-1}$ refers to a Coxeter group.  
 The space $\con(\RR^2, N)$ is also a classifying space with fundamental group equal to the {\em pure braid group} $P\kern-.2emB_N$, the subgroup of $B_N$ consisting of all $N$-strand braids which induce the identity permutation.  
 It sits in a short exact sequence $0 \to P\kern-.2emB_N \to B_N \to \Sigma_N \to 0$.  The rational cohomology of $\con(\RR^2, N)$ is then the cohomology of the pure braid group; the cohomology ring structure of $P\kern-.2emB_N$ was determined by Arnold ~\cite{Arnold:1969} in 1969.

The {\em Betti numbers} of a topological space are the ranks of its homology groups (which equal the ranks of its cohomology groups,
with  coefficients in a field, here $\QQ$ or $\CC$.)   
The generating function for this sequence of ranks is called the {\em Poincar\'e polynomial}.  
Arnold ~\cite[Corollary ~2]{Arnold:1969} determined the Poincar\'{e} polynomial for the pure braid group $P\kern-.2emB_N$ on $N$ strands to be 
\begin{equation}\label{braid-poly}
P_N(t) = (1+t) (1+ 2t) \cdots (1+ (N-1) t).
\end{equation}
Table ~\ref{table3-braid} gives these Betti numbers for small $N$.
They are of combinatorial interest, being unsigned Stirling numbers of the first kind,
\[
\dim H^k(P\kern-.2emB_N, \QQ) = \big{[} {{N}\atop{N-k}} \big{]}, \quad \mbox{ for} \quad 0 \le k \le N-1
\]
(see ~\cite[Sect. 6.1]{GKP:1994}).

%***************************
%  Table 3: Cohomology PB
%***************************
\begin{table}[h]
\renewcommand{\arraystretch}{0.8}
\begin{center}
	\begin{tabular}{| c | r |r | r| r | r |r |r|r|r|r|r|}
	\hline
	 \backslashbox{$N$} 
	 {\raisebox{-1mm}{$k$}} &  $0$ & $1$ & $2$ & $3$ & $4$ & $5$ & $6$  & $7$ & $8$ \\ 
	\hline &&&&&&&&&\\[-.7em]  
	$1$ & $1$ &$0$ & $0$ & $0$ & $0$ & $0$ & $0$ & $0$ & $0$  \\
        $2$ &$1$ & $1$ & $0$ & $0$ & $0$ & $0$ & $0$ & $0$ & $0$  \\
        $3$ & $1$ &$3$ & $2$ & $0$ & $0$ & $0$ & $0$ & $0$ & $0$  \\
	$4$ & $1$ &$6$ &$11$ & $6$ & $0$ & $0$ & $0$ & $0$ & $0$  \\
	$5$ & $1$ &$10$ &$35$ & $50$ & $24$ & $0$ & $0$    & $0$ & $0$  \\
	$6$ & $1$ &$15$ &$85$ & $225$ & $274$ & $120$ & $0$        & $0$ & $0$ \\
	$7$ & $1$ &$21$ &$175$ & $735$ & $1624$ & $1764$  & $720$ & $0$ & $0$ \\
        $8$ & $1$ &$28$ &$322$ & $1960$ & $6769$ & $13132$   & $13068$ & $5040$ & $0$  \\
        $9$ & $1$ &$36$ &$546$ & $4536$ & $22449$ & $67284$ & $118124$   & $109584$ & $40320$\\
	\hline
	\end{tabular}
	\vspace{.5em}
		\captionsetup{justification=centering}
	\caption{Betti numbers of pure braid group cohomology\newline $H^k(P\kern-.2emB_N, \QQ) \cong H^k(\bcon(\RR^2, N), \QQ)$} 
	\label{table3-braid}
\end{center}
\end{table}

Our interest here is  configuration spaces on 
\[
\XX= \SS^2= \{\bu = (x, y, z): \bu \cdot \bu = x^2+y^2+z^2=1\},
\]
 the unit $2$-sphere  embedded in $\RR^3$.
The  configuration spaces of $\SS^2$ have a close relationship to those of $\RR^2$, since $\SS^2$ is the one-point compactification of $\RR^2$.
Note also that $\SS^2$ is homeomorphic to $\mathbb{CP}^1$, the complex projective line. 
Tautologically the configuration space $\con(\SS^2, 1)\cong\SS^2$; and the space $\con(\SS^2, 2)$ is homeomorphic to an $\RR^2$-bundle over $\SS^2$, hence homotopy equivalent to $\SS^2$ (see ~\cite[Example 2.4]{Cohen:2010}).  
For $N \ge 3$ points we have a well-known result, given as follows in Feichtner and Ziegler ~\cite[Theorem 1]{FZ:2000}. 

%***************************
%  Theorem 4.3
%***************************
\begin{theorem}\label{thm:411}
For $N \ge 3$ the configuration space $\con(\SS^2, N)$ of $N$ distinct labeled points on the $2$-sphere is the total space of a trivial $PSL(2, \CC)$-bundle over $\sM_{0,N}$, the moduli space of conformal structures on the $N$-punctured complex projective line, modulo conformal automorphisms.
Hence there is a homeomorphism
\[
\con(\SS^2, N) \cong PSL(2, \CC) \times \sM_{0,N}.
\]
\end{theorem}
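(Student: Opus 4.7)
The plan is to exploit the identification $\SS^2 \cong \mathbb{CP}^1$, whose orientation-preserving conformal automorphism group is $PSL(2,\CC)$, acting by Möbius transformations. The crucial algebraic fact is that this action is \emph{sharply $3$-transitive}: given any two ordered triples of distinct points in $\mathbb{CP}^1$, there is a unique Möbius transformation carrying one to the other. In particular, the induced diagonal action of $PSL(2,\CC)$ on $\con(\SS^2,N)$ is free whenever $N \ge 3$, since any element fixing three distinct points must be the identity.

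First I would take $\sM_{0,N}$ to be the quotient $\con(\mathbb{CP}^1, N)/PSL(2,\CC)$ and realize it concretely as the slice of configurations whose first three points are $(0, 1, \infty)$, namely
\[
\sM_{0,N} \;\cong\; \bigl\{\,(0, 1, \infty, v_4, \ldots, v_N) \in \con(\mathbb{CP}^1,N)\,\bigr\} \;\cong\; \con(\mathbb{CP}^1 \setminus \{0,1,\infty\},\, N-3).
\]
Sharp $3$-transitivity shows every $PSL(2,\CC)$-orbit meets this slice in exactly one point, so the inclusion of the slice is a continuous section of the quotient map $\con(\SS^2,N) \to \sM_{0,N}$.

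Next I would define the trivialization $\Phi \colon \con(\SS^2, N) \to PSL(2,\CC) \times \sM_{0,N}$ by
\[
\Phi(\bu_1, \ldots, \bu_N) \;=\; \bigl(g_{\bU}^{-1}, \; (0, 1, \infty, g_{\bU}(\bu_4), \ldots, g_{\bU}(\bu_N))\bigr),
\]
where $g_{\bU}$ is the unique Möbius transformation sending $(\bu_1, \bu_2, \bu_3)$ to $(0, 1, \infty)$, given explicitly by the cross-ratio formula
\[
g_{\bU}(w) \;=\; \frac{(w - \bu_1)(\bu_2 - \bu_3)}{(w - \bu_3)(\bu_2 - \bu_1)}.
\]
The inverse map sends $(g,\,(0,1,\infty, v_4, \ldots, v_N))$ to $(g(0), g(1), g(\infty), g(v_4), \ldots, g(v_N))$. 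Bijectivity is immediate from sharp $3$-transitivity, and both $\Phi$ and $\Phi^{-1}$ are continuous (indeed real-analytic) wherever they are defined. This exhibits $\con(\SS^2,N) \to \sM_{0,N}$ as a trivial principal $PSL(2,\CC)$-bundle and gives the claimed homeomorphism.

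The one mild subtlety is verifying continuity of the assignment $\bU \mapsto g_{\bU}$ uniformly on $\con(\SS^2,N)$, including at configurations where some $\bu_i$ equals $\infty$. This is handled by working with Möbius transformations projectively in $PGL(2,\CC) \cong PSL(2,\CC)$: one lifts the cross-ratio formula to homogeneous coordinates, where the matrix entries become polynomial in the projective coordinates of the $\bu_i$ and the simultaneous vanishing condition on the coefficients is equivalent to a collinearity that is precisely excluded by the configuration-space requirement $\bu_i \ne \bu_j$. There is no deep obstacle; the substance of the theorem is the sharp $3$-transitivity of $PSL(2,\CC)$ on $\mathbb{CP}^1$, and the bundle is trivial precisely because this transitivity produces a canonical global section.
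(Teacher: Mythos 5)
Your proof is correct. The paper itself does not prove this statement — it cites it as \cite[Theorem 1]{FZ:2000} (Feichtner--Ziegler) — but your argument via sharp $3$-transitivity of $PSL(2,\CC)$ on $\mathbb{CP}^1$ and the explicit cross-ratio section is the standard proof and matches what that reference does. One small wording nit: in your final paragraph, the degeneracy condition for the lifted $2\times 2$ matrix is that two of the three marked points coincide (its determinant is a nonzero multiple of $(\bu_1-\bu_2)(\bu_2-\bu_3)(\bu_1-\bu_3)$), not a ``collinearity''; with that phrasing fixed, the continuity argument in homogeneous coordinates is fine.
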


\noindent
Note that $PSL(2, \CC)$ is homotopy equivalent to its maximal compact subgroup $SO(3)$, the group of orientation preserving isometries of $\SS^2$.

The $SO(3)$-action on $\con(\SS^2,N)$ permits us to rotate the first point to the north pole, from which we stereographically project the rest of the unit sphere to the plane $\RR^2$.
Since we are still free to rotate about the north pole, which corresponds to $SO(2)$ rotations in the plane, we can identify  $\con(\RR^2, N-1)/SO(2)$ with the reduced $N$-configuration space $\bcon(\SS^2,N)$.  
The action of $SO(2)$ on $\con(\RR^2,N-1)$ is free if $N\ge3$, so we can regard $\con(\RR^2,N-1)$ as a principal $SO(2)$-bundle over $\bcon(\SS^2,N)$. 

This principal bundle has a section, and thus is a product bundle, so the Poincar\'{e} polynomial  $\tilde{P}_N(t)$ of the base $\bcon(\SS^2,N)$ may be computed as the quotient of the well-known Poincar\'e polynomials $P(t)=P_{N-1}(t)=\big(1+t\big)\big( 1+ 2t\big) \cdots\big(1+(N-2)t\big)$ for $\con(\RR^2, N-1)$ from Equation \eqref{braid-poly} and $p(t)=(1+t)$ for $SO(2)$.  
It follows that $\bcon(\SS^2,N)$ and $\cM_{0,N}$ both have Poincar\'e polynomial
\begin{equation}\label{eq4102}
\tilde{P}_N(t)=P(t)/p(t)=\big(1+2t\big)\cdots\big(1+(N-2)t\big).
\end{equation}

\noindent We give the first few Betti numbers in the following table:
\vspace{.6em}
%***************************
%  Table 4: Cohomology Conf
%***************************
\begin{table}[h]
\renewcommand{\arraystretch}{0.9}
\begin{center}
	\begin{tabular}{| c | r |r | r| r | r |r |r|r|r|r|r|}
	\hline
	\backslashbox{\raisebox{.5mm}{$N$}}{\raisebox{-1mm}{$k$}}  &  $0$ & $1$ & $2$ & $3$ & $4$ & $5$ & $6$  & $7$ & $8$ &$9$\\ 
	\hline &&&&&&&&&&\\[-.7em]
	$3$ & $1$ &$0$ & $0$ & $0$ & $0$ & $0$ & $0$ & $0$ & $0$ & $0$\\
	$4$ & $1$ &$2$ &$0$ & $0$ & $0$ & $0$ & $0$ & $0$ &  $0$ & $0$\\
	$5$ & $1$ &$5$ &$6$ & $0$ & $0$ & $0$ & $0$ & $0$ & $0$  & $0$\\
	$6$ & $1$ &$9$ &$26$ & $24$ & $0$ & $0$ & $0$  & $0$ & $0$ & $0$\\
	$7$ & $1$ &$14$ &$71$ & $154$ & $120$ & $0$  & $0$ & $0$ & $0$ & $0$\\
	$8$ & $1$ &$20$ &$155$ & $580$ & $1044$ & $720$   & $0$ & $0$ & $0$ & $0$\\
        $9$ & $1$ &$27$ &$295$ & $1665$ & $5104$ & $8028$   & $5040$ & $0$ & $0$ & $0$\\
        $10$ & $1$ &$35$ &$511$ & $4025$ & $18424$ & $48860$    & $69264$ & $40320$ & $0$ & $0$\\
        $11$ & $1$ & $44$ & $826$ & $8624$ & $54649$ & $214676$ & $509004$ & $663696$ & $362880$ & $0$\\
	$12$ & $1$ & $54$ & $1266$ & $16884$ & $140889$ & $761166$ & $2655764$ & $5753736$ & $6999840$ & $3628800$\\
	\hline
	\end{tabular}
	\vspace{.5em}
	\captionsetup{justification=centering}
	\caption{Betti numbers for  reduced configuration space cohomology $H^k(\bcon(\SS^2, N); \QQ)$}
	\label{table4}
\end{center}
\end{table}

\noindent 
By taking the alternating sum of each row, or more directly by evaluating $P_N(-1)$, we can compute the Euler characteristic
\begin{equation}
\chi(\bcon(N))=(-1)^{N-3} (N-3)!
\end{equation}
of $\bcon(N)$ for $N \ge 3$.

We also have ~\cite[Proposition 2.3]{FZ:2000}: 

%***************************
%  Theorem 4.4
%***************************
\begin{theorem}\label{thm:412} {\rm (Feichtner-Ziegler (2000))}
For $N \ge 3$ the moduli space $\sM_{0,N}$ is homotopy equivalent to the complement of the affine complex braid arrangement of hyperplanes $\sM( {}^{\textrm{aff}}\sA_{N-2}^{\CC})$ of rank $N-2$, since
\[
\sM_{0,N} \times \CC \simeq \sM( {}^{\textrm{aff}}\sA_{N-2}^{\CC}).
\]
Its integer cohomology algebra is torsion-free. 
It is generated by $1$-dimensional classes $e_{i,j}$ with $1 \le i < j \le N-1$ with $(i, j) \ne (1, 2)$ and has a presentation as an exterior algebra
\[
H^{\ast} (\sM({}^{\textrm{aff}}\sA_{N-2}^{\CC}))\cong \Lambda^{*} \ZZ^{{{N-1}\choose{2}} -1}/ \sI,
\]
where the ideal $\sI$ is generated by elements
\[ 
e_{1, i} \wedge e_{2, i}, \quad 2<i \le N-1
\]
and
\[ 
e_{i, \ell} \wedge e_{j, \ell} - e_{i, j} \wedge e_{j, \ell} + e_{i,j} \wedge e_{i, \ell} \quad 1 \le i < j < \ell \le N-1, \, (i, j) \ne (1,2).
\]
\end{theorem}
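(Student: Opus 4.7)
The plan is to deduce both the homotopy equivalence and the cohomology ring structure in parallel, by realizing $\sM_{0,N}$ (up to a contractible factor) as the complement of a specific complex hyperplane arrangement and then invoking the Orlik--Solomon theorem together with Arnold's explicit computation for the braid arrangement.

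First, by Theorem ~\ref{thm:411}, $\con(\SS^2, N) \cong PSL(2,\CC) \times \sM_{0,N}$. Since $PSL(2,\CC)$ acts $3$-transitively on $\CC \PP^1 \cong \SS^2$, I would normalize by fixing one marked point at $\infty$; the remaining stabilizer is the affine group $\text{Aff}(\CC) = \CC \rtimes \CC^*$. This identifies $\sM_{0,N}$ with the quotient $\con(\CC, N-1)/\text{Aff}(\CC)$, and $\con(\CC,N-1)$ is exactly the complement $\sM(\sA_{N-1}^{\CC})$ of the complexified braid arrangement in $\CC^{N-1}$, whose rank is $N-2$ (with a diagonal $\CC$ as the rank deficiency). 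Essentializing the arrangement by modding out the translation subgroup $\CC \subset \text{Aff}(\CC)$ yields the rank-$(N-2)$ affine braid arrangement, whose complement is $\sM({}^{\textrm{aff}}\sA_{N-2}^{\CC})$. Since the residual $\CC^*$-bundle $\sM({}^{\textrm{aff}}\sA_{N-2}^{\CC}) \to \sM_{0,N}$ becomes trivializable after crossing with a contractible $\CC$ (replacing the multiplicative $\CC^*$-action with its additive Lie algebra), one obtains the asserted equivalence $\sM_{0,N} \times \CC \simeq \sM({}^{\textrm{aff}}\sA_{N-2}^{\CC})$.

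Next, I would invoke the Orlik--Solomon theorem: for any complex hyperplane arrangement $\sA$, the integer cohomology of the complement $\sM(\sA)$ is torsion-free and isomorphic as a graded $\ZZ$-algebra to the Orlik--Solomon algebra, generated in degree~$1$ by classes $e_H$ dual to each hyperplane and cut by relations encoding the matroid of linear dependencies. Applied to the braid arrangement $\sA_{N-1}^{\CC}$ via Arnold's computation, $H^{\ast}(\con(\CC, N-1); \ZZ)$ is the exterior algebra on generators $e_{i,j}$ for $1 \le i < j \le N-1$ modulo the triple relations
\[
e_{i,\ell}\wedge e_{j,\ell} - e_{i,j}\wedge e_{j,\ell} + e_{i,j}\wedge e_{i,\ell} = 0, \quad i<j<\ell.
\]
Passing to the essentialized arrangement corresponds algebraically to killing one degree-$1$ generator, naturally $e_{1,2}$ after fixing the first two points to normalize translations. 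Substituting $e_{1,2}=0$ into the Arnold relation for the triple $(1,2,i)$ with $i > 2$ collapses it to $e_{1,i}\wedge e_{2,i} = 0$, giving the first family of generators of $\sI$, while the remaining Arnold relations with $(i,j)\ne(1,2)$ produce the second family. Torsion-freeness is inherited from the Orlik--Solomon structure since setting a single degree-one generator in a free exterior-type algebra to zero preserves torsion-freeness.

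The main obstacle is verifying that the ideal $\sI$ with just the two listed families is \emph{complete}, i.e. no further relations are needed. I would check this dimensionally by comparing Poincar\'{e} polynomials. Arnold's formula gives $P(\con(\CC, N-1))(t) = (1+t)(1+2t)\cdots(1+(N-2)t)$, and essentialization should divide out the factor $(1+t)$ associated to the killed generator $e_{1,2}$, yielding the target Poincar\'{e} polynomial $\tilde{P}_N(t) = (1+2t)\cdots(1+(N-2)t)$ of Equation~\eqref{eq4102}, which matches $H^{\ast}(\sM_{0,N})$ (equivalently of $\sM_{0,N}\times\CC$). A secondary obstacle is the precise construction of the trivialization required to promote the $\CC^*$-bundle to a product with $\CC$; I would handle this via the contractibility of the translation subgroup of $\text{Aff}(\CC)$ together with standard deformation-retraction arguments for principal bundles over the CW base $\sM_{0,N}$.
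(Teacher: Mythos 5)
The paper does not prove Theorem~\ref{thm:412}; it simply cites \cite[Proposition 2.3]{FZ:2000}. So there is no in-paper argument to compare against, and your proposal should be judged on its own merits. It reaches the correct conclusion, but the geometric half contains a genuine conceptual error.

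The error is the sentence ``Essentializing the arrangement by modding out the translation subgroup $\CC \subset \mathrm{Aff}(\CC)$ yields the rank-$(N-2)$ affine braid arrangement, whose complement is $\sM({}^{\textrm{aff}}\sA_{N-2}^{\CC})$.'' Essentialization of the central braid arrangement --- quotienting $\CC^{N-1}$ by the diagonal $\CC$ of translations --- produces a \emph{central} arrangement in $\CC^{N-2}$, not the \emph{affine} arrangement ${}^{\textrm{aff}}\sA_{N-2}^{\CC}$, and the two have different topology: the essential complement is homotopy equivalent to $\con(\CC,N-1)$ itself, with Poincar\'e polynomial $(1+t)(1+2t)\cdots(1+(N-2)t)$, whereas $\sM({}^{\textrm{aff}}\sA_{N-2}^{\CC})$ has Poincar\'e polynomial $(1+2t)\cdots(1+(N-2)t)$ --- one fewer $(1+t)$ factor, one fewer class in $H^1$. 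The operation that actually produces ${}^{\textrm{aff}}\sA_{N-2}^{\CC}$ is the \emph{decone}: intersect with the affine slice $z_2-z_1=1$, which is transversal to the $\CC^*$-scaling direction, not the translation direction. Concretely $\con(\CC,N-1)\cong\CC^*\times\sM({}^{\textrm{aff}}\sA_{N-2}^{\CC})$ via $(z_1,\dots,z_{N-1})\mapsto\bigl(z_2-z_1,\ \tfrac{1}{z_2-z_1}(z_1,\dots,z_{N-1})\bigr)$, and the residual translation $\CC$-action on the slice (which preserves $z_2-z_1=1$) is free with global section $z_1=0$, giving $\sM({}^{\textrm{aff}}\sA_{N-2}^{\CC})\cong\CC\times\sM_{0,N}$ outright as a homeomorphism. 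Your last paragraph about ``replacing the multiplicative $\CC^*$-action with its additive Lie algebra'' is not a legitimate operation on fiber bundles and should be removed; once the two $\CC$- and $\CC^*$-quotients are kept straight, no trivialization lemma is needed.

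With the geometry corrected, your algebraic half is essentially right and can be made rigorous. From $\con(\CC,N-1)\cong\CC^*\times\sM({}^{\textrm{aff}}\sA_{N-2}^{\CC})$, the $\CC^*$ factor is detected by $e_{1,2}$ (the class of $d\log(z_2-z_1)$), so K\"unneth gives $H^*(\con(\CC,N-1);\ZZ)\cong\Lambda[e_{1,2}]\otimes H^*(\sM({}^{\textrm{aff}}\sA_{N-2}^{\CC});\ZZ)$, and $H^*(\sM({}^{\textrm{aff}}\sA_{N-2}^{\CC}))$ is precisely the quotient of Arnold's presentation by the ideal $(e_{1,2})$; your observation that the triple relation for $(1,2,i)$ then collapses to $e_{1,i}\wedge e_{2,i}=0$ gives the first family of generators of $\sI$, the remaining triples give the second, torsion-freeness is inherited from Orlik--Solomon for the affine arrangement, and the Poincar\'e-polynomial count confirms that $\sI$ is complete.
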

\noindent
Here the complexified $A_{N-2}$-arrangement of hyperplanes $\sA_{N-2}^{\CC}$ of rank $N-2$ is cut out by the hyperplanes
\[
z_i - z_j =0 \quad 1\le i<j \le N-1.
\]
Its complement $\sM(\sA_{N-2}^{\CC} ) := \CC^{N-1} \smallsetminus \bigcup \sA_{N-2}^{\CC}$ is homeomorphic to $\con(\CC, N-1)$. The associated affine arrangement is:
\[
{}^{\textrm{\it{aff}}}\sA_{N-2}^{\CC} := \{ (z_1, z_2, \cdots, z_{N-1} \in \sA_N^{\CC}:  z_2 - z_1 = 1\}.
\]
Treating $\CC^{N-2} \cong \{ (z_1, z_2, \cdots, z_{N-1}) : z_2 - z_1 =1\}$, we set
\[
\sM ({}^{\textrm{\it{aff}}}\sA_{N-2}^{\CC}) := \CC^{N-2} \smallsetminus {}^{\textrm{\it{aff}}}\sA_{N-2}^{\CC}.
\]

A more refined result determines the integral cohomology ring for the configuration spaces of spheres, which includes torsion elements. 
It was determined by Feichtner and Ziegler, who obtained in the special case of $\con(\SS^2,N)$ the following result (see ~\cite[Theorem 2.4]{FZ:2000}). 

%***************************
%  Theorem 4.5
%***************************
\begin{theorem} \label{F-Z}
{\rm (Feichtner-Ziegler (2000))}
For  $N\ge3$, the  integer cohomology ring\\
$H^{\ast} ( \con(\SS^2,N), \ZZ)$ has only $2$-torsion. 
It is given as
\[
H^{\ast}( \con(\SS^2,N), \ZZ)  \cong ( \ZZ(0) \oplus \ZZ/ 2\ZZ(2) \oplus \ZZ(3)) ) \otimes
\Lambda^{\ast} (\oplus_{i=1}^{{{N-1}\choose{2}} -1}\ZZ(1))/\sI,
\]
in which $\sI$ is the ideal of relations given in {\rm Theorem ~\ref{thm:412}}.
\end{theorem}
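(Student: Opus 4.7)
The plan is to combine Theorem \ref{thm:411} with Theorem \ref{thm:412} via the K\"unneth formula with integer coefficients. By Theorem \ref{thm:411}, for $N\ge 3$ there is a homeomorphism
\[
\con(\SS^2,N)\cong PSL(2,\CC)\times \sM_{0,N},
\]
and so in particular a homotopy equivalence of the same form. Since $PSL(2,\CC)$ deformation retracts onto its maximal compact subgroup $SO(3)$, we obtain the homotopy equivalence $\con(\SS^2,N)\simeq SO(3)\times \sM_{0,N}$, which already reduces the computation to two independent factors.

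First I would record the cohomology of each factor. The cohomology of $SO(3)\cong \RR P^3$ is a standard computation, most easily read off from the cellular structure of real projective 3-space: $H^0(SO(3),\ZZ)\cong \ZZ$, $H^1(SO(3),\ZZ)=0$, $H^2(SO(3),\ZZ)\cong \ZZ/2\ZZ$, and $H^3(SO(3),\ZZ)\cong \ZZ$, which, written with degrees indicated, is exactly the graded group
\[
\ZZ(0)\oplus \ZZ/2\ZZ(2)\oplus \ZZ(3).
\]
For the moduli factor, Theorem \ref{thm:412} identifies $H^{\ast}(\sM_{0,N},\ZZ)$ with the exterior algebra $\Lambda^{\ast}\ZZ^{\binom{N-1}{2}-1}$ modulo the ideal $\sI$ of Orlik--Solomon--type relations, and, crucially for our purposes, states that this ring is \emph{torsion-free}.

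Next I would invoke the K\"unneth formula for singular cohomology with integer coefficients. Because $H^{\ast}(\sM_{0,N},\ZZ)$ is finitely generated and torsion-free in every degree, the Tor terms in the K\"unneth short exact sequence vanish identically, and the sequence collapses (and splits) to give a graded ring isomorphism
\[
H^{\ast}(\con(\SS^2,N),\ZZ)\cong H^{\ast}(SO(3),\ZZ)\otimes_{\ZZ} H^{\ast}(\sM_{0,N},\ZZ).
\]
Substituting the two computations above yields the desired presentation
\[
H^{\ast}(\con(\SS^2,N),\ZZ)\cong \bigl(\ZZ(0)\oplus \ZZ/2\ZZ(2)\oplus \ZZ(3)\bigr)\otimes \Lambda^{\ast}\!\Bigl(\bigoplus_{i=1}^{\binom{N-1}{2}-1}\ZZ(1)\Bigr)\bigm/\sI.
\]
The only torsion appearing in the right-hand side comes from tensoring the torsion-free exterior-algebra part against the degree-$2$ summand $\ZZ/2\ZZ$ of $H^{\ast}(SO(3),\ZZ)$, so all torsion is $2$-torsion, establishing the torsion claim.

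The main obstacle, or rather the main point that must be checked carefully, is the vanishing of the Tor contribution in K\"unneth: this requires the torsion-freeness asserted in Theorem \ref{thm:412}, which is a nontrivial input from the theory of complements of complex hyperplane arrangements (for the affine braid arrangement ${}^{\textrm{aff}}\!\sA_{N-2}^{\CC}$). Granting that input, everything else is formal. A minor secondary point is to confirm that the product decomposition of Theorem \ref{thm:411} is genuinely a global product (not merely a fiber bundle), so that no Serre spectral sequence differentials need to be analyzed; this is exactly the content of that theorem, which provides a trivial $PSL(2,\CC)$-bundle. With these two ingredients in hand, the statement follows directly from K\"unneth.
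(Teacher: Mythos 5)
The paper does not actually prove Theorem \ref{F-Z}; it is stated as a quotation of Feichtner and Ziegler \cite[Theorem 2.4]{FZ:2000}, so there is no in-paper proof to compare against. That said, your proposal is a correct reconstruction, and it follows the route the paper implicitly suggests by juxtaposing Theorems \ref{thm:411} and \ref{thm:412}: use the trivial $PSL(2,\CC)$-bundle structure to split $\con(\SS^2,N)\simeq SO(3)\times\sM_{0,N}$, compute $H^{\ast}(SO(3),\ZZ)\cong H^{\ast}(\RR P^3,\ZZ)=\ZZ(0)\oplus\ZZ/2\ZZ(2)\oplus\ZZ(3)$, invoke the torsion-freeness of $H^{\ast}(\sM_{0,N},\ZZ)$ from Theorem \ref{thm:412}, and then let K\"unneth collapse. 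You correctly identify the two nonformal inputs: the genuine product decomposition (so there are no spectral sequence differentials to worry about) and the torsion-freeness (so the $\mathrm{Tor}$ terms in K\"unneth vanish and one gets a ring, not merely a graded group, isomorphism). This is also essentially the argument in Feichtner--Ziegler's original paper. One small remark worth adding if you flesh this out: to apply the K\"unneth theorem as a ring isomorphism you also want $\sM_{0,N}$ to have the homotopy type of a finite CW complex, which holds because it is (homotopy equivalent to) the complement of a finite affine hyperplane arrangement; this guarantees the relevant cohomology groups are finitely generated free $\ZZ$-modules, the precise hypothesis under which the cross product $H^{\ast}(X;\ZZ)\otimes H^{\ast}(Y;\ZZ)\to H^{\ast}(X\times Y;\ZZ)$ is an isomorphism of graded rings.
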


In this result the expression $G(i)$ denotes a direct summand of $G$ in cohomology of degree~$i$, e.g. there is a $\ZZ/ 2\ZZ$ direct summand in $H^2( \con(\SS^2,N), \ZZ)$.

%***************************
%  Subsection 4.2
%***************************
\subsection{Generalized Morse Theory and Topology Change}\label{sec:42}
{Morse theory}, as treated in Milnor ~\cite{Milnor:1963}, concerns how topology changes for the {\em sublevel sets}
\[
\UU^t := \{ u \in \UU: \, f(u) \le t\}  
\]
of a given, sufficiently nice, real-valued function $f$ on a manifold $\UU$, as the level set parameter ~$t$ varies.
At the {\em critical values} of the function, where its gradient vanishes, the topology changes.
This change can be described  by adding up the contributions of individual {\em critical points} of the function that occur at the critical values.
More precisely, a  {\em Morse function} is a smooth enough function that has only isolated {\em critical points}, each of  which is  non-degenerate, and arranged so that only one critical point occurs at each critical level $f(u) = t$. 
Here {\em non-degenerate} means that the function $f$ is twice-differentiable and its Hessian matrix $[\frac{\partial^2\!f}{\partial u_i \partial u_j}]$ is nonsingular at the critical point. The topology of a sublevel set  $\UU^t$ is changed as $t$ ascends past a critical value, up to homotopy, by attaching a cell of dimension equal to the \emph{index} of the critical point: the number of negative eigenvalues of the Hessian.

Our interest here will be in 
 {\em superlevel sets} 
 \[
 \UU_{s} : =  \{ u\in \UU: \, f(u) \ge s\}, 
 \]
whose topology changes as $s$ descends past a critical value by attaching a cell of dimension equal to the \emph{co-index} of the critical point: the number of positive eigenvalues of the Hessian.

In the 1980's Goresky and MacPherson ~\cite{GoreskyM:1988} developed Morse theory on more general topological spaces than manifolds, namely {\em stratified spaces} in the sense of Whitney ~\cite{Whitney:1964}, and applicable  to a wider class of real-valued functions.
The configuration spaces such as $\con(N; \theta)$ studied here are in general stratified spaces in Whitney's  sense, because viewed using the $r$-parameter they are real semi-algebraic varieties. 

For the case at hand of $\UU=\con(\SS^2,N)$ and the injectivity radius function $\ir$, we have a further problem that $\rho$  is not a Morse function. 
Its critical points are degenerate and non-isolated, and even the notion of ``critical'' needs care in defining, since $\ir$ is a min-function of a finite number of smooth functions (see Definition ~\ref{def:41}). 
Technically, the angular distance function from $\bu$ is not smooth at the antipodal point $-\bu$, with angular distance $\pi$ on $\SS^2$;
however we can treat these functions as if they were smooth using the following trick, valid for the nontrivial cases $N\ge3$ where $\ir_{\max} \le\frac{\pi}{3}$: 
simply include the constant function $\frac{\pi}{3}$ among those functions over which we take the min, and smoothly cut off the other pairwise angular distance functions $\theta(\bu_i, \bu_j)$ if they exceed $\frac{2\pi}{3}$.

An appropriate version of Morse theory that applies in this context, {called {\em min-type Morse theory}, has only recently been sketched by Gershkovich and Rubinstein ~\cite{GR97} (see also Baryshnikov et al. ~\cite{BBK:2014}).
Related work includes Carlsson et al. ~\cite{CGZKM:2012} and Alpert ~\cite{Alpert:2016}.} 
The treatment of ~\cite{BBK:2014} studies  a notion of topologically critical value.

In what follows we develop an {alternative} max-min approach to criticality and a Morse theory for the injectivity radius function $\ir$ on configurations that is in the spirit of the criticality theory for maximizing {\em thickness} or normal injectivity radius (also known as {\em reach}) on configurations of curves subject to a length constraint (or in a compact domain of ~$\RR^3$, or in ~$\SS^3$) studied earlier in optimal ropelength and rope-packing problems by Cantarella ~et ~al. ~\cite{CFKSW:2006}.
This approach provides a notion of critical configuration, refining the notion of  a critical value. 
The Farkas Lemma (and its infinite-dimensional generalizations in the case of the ropelength problem) is a key tool used in these works that relates criticality to the existence of a balanced system of forces on the configuration. A more detailed treatment is planned  in ~\cite{KKLS16+}. 

To understand criticality for the injectivity radius function $\ir$ on $\con(N)=\con(\SS^2; N)$, we first need to make sense of varying a configuration $\bU=(\bu_1,...,\bu_N) \in \con(N)\subset (\SS^2)^N$ along a tangent vector $\bV=(\bv_1,...,\bv_N)$ to $\con(N)$ at $\bU$; 
here $\bv_i$ is a tangent vector to $\SS^2$ at $\bu_i$, for $i=1, 2, ... , N$.
For sufficiently small $t$ we can define a nearby configuration
\[
\bU\#t\bV:=\left(\frac{\bu_1+t\bv_1}{| \bu_1+t\bv_1|},..., \frac{\bu_N+t\bv_N}{| \bu_N+t\bv_N|}\right) \in \con(N)\subset (\SS^2)^N 
\]
by translating and projecting each factor back to $\SS^2$. 
In particular, the $\bV$-directional derivative $f'$ of a smooth function $f$ on $\con(N)$ at $\bU$ is simply $f'=\frac{d}{dt}|_{t=0} f(\bU\#t\bV)$, so $\bU$ is a critical point for smooth $f$ provided all its $\bV$-directional derivatives vanish at $\bU$; this means that the increment $f(\bU\#t\bV)-f(\bU) = o(t)$, where $o(t)$ is a function which tends to $0$ faster than linearly.
\begin{remark} The operation taking $\bU$ to $\bU\#t\bV$ can be thought of as the spherical analog of translating $\bU$ by $t\bV$ via vector addition in the linear case, hence the suggestive sum notation.  The map taking $t\bV$ to $\bU\#t\bV$ approximates (to within $o(t^2)$) the exponential map at $\bU$. \end{remark}

Now we make precise ``max-min criticality'' for the injectivity radius function~$\ir$. 

%***************************
%  Definition 4.6
%***************************
\begin{definition}\label{def:46}
A configuration  $\bU=(\bu_1,...,\bu_N) \in \con(N)$ is {\em critical for maximizing} $\ir$ provided for every tangent vector $\bV=(\bv_1,...,\bv_N)$ 
to $\con(N)$ at  $\bU$ 
%and sufficiently small $t$ 
 we have, as $t \to 0$,
\[
[\ir(\bU\#t\bV)-\ir(\bU)]_+ = o(t),
\]
where $[g]_+=\max\{g,0\}$ denotes the positive part of $g$.  
Equivalently, a configuration $\bU$ is critical if {\em no} variation $\bV$ can {\em increase} $\ir$ to first order. 
\end{definition}
\noindent

Otherwise, a configuration $\bU$  is {\em regular}, that is, there exists a variation $\bV$ which {\em does} increase $\ir$ to first order, and so, by the definition of $\ir$ as a min-function, this means that for all pairs $(\bu_i, \bu_j)$ realizing the minimal angular distance $\theta(\bu_i, \bu_j)= \theta_o$, their distances increase to first order under the variation $\bV$ as well.  
Note that the set of regular configurations is open.
If each configuration in this $\{\ir=\frac{\theta_o}{2}\}$-level set is regular, then this level is {\em topologically regular:}
that is, there a deformation retraction from $\con(N;\frac{\theta_o}{2}-\varepsilon)$ to $\con(N;\frac{\theta_o}{2}+\varepsilon)$ for some $\varepsilon>0$ (see ~\cite[Lemmas 3.2, 3.3 and Corollary 3.4] {BBK:2014}).

%***************************
%  Definition 4.7
%***************************
\begin{definition}\label{def:47}
For $\bU \in \con(N; \th)$, the {\em contact graph} of $\bU$ is the graph embedded in $\SS^2$ with vertices given by points $\bu_i$ in $\bU$ and edges given by the geodesic segments $[\bu_i, \bu_j]$ when $\theta(\bu_i , \bu_j) =\theta$.
\end{definition}
\noindent Examples of contact graphs for extremal values of the Tammes problem were given in Figure~\ref{fig:contact} of Section~\ref{sec:3}. 

%***************************
%  Definition 4.8
%***************************

\begin{definition}\label{def:48}
 A {\em stress graph} for $\bU \in \con(N; \th)$ is a contact graph with nonnegative weights $w_e$ on each geodesic edge $e=[\bu_i, \bu_j]$.
\end{definition}
\noindent
A stress graph gives rise to a system of {\em tangential forces} associated to each geodesic edge $e=[\bu_i, \bu_j]$ of the contact graph. 
These forces have magnitude $w_e$, are tangent to $\SS^2$ at each point $\bu_i$ of $\bU$, and are directed along the outward unit tangent vectors $T_e\!\!\mid_{\bu_i},T_e\!\!\mid_{\bu_j}$ to the edge $e$ at its endpoints  $\bu_i, \bu_j$, respectively.  

%***************************
%  Definition 4.9
%***************************
\begin{definition} 
A stress graph is  {\em balanced} if the vector sum of the forces in the tangent space of $\SS^2$ at $\bu_i$ is zero for all points of $\bU.$  
A configuration $\bU$ is {\em balanced} if its underlying contact graph has a balanced stress graph for some choice of non-negative, not-everywhere-zero weights on its edges.
\end{definition}

%***************************
%  Theorem 4.10
%***************************
\begin{theorem} \label{thm:contact}
To each critical value $\frac{\theta}{2}$ for the injectivity radius $\ir$, there exists a balanced configuration $\bU$ with $\ir(\bU)=\frac{\theta}{2}$. 
The vertices of the contact graph are a subset of the points in $\bU$ and the geodesic edges of the contact graph all have length $\theta$.
\end{theorem}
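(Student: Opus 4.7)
The plan is to first extract a critical configuration $\bU$ at the given critical value $\theta/2$, then convert max-min non-improvability of $\ir$ at $\bU$ into a balanced system of tangential forces via the Farkas/Motzkin alternative. For existence: if every configuration in the level $\{\ir = \theta/2\}$ were regular in the sense of Definition~\ref{def:46}, then by the deformation-retraction statement quoted just after that definition (citing ~\cite{BBK:2014}), the level would be topologically regular and $\theta/2$ would not be a critical value. Pick therefore a critical $\bU$ with $\ir(\bU)=\theta/2$; its contact graph, by Definition~\ref{def:47}, automatically has its vertices among the $\bu_i$ and all of its geodesic edges of length exactly $\theta$, so the second sentence of the theorem is immediate.

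To compute the first variation, fix $\bV = (\bv_1,\dots,\bv_N) \in \bigoplus_i T_{\bu_i}\SS^2$ and consider the smooth path $\bU\#t\bV$. For each contact edge $e = [\bu_i,\bu_j]$ of $\bU$, the standard first variation of geodesic arc length on $\SS^2$ gives
\[
\ell_e(\bV) \;:=\; \frac{d}{dt}\Big|_{t=0}\theta\bigl((\bU\#t\bV)_i,\,(\bU\#t\bV)_j\bigr) \;=\; \bv_i \cdot T_e|_{\bu_i} + \bv_j \cdot T_e|_{\bu_j},
\]
the radial projection in $\bU\#t\bV$ contributing only at second order, and the sign fixed by the convention that each $T_e$ is the \emph{outward} unit tangent. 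Non-contact pairs satisfy $\theta(\bu_i,\bu_j) > \theta$ strictly and persist in that strict inequality for small $t$, so $\ir(\bU\#t\bV) - \ir(\bU)$ can become positive only when $\ell_e(\bV) > 0$ for every contact edge; conversely, strict positivity of all $\ell_e(\bV)$ forces $\ir$ to increase to first order. Thus $\bU$ is critical precisely when the strict-inequality system $\{\ell_e(\bV) > 0 : e \in E(\bU)\}$ has no solution $\bV \in \bigoplus_i T_{\bu_i}\SS^2 \cong \RR^{2N}$.

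Motzkin's transposition theorem (the alternative form of Farkas' lemma) then asserts that unsolvability of $\{\ell_e > 0\}$ is equivalent to the existence of weights $w_e \ge 0$, not all zero, with $\sum_{e \in E(\bU)} w_e \ell_e \equiv 0$ as a linear form on the tangent space. Reading off the $T_{\bu_i}\SS^2$-component of this identity yields $\sum_{e \ni \bu_i} w_e\,T_e|_{\bu_i} = 0$ for each $i$, which is precisely the balancing condition imposed on the stress graph of Definition~\ref{def:48} with weights $w_e$. This produces the required balanced configuration. The $SO(3)$-quotient plays no role in this argument: infinitesimal rotations are Killing fields of $\SS^2$ and annihilate every $\ell_e$, so they are automatically consistent with any choice of balancing weights and may be ignored when passing between $\con(N)$ and $\bcon(N)$.

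The main technical obstacle is not the Motzkin step, which is elementary, but the initial passage from critical \emph{value} to critical \emph{configuration}: making rigorous the min-type criticality / topology-change dictionary alluded to in Section~\ref{sec:42}, so that a non-regular level set must contain a non-regular point. This rests on the deformation-retraction machinery for min-type functions sketched in ~\cite{BBK:2014} and developed further in ~\cite{KKLS16+}. Once that foundational input is in place, the Farkas-type argument above is essentially forced.
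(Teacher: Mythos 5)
Your proposal is correct and follows essentially the same approach as the paper. The paper's proof simply cites \cite[Corollary 3.4 and Equation 2]{BBK:2014} for the combined passage from a non--topologically--regular value to a balanced configuration in its level set; you have unpacked the two ingredients that citation encapsulates, namely the contrapositive extraction of a non-regular (critical) configuration from the level set, followed by the Gordan/Motzkin alternative converting first-order non-improvability of the min-function $\ir$ into the existence of nonnegative, not-all-zero edge weights with vanishing weighted sum of outward unit tangents at each vertex.
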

\begin{proof}
As in ~\cite[Corollary 3.4 and Equation 2]{BBK:2014}, since $\ir$ is a min-function on $\con(N)\subset(\SS^2)^N$, if $\frac{\theta}{2}$ is not a topologically regular value of $\ir$, then some configuration $\bU \in \ir^{-1}(\frac{\theta}{2})$ is balanced. 
Because $\ir(\bU) = \frac{\theta}{2}$, the conditions on the vertices and edge lengths are clearly met.
\end{proof}

We now prove a converse result.

%***************************
%  Theorem 4.11
%***************************
\begin{theorem} \label{thm:converse}
 If a configuration ${\bf U}$ on $\SS^2$ is balanced, then ${\bf U}$ is critical for maximizing the injectivity radius $\ir$.
\end{theorem}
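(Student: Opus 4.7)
The plan is to prove the contrapositive via a Farkas/Gordan-type duality argument: given the balancing condition, any variation that strictly increased every contact-edge angular distance would violate a weighted-sum identity. Recall that since $\ir(\bU)=\theta_o/2$ is achieved exactly on the edges of the contact graph, $\ir$ locally agrees with the minimum of half the contact-edge angular distances along small perturbations of $\bU$.

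First I would derive the directional derivative formula. For a contact edge $e=\{i,j\}$ and a variation $\bV=(\bv_1,\ldots,\bv_N)$,
\[
\dot\theta_e \;:=\; \left.\frac{d}{dt}\right|_{t=0}\theta(\bu_i\#t\bv_i,\,\bu_j\#t\bv_j) \;=\; -\langle\bv_i,T_e|_{\bu_i}\rangle - \langle\bv_j,T_e|_{\bu_j}\rangle,
\]
by differentiating $\arccos(\bu_i(t)\cdot\bu_j(t))$ with $\bu_k(t)=(\bu_k+t\bv_k)/|\bu_k+t\bv_k|$ and using that the gradient of angular distance at an endpoint of a geodesic is minus the outward unit tangent along the edge. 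A continuity argument, based on the fact that non-contact pairs have angular distance strictly greater than $\theta_o$ and so remain strictly greater on a neighborhood of $\bU$, then yields
\[
\ir(\bU\#t\bV)-\ir(\bU) \;=\; \tfrac{t}{2}\min_{e}\dot\theta_e + o(t) \qquad \text{as } t\to 0^+,
\]
with the minimum ranging over contact edges.

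The decisive step is a one-line calculation dual to the one in the proof of Theorem~\ref{thm:contact}. If some $\bV$ had $\dot\theta_e>0$ for every contact edge $e$, then using the balancing weights $w_e\ge 0$ (not all zero)
\[
0 \;<\; \sum_{e} w_e\,\dot\theta_e \;=\; -\sum_i\Big\langle \bv_i,\,\sum_{e\ni\bu_i}w_e T_e|_{\bu_i}\Big\rangle \;=\; 0,
\]
a contradiction. Hence for every variation $\bV$ at least one contact edge has $\dot\theta_e\le 0$, so $\min_e\dot\theta_e\le 0$, and therefore $[\ir(\bU\#t\bV)-\ir(\bU)]_+=o(t)$; i.e., $\bU$ is critical.

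The main point requiring care is the first-order expansion of the non-smooth function $\ir$, which needs the open/closed continuity argument above to ensure that only contact edges of $\bU$ participate in $\ir$ along the nearby path $t\mapsto\bU\#t\bV$. Everything else is the algebraic identity encoding the balancing relation, which is precisely the dual obstruction appearing in Theorem~\ref{thm:contact}; the converse direction we prove here and the direct direction stated there together form the two halves of a Farkas alternative for this min-type optimization problem.
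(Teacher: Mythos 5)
Your proposal is correct and takes essentially the same approach as the paper, which packages the first-variation computation into its Lemma~\ref{lem:412} but ultimately pairs a hypothetically all-increasing variation against the balancing weights to get a contradiction exactly as you do. One small blemish: the gradient of $\theta(\bu_i,\bu_j)$ with respect to $\bu_i$ is the outward unit tangent $T_e|_{\bu_i}$ itself (not its negative), so $\dot\theta_e = \langle\bv_i,T_e|_{\bu_i}\rangle + \langle\bv_j,T_e|_{\bu_j}\rangle$; your sign error propagates consistently through the duality computation, so the contradiction and conclusion are unaffected.
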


We will need a preliminary lemma.
Consider a planar graph $G$ embedded on the unit sphere $\SS^2$ via a map $\bu: 
G\rightarrow\SS^2$ which is $C^2$ on the edges of $G$.  
(By slight abuse of notation, a point on its image in $\SS^2$ may also be denoted by $\bu$.)
Suppose each edge $e$ of $G$ is assigned a {\em nonzero} weight $w_e\in \RR$.  
Let $L_e(\bu)$ denote the length of edge $\bu(e)$ induced by the map $\bu$, and let $\sL(\bu)=\sum w_e L_e(\bu)$ be the total {\em weighted length} of the embedded graph $\bu(G)$. 
We can vary the map $\bu$ using a $C^2$ vector field $\bv$, just as we varied a configuration: 
for sufficiently small $t$, each point $\bu \in \SS^2$ on the image of the graph is moved to $\bu\#t\bv=\frac{\bu+t\bv}{|\bu+t\bv|}$.
Let $\sL'(\bv)$ denote the first derivative at $t=0$ of weighted length for this varied graph, i.e. the {\em first variation} of $\sL(\bu)$ along $\bv$. 

%***************************
%  Lemma 4.12: Variation of length
%***************************
\begin{lemma}\label{lem:412}
The first variation $\sL'(\bv)$ of the weighted lengh $\sL(\bu)$ for the embedded graph $\bu(G)$ vanishes for every vector field $\bv$ on $\SS^2$ if and only if the following two conditions hold:

$(1)$ each edge $e$ joining a pair of vertices $e^-, e^+$ of $G$ maps to a geodesic arc $\bu(e)=[\bu(e^-),\bu(e^+)]=[\bu^-,\bu^+]$ in the embedded graph $\bu(G)$; 

$(2)$ at any vertex $\bu$ of the embedded graph $\bu(G)$, the weighted sum $\sum w_{e^*} T_{e^*}\!\!\mid_\bu=0$, where the sum is taken over the subset of edges $\bu(e^*)$ incident to $\bu$, and where $T_{e^*}\!\!\mid_\bu$ is the outer unit tangent vector of $\bu(e^*)$ at $\bu$.
\end{lemma}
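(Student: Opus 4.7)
The plan is a direct first-variation-of-length computation, edge by edge, followed by rearranging the sum over edges into a bulk term plus a vertex term, and then invoking the freedom to choose test vector fields $\bv$ to decouple the two resulting conditions.

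First, I would derive the standard first-variation-of-arc-length formula for a single smooth curve on $\SS^2$. Fix an edge $e$ with endpoints $e^-, e^+$ parametrized by arc length $s \in [0, L_e(\bu)]$ from $\bu^-$ to $\bu^+$, with unit tangent $T$. Since $\bv$ is tangent to $\SS^2$ along the image of $e$, the normalized variation $\bu \# t\bv$ agrees with the exponential map to first order (as in the paper's remark), so the usual Riemannian first variation applies and gives
\[
L_e'(\bv) \;=\; \bigl\langle \bv, T\bigr\rangle\Big|_{s=0}^{s=L_e(\bu)} \;-\; \int_e \bigl\langle \bv, \nabla_T T\bigr\rangle\,ds.
\]
Converting $T$ at the endpoints to the paper's outer unit tangent convention, $T_e|_{\bu^-} = T(0)$ and $T_e|_{\bu^+} = -T(L_e(\bu))$, rewrites the boundary contribution as $-\bv(\bu^+)\cdot T_e|_{\bu^+} - \bv(\bu^-)\cdot T_e|_{\bu^-}$, while the bulk integrand $\langle \bv, \nabla_T T\rangle$ pairs $\bv$ against the geodesic curvature vector $\vec{\kappa}_e$ of $\bu(e)$.

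Next, I would sum these one-edge identities weighted by the $w_e$ and reorganize the boundary contributions by collecting the terms attached to each vertex:
\[
\sL'(\bv) \;=\; -\sum_{\bu \in V(G)} \bv(\bu)\cdot\!\Bigl(\sum_{e^* \ni \bu} w_{e^*}\, T_{e^*}|_{\bu}\Bigr) \;-\; \sum_e w_e \int_e \bigl\langle \bv, \vec{\kappa}_e\bigr\rangle\,ds.
\]
The reverse (``$\Leftarrow$'') direction is then immediate: if every edge is a geodesic then $\vec{\kappa}_e \equiv 0$, killing the bulk term, and condition (2) kills each vertex term. For the forward (``$\Rightarrow$'') direction, I would decouple the two contributions using localized test fields. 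Choosing a $C^2$ vector field $\bv$ supported in a small neighborhood of an interior point of a single edge $e_0$ and tangent to $\SS^2$ kills all vertex terms and all other edge integrals, leaving $\int_{e_0} w_{e_0}\langle \bv, \vec{\kappa}_{e_0}\rangle\,ds = 0$. Since $w_{e_0}\neq 0$ and $\bv$ may be chosen with arbitrary tangential profile on the edge, the fundamental lemma of the calculus of variations forces $\vec{\kappa}_{e_0} \equiv 0$, i.e.\ $\bu(e_0)$ is a geodesic arc, giving (1). Granted (1), the bulk terms vanish identically, so the remaining identity reads $\sum_\bu \bv(\bu)\cdot \bigl(\sum_{e^* \ni \bu} w_{e^*} T_{e^*}|_{\bu}\bigr) = 0$ for every tangential $\bv$. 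Testing with bump fields concentrated at a single vertex $\bu$ and prescribed tangential value there yields (2).

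The only subtle point, and the one I would handle carefully, is the sign bookkeeping in passing from the covariant $T$ to the paper's outer tangent $T_e|_{\bu^\pm}$, since a sign error would flip condition (2) to a non-balanced condition. The other mild technicality is to verify that the space of admissible test vector fields $\bv$ is rich enough to separate the bulk and vertex contributions; this is standard once one allows $C^2$ tangential bump fields localized away from, or concentrated at, individual vertices, together with the assumption $w_e \neq 0$ that prevents a vanishing weight from masking a non-geodesic edge.
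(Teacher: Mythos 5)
Your proof is correct and follows essentially the same approach as the paper: derive (or cite) the first variation of arc-length for each edge, sum with weights and reorganize into a bulk integral against geodesic curvature plus a vertex sum against the outer tangent vectors, and then decouple the two conditions with localized tangential test fields. You spell out the sign conversion from the covariant tangent to the outer unit tangent more explicitly than the paper (which simply cites Hicks for the first-variation formula), but the decomposition and the use of bump fields at interior points and at vertices are the same.
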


\begin{proof}  This lemma is a direct consequence of the first variation of length formula 
\[
L'_e(\bv) = \bv\cdot T\!\! \mid_{\bu(e^-)}^{\bu(e^+)} - \int_{\bu(e)}{\bv}\cdot {\bf k}  
\]
(see, for example,  Hicks ~\cite[Chapter 10, Theorem 7, page 148]{Hicks:1965}). 
Here $T$ is the unit tangent vector field of the edge $\bu(e)$, and ${\bf k}$ is the geodesic curvature vector of $\bu(e)$; 
with respect to any local arclength parameter on $\bu(e)$, the geodesic curvature vector is the projection to $\SS^2$ of the acceleration: 
${\bf k} = \ddot{\bu} + \bu$, which is tangent to $\SS^2$ and normal to $\bu(e)$, and which vanishes iff $\bu(e)$ is a geodesic arc.

Now express $\sL'(\bv)=\sum w_e L'_e(\bv)$ as a sum of edge terms and vertex terms. 
The geodesic arc condition (1) -- that ${\bf k}=0$ along every edge -- implies the edge terms in $\sL'(\bv)$ all vanish for any variation $\bv$ of the map $\bu$; 
and the force balancing condition (2) implies all vertex terms vanish for any variation $\bv$.

Conversely, given any interior image point $\bu$ of an edge, take a variation $\bv$ supported in an arbitrarily small neighborhood of $\bu$, and orthogonal to $\bu(e)$ at $\bu$: 
the vanishing of $\sL'(\bv)$ implies condition (1) that ${\bf k}=0$; 
similarly, at any given vertex ${\bu}$, consider a pair of variations $\bv_1, \bv_2$ supported in an arbitrarily small neighborhood of  $\bu$ which approximate an orthogonal pair of translations of the tangent space to $\SS^2$ at $\bu$: the vanishing of $\sL'(\bv)$ for both of these $\bv_1, \bv_2$ implies the forces balance (2).
\end{proof}

\begin{remark}
\noindent In case $w_e=0$, vanishing for the first variation of $\sL$ does not imply $\bu(e)$ is a geodesic arc: 
instead, the edges with nonzero weights form a balanced geodesic subgraph of the original embedded graph $\bu(G)$. \end{remark}

\noindent  
Lemma ~\ref{lem:412} suggests the following definition. 

%***************************
%  Definition 4.13
%***************************
\begin{definition}\label{def:413}
An embedded graph satisfying properties (1) and (2) is called a {\em balanced geodesic graph.}  
(Note that there is no requirement here that the geodesic edge lengths are integer multiples of some basic length, as would be the case for a contact graph.)
\end{definition}

\noindent Lemma ~\ref{lem:412} shows that a  balanced geodesic graph has vanishing first variation of weighted length $\sL$, even if some of its edge weights $w_e$ are zero. 

\begin{proof}[Proof of Theorem ~\ref{thm:converse}]
By hypothesis, there are non-negative edge weights (not all zero) so that the resulting stress graph $\bu(G)$ for the configuration {\bU} is balanced.  
By Lemma ~\ref{lem:412} the first variation $\sL'(\bv)$ of weighted length for $\bu(G)$ vanishes for all variation vector fields $\bv$ on $\SS^2$.  

Suppose (to the contrary) that {\bU} were {\em not} critical for maximizing the injectivity radius $\ir$.  
Then there would be a variation ${\bV}$ of ${\bU}$ so that every geodesic edge of the stress graph has length increasing at least linearly in ${\bV}$.  
Extend ${\bV}$ to an ambient $C^2$ variation vector field $\bv$ on $\SS^2$. 
Since the edge weights are {\em nonnegative}, and not all zero, that implies the weighted length of the stress graph also increases at least linearly in $\bv$, a contradiction.
\end{proof}

%***************************
%  Remark
%***************************
\begin{remark}  
A key property of balanced configurations is that for each $N \ge 3$  the  set of radii $r$ such that $\bcon(N)$ contains a balanced configuration $\bU$ of injectivity radius $\frac{\theta(r)}{2}$ is finite. 
It follows that {\em the set of critical radius values for $\bcon(N)$ is finite}. 

This finiteness result can be proved using the structure of the spaces $\bcon(N)[r]$ as real semi-algebraic sets, which we consider in ~\cite{KKLS16+}.  
We will assume this finiteness result holds in the discussions in ~\ref{sec:43a};
it can be directly verified for small $N$.
\end{remark} 

%***************************
%  Subsection 4.3
%***************************
\subsection{Small Radius Case}\label{sec:43}
For small radii, it is convenient to state results for $r =r(\theta)$ in terms of the angle parameter $\theta$.  
For sufficiently small angles, the superlevel sets $\con(N; \th)$  will have the same homotopy type as the full configuration space $\con(N)$.  
In terms of the radius function, the conclusion of this result applies for $0 \le r < r_1(N)$, where $r_1(N) = \sin \left(\frac{\pi }{N}\right)/\left(1-\sin \left(\frac{\pi }{N}\right)\right)$ is the smallest critical value for $\con(N)[r]$.

%***************************
%  Theorem 4.14
%***************************
\begin{theorem} \label{thm:43}
Suppose $N \ge 3$. 
The smallest critical value for maximizing $\ir$ on $\bcon(N)$ is $\frac{\pi}{N}$, achieved uniquely by the $N$-Ring configuration of equally spaced points along a great circle. 
Moreover, for angular diameter $0 \le\th<\frac{2\pi}{N}$ the following hold.

$(1)$ The space $\con(N; \th)$ is a strong deformation retract of the full configuration space $\con(N)=\con(N; 0)$.

$(2)$ The reduced space $\bcon(N;\th)=\con(N; \th)/SO(3)$ is a strong deformation retract of the full reduced configuration space $\bcon(N)$.

 Consequently each has, respectively, the same homotopy type and cohomology groups as the corresponding full configuration space.
 \end{theorem}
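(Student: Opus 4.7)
The plan is to prove Theorem~\ref{thm:43} in three stages: verify the $N$-Ring is critical at $\rho = \pi/N$; prove no balanced configuration can have $\rho < \pi/N$, with equality forcing the $N$-Ring; then invoke min-type Morse theory to obtain the deformation retracts. For the first stage, I place $N$ equally spaced points on a great circle at angular separation $2\pi/N$. The contact graph is a single $N$-cycle in which, at each vertex, the two outward unit tangent vectors along the incident geodesic edges are diametrically opposite (both lie along the great circle's tangent direction). Assigning equal positive weights to all $N$ cycle edges yields a balanced stress graph, so Theorem~\ref{thm:converse} shows this configuration is critical, with $\rho = \pi/N$.

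The main obstacle is showing no balanced configuration has $\rho < \pi/N$. A Farkas-type argument on the first-order increase of $\rho$ (as a min-function of pairwise distances) establishes that $\bU$ is critical if and only if $\bU$ is balanced. Suppose $\bU$ is balanced with contact edge length $\theta = 2\rho(\bU)$. In the positive-weight stress subgraph $G_+$, any vertex incident to a positive-weight edge must have degree at least $2$, since a single tangent force cannot cancel; hence $G_+$ contains a cycle $C$ of some length $k \leq N$. The key geometric observation I expect to use is that the balance conditions at the cycle vertices, together with the first-variation identity in Lemma~\ref{lem:412}, force $C$ (after an appropriate reweighting) to bound a spherical polygon with vanishing total turning. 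A Gauss--Bonnet computation then gives enclosed area $2\pi$, and the spherical isoperimetric inequality yields $k\theta \geq 2\pi$, so $\theta \geq 2\pi/k \geq 2\pi/N$, with equality forcing $k = N$, $C$ a great circle, and all $N$ vertices equally spaced on it, i.e., the $N$-Ring (unique up to $SO(3)$). The delicate point is the interaction of the cycle $C$ with other positive-weight edges at its vertices; a careful stress decomposition handling this is deferred to \cite{KKLS16+}.

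With no critical values in $(0, \pi/N)$, claims (1) and (2) follow from min-type Morse theory as in \cite{BBK:2014}. At any configuration $\bU$ with $\rho(\bU) < \pi/N$, the Farkas-type argument above produces a tangent variation $\bV(\bU)$ strictly increasing all minimal pairwise distances to first order; patching these local choices by a partition of unity on $\con(N) \setminus \con(N;\theta)$ and averaging over the $SO(3)$-orbit yields a continuous $SO(3)$-equivariant tangent vector field, whose flow (projected back to $\SS^2$ via the $\#$-construction of Section~\ref{sec:42}) deformation retracts $\con(N)$ onto $\con(N;\theta)$ for every $\theta < 2\pi/N$. $SO(3)$-equivariance provides the induced retraction on $\bcon(N)$, giving (2); the consequences for homotopy type and cohomology are then immediate.
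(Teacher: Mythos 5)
Your proposal has the same three-stage structure as the paper's proof: criticality of the $N$-Ring via a balanced stress graph and Theorem~\ref{thm:converse}; a lower bound ruling out balanced configurations below $\rho=\pi/N$; and a subgradient-type flow giving the retraction. Stages one and three agree in substance with the paper (the paper simply invokes a ``weighted $\ir$-subgradient-flow''; your partition-of-unity plus $SO(3)$-averaging version is a reasonable gloss), and those parts are fine.

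The middle stage has a genuine gap, which you partially acknowledge. After extracting a cycle $C$ of length $k\le N$ from the positive-weight stress subgraph $G_+$, you assert that the balance conditions ``force $C$ (after an appropriate reweighting) to bound a spherical polygon with vanishing total turning.'' This is not a consequence of balance, and it fails for some balanced configurations: in the balanced contact graph $K_4$ of a regular inscribed tetrahedron (a balanced configuration by the symmetric weight assignment), each triangular face has total turning $+\pi$, not $0$, since its enclosed area is only $\pi$. The cycle that does bound a hemisphere there is a Hamiltonian $4$-cycle, but nothing in your extraction selects it, and reweighting the stress on a fixed cycle does not alter the cycle's geometric turning, which is determined by the embedded angles. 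Producing the right cycle --- or, as the paper instead argues, bounding the total angular length of the balanced stress graph below by $2\pi$ --- is exactly the substantive content here, and it is what the deferred ``careful stress decomposition'' would have to supply. Note too that the paper's version of this step (``total angular length must be at least $2\pi$ ... then the total length $N\theta<2\pi$'') implicitly uses that the stress graph has at most $N$ edges, whereas a planar contact graph on at most $N$ vertices may have up to $3N-6$ of them; so it is likewise terse rather than complete. In short, your proposal mirrors the paper's outline but does not close the key geometric lemma, and the specific mechanism you invoke (vanishing turning for an arbitrary cycle of $G_+$) would fail as stated.
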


\begin{proof}
This result corresponds to ~\cite[Theorem 5.1]{BBK:2014}. 
First note that by using equal weights on each of its edges, the $N$-Ring is balanced and hence a critical configuration by Theorem ~\ref{thm:converse}. 
The balanced contact graph on $\SS^2$ of a $\frac\th2$-critical $N$-configuration has geodesic edges with angular length $\th$. 
In order to balance, its total angular length must be at least $2\pi$, the length of a complete great circle. 
Thus if $\th<\frac{2\pi}{N}$, then the total length $N\th<2\pi$ and there is no balanced $N$-configuration in $\con(N; \th)$ and $\frac{\theta}{2}$ is not a critical value for $\ir$.  
In this case, a weighted $\ir$-subgradient-flow provides the strong deformation retraction of $ \con(N)$ to $\con(N; \th)$. 
\end{proof}

%***************************
%  Corollary 4.15
%***************************
\begin{corollary}\label{cor:48}
For $\th < \frac{2\pi}{N}$ and $N \ge 4$ the configuration spaces $\con(N; \th)$ and $\bcon(N;\th)$ are path-connected, but not simply-connected.
\end{corollary}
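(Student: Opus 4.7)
The strategy is to transfer both claims from the constrained spaces $\con(N;\theta)$ and $\bcon(N;\theta)$ to the unconstrained configuration spaces $\con(N)$ and $\bcon(N)$, and then invoke previously stated results. Theorem~\ref{thm:43} gives, for $\theta < \tfrac{2\pi}{N}$, strong deformation retractions $\con(N)\simeq\con(N;\theta)$ and $\bcon(N)\simeq\bcon(N;\theta)$; in particular the sets of path components and the fundamental groups agree. It therefore suffices to prove that both $\con(N)$ and $\bcon(N)$ are path connected but not simply connected for $N\ge 4$.

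\textbf{Path connectedness.} The space $\con(N)=\con(\SS^2,N)$ is the complement of the ``fat diagonal'' $\Delta=\{\bU\in(\SS^2)^N : \bu_i=\bu_j\text{ for some }i\ne j\}$ in the connected closed manifold $(\SS^2)^N$. Since $\Delta$ is a finite union of smooth submanifolds, each of real codimension $2$, I would deduce path connectedness of $\con(N)$ by a standard transversality argument: choose smooth component-wise paths $\bu_i\!\rightsquigarrow\!\bw_i$ on $\SS^2$, then perturb them slightly so that for no time $t$ and no pair $i\ne j$ do the two paths coincide (the bad set in time has codimension $\ge 1$ after generic perturbation). Since $\bcon(N)=\con(N)/SO(3)$ is the image of $\con(N)$ under the quotient map, which is continuous and surjective, it is automatically path connected as well.

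\textbf{Non-simple connectedness.} For the reduced space I would read off the first rational Betti number from the Poincar\'{e} polynomial \eqref{eq4102},
\[
\tilde P_N(t)=(1+2t)(1+3t)\cdots(1+(N-2)t).
\]
For $N\ge 4$ the linear coefficient is $2+3+\cdots+(N-2)\ge 2$, so $H^1(\bcon(N);\QQ)\ne 0$ and hence $H_1(\bcon(N);\ZZ)$ is infinite. The Hurewicz homomorphism exhibits $H_1(\bcon(N);\ZZ)$ as the abelianization of $\pi_1(\bcon(N))$, so $\pi_1(\bcon(N))\ne 1$. For the total space I would invoke Theorem~\ref{thm:411}, which gives a homeomorphism $\con(N)\cong PSL(2,\CC)\times\sM_{0,N}$; combined with the homotopy equivalence $PSL(2,\CC)\simeq SO(3)$ noted there, this yields
\[
\pi_1(\con(N))\,\cong\,\pi_1(SO(3))\times\pi_1(\sM_{0,N})\,\cong\,\ZZ/2\ZZ\times\pi_1(\sM_{0,N}),
\]
whose first factor alone is already nontrivial.

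\textbf{Main obstacle.} No serious obstacle is expected: every ingredient has already been supplied in the excerpt (Theorem~\ref{thm:43} to reduce to the unconstrained setting, Theorem~\ref{thm:411} for the product structure of $\con(N)$, and equation~\eqref{eq4102} for the Betti numbers). The only step demanding any care is the transversality step in the path-connectedness argument, and it is entirely standard for complements of codimension-$2$ subvarieties in a manifold.
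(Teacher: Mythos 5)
Your proof is correct and follows essentially the same route as the paper: reduce to $\con(N)$ and $\bcon(N)$ via the deformation retractions of Theorem~\ref{thm:43}, read off $H^1(\bcon(N);\QQ)\ne 0$ from the Poincar\'e polynomial \eqref{eq4102}, and use the product decomposition of Theorem~\ref{thm:411} (with $PSL(2,\CC)\simeq SO(3)$) for $\con(N)$. The only cosmetic difference is in establishing path-connectedness — you argue directly via transversality to the codimension-$2$ fat diagonal, while the paper deduces connectedness from $H^0=\QQ$ and then upgrades to path-connectedness using that these spaces are (closures of) manifolds; both are standard and equivalent in force here.
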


\begin{proof}
These spaces have the same homotopy type as $\con(N)$ (resp. $\bcon(N)$),  which is  connected since $H^0(\con(N), \QQ)=\QQ$ (resp. $H^0(\bcon(N), \QQ)=\QQ$).
They each are  closures of open manifolds and are connected, so  are path-connected. 
We have $H^1(\bcon(n),  \QQ) = \QQ^k$ for some $k =k(N) \ge 2$, using the  formula  \eqref{eq4102} applied for $N \ge 4$, so $\bcon(N)$ is not simply-connected. 
Finally, $\con(N)$ is not simply connected via the product decomposition in Theorem ~\ref{thm:411}. 
\end{proof}

%***************************
%  Subsection 4.4
%***************************
\subsection{Large Radius Case}\label{sec:43a}

We consider reduced configuration spaces $\bcon(N)[r]$ having radius parameter $r$ sufficiently close to  $r_{max}(N)$, depending on $N$. 
Using the finiteness of the set of critical values, there is an $\epsilon(N) >0$ such that the upward ``gradient flow'' of the injectivity radius function $\ir$ (or of the corresponding touching-sphere radius function~$r$) defines a deformation retraction from $\bcon(N)[r]$ to $\bcon(N)[r_{max}(N)]$ for the range $r_{\max}(N) - \epsilon(N) < r < r_{max}(N)$.  

The simplest topology that may occur at $r_{max}(N)$ is  where $\bcon(N)[r_{\max}(N)]$ has all its connected components contractible;
the property holds for most small $N$ -- in fact, for all $N \le 14$ except $N=5$. 
When it holds, the cohomology groups for $\bcon(N)[r]$ in this range of $r$ will have the following very simple form: 
\medskip

%***************************
%  Definition 
%***************************
{\bf  Purity Property.}
{\em There is some $\epsilon = \epsilon(N) >0$ such that for
\[
r_{\max}(N) - \epsilon(N) < r < r_{max}(N)
\]
there is an integer $s=s(N) \ge 1$ such that  the cohomology groups of the reduced configuration space are
\[
H^k( \bcon(N)[r], \QQ)  = \left\{
\begin{array}{cl}
\ZZ^{s(N)}& \mbox{if} \quad k=0 \\
~~~ \\
0 & \mbox{if}  \quad k \ge 1.
\end{array}
\right.
\]
}\medskip

For $N=5$ the cohomology does {\em not} have the Purity Property.
The reduced configuration space $\bcon(5)[r]$ is  $7$-dimensional for $r < r_{max}(5)$ but becomes $2$-dimensional at $r=r_{max}(5)$. 
Some optimal maximum radius configurations at $r_{max}(5)= 1+ \sqrt{2}$ have room for an extra sphere (giving $N=6$): the sphere centers form five vertices of an octahedron, and either vertex in an antipodal pair of vertices can freely and independently move towards the unoccupied sixth vertex of the octahedron. 
The resulting reduced configuration space $\bcon(5)[1+\sqrt{2}]= \bcon(5; \frac{\pi}{2})$ is a simplicial $2$-complex which is not contractible; 
it is pictured schematically in Figure ~\ref{fig:example5}.  It has a single connected component having  Euler characteristic $\chi(\bcon(5; \frac{\pi}{2}))= -10$.
 For further discussion of this space as a critical stratified set, see Section ~\ref{sec:46}. 

%***************************
%  Figure 6: Complex for N=5
%***************************
\begin{figure}[htbp] 
   \centering
   \includegraphics[width=2in]{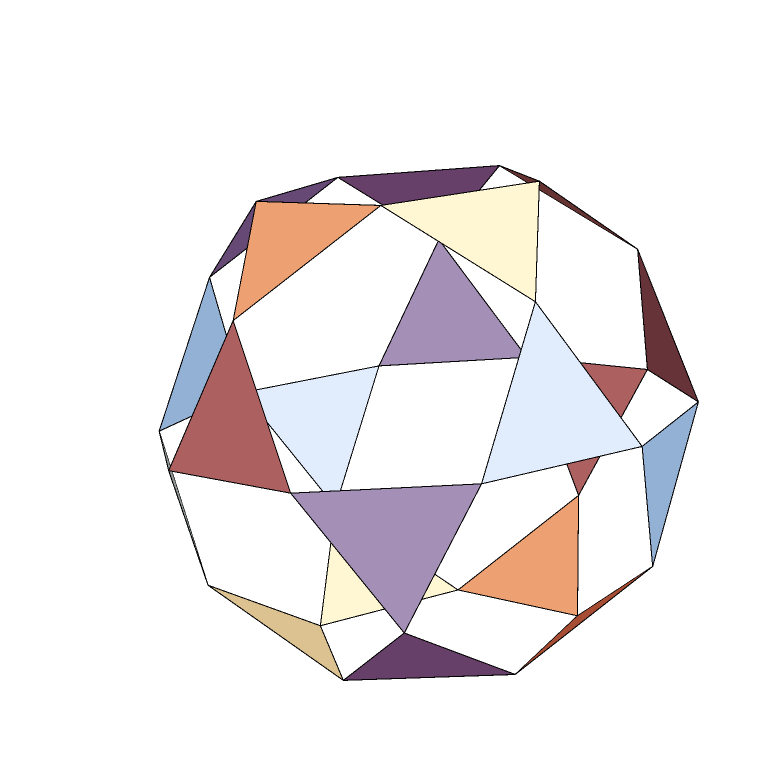} 
   \caption{The $r$-maximal stratified set for $N=5$} 
   \label{fig:example5}
\end{figure}

Does the purity property hold for  all or most  large $N$? 
We do not know. 
One might expect that  extremal configurations for high values of $N$ at  $r= r_{max}(N)$ will have most spheres are held in a rigid structure, and for $r$ near it all individual spheres will only be able to move in a tiny area around them, each contributing a connected component to the reduced configuration space.
Against this expectation, computer experiments packing $N$ equal-radius two-dimensional disks confined to a unit disk suggest the possibility for some $N$ that extremal configurations could have \emph{rattlers}, which are loose disks that have motion permitted even at $r=r_{max}$ (Lubachevsky and Graham ~\cite{LubG:1995}). 
However, even with rattlers one could still have contractibility of individual connected components.
The hypothesis of  extremal configurations being rigid (and unique) is known to hold for $6 \le N \le 12$.

When the purity property holds one can (in principle) determine the number of connected components for the set of near-maximal configurations; call it $s = s(N)$.  This value depends on the symmetries of each maximal configuration under the $SO(3)$ action.  
Denoting  the  isomorphism types of the connected components of maximal rigid (labeled) configurations of $N$ points at $r= r_{max}(N)$ by $C_{i,N}$ for $1 \le i \le e(N)$, one would have 
\[
s(N) = \sum_{i} \frac{N!}{|Aut(C_{i,N})|}.
\]
For $3 \le N \le 12$, excluding $N \ne 5$, the extremal configurations for the Tammes problem are known to be unique up to isometry; 
call them $C_{1, N}$. 
The analysis of Danzer given in Theorem ~\ref{th73} covers the cases  $7 \le N \le 10$.  
For the case $N=12$, the unique extremal configuration $\DOD$ of vertices of an icosahedron has  $Aut(C_{1, 12}) = A_5$, the alternating group, of order $60$, whence $s(12) = \frac{12!}{60} = 7983360.$ 

%***************************
%  Subsection 4.5
%***************************
\subsection{Structure of Critical Strata}\label{sec:46}
 Connected components of critical strata necessarily have dimension at least three from the $SO(3)$-action.  
 In what follows we consider reduced critical strata that  quotient out by this action.  
 At a critical value $\rho$ there can be several disconnected reduced critical strata, and such strata  can have positive dimension.  
 We give examples of each.

For $N=5$ a  positive dimensional reduced critical stratified set occurs at the maximal radius value $r_{max}(5) = 1 + \sqrt{2}$.  
The set of (reduced) critical configurations forms a family, which is two-dimensional, containing multiple strata. 
A generic contact graph at the maximal injectivity radius $\ir=\frac{\pi}{4}$ is a $\Theta$-graph having  $2$ polar vertices and $3$ equatorial vertices.  
This contact graph, depicted in Figure ~\ref{fig:contact}, has $3$ faces and $6$ edges and is optimal.  
The three angles between equatorial vertices can range between $\frac\pi2$ and $\pi$, with the condition that their sum is $2\pi$, defining a $2$-simplex.  
As long as none of the equatorial angles is $\pi$, criticality is achieved using weights that are non-zero on all the edges.  
When an equatorial angle is $\pi$, corresponding to a corner of the $2$-simplex, these equatorial vertices may be regarded as a new pair of polar vertices.  
In this configuration, as the angles between equatorial angles go to $\pi$, some weights of the stress graph can go to $0$ and the support of the weights degenerates to a $4$-Ring.  
The limit contact graph consists of the edges of a square pyramid whose base is that $4$-Ring.  
This gives a non-optimal contact graph with $5$ faces and $8$ edges.

For $N=12$ there are several distinct reduced critical strata at the critical value $\rho=1$, two of which correspond to the $\FCC$-configuration and $\HCP$-configurations, singled out in Frank's discussion in Section ~\ref{sec:27}.  
These configurations are  defined in Section ~\ref{sec:52} below, and their  criticality is shown in Theorem ~\ref{th913bb}.

%***************************
%  Subsection 4.6
%***************************
\subsection{Topology Change for Variable Radius: $N=4$}\label{sec:47}
For very small $N$ it is possible to completely work out all the critical points and the changes in topology.
We illustrate such an analysis on the simplest nontrivial example $N=4$ (see Figure ~\ref{fig:morse4}, explained below).

%***************************
%  Figure 7: Cartoon of Conf(4)
%***************************
\begin{figure}[htbp] 
   \centering
   \includegraphics[scale=1]{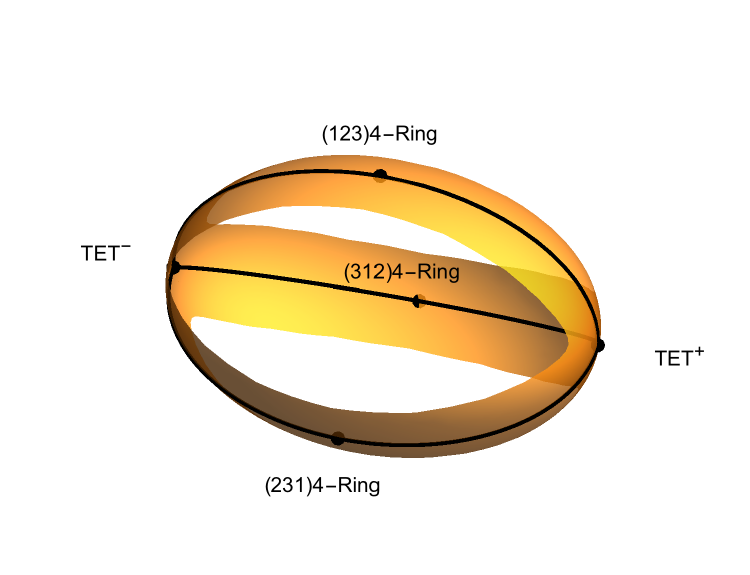}
   \caption{Part of the configuration space for $N=4$}  
   \label{fig:morse4}
\end{figure}

We consider the reduced superlevel sets $\bcon(4)[r]$. Since $\con(4)$ is $8$-dimensional, away from the critical values these spaces are $5$-dimensional manifolds with boundary.

If we ignore the labelling of points and classify the contact graphs for four vertices, there are exactly two geometrically distinct $\ir$-critical $4$-configurations in $\bcon(4)$:
\begin{enumerate}

\item[(1)]
The  $4$-Ring of four equally spaced points around a great circle on $\SS^2$ with $\th=\frac\pi2$ which is a saddle configuration for $\ir$. 
There is a $1$-dimensional subspace of the tangent space to $\bcon(4)$ at the $4$-Ring along which $\ir$ increases to second order, i.e. the co-index is $k=1$.
The critical value for $r$ for the $4$-Ring is $r_1= 1 + 2 \sqrt{2} \approx 3.8284$.

 \item[(2)]
The vertices of the regular tetrahedron \textrm{TET} with $\th=\cos^{-1}(-\frac{1}{3})$, which is the maximizing configuration for $\ir$ on $\bcon(4)$, i.e the co-index $k=0$.
The critical value for $r$ for TET is $r_2= r_{max} (4) = 2+ \sqrt{6} \approx 4.4495.$

 \end{enumerate}
 There are two intervals $(0,r_1)$ and $(r_1, r_2]$ on which the topology of $\bcon(4) [r]$ remains constant. 
 From Theorem ~\ref{thm:411}, it can be seen that on the interval $(0, r_1)$, $\bcon(4)[r]$ is homeomorphic to $\bcon(4)$. 
 This has the homotopy type of $\RR^2$ punctured at two points, hence
\[
 H^{0}(\bcon(4)[r], \Z) = \Z, \quad H^{1}(\bcon(4)[r], \Z) = \Z^2,\quad  H^{k}(\bcon(4)[r], \Z) = 0, \, \mbox{for}\,\, k \ge 2.
\]

On the open interval $(r_1, r_{2})$, the manifold  $\bcon(4)[r]$ has two connected components, each diffeomorphic to a $5$-ball, which can be seen from the strong deformation retraction to $\bcon(4)[r_2]$ consisting of the two points associated to the orientated labelings of $\textrm{TET}$ configurations, $\textrm{TET}^+$ and $\textrm{TET}^-$. 
Hence
\[
 H^{0}(\bcon(4)[r], \Z) = \Z^2, \quad  H^{k}(\bcon(4)[r], \Z) = 0, \, \mbox{for}\,\, k \ge 1.
\]

Figure ~\ref{fig:morse4} above is only a schematic picture, since we cannot draw a $5$-dimensional manifold. It compresses four of the
dimensions.  
The visible points take $r$-values with $r_1 \le r \le r_2$. 
The value $r=r_1$ is a circular vertical ring in the middle, and the values of $r$  increase as one moves to the left or right, reaching a maximum at $\textrm{TET}^+$ and at $\textrm{TET}^-$.

From Table ~\ref{table4}, we can easily compute the Euler characteristic $\chi(\bcon(4))=-1$. 
The indexed sum of critical points of the function $\rho: \bcon(4) \rightarrow \RR$ gives an alternative computation of the Euler characteristic as
\[
\chi(\bcon(4)) = \sum _k (-1)^k  \# (\,\textrm{critical points of co-index = } k \, ).
\]

We count the {\em labeled} configurations in $\bcon(4)$: 
since the $4$-Ring has symmetry group ~$D_4$ of order $8$ in $SO(3)$, there are $3=|\Sigma_4/D_4|$ critical points of this type with co-index $1$; 
and since TET has symmetry group $A_4$ of order $12$ in $SO(3)$, there are really $2=|\Sigma_4/A_4|$ critical points of this type with co-index $0$; 
and so we obtain
 \[
 \chi(\bcon(4))=2-3=-1,
 \]
 as predicted. 
 In fact, the Morse complex for $\ir$ captures the fact that $\bcon(4)$ itself has the homotopy type of the
  $\Theta$-graph:  
  there are $2$ vertices ($0$-cells) in the complex corresponding to the maxima (co-index $0$) $\textrm{TET}^+$ and $\textrm{TET}^{-}$ configurations; 
  there are $3$ edges ($1$-cells) corresponding to the $3$ saddle (co-index $1$) $4$-Ring configurations.

%***************************
%  Subsection 4.7
%***************************
\subsection{Topology Change for Variable Radius: $N \ge 5$}\label{sec:48}

The complexity of the changes in topology of the configuration space grows rapidly with $N$.
For larger  values of $N$ there are many $\ir$-critical configurations which are not maximal.

The value $N=12$ is large enough to be extremely challenging to obtain a complete analysis of the critical configurations of the configuration space, and to analyze the variation of the topology as a function of the radius $r$. 
The Betti numbers for $N=12$ for radius $r=0$ given in Table ~\ref{table4} differ greatly from those at $r=r_{max}(12)$ where the cohomology of $\bcon(12)[r_{max}(12)]$ is entirely in dimension $0$, according to the Purity Property, which holds for $N=12$ by results in Section ~\ref{sec:3}. 
This topology change involves millions of (labeled) critical points. Its full investigation remains a task for the future.

%%%%%%%%%%%%%%%%%%%%%%%%%%%%%%%%%%%%%%%%%%%%%%%%%%%%%%%%%%%%%%%%%%%%%%%%%%%%%%%%%
%%                                          %%                                                        %%                                                        %%                                                                         %%
%%   SECTION  5 UNIT RADIUS SPHERES           %%                                                         %%                                                        %%                                                                         
%%                                          %%                                                         %%                                                         %%                                                                          %%
%%%%%%%%%%%%%%%%%%%%%%%%%%%%%%%%%%%%%%%%%%%%%%%%%%%%%%%%%%%%%%%%%%%%%%%%%%%%%%%%%%
\section{Unit Radius Configuration Space for $12$ Spheres}\label{sec:5}
In this section, we discuss $\con(12)[1]$ and $\bcon(12)[1]$, the configuration spaces of $12$ unit spheres touching a central unit sphere $\SS^2$.
These configuration spaces are remarkable and have some special properties.
The value $r = 1$ is a critical value and 
that $\bcon(12)[1]$
has (at least) two geometrically distinct critical points, the $\FCC$ and $\HCP$ configurations. 
We believe that $r=1$ is the maximal radius  where the spheres $\bcon(12)[r]$ are arbitrarily permutable with motions remaining on $\SS^2$ (see Section ~\ref{sec:65}).

%***************************
%  Subsection  5.1 NOT SET OFF 
%***************************
%\subsection{Generalities}\label{sec:51}

The  case where all spheres are unit spheres has been extensively studied in connection with sphere packing. 
The value $r=1$ is a critical value of the radius function ~$r$, and we will see that the associated configuration spaces $\con(12)[1]$ and  $\bcon(12) [1]$ are not manifolds.
To better understand their topology, it is useful to consider the 
spaces $\con(12)[r]$ and $\bcon(12)[r]$ for $r$ in a neighborhood of $1$.
These are stratified spaces naturally embedded in $(\SS^{2})^{12}$ and filtered by $r$.
For noncritical values of $r$, the spaces $\con(12)[r]$ and $\bcon(12)[r]$ are submanifolds with boundary.
For all $r< r_{max}(12)$, the space $\con(12)[r]$ has top dimension $24$. After factoring out the ambient $SO(3)$-action, the space $\bcon(12)[r]$ has top dimension $21$.

%***************************
%  Subsection 5.1
%***************************
\subsection{Three Remarkable Configurations of Unit Spheres: DOD, FCP, HCC}\label{sec:51}
We now consider the three configurations of $12$ touching spheres singled out by Frank (1952) ~\cite{Frank:1952}. 
In Figure ~\ref{fig:configs}, the three polyhedra have vertices located at the $12$ touching sphere centers of these configurations and centroids located at the center of the central sphere. 
The edges of these polyhedra specify the contact graphs of these configurations, also pictured schematically in Figure ~\ref{fig:configs}.
The $\DOD$ configuration realizes the optimal contact graph for $N=12$  given in Figure ~\ref{fig:contact} in Section ~\ref{sec:33}.  

\begin{itemize}[leftmargin=1.8em]
\item The $\DOD$ configuration is obtained by placing $12$ spheres touching a central $13$-th sphere at the vertices of an inscribed icosahedron;
such touching points are also the centers of the faces of a circumscribed dodecahedron. 
It has oriented symmetry group $A_5$, the icosahedral group, of order $60$ and in $\bcon(12)[1]$ there are $\frac{|\Sigma_{12}|}{|A_5|}=\frac{12!}{60}=7983360$ of these.  
 
\item The $\FCC$ configuration is obtained by stacking three layers of the hexagonal lattice, with the third layer not lying over the first layer.
The inscribed polyhedron formed by the convex hull of the $12$ points of the $\FCC$ configuration where the spheres touch the central sphere is a {cuboctahedron.} 
The circumscribed dual polyhedron which has the $12$ points as the center of its faces is a {rhombic dodecahedron}. 
It has oriented symmetry group $\Sigma_4$, the octahedral group, of order $24$ and in $\bcon(12)[1]$ there are $\frac{|\Sigma_{12}|}{|\Sigma_4|}=\frac{12!}{24}=19958400$ of these. 

\item The $\HCP$ configuration is obtained by stacking three layers of the hexagonal lattice, with the third layer lying directly over the first layer.
The inscribed polyhedron  formed by the convex hull of the $12$ points of $\HCP$ where the spheres touch the central sphere is a { triangular orthobicupola.}
This polyhedron is the Johnson solid $J_{27}$. The circumscribed dual polyhedron which has the $12$ points as the center of its faces is a {trapezoidal rhombic dodecahedron}.
It has oriented symmetry group $D_3$, the dihedral group of order $6$, and in $\bcon(12)[1]$ there are $\frac{|\Sigma_{12}|}{|D_3|}=\frac{12!}{6}=79833600$ of these. 
\end{itemize}
The $\DOD$ configuration is an interior point of $\bcon(12)[1]$, while
the $\FCC$ and $\HCP$ configurations are elements of $\partial \bcon(12)[1]$.

%***************************
%  Figure 8: DOD FCC HCP     
%***************************
\begin{figure}[htbp] 
   \centering
		\includegraphics[scale=.39]{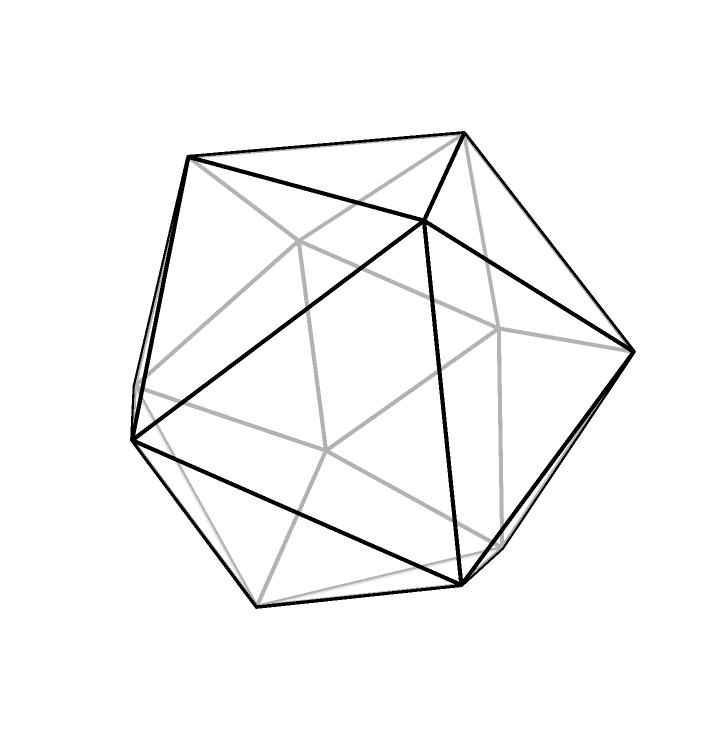} \quad\quad\quad
		\includegraphics[scale=.42]{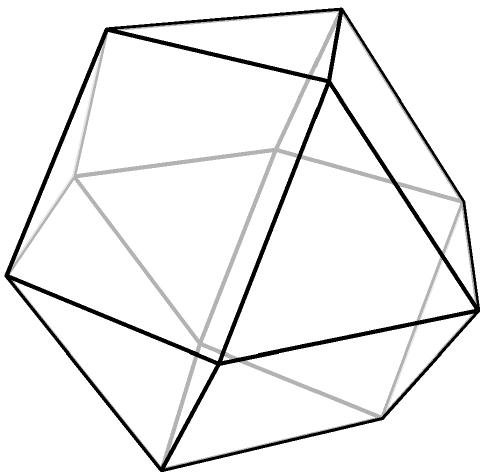}\quad\quad\quad
		\vspace{.2in}\includegraphics[scale=.38]{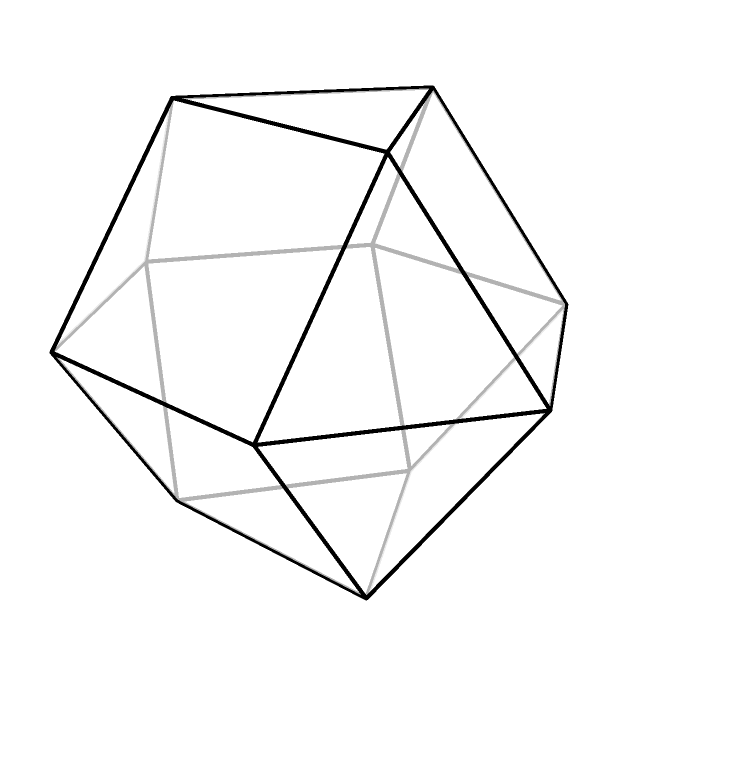}
		
		\includegraphics[scale=.38]{figures/n12.pdf} \quad
		\includegraphics[scale=.37]{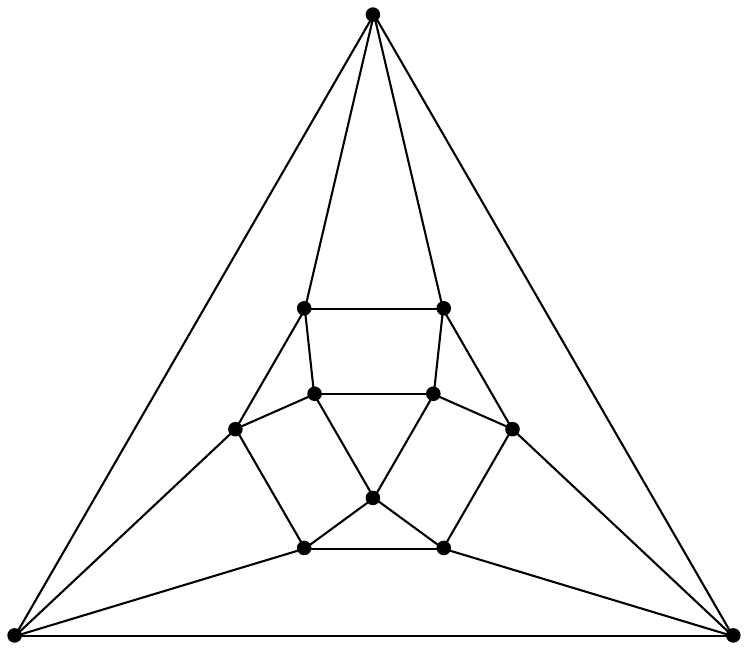}\quad
		\includegraphics[scale=.37]{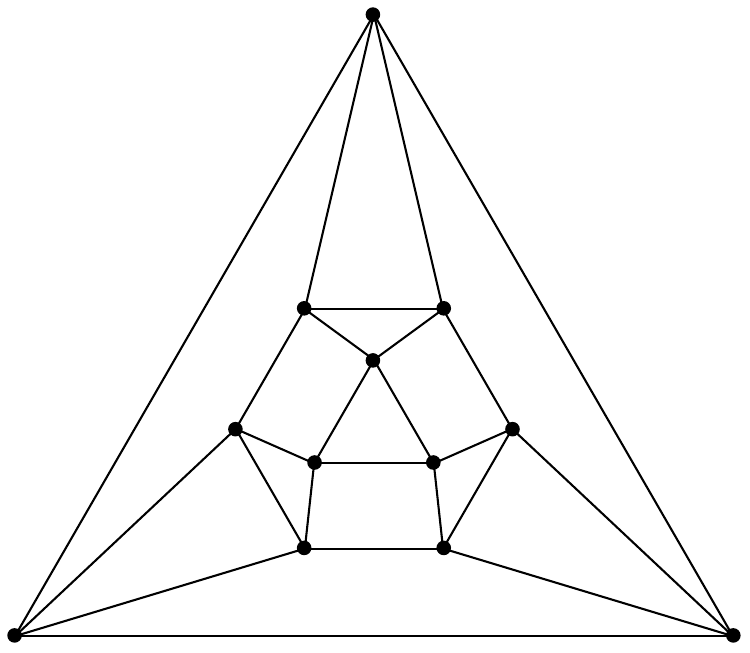}
		\captionsetup{justification=centering}

   \caption{The DOD, FCC, and HCP configurations with their corresponding contact graphs }
   \label{fig:configs}
\end{figure}

%***************************
%  Subsection 5.2
%***************************
\subsection{Three Remarkable Configurations of Unit Spheres: Rigidity Properties}\label{sec:52}
We first consider rigidity properties of these packings.  
In the following definition we identify a sphere tangent to the $2$-sphere with the circular disk (i.e. spherical cap) on $\SS^2$ that it produces by radial projection. 

%***************************
%  Definition 5.1
%***************************
\begin{definition} (cf. Connelly ~\cite[p. 1863]{Connelly:2008})
A packing of disks on $\SS^2$ is {\em locally jammed} if each disk is held fixed by its neighbors.  
That is, no disk in the packing can be moved if all the other disks are held fixed.
We say a configuration of disks is {\em jammed} if it can only be moved by rigid motions.
We call it {\em completely unjammed} if each disk can be moved slightly while holding all the other disks fixed. 
\end{definition}

%***************************
%  Theorem 5.2
%***************************
\begin{theorem}\label{th913}
The $\DOD$ configuration in $\con(12)[1]$ is completely unjammed. Its space of (infinitesimal) deformations has dimension $24$.
The deformation space is $21$ dimensional if viewed in $\bcon(12)[1]$.
\end{theorem}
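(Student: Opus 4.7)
The plan is to show that the $\DOD$ configuration sits in the interior of $\con(12)[1]$ because, at radius $r=1$, the $12$ surrounding spheres do not touch each other, and hence each can be perturbed independently. All the dimension counts will then follow immediately, since locally $\con(12)[1]$ will be an open subset of $(\SS^2)^{12}$.

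First, I would compute the angular separation of adjacent icosahedral vertices inscribed in $\SS^2$: a direct coordinate computation (or a lookup in Table~\ref{tab711}) gives angular distance $\arccos(1/\sqrt{5}) \approx 63.4349^\circ$, which is exactly $\theta_{max}(12)$. Next, I would observe that the angular diameter associated to radius $r=1$ via Lemma~\ref{lemma:31} is
\[
\theta(1) = 2 \arcsin\!\left(\tfrac{1}{2}\right) = \tfrac{\pi}{3} = 60^\circ.
\]
Since $\arccos(1/\sqrt{5}) > \pi/3$, at the $\DOD$ configuration the $12$ surrounding unit spheres are pairwise separated by a strictly positive gap; equivalently, the minimum angular separation between sphere centers strictly exceeds the threshold $\theta(1)$, so $\ir(\DOD) > \tfrac{\theta(1)}{2}$. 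In particular the $\DOD$ configuration lies in the open subset $\con^{+}(12;\theta(1))$, i.e. in the interior of $\con(12)[1]$ in $(\SS^2)^{12}$.

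Because the constraint $\theta(\bu_i,\bu_j) \ge \theta(1)$ is strict at $\DOD$ for every pair, there is an open neighborhood $\sN \subset (\SS^2)^{12}$ of $\DOD$ on which all the pairwise constraints remain strict. This $\sN$ is an open submanifold of $(\SS^2)^{12}$ of dimension $2 \cdot 12 = 24$, and it is entirely contained in $\con(12)[1]$. Since each factor $\bu_i$ can be varied independently within a small spherical cap while the other eleven are held fixed, the $\DOD$ configuration is completely unjammed in the sense of the definition preceding the theorem, and its space of infinitesimal deformations in $\con(12)[1]$ is precisely $T_{\DOD}(\SS^2)^{12}$, of dimension $24$.

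Finally, I would quotient by $SO(3)$. The $\DOD$ configuration has finite (discrete) stabilizer $A_5$ (the rotational icosahedral group), so the $SO(3)$-action is locally free near $\DOD$ and the quotient $\bcon(12)[1]$ is locally a manifold of dimension $24 - 3 = 21$ there. The main (minor) obstacle is purely bookkeeping: making sure the openness of the pairwise strict inequality really gives an honest manifold chart of the full dimension, and that the $A_5$-stabilizer does not reduce the effective dimension of the quotient; both follow directly from the slice theorem applied to the locally free smooth $SO(3)$-action on the open set $\sN$.
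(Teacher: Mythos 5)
Your proof is correct and takes essentially the same approach as the paper's: you verify numerically that the minimum angular separation at $\DOD$ (namely $\arccos(1/\sqrt{5}) = \theta_{max}(12) \approx 63.43^\circ$) strictly exceeds the threshold $\theta(1)=\pi/3$, which is the quantitative content of the paper's one-line observation that $r_{max}(12)>1$ is attained at $\DOD$, hence $\DOD$ lies in the interior and the deformation space is full-dimensional. Your extra bookkeeping (open neighborhood in $(\SS^2)^{12}$, finite $A_5$ stabilizer and the slice theorem for the $SO(3)$ quotient) is a sound elaboration of what the paper leaves implicit.
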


\begin{proof}
The maximal radius for $12$ spheres is $r_{\max}(12) > 1$ and is achieved in the $\DOD$ configuration.
Therefore the deformation space at $\DOD$ is full dimensional.
\end{proof}

In contrast, both the $\FCC$ and $\HCP$ configurations are \emph{locally jammed}, i.e. they are rigid against motion of any one disk while holding all the other disks fixed;  
each of their infinitesimal deformation spaces has codimension at least $2$. 

In Section ~\ref{sec:53}, we will describe a deformation of the $\DOD$ packing to the $\FCC$ packing.
This deformation, properly adjusted, has $6$ moving balls during its final phase arriving at $\FCC$.
(The $6$ fixed balls form an antipodal pair of ``triangles''.)
We believe this value $6$ to be the smallest number of moving balls needed to unjam the $\FCC$ configuration.
For a manual on how to unlock $\FCC$, see Section ~\ref{manual}.

A deformation of the $\DOD$ configuration to the $\HCP$ configuration, also described in Section ~\ref{sec:53}, requires $9$ moving balls at the instant of arrival at $\HCP$. 
We believe this value ~$9$ to be the smallest number of moving balls needed to unjam the $\HCP$ configuration.
For a manual on how to unlock $\HCP$, see Section ~\ref{manual}.
A possible reason for the larger number of moving balls needed to unjam the $\HCP$ configuration compared with that of the $\FCC$ configuration is that the $\HCP$ configuration has fewer local symmetries.

%***************************
%  Subsection 5.3
%***************************
\subsection{Three Remarkable Configurations of Unit Spheres: Criticality Properties}\label{sec:52a}
The $\FCC$ and $\HCP$ configurations are critical configurations for $r=1$. 
According to Theorem ~\ref{thm:converse} it suffices to show that these configurations carry balanced contact graph structures.

%***************************
%  Theorem 5.3
%***************************
\begin{theorem}\label{th913bb}
The $\FCC$ configuration and $\HCP$ configuration for $r=1$ carry balanced contact graph structures.
Consequently, $r=1$ is a  critical value of the radius function on $\bcon(12)$. 
\end{theorem}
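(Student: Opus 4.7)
The plan is to apply Theorem~\ref{thm:converse}: if each of the $\FCC$ and $\HCP$ configurations supports a balanced stress graph on its contact graph, then it is critical for maximizing $\ir$ in $\bcon(12)$, so $r=1$ (equivalently $\ir = \pi/6$) is a critical value of the radius function. I will exhibit such a balanced stress graph in each case by a direct force-balance computation at each vertex, exploiting the symmetry of the respective polyhedra.

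For $\FCC$, the contact graph is the edge graph of a cuboctahedron inscribed in $\SS^2$; at each vertex $\bu^*$ the four neighbors $\bu_1,\ldots,\bu_4$ satisfy $\bu_i\cdot\bu^* = \cos(\pi/3) = 1/2$, so they lie in the plane $P = \{\bv : \bv\cdot\bu^* = 1/2\}$ parallel to the tangent plane at $\bu^*$. I give all $24$ edges equal positive weight. The key identity to verify is $\sum_i \bu_i = 2\bu^*$. This follows because the four neighbors form the rectangle of the vertex figure inside $P$, whose centroid must lie on the symmetry axis $\RR\bu^*$ (being fixed by the $C_2$ rotation of the cuboctahedron about the axis through $\bu^*$ and $-\bu^*$), and hence equals $\tfrac12\bu^*$. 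Using $|\bu_i - \tfrac12\bu^*| = \sqrt{3}/2$ at angular distance $\pi/3$, this yields
\[
\sum_{i=1}^{4} T_i = \frac{2}{\sqrt{3}}\sum_i\bigl(\bu_i - \tfrac12\bu^*\bigr) = \frac{2}{\sqrt{3}}\bigl(2\bu^* - 2\bu^*\bigr) = 0,
\]
so the uniform stress graph is balanced at every cuboctahedral vertex.

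For $\HCP$, the $12$ vertices decompose into a top triangle, a middle equatorial hexagon, and a bottom triangle. The key observation is that the six middle vertices are equally spaced on a great circle (the equator) with angular separation $2\pi/6 = \pi/3$, and hence form a $6$-Ring in the sense of Theorem~\ref{thm:43}. I take as the stress graph the six hexagon edges with equal positive weight, and give weight $0$ to the remaining $18$ edges. At each middle vertex the two hexagonal neighbors lie at arc distance $\pm\pi/3$ along the same great circle, so their outward unit tangents are antipodal and cancel; at each top or bottom vertex no incident edge carries positive weight, so the balance condition is automatic. This exhibits a (sparse) balanced stress graph on the contact graph of $\HCP$.

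The main subtlety is in the $\HCP$ case, where one might first hope that equal weights on all $24$ edges would work by analogy with $\FCC$. That attempt fails: at a middle equatorial vertex, the two slant edges to the top-triangle and bottom-triangle neighbors have outward unit tangents with opposite vertical components but identical horizontal components along the equator, so they add rather than cancel. In fact, an equivariant analysis with three weight parameters on the three orbits of edges under the full symmetry group of $\HCP$ (triangle edges, slant edges, and hexagon edges) shows that the triangle and slant weights are forced to vanish, so the equatorial $6$-Ring is essentially the only balanced stress graph available. With both stress graphs in hand, Theorem~\ref{thm:converse} completes the proof.
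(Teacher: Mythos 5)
Your proof is correct, and it establishes the theorem. The $\FCC$ part is essentially the same as the paper's argument (uniform weights plus symmetry), though you make the symmetry claim explicit via the identity $\sum_i \bu_i = 2\bu^*$, which is a cleaner verification than the paper's appeal to $\Sigma_4$-symmetry alone.

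For $\HCP$ you take a genuinely different and more economical route. The paper assigns positive weights according to the three types of face adjacencies (triangle--square, square--square, triangle--triangle) and checks a linear force balance, whereas you put weight only on the six equatorial edges, observe they form a great-circle $6$-Ring, and invoke the $N$-Ring balance that the paper already established in the proof of Theorem~\ref{thm:43}. Since the definition of a balanced configuration allows zero weights, both stress graphs are legitimate; yours requires no new computation and is arguably the slicker argument.

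However, your closing assertion that ``the equatorial $6$-Ring is essentially the only balanced stress graph available'' is false, and the paper's own proof is a counterexample: it exhibits balanced stress graphs with positive weight on $21$ of the $24$ edges, and on all $24$. The source of the error is the orbit count. Under the $D_{3h}$ symmetry of $\HCP$, the $24$ contact-graph edges split into \emph{four} orbits, not three: the six hexagon edges fall into two orbits of three, distinguished by whether the edge separates two square faces or two triangular faces of the triangular orthobicupola. (Geometrically, a $60^\circ$ rotation of the equator is not a symmetry of the $12$-point configuration, since the polar triangles only admit $C_3$; so adjacent equatorial edges are inequivalent.) If you allow the two equatorial orbits to carry distinct weights, the balance at a polar vertex still forces the cap weight to equal the slant weight (call it $w_1$), but the balance at an equatorial vertex now gives a single nontrivial relation between the two equatorial weights and $w_1$ (one must exceed the other by $\tfrac{2}{3}w_1$), which has plenty of all-positive solutions. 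Your equivariant computation is correct \emph{given} three weight parameters, but the hypothesis that there are only three orbits is wrong. This slip does not affect the proof of the theorem --- exhibiting one balanced stress graph suffices --- but the uniqueness claim should be deleted.
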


\begin{proof}
By Theorem ~\ref{thm:converse}, a sufficient condition for the criticality of a configuration for maximizing injectivity radius is that its contact graph can be balanced. 
That is, a set of positive weights may be assigned to the edges of the contact graph so that at each vertex the weighted vector sum, defined by the outward tangent vectors to the incident edges, vanishes. 

%***************************
%  Figure 9: Stress graphs    
%***************************
\begin{figure}[htbp] 
  \begin{centering}
          \quad\quad\quad\quad\quad\includegraphics[width=2.4in]{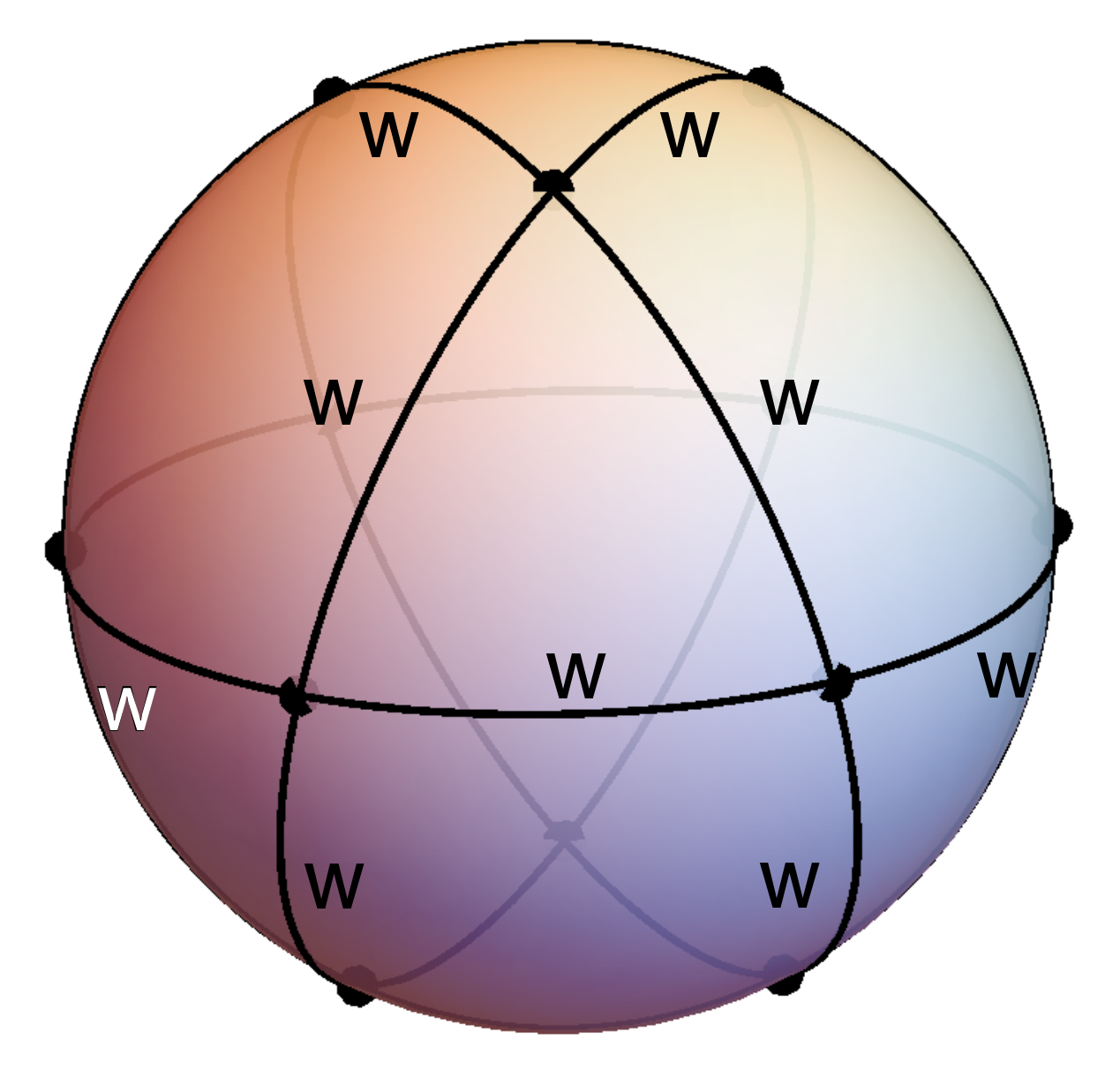} \newline
    (a) FCC Configuration\\
   \quad\quad\quad\quad \quad\includegraphics[width=2.7in]{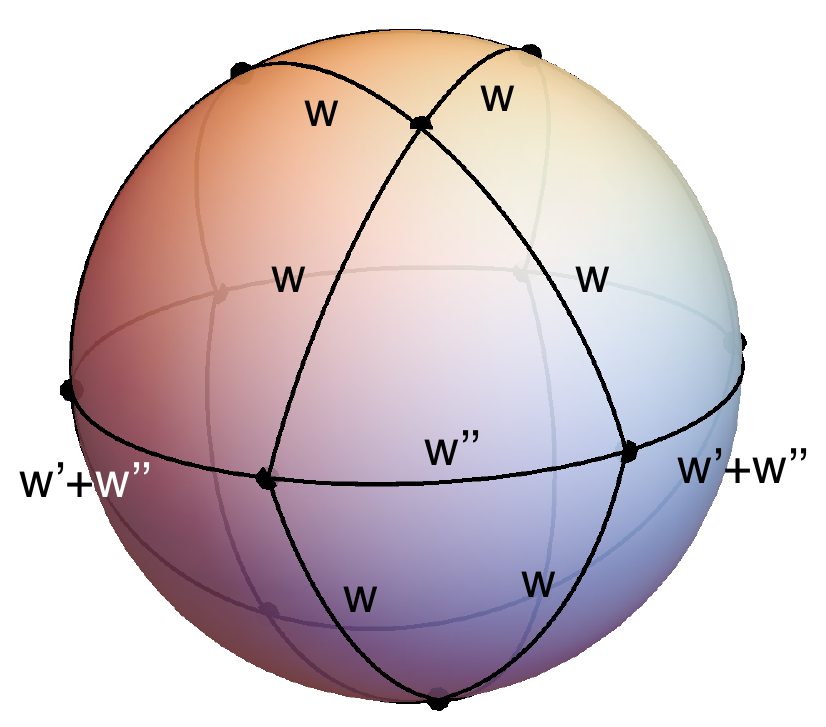} \newline
     (b) HCP Configuration\\
       \end{centering}
   \caption{Stress graphs for $\FCC$  and $\HCP$ configurations} \label{fig:stress-graph}
\end{figure}

We now indicate weight values for the $\FCC$ and $\HCP$ configurations (see Figure ~\ref{fig:stress-graph}).

(1) At radius $r=1$ for the $\FCC$ configuration, the stress graph is balanced when all the weights are equal.  
This can be seen from the cubic- or $\Sigma_4$-symmetry of the contact graph. 

(2) At radius $r=1$ for the $\HCP$ configuration, consider a weight $w_1$ on edges between triangular faces and square faces, a weight $w_2$ on edges between pairs of square faces, and a weight $w_3$ on edges between pairs of triangular faces.
From the structure of the contact graph, it is possible to choose a constant $w_1 > 0$ and find a weight $w_2 = w' >0$ which balances the associated stress graph.  
This suffices to balance this configuration with some zero weights. 
However, it is also possible to add a uniform constant weight $w'' = w_3 >0$ to the equatorial great circle, giving a balanced stress graph with positive weights on all edges of $w_1, w_2 := w'+w'', w_3 := w''$.
\end{proof}

The $\DOD$ configuration is not a critical configuration for $r=1$; 
instead it is a critical configuration at the maximal radius $r= r_{max}(12)\approx 1.10851$.
As noted in Section ~\ref{sec:2}, Fejes T\'{o}th ~\cite{FT:1943b}  conjectured that this configuration does have a certain extremality property for local packing by equal spheres, that it gives a minimizer for a single Voronoi cell of a unit sphere packing.
This statement, the Dodecahedral Conjecture, was proved in 2010  by Hales and McLaughlin ~\cite{HalesM:2010}.

%***************************
%  Theorem 5.4
%***************************
\begin{theorem}\label{th514a} {\rm (Hales and McLaughlin (2010)) }
A $\DOD$ configuration of unit spheres minimizes the volume of a Voronoi cell of a unit sphere  with center at the origin of $\RR^3$ over all sphere packing configurations of unit spheres containing that sphere.
\end{theorem}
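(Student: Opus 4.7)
The overall strategy is to localize the problem: show the lower bound on Voronoi cell volume one \emph{piece} at a time, and arrange those local inequalities so that their sum gives the desired global bound with equality only for the regular (circumscribed) dodecahedron. Concretely, fix a unit sphere packing containing the unit sphere $B_0$ centered at $0$, and let $V_0$ denote its Voronoi cell. Decompose $V_0$ into Rogers-type simplices based at the origin, one for each flag (vertex, edge, face) of $\partial V_0$; equivalently, triangulate $\partial V_0$ (or the radial projection of its $1$-skeleton to $\SS^2$) and cone each triangle to $0$. Each Rogers simplex is determined by a few scalar parameters (distances from $0$ to the nearest neighbor centers generating the relevant Voronoi faces, plus dihedral data), and its volume is an explicit semi-algebraic function of these parameters.

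Next, I would define a \emph{score function} $\sigma$ on each Rogers simplex (mimicking Hales' strategy for the Kepler conjecture) of the form: (true volume) $-$ (fair share), where the ``fair share'' is chosen so that the regular dodecahedral simplex has score exactly $0$ and so that $\sigma$ admits a tractable local lower bound. The goal is then to prove $\sum \sigma \ge 0$, with equality iff every constituent simplex is congruent to a face-cone of the regular dodecahedron. Because the Voronoi cell contains $B_0$, the combinatorial types of $\partial V_0$ are constrained: one can bound the number of faces, vertices per face, and the angular/metric parameters using the packing hypothesis (disjointness of unit balls means pairwise centers are at distance $\ge 2$). These constraints cut the problem down to a finite list of combinatorial types of ``local Voronoi neighborhoods,'' each parametrized by a compact semi-algebraic cell in a low-dimensional space.

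The crux is then to verify the local inequality $\sigma \ge 0$ (or an appropriately weighted variant that accounts for combinatorial defects) across every combinatorial type. Analytically, one expects that near the dodecahedral simplex the Hessian of $\sigma$ is nonnegative and vanishes only in the expected symmetry directions; away from the symmetric point, $\sigma$ must be bounded below by an explicit margin. This two-step bound -- a local Taylor analysis at the extremizer plus a global interval-arithmetic sweep on the complement -- is the standard pattern. In practice the global sweep is carried out by recursive subdivision of parameter boxes, discarding any box on which a verified lower bound for $\sigma$ is already nonnegative and subdividing otherwise.

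The hard part, and what makes this a theorem rather than a two-page calculation, is twofold. First, engineering the score function so that the local inequality is \emph{both} sharp at the dodecahedral configuration \emph{and} verifiable on all other combinatorial types; naive choices either fail sharpness (yielding a strict but useless bound elsewhere) or blow up the case analysis to intractable size. Second, actually executing the interval-arithmetic verification for the resulting, still very large, case list: this requires careful tuning of subdivision strategies and, for honest rigor, a formally checked certificate. Both issues are exactly what Hales and McLaughlin had to confront, and neither admits an elegant shortcut; the proof is unavoidably computer-assisted, and the main mathematical content lies in setting up a score-function / linear-programming framework within which machine verification is feasible.
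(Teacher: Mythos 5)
The paper does not contain a proof of this theorem: it is stated as an external result, attributed to Hales and McLaughlin and cited from their 2010 paper, so there is no ``paper's own proof'' to compare against. Your write-up is a reasonable high-level account of the Hales--McLaughlin strategy (Rogers-simplex decomposition of the Voronoi cell, a score function calibrated to vanish on the dodecahedral simplex, reduction to finitely many combinatorial types, and an interval-arithmetic sweep to certify the local inequalities), but it is a proof \emph{plan}, not a proof. The entire mathematical content of the theorem lies in the parts you flag as ``the hard part'': constructing a score function that is simultaneously sharp at the regular dodecahedron and verifiable everywhere else, controlling the combinatorial explosion of cases (including truncation issues when Voronoi faces are generated by centers far from the origin), and actually carrying out and certifying the computation. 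As written, your argument asserts that these steps can be done rather than doing them, so it does not establish the theorem; it correctly identifies why this is a major theorem requiring computer assistance, and it correctly identifies the architecture of the known proof, but a referee could not accept it as a substitute for the Hales--McLaughlin argument.
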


The volume of this Voronoi cell gives a local sphere packing density of approximately $0.7546$, which exceeds the sphere packing density $\frac{\pi}{\sqrt{18}} \approx 0.74048$ in $3$-dimensional space.  

%***************************
%  Subsection 5.4
%***************************
\subsection{Three Remarkable Configurations of Unit Spheres: Deformation Properties}\label{sec:53}
We now show that the $\DOD$ configuration can be continuously deformed inside $\bcon(12)[1]$ to the $\FCC$ configuration  and to the $\HCP$ configuration.

%***************************
%  Theorem 5.5
%***************************

\begin{theorem}\label{th55}\hspace{0em}\par~$(1)$ On the space $\con(12)[1]$ there is a continuous deformation of the $\DOD$ configuration to the $\FCC$ configuration that remains in the interior $\con^{+}(12)[1]$ of $\con(12)[1]$ till the final instant.

$(2)$ There is also a continuous deformation of the $\DOD$ configuration to the $\HCP$ configuration that remains in the interior $\con^{+}(12)[1]$ of $\con(12)[1]$ till the final instant. 
\end{theorem}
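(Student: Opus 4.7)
The plan is to construct two explicit one-parameter families of labeled configurations $\bU(t), \bV(t) \in \con(12)[1]$ for $t \in [0,1]$, both beginning at a DOD configuration at $t=0$ and ending respectively at an FCC and an HCP configuration at $t=1$, such that the minimum pairwise angular distance on $\SS^2$ exceeds $60^\circ$ strictly for every $t\in[0,1)$. Since at $r=1$ the non-overlap condition is precisely that pairwise angular distances between sphere centers on $\SS^2$ are $\ge 2\arcsin(\tfrac{1}{2}) = 60^\circ$, such families automatically remain in the interior $\con^+(12)[1]$ for $t\in[0,1)$ and hit the boundary only at $t=1$, as required.

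For part (1), I would use Buckminster Fuller's \emph{jitterbug transformation}. The FCC configuration, viewed as the vertex set of a cuboctahedron, has eight triangular faces arranged in four antipodal pairs, each pair sharing a common $3$-fold axis along one of the four diagonals of the circumscribed cube. The jitterbug rotates the eight triangles rigidly and synchronously about these axes (letting the six intervening ``square'' faces hinge and deform), preserving the tetrahedral symmetry $A_4$ and keeping all $12$ vertices on a common concentric sphere throughout. At a specific intermediate angle, determined by the golden ratio, the $12$ vertices coincide with those of a regular icosahedron (DOD). Reversing the parameter direction defines $\bU(t)$ with $\bU(0) = \DOD$ and $\bU(1) = \FCC$.

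For part (2), the target HCP, realized as the vertex set of the triangular orthobicupola $J_{27}$, has only the dihedral symmetry $D_3$. I would construct $\bV(t)$ as a modified jitterbug respecting a chosen $D_3$-subgroup of the icosahedral group $A_5$. Fix a $3$-fold axis of the DOD configuration passing through the centers of two opposite triangular faces of the icosahedron; this axis organizes the $12$ vertices into four latitudinal triangles at heights $\pm z_1, \pm z_2$, with adjacent triangles offset by $60^\circ$. Partition the $20$ icosahedral triangular faces into $8$ ``rigid'' faces (the polar pair together with a $D_3$-symmetric ring of six) and $12$ ``flex'' faces (which pair up at $t=1$ into the six equatorial squares of $J_{27}$), and rotate the rigid triangles synchronously about their own normals. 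Equivalently, the two inner-layer triangles of DOD collapse toward the equator where they merge into a regular hexagon, while the top and bottom outer triangles are simultaneously rotated so as to end aligned (HCP) rather than offset by $60^\circ$ (FCC).

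The main obstacle in both parts is verifying strict inequality, $\min_{i<j}\theta(\bu_i(t),\bu_j(t)) > 60^\circ$, for every $t \in [0,1)$. In each part the symmetry group ($A_4$ or $D_3$) acts on the set of $\binom{12}{2}=66$ unordered pairs, reducing the verification to finitely many orbit representatives, each of which is an explicit elementary trigonometric function of the single parameter. The check then reduces to analyzing monotonicity and endpoint behavior of a small handful of one-variable functions. For the classical jitterbug of part (1) this is well documented and the distance functions are easily seen to attain $60^\circ$ only at $t=1$. For part (2) the added synchronized rotation of the outer triangles must be tuned so that no orbit of pairs becomes tangent before the inner triangles reach the equator; verifying this synchronization is the only genuinely new computation required, and it is an elementary exercise in spherical trigonometry once the $D_3$-symmetric parametrization is pinned down.
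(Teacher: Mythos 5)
Your approach for part (1) via the jitterbug is a genuinely different route from the paper: the paper constructs an explicit ``$M_6$-move'' (first push the two polar triangles toward the poles, then slide the six remaining balls to the equator while rotating the polar triangles by $\pi/3$), with the interior condition secured by delaying the polar contacts and using a compactness argument; you instead use Fuller's jitterbug and a rescaling argument. This is fine in principle, and the paper itself notes in its Section on the jitterbug that the jitterbug path enters $\bcon^{+}(12)[1]$ immediately after leaving $\FCC$, so reversing the parameter gives exactly the property you want. Do note, though, that even for part (1) the key inequality --- that after rescaling to fixed circumradius the triangle edges remain the pairwise minimum and increase strictly monotonically, and that the shrinking diagonal of each square never dips below that minimum --- requires an explicit check; ``well documented'' is not quite a proof in this context.

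The genuine gap is in part (2). The ``modified jitterbug'' to $\HCP$ is not a standard construction and your description does not actually pin one down: you first say to ``rotate the rigid triangles synchronously about their own normals,'' which is a pure in-plane rotation and cannot by itself move the inner-layer triangles to the equator; you then say ``equivalently'' the two inner-layer triangles collapse to the equator while the outer triangles rotate, but that second description is not equivalent to the first --- it is a different (translation plus rotation) motion closer to the paper's $M_6$-move, and the equivalence is asserted rather than proved. There is also a directionality issue you do not address: in $\DOD$ the two polar triangles are azimuthally offset by $60^\circ$, and they must end \emph{aligned} in $\HCP$, so the ``synchronous'' rotation must in fact be \emph{counter}-rotation (the two poles turning in opposite senses), which breaks the jitterbug template; the paper handles exactly this by letting $\varphi^S=-\varphi^N$. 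Finally, you explicitly defer the one computation that actually carries the proof --- verifying that no pair of balls becomes tangent before $t=1$ --- and this is precisely where the paper's Lemmas~\ref{lem:56} and~\ref{lem:57} do the work (an explicit monotonicity argument in a right spherical triangle) before the compactness modification pushes the path into the interior. Until that synchronization/tuning is specified and the non-tangency verified, part (2) of your proposal is a plausible sketch, not a proof.
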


The motions of these two deformations, measured from the touching points of the $12$ spheres to the central sphere,  can be given by piecewise analytic functions on the $2$-sphere. 
The proof of Theorem ~\ref{th55} is given in Sections ~\ref{sec:531} - ~\ref{sec:546}.  
An unlocking manual for doing it is given in the Appendix (Section ~\ref{manual}). 

%***************************
%  Subsubsection 5.4.1
%***************************
\subsubsection{Coordinates for the icosahedral configuration $\DOD$}\label{sec:531}

To describe the deformations we will need coordinates.
In Sections ~\ref{sec:531} - ~\ref{sec:545}  we suppose  a  ball with radius $1$ centered at $0$ touches all the $12$  balls of the same given radius $r$.
We initially allow all values of $r \in (0,r_{max}(12)]$, but in the move $M_6$ described in later subsections we will necessarily restrict to $r \le 1$. 
We take $\DOD$ to consist of $12$ equal balls of radius $r$, touching a central unit sphere at $12$ vertices of an inscribed icosahedron $I$. 
We view this icosahedron $I$ as embedded in $\mathbb{R}^{3}$ with Cartesian coordinates so that we have: 

\begin{itemize}

\item Its centroid is at the center $(0,0,0)$ of the unit sphere. 

\item It has two opposite faces parallel to the $xy$-plane. 
In other words, for some $z=h$, the intersections $I\cap\left\{z=\pm h \right\}$ are triangular faces of $I.$ 
Here $h= \frac{\phi^2}{\sqrt{3 \phi^2 +3}} \approx 0.79659,$ where $\phi= \frac{1+ \sqrt{5}}{2}$ is the golden ratio. 

\end{itemize}

%***************************
%  Figure 10: 6-Move
%***************************
\begin{figure}
\centering
\begin{minipage}{.5\textwidth}
  \centering
   \includegraphics[scale=.3]{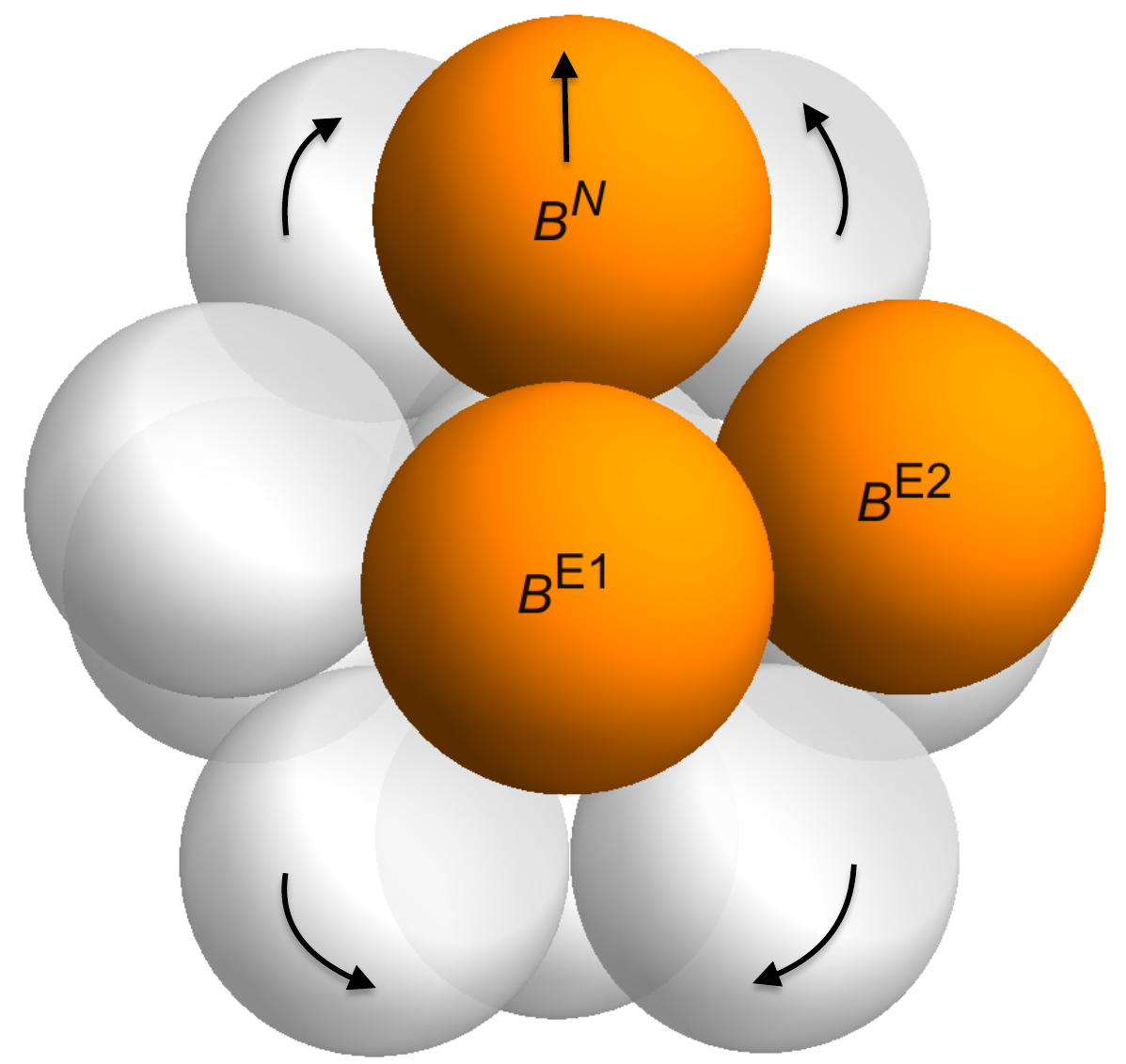} \\
 (a) Phase 1
\end{minipage}%
\begin{minipage}{.5\textwidth}
  \centering
  \includegraphics[scale=.5]{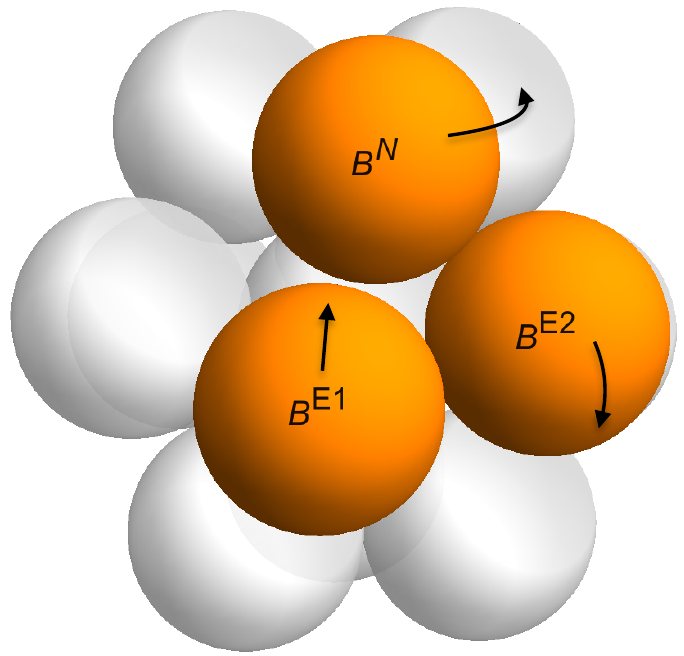} \\
(b) Phase 2
\end{minipage}
   \caption{The $6$-move $M_6$}
   \label{fig:6-move}
\end{figure}

The \textit{north balls} are those which have their centers in the plane $\left\{  z= h \right\},$  
while the \textit{south balls} have their centers in the plane $\left\{  z= -h \right\}.$ 
The three north balls form a \textit{north triangle} which is centrally symmetric to the \textit{south triangle} formed by the three south balls, as in $\FCC$.  
Together with north and south triangles, we have $6$ remaining balls, which will be called \textit{equatorial}, even though in the initial configuration they do not have their centers on the equator. 
The equator lies in the plane $z=0$. 

To fix their positions, let the {\em Greenwich meridian} be defined as $\SS^2\cap\left\{x\geq0,\ y=0\right\},$ and the longitude $\varphi\in\lbrack0,2\pi)$ be measured from it in the counterclockwise direction viewed from the north pole $(0,0,1)$. 
We require: 

\begin{itemize}

\item The center of one of the north balls is in the half-plane of the Greenwich meridian, i.e. this ball touches the Greenwich meridian. 
Let us call this ball $\ball{B}^{N}.$ 

\item The center of one of the equatorial balls is in the half-plane of the Greenwich meridian. 
Let us call it $\ball{B}^{E1}.$ It will necessarily be in the southern hemisphere.  

\end{itemize}

This fixes the location of all  $12$ balls. 
With this orientation of the icosahedron, the meridians of the north triangle are spaced by $\frac{2\pi}{3}.$ 
Furthermore the meridians of the other three balls in the northern hemisphere are also spaced by $\frac{2\pi}{3}$ and the meridians combined are spaced by ~$\frac{\pi}{3}$ as in $\FCC$. 
The same holds for the six balls in the southern hemisphere.

%***************************
%  Subsubsection 5.4.2
%***************************
\subsubsection{The  $M_{6}$-move: two variants}\label{sec:532a}
We now define the ``$6$-move'' deformation $M_6$, which has two variants, one leading from $\DOD$ to the $\FCC$ configuration, and the other leading to the $\HCP$ configuration. 
This move proceeds in two phases.

The first phase is the same for both variants.
It moves the $6$ balls that are not equatorial at constant speed along meridians towards the poles, until they form north and south triangles of three mutually touching balls. 
The $6$ equatorial balls do not move. 

In the second phase, all $12$ balls are moving. 
In both variants, the $6$ equatorial balls, initially not on the equator, move towards the equator along their meridians at constant speed, to arrive on the equator at the end of the move, forming a ring of six balls on the equator.  
This ring is an allowed configuration only if $r \le 1.$  
They do not touch during this move, until the last moment, and then all touch if $r=1$.
At the same time, the north and south triangles will rotate about the polar axis at a variable speed, the same for all six, in such a way as to avoid the equatorial balls.  
They will rotate by $\frac{\pi}{3}$ to their final position. 
For the $\FCC$ move, the north triangle and south triangle rotate in the same direction, while for the $\HCP$ move they rotate in opposite directions.  
A key issue is to suitably specify the variable speed of rotation. 

%***************************
%  Subsubsection 5.4.3
%***************************
\subsubsection{Frst phase of the $M_{6}$-move: two triangles move away from the equator}\label{sec:532} 
Denote by $P$ the parallel $z=z_1$ where the three centers of the north triangle stop. 
Let $-P$ be the parallel $z=-z_1$ where the south triangle stops.

%***************************
%  Subsubsection 5.4.4
%***************************
\subsubsection{Second phase of the $M_{6}$-move: rotating the two triangles}\label{sec:533} 
Each of the six centers of the equatorial balls, initially not on the equator, will move at constant speed along their respective meridians towards their final positions on the equator. 
Parametrize this motion so that at $t=0$, the $6$ balls are at their initial positions, while at $t=1$, the $6$ balls are at their final positions on the equator.

The centers of the north triangle are on $P$ at $t=0$ and will remain on $P$ throughout the move.  
Similarly, the centers of the south triangle are on $-P$ at $t=0$ and will remain on $-P$ throughout the move. 
The triangles simply rotate.

It now suffices to specify functions $\varphi^{N}\left(  t\right)$, which describe the\textit{ increment} of the longitude of the north triangle during the time $\left[  0,t\right]$ and $\varphi^{S}\left(t\right)$, the\textit{ increment} of the longitude of the south triangle during the time $\left[  0,t\right]$. 
We will take $\varphi^{N}\left(  t\right)$ to be a continuous, non-decreasing function, with $\varphi^{N}\left(  0\right) =0,$ $\varphi^{N}\left(  1\right)=\frac{\pi}{6}.$

We get two different moves to $\FCC$ and $\HCP$ depending on which direction the south triangle rotates. 
The motion $\varphi^{S}=\varphi^{N}$ will take us to $\FCC$, and choosing the opposite rotation $\varphi^{S}=-\varphi^{N}$ will take us to $\HCP$.

%***************************
%  Subsubsection 5.4.5
%***************************
\subsubsection{The choice of $\varphi^{N}$}\label{sec:545}
The function $\varphi^{N},$ defined so that no ball from the two triangles hits any equatorial one, is certainly not unique.

Here is a minimal definition of $\varphi^{N},$ beginning at the second phase.
Recall that our balls are open, and that: 
\begin{itemize}

\item the center of one of the three north balls, $\ball{B}^{N}$, is on the half-plane of the Greenwich meridian.  

\item the center of one of the equatorial balls, $\ball{B}^{E1}$, is also on the half-plane of the Greenwich meridian. 

\item there is an equatorial ball with the longitude $\frac{\pi}{3};$ call it $\ball{B}^{E2}.$  

\end{itemize}

\noindent
Note that the center of $\ball{B}^{E1}$ is south of the plane $z=0,$ while that of $\ball{B}^{E2}$ is north of the plane $z=0$. 

Throughout the second phase of $M_6$ the ball $\ball{B}^{E1}$ will move north while $\ball{B}^{E2}$ moves south.  
Let $\ball{B}^{E1}\left( t \right) ,$ $\ball{B}^{E2}\left( t \right) $ denote their positions, $t \in \left[  0,1 \right].$ 
Then for  every $\varphi \in \left[ 0,2\pi \right)$ define $\ball{B}^{N}\left(  \varphi \right)  $ to be the ball with the center at $P$ and with the longitude $\varphi.$ For example, $\ball{B}^{N}\left(  0 \right)  = \ball{B}^{N}.$ 

Now let us define the function $\varphi^{N}$ as follows:
\begin{equation}\label{eq721}
\varphi^{N}\left(  t\right)  = \inf \left\{ \varphi \ge 0 : \ball{B}^{N} \left( \varphi \right) \cap \ball{B}^{E1} \left(  t \right)  = \varnothing \right\}. 
\end{equation} 
Clearly, $\varphi^{N}\left(  t\right) = 0$ for all $t$ small enough. The only thing one needs to check is that
\begin{equation}\label{eq722}
\ball{B}^{N} \left( \varphi^{N} \left( t\right) \right) \cap \ball{B}^{E2} \left( t\right )  = \varnothing 
\end{equation}
holds for all $t \in \left[ 0,1 \right] $.\\ 

%***************************
%  Lemma 5.6
%***************************	
\begin{lemma}\label{lem:56}
An  increment function $\varphi^N(\cdot)$ exists: $\varphi^N(t)$ as defined by \eqref{eq721} satisfies \eqref{eq722}.
\end{lemma}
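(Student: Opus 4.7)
My plan is to reduce \eqref{eq722} to a concrete one-parameter trigonometric inequality and verify it separately in the phase where the infimum \eqref{eq721} is non-binding and the phase where it binds. Writing $s:=\psi_0(1-t)$ for the common latitude magnitude, so that $\ball{B}^{E1}(t)$ sits at spherical coordinates $(-s,0)$ and $\ball{B}^{E2}(t)$ at $(s,\pi/3)$ while $\ball{B}^N(\varphi)$ sits at $(\psi_1,\varphi)$, the spherical law of cosines gives
\[
\cos d(\ball{B}^N(\varphi),\ball{B}^{E1}(t)) \;=\; -\sin\psi_1\sin s+\cos\psi_1\cos s\cos\varphi,
\]
\[
\cos d(\ball{B}^N(\varphi),\ball{B}^{E2}(t)) \;=\; \sin\psi_1\sin s+\cos\psi_1\cos s\cos\!\bigl(\tfrac{\pi}{3}-\varphi\bigr).
\]
At $\varphi=0$ the first distance equals $\psi_1+s$, so \eqref{eq721} is non-binding precisely when $s\ge s^\ast:=\theta(r)-\psi_1$, and on this range $\varphi^N(t)=0$.

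On the non-active subinterval $s\in[s^\ast,\psi_0]$, I will check that $\tfrac{d}{ds}\cos d(\ball{B}^N(0),\ball{B}^{E2}(t)) = \sin\psi_1\cos s-\tfrac{1}{2}\cos\psi_1\sin s$ is positive, which amounts to $\tan s<2\tan\psi_1$ and holds comfortably throughout since $s\le\psi_0$ while $\psi_1$ is bounded below by $\arcsin\sqrt{2/3}\approx 54.74^\circ$ for all $r\in(0,1]$. Hence $d(\ball{B}^N,\ball{B}^{E2})$ is monotonically decreasing in $s$ and attains its minimum at $s=\psi_0$, that is at $t=0$, where the configuration is the post-phase-1 state; admissibility of this state in $\con(12)[r]$ is built into phase 1, so \eqref{eq722} is immediate on this subinterval.

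On the active subinterval $s\in[0,s^\ast]$, the binding condition $d(\ball{B}^N,\ball{B}^{E1})=\theta(r)$ yields $\cos\varphi^N(t)\cos\psi_1\cos s = \cos\theta(r)+\sin\psi_1\sin s$. Subtracting $\cos\theta(r)$ from $\cos d(\ball{B}^N,\ball{B}^{E2})$ and applying the identity $\cos(\tfrac{\pi}{3}-\varphi)-\cos\varphi=-\sin(\tfrac{\pi}{6}-\varphi)$ reduces \eqref{eq722} to the explicit one-parameter inequality
\[
\sin\!\bigl(\tfrac{\pi}{6}-\varphi^N(t)\bigr) \;\ge\; 2\tan\psi_1\tan s.
\]
Both sides vanish at $s=0$, where $\varphi^N=\pi/6$. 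Implicit differentiation of the binding relation yields $\varphi^N{}'(s) = -(\tan\psi_1+\cos\varphi^N\tan s)/\sin\varphi^N$, and a second-order Taylor expansion at $s=0$ then gives $\pi/6-\varphi^N(s) = 2\tan\psi_1\cdot s+Cs^2+O(s^3)$ with an explicitly computable $C>0$, so the gap in the reduced inequality is positive of order $s^2$ near the terminal instant. At the other endpoint $s=s^\ast$ (where $\varphi^N=0$), the inequality becomes $\tfrac{1}{2}\ge 2\tan\psi_1\tan(\theta(r)-\psi_1)$, which via $\sin^2\psi_1 = (1+2\cos\theta(r))/3$ reduces to an elementary algebraic inequality in $\cos\theta(r)$ that one checks directly for $r\in(0,1]$. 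A monotonicity argument in the interior, tracking the sign of the gap's derivative using the implicit expression above, then completes the verification.

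The main obstacle is the tangential behavior at $s=0$: both sides of the reduced inequality agree to first order, so the verification cannot be completed by any purely symmetry-based argument. For instance, the $\pi$-rotation about the axis through latitude $0$ and longitude $\pi/6$ interchanges $\ball{B}^{E1}(t)$ and $\ball{B}^{E2}(t)$ but maps $\ball{B}^N(\varphi^N(t))$ to a south-triangle ball rather than to itself, so a genuine second-order trigonometric estimate is required.
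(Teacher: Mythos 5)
Your proposal takes a genuinely different route from the paper's, and the setup is correct, but the verification has two unfilled gaps that are not routine.

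What the paper does: the proof of Lemma~\ref{lem:56} reduces everything to Lemma~\ref{lem:57}, which is a synthetic argument. There, an auxiliary point $O(t)$ is placed on the parallel $P$ at the intersection with the perpendicular bisector $\varkappa(t)$ of the geodesic arc $AB$ (with $A=\ball{B}^{E1}(t)$, $B=\ball{B}^{E2}(t)$), so that $|OA|=|OB|$ automatically. The right triangle $O(t)DA(t)$ has both legs shrinking as $t$ increases, hence the hypotenuse $|OA|$ is strictly decreasing, and since $|O(1)A(1)|=\frac{\pi}{3}$ one gets $|OA|=|OB|>\frac{\pi}{3}$ for $0<t<1$. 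Combined with the monotonicity of spherical distance along $P$, the binding ball $\ball{B}^{N}(\varphi^N(t))$ sits on the $A$-side of $O$, hence is even farther from $B$ than $O$ is, giving disjointness. Notice this never needs a second-order estimate: the auxiliary point $O$ makes the comparison strict for all $0<t<1$ by a first-order monotonicity argument.

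Your approach is instead fully analytic. The spherical law of cosines expressions, the computation $d(\ball{B}^N(0),\ball{B}^{E1}(t))=\psi_1+s$, the threshold $s^\ast=\theta(r)-\psi_1$, the treatment of the non-active subinterval $[s^\ast,\psi_0]$ via monotonicity of $\cos d(\ball{B}^N(0),\ball{B}^{E2})$ and admissibility of the post-phase-1 state, the reduction of the active subinterval to $\sin(\frac{\pi}{6}-\varphi^N)\ge 2\tan\psi_1\tan s$, the implicit derivative $\varphi^{N\prime}(s)$, and the second-order coefficient $C=\frac{\sqrt3}{2}(1+4\tan^2\psi_1)>0$ at $s=0$ all check out. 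You also correctly identify that for $r=1$ both sides of the reduced inequality agree to first order at $s=0$, so the estimate is genuinely tight there.

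However, the plan is not complete. First, the assertion that ``a monotonicity argument in the interior, tracking the sign of the gap's derivative, then completes the verification'' is exactly the hard part, and it is left unverified. Because of the degenerate zero of the gap $g(s)=\sin(\frac{\pi}{6}-\varphi^N(s))-2\tan\psi_1\tan s$ at $s=0$ (with $g(0)=g'(0)=0$, $g''(0)>0$), positivity near $s=0$ and at $s=s^\ast$ does not by itself preclude $g$ dipping below zero in between; you would need to show $g$ has no interior sign change, e.g.\ by establishing $g'\ge0$ or a convexity/unimodality property, neither of which is immediate from the implicit formula for $\varphi^{N\prime}$ (which moreover blows up as $\varphi^N\to0$). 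Second, the endpoint inequality $\frac{1}{2}\ge 2\tan\psi_1\tan(\theta(r)-\psi_1)$ is reduced to ``an elementary algebraic inequality in $\cos\theta(r)$ that one checks directly'' but not actually carried out; it is true (numerically $\approx 0.26<0.5$ at $r=1$ and easier for smaller $s^\ast$), but this needs to be done. Your closing remark that the tangency means ``the verification cannot be completed by any purely symmetry-based argument'' is also somewhat misleading: the paper's Lemma~\ref{lem:57} does exploit a symmetry (equidistance of $O$ from $A$ and $B$) together with first-order monotonicity and does not need a second-order estimate; the tangency is a feature only of the direct parametrization you chose.
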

\begin{proof}
We use Euler coordinates on the sphere.  
The latitude $\vartheta$ of the parallel $P$ is $\left( \pi-\theta \right),$ where $\theta$ satisfies 
\[
\sin\left(\theta\right)=\frac{1}{\sqrt{3}},\ \cos\left(\theta\right)=\sqrt{\frac{2}{3}},\ \tan\left(\theta\right)
=\frac{1}{\sqrt{2}}. 
\]
\newline  

\noindent By symmetry, it is enough to consider the movement of three balls: 

\begin{itemize}
\medskip
\item $\ball{B}^{N}$ on the parallel $P$. 
Its initial angle $\varphi\left(  t=0\right)  =0.$ 
The latitude $\vartheta$ of $\ball{B}^N$ is constant. 
\medskip
\item $\ball{B}^{E1}$ on the Greenwich meridian $\varphi=0.$ 
\medskip
Its initial latitude is $\vartheta\left(  t=0\right) = \left( \frac{2\pi}{3}- \theta \right) <0$ and final latitude is $\vartheta\left(  t=1\right)  = 0.$   
On the interval, its latitude is given by $\vartheta \left(  t\right)  =\left(  \frac{2\pi}{3}-\theta\right) t.$
\medskip
\item $\ball{B}^{E2}$ on the meridian $\varphi=\frac{\pi}{3}.$ 
Its initial latitude is $\vartheta\left(  t=0\right)  =-\left(  \frac{2\pi}{3}-\theta\right)  >0$ and final latitude is $\vartheta\left(  t=1\right) = 0.$ 
On the interval, its latitude is given by $\vartheta\left(  t\right)  =\left(  \theta-\frac{2\pi}{3}\right)  t.$
\end{itemize}

\noindent The function $\varphi^{N}\left(  t\right)  $ is uniquely defined by: 

\begin{itemize}
\medskip
\item $\varphi^{N}\left(  t\right)  \geq0.$ 
\medskip
\item At every time $t<1$ and after initial contact, the ball $\ball{B}^{N}\left(  t\right)$ touches the ball $\ball{B}^{E1}\left(  t\right)$.    

\end{itemize}

\noindent To complete the proof of Lemma ~\ref{lem:56} it remains to check that the balls $\ball{B}^{N}\left(  t\right)$ and $\ball{B}^{E2}\left(t\right)$ of radius $r \le 1$ are disjoint for $0< t<1$. 
This fact will follow from the next lemma.  \end{proof}

%***************************
%  Lemma 5.7
%***************************
\begin{lemma}\label{lem:57}
Let $0<t<1.$ Consider an {isosceles spherical} triangle $ABO,$ where $A$ has $\varphi=0,$ $\vartheta\left(  t\right)  =\left(  \frac{2\pi}{3}-\theta\right)  t,$ $B$ has $\varphi=\frac{\pi}{3},$ $\vartheta\left(t\right)  =-\left(  \frac{2\pi}{3}-\theta\right)  t,$ and $O=O\left(t\right)$ is defined by $\vartheta\left(  O\right)  =\pi-\theta$ and the touching condition. 
Then $\left\vert AO\right\vert =\left\vert BO\right\vert >\frac{\pi}{3}.$
\end{lemma}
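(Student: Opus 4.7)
The plan is to reduce Lemma~5.7 to an explicit trigonometric inequality. The isosceles property $|AO|=|BO|$ is geometric: the involution $\sigma\colon (\varphi,\vartheta)\mapsto(\pi/3-\varphi,-\vartheta)$ of $\SS^2$ swaps $A$ and $B$, so any point equidistant from $A$ and $B$ lies on the great circle fixed by $\sigma$, i.e.\ the perpendicular bisector of the geodesic $AB$ through the midpoint $(\pi/6,0)$. The touching condition will select its unique intersection with the parallel $P$, guaranteeing $|AO|=|BO|$.

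Writing $A=(0,a)$, $B=(\pi/3,-a)$ and $O=(\varphi_O,P)$ with $a:=(2\pi/3-\theta)t$, and using the identity $\tfrac12\cos\varphi_O-\tfrac{\sqrt3}{2}\sin\varphi_O=\cos(\varphi_O+\pi/3)$, the perpendicular-bisector equation $O\cdot(A-B)=0$ becomes
\[
\cos\bigl(\varphi_O+\tfrac{\pi}{3}\bigr)\,=\,-2\tan a\tan P.
\]
Substituting the angle-sum expansion of $\cos\varphi_O$ into the spherical law of cosines
\[
\cos|AO|\,=\,\sin a\sin P+\cos a\cos P\cos\varphi_O,
\]
the term $\sin a\sin P$ cancels exactly against the contribution coming from $\cos(\varphi_O+\pi/3)=-2\tan a\tan P$, leaving the clean expression
\[
\cos|AO|\,=\,\tfrac{\sqrt3}{2}\cos a\cos P\,\sqrt{1-4\tan^2 a\,\tan^2 P}.
\]
Using $\sin P=\sqrt{2/3}$, $\cos P=1/\sqrt3$ and hence $\tan^2 P=2$, this further reduces to $\cos|AO|=\tfrac12\cos a\sqrt{1-8\tan^2 a}$.

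The inequality $|AO|>\pi/3$ is equivalent to $\cos|AO|<1/2$, i.e.\ $\cos a\sqrt{1-8\tan^2 a}<1$; squaring both (positive) sides and using $\cos^2 a\tan^2 a=\sin^2 a$ gives $\cos^2 a-8\sin^2 a<1$, which collapses to $-9\sin^2 a<0$. This holds strictly precisely when $a\neq 0$, that is, for every $t\in(0,1)$, proving the lemma. The main technical obstacle is recognizing the cancellation in the second step: without it the expression for $\cos|AO|$ resists direct comparison with $\tfrac12$, whereas once the cancellation is performed the remaining inequality is elementary. A subsidiary point to verify is that the geometric parameters of the $M_6$ move keep $|\tan a|\le 1/(2\sqrt2)$ throughout $t\in(0,1)$, so that the square root above is real and the perpendicular bisector really does meet $P$.
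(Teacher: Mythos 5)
Your proof and the paper's proof hinge on the same geometric picture — $O$ is the intersection of the perpendicular bisector of $AB$ with the parallel $P$ — but argue differently from that point. The paper forms the right spherical triangle $ODA$ (with $D$ the equatorial midpoint of $AB$), observes that both legs $\left|OD\right|$ and $\left|AD\right|$ vary monotonically in $t$, and invokes spherical Pythagoras $\cos\left|AO\right|=\cos\left|AD\right|\cos\left|OD\right|$ together with the endpoint value $\left|AO\right|=\tfrac\pi3$ to conclude. Your computation of $\cos\left|AO\right|=\tfrac12\cos a\sqrt{1-8\tan^2 a}$ is exactly the product of those two leg-cosines (one can check $\cos\left|AD\right|=\tfrac{\sqrt3}{2}\cos a$ and $\cos\left|OD\right|=\cos P\sqrt{1-4\tan^2 a\tan^2 P}$), so the two arguments are quantitatively equivalent; yours trades the conceptual monotonicity observation for a closed formula that makes the comparison with $\tfrac12$ one line of algebra. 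That is a legitimate and somewhat cleaner route.

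There is, however, a gap which you yourself flag at the end and then leave open: the whole argument (yours and the paper's alike) requires the perpendicular bisector to actually meet the parallel $P$, i.e.\ $\left|\tan a\right|\le\tfrac{1}{2\sqrt2}$, so that $\sin(\varphi_O+\tfrac\pi3)$ is real and the right triangle exists. You label this a ``subsidiary point to verify'' but do not verify it. Worse, if one takes the lemma's stated coefficient $\tfrac{2\pi}{3}-\theta\approx1.48$ at face value, the condition \emph{fails} for most $t\in(0,1)$, since $\arctan\tfrac{1}{2\sqrt2}\approx0.34$. (This appears to be a typo in the paper: the relevant quantity $a$ is the latitude of the equatorial balls in the $\DOD$ configuration, which is roughly $0.19$ radians and comfortably inside the allowed range, so the construction is geometrically sound — but with the literal coefficient it is not.) Since your proof rests on exactly the same existence-of-$O$ assumption as the paper, this is a shared implicit hypothesis rather than a flaw unique to your argument; still, having recognized the issue, you should either supply the bound on $a$ or note that it is inherited from the geometry of the $M_6$ move rather than leave it as an unresolved caveat.
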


\begin{proof}
Let $D$ be the middle point of the arc $AB.$ It does not depend on $t$ and is given by $\vartheta\left(  D\right)  =0,$ $\varphi\left(  D\right)  =\frac{\pi}{6}.$ 
Let $\varkappa\left(  t\right)$ be the arc perpendicular to the arc $AB$ at $D.$ Then $O\left(  t\right) $ is simply the intersection of $\varkappa\left(  t\right) $ and the parallel $P.$

The triangle $O\left(  t\right)  DA\left(  t\right) $ is a right triangle. Evidently, the legs $O\left(  t\right) D$ and $A\left(  t\right)  D$ become shorter as $t$ increases. 
Hence the hypotenuse $O\left(  t\right) A\left(  t\right)$ becomes shorter as well. 
Since $O\left(  1\right) A\left(  1\right)  =\frac{\pi}{3},$ the proof follows.
\end{proof}

%***************************
%  Subsubsection 5.4.6
%***************************
\subsubsection{Completion of proof of   Theorem ~\ref{th55}}\label{sec:546}

\begin{proof}[Proof of Theorem ~\ref{th55}]  
Lemmas ~\ref{lem:56} and ~\ref{lem:57} complete a proof that there exists a deformation path from the $\DOD$ configuration to a $\FCC$ configuration and to a $\HCP$ configuration, respectively. 
However, the deformation path obtained  does not satisfy one required condition of the theorem: 
remaining in the interior of the configuration space. 
It exits from the interior of $\con(12)[1]$ at the end of the first phase and remains on the boundary during the second phase: 
the three north balls are touching and the three south balls are touching. 

We can modify the construction above so that no balls touch throughout the deformation until the final instant. 
To do this we halt the first phase just short of the three balls touching, at $z= 1 -\epsilon_1$. 
Then in the second phase, we allow $z$ to increase monotonically in the north triangle at some variable speed $\psi(t)$ 
 as the rotation proceeds, in such a way as to avoid contact between the three north balls and the equatorial balls.  
 The south triangle $z$ variable is to decrease monotonically in the reflected motion of $-z$ at the same time. 
Lemma ~\ref{lem:57} implies that if $z$ approaches $1$ rapidly enough in the motion that we can again avoid contact;
this is an open condition at each point $t$, so by compactness of the motion interval we have a finite subcover to attain it.
\end{proof}

%***************************
%  Remark
%***************************
\begin{remark}\ 

(1) This motion process can be continued by concatenation with an inverse $M_6$ using $-\varphi(1-t)$, in such a way as to arrive back at a  $\DOD$ configuration, differently labeled.  
This is possible because there are two exit directions (tangent vectors) from the $\FCC$ configuration and two exit directions from the $\HCP$ configuration in $\bcon(12)[1]$.  
Section ~\ref{sec:551} studies the group of permutations of the $12$ labels obtainable by such deformations. 

(2) Starting from the $\FCC$ or $\HCP$ configuration, there is a reference frame in which the north triangle remains fixed. 
The inverse of the second phase of $M_6$ describes a move which unlocks the $\FCC$ configuration with $6$ moving balls and $6$ fixed balls, and which unlocks the $\HCP$ configuration with $9$ moving balls and $3$ fixed balls (see Section ~\ref{manual}). 
\end{remark}

%***************************
% Subsection 5.5: Jitterbug
%***************************
\subsection{Buckminister Fuller's ``Jitterbug'' }\label{sec:56} 
According to his recollection, on 25 April 1948 Buckminster Fuller found  a ``jitterbug'' construction given by  a jointed framework motion that, among other things,  permits an $\FCC$ configuration, given as the vertices of a cuboctahedron, to be continuously deformed into a $\DOD$ configuration, given as the vertices of an icosahedron (see ~\cite[p. 273]{Schw10}). 

In Buckminster Fuller's construction, the joint distances remain constant during the motion, so that they can be rigid bars, while the radii of the associated touching spheres continuously contract during the deformation. 
At each instant during the motion the central sphere and the $12$ touching spheres can all have equal radii without overlapping, and this radius varies monotonically in time. 

In retrospect one may see that it is possible to rescale space during the motion via homotheties varying in time such that all spheres retain the fixed radius $1$ throughout the deformation.
In this case the joint lengths will change continuously in the motion.
The rescaled motion no longer corresponds to a physical object with rigid bars, but it does give a continuous motion in the configuration space of $12$ equal spheres touching a 13-th central sphere that continuously deforms the $\FCC$ configuration to the $\DOD$ configuration. 

The work of Buckminster Fuller on the ``jitterbug'' movable jointed framework is described in Schwabe ~\cite{Schw10}.
Fuller described it in his book  Synergetics ~\cite[Sec. 460.00-463.00]{Fuller:1976}.
The construction  is also described in Edmondson ~\cite[Chap. 11]{Edmondson:1987}, with a detailed analysis in Verheyen ~\cite{Vehr89}.

%***************************
%  Remark
%***************************
\begin{remark}
{ The ``jitterbug'' motion immediately enters  the interior $\bcon^{+}(12)[1]$ after the initial instant, in contrast to the ``unlockings'' described in Appendix 8, which adhere to its boundary.}   
\end{remark}

%%%%%%%%%%%%%%%%%%%%%%%%%%%%%%%%%%%%%%%%%%%%%%%%%%%%%%%%%%%%%%%%%%%%%%%%%%%%%%%%%
%%                                          %%                                                        %%                                                        %%                                                                         %%
%%   SECTION 6  PERMUTATIONS      %%                                                         %%                                                        %%                                                                    
%%                                          %%                                                         %%                                                         %%                                                                          %%
%%%%%%%%%%%%%%%%%%%%%%%%%%%%%%%%%%%%%%%%%%%%%%%%%%%%%%%%%%%%%%%%%%%%%%%%%%%%%%%%%%
\section{Permutability of the $\DOD$ Configuration:
Connectedness Conjectures}\label{sec:6}
%%%
% AlternateSubsectionName: Deformations of the $DOD$ Configuration:Connectedness Conjectures
% Old Title: Permutability of unit spheres in the $\DOD$ configuration: Connectedness Conjectures
%%%%
The number of connected components  of the configuration space $\con(12)[r]$ is related to the ability to permute labeled spheres by deformations within $\con(12)[r]$. 
The possible permutability of the (labeled) spheres in the $\DOD$ configuration in $\con(12)[r]$ depends on the radius $r$ of the touching spheres.

%***************************
% Subsection 6.1
%***************************
\subsection{Permutations of $\DOD$ Configurations for Radius $r=1$.}\label{sec:551}
Conway and Sloane ~\cite[Chap. 1, Appendix, pp.  29--30]{Splag3} give a terse proof that for radius $1$ the labels on labeled spheres in $\DOD$ configurations can be arbitrarily permuted using continuous deformations inside  the space $\con(12)[1]$.

%***************************
%  Theorem 6.1
%***************************
\begin{theorem} \label{thm:58}{\rm (Permutability at radius $r=1$)}
For the radius parameter $r=1$, each labeled $\DOD$ configuration can be continuously deformed in the configuration space $\bcon(12)[1]$ to a $\DOD$ configuration at the same $12$ touching points with any permutation of the labeling.
\end{theorem}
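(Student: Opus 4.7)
My strategy is to identify $\Sigma_{12}$ with the monodromy group of $\bcon(12)[1]$ based at a fixed DOD configuration, and then to generate it using the jitterbug-type deformations supplied by Theorem~\ref{th55}.

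First I would set up the algebraic framework. Fix a labeled DOD configuration $\bU = (\bu_1, \ldots, \bu_{12}) \in \con(12)[1]$, and for each $\sigma \in \Sigma_{12}$ set $\bU_\sigma := (\bu_{\sigma(1)}, \ldots, \bu_{\sigma(12)})$. Define
\[
G := \{\sigma \in \Sigma_{12} : \bU \text{ and } \bU_\sigma \text{ lie in the same path-component of } \bcon(12)[1]\}.
\]
Concatenation and reversal of paths show $G$ is a subgroup of $\Sigma_{12}$; the theorem is the claim $G = \Sigma_{12}$. Automatically $A_5 \subseteq G$: the icosahedron is preserved by its orientation-preserving symmetry group $A_5 \subset SO(3)$, so each element of $A_5$ descends, after passing to the quotient by $SO(3)$, to a constant path in $\bcon(12)[1]$ connecting $\bU$ to $\bU_\sigma$ for the induced $\sigma$ on the $12$ icosahedral vertices. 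In particular $G$ is already transitive on the $12$ vertices.

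Next I would enlarge $G$ beyond $A_5$ using Theorem~\ref{th55}. Apply $M_6$ to deform $\bU$ into an $\FCC$ configuration $\bV$ along a chosen $3$-fold axis. The remark following Theorem~\ref{th55} notes that $\FCC$ has two exit directions in $\bcon(12)[1]$, so concatenating the forward $M_6$ with a reverse $M_6$ along the \emph{other} exit direction yields a path $\bU \rightsquigarrow \bV \rightsquigarrow \bU_\tau$ ending at a relabeled DOD, with $\tau \in G$ generically not in $A_5$. Varying the initial $3$-fold axis (the icosahedron has ten such axes) and routing through an $\HCP$ configuration instead of $\FCC$ produces a family of elements $\{\tau_i\} \subset G$.

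The concluding step is purely group-theoretic: verify that $\langle A_5, \tau_1, \tau_2, \ldots \rangle = \Sigma_{12}$. A clean route is via Jordan's theorem, which asserts that a primitive subgroup of $\Sigma_n$ containing a transposition is all of $\Sigma_n$. It therefore suffices to exhibit (a) some $\tau_i$ that does not preserve the block system of the six antipodal pairs of icosahedral vertices (this breaks imprimitivity, since $A_5$ acts transitively on these blocks and $2$-transitively within each block), and (b) an element of $G$ realizing a transposition, or alternatively a $3$-cycle (in which case the $A_n$-version of Jordan's theorem gives at least $A_{12}$, and a separate odd-permutation witness lifts the result to $\Sigma_{12}$). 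The extra flexibility afforded by the $\HCP$ route, whose stabilizer $D_3$ has order only $6$ compared with $\FCC$'s $\Sigma_4$ of order $24$, should be useful for manufacturing a short cycle.

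The main obstacle is the explicit combinatorial identification of the $\tau_i$. Tracking individual balls through the two phases of an $M_6$ move and then through its reverse along the alternate exit direction requires careful bookkeeping using the coordinates of Section~\ref{sec:531}: one must determine which vertex of the north or south triangle of $\bV$ becomes which icosahedral vertex of $\bU_\tau$, and in particular which equatorial ball of $\bU_\tau$ is the image of which equatorial ball of $\bU$. Once one $\tau_i$ is identified explicitly as a permutation that either transposes two labels or acts nontrivially across antipodal pairs, the remainder of the argument reduces to a routine invocation of Jordan; but producing that first clean non-$A_5$ element is where the bulk of the work lies.
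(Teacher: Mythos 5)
Your proposal has a genuine gap, and it stems from not using the $M_5$-move that is central to the paper's argument. The paper's proof of Theorem~\ref{thm:58} has two separate ingredients: (a) the $M_6$-move composed with a $\frac{\pi}{2}$ rotation about a $4$-fold axis of $\FCC$ and then $M_6^{-1}$, which produces an explicit odd permutation (a product of three disjoint $4$-cycles); and (b) the $M_5$-moves of Section~\ref{sec:62}, which rigidly rotate the northern hexagonal cap by $\frac{2\pi}{5}$ and so realize genuine $5$-cycles fixing $7$ points. The $M_5$-moves are what furnish \emph{short cycles}, and the paper invokes the Conway--Sloane argument (via the Mathieu group $M_{12}$) to conclude that the twelve resulting $5$-cycles generate $A_{12}$; combined with the odd permutation from (a), this yields $\Sigma_{12}$.

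Your plan of generating $G$ from $A_5$ and $M_6$-type loops (routing through $\FCC$ or $\HCP$ along the alternate exit direction) and then finishing with Jordan's theorem founders precisely at the point you flag as ``the bulk of the work.'' Jordan's theorem in the form you cite needs a transposition or a $3$-cycle (or, in the $p$-cycle variant, a $p$-cycle with $p\le n-3$ prime), but the $M_6$ and $\HCP$ concatenations move $6$, $9$, or all $12$ balls at once and have no reason to ever produce so small a support. The paper's odd permutation $\sigma_1$, for instance, moves all $12$ balls. Without the $M_5$-move you have no source of short cycles, so the group-theoretic finish simply does not launch. Ironically, if you \emph{did} introduce $M_5$-moves, your Jordan-theorem strategy would give a cleaner route than the paper's: a $5$-cycle with $5\le 12-3$ in a primitive subgroup already forces $A_{12}$ (Jordan, prime-cycle version), and primitivity is easy from the $5$-cycle's support pattern together with $A_5$-transitivity, sidestepping the $M_{12}$ calculation entirely. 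But as written, the proposal omits the one move that makes the endgame possible.

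A secondary concern: your construction of $\tau$ by concatenating $M_6$ with a reverse $M_6$ ``along the other exit direction'' is asserted but not shown to yield a $\tau\notin A_5$. The paper instead inserts a $\frac{\pi}{2}$ rotation about a $4$-fold axis of $\FCC$ between the forward and reverse $M_6$-moves; since a $4$-fold rotation is a symmetry of $\FCC$ but not of $\DOD$, the resulting permutation is manifestly nontrivial and can be computed to be odd. Your version relies on an unverified claim about how the two exit tangent directions at $\FCC$ interact under $M_6^{-1}$, and it is not obvious this gives anything outside $A_5$ at all.
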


%***************************
%  Figure 11: 5-Move
%***************************
\begin{figure}[htbp]
   \centering
   \includegraphics[scale=1.1]{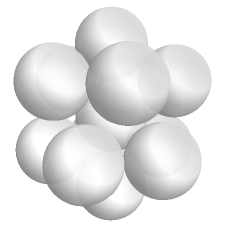}
   \includegraphics[scale=1.5]{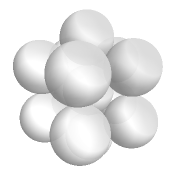}
   \caption{The $5$-move $M_5$}
   \label{fig:5-move}
\end{figure}

We follow the outline in Conway and Sloane ~\cite[Chapter 1, Appendix, pp. 29--30]{Splag3}. 
A main ingredient is an additional set of permutation moves which we call $M_5$-moves, detailed next.

%***************************
%  Subsection 6.2
%***************************
\subsection{The  $M_{5}$-move}\label{sec:62}
Beginning from the $\DOD$ configuration centered at the origin, we rotate it so that two opposite balls have their centers on the $z$ axis. 
Call these balls $\ball{N}$ and $\ball{S}$. Note that the centers of $5$ of the $10$ remaining balls are in the northern half-space, while the remaining $5$ centers are in the southern half-space.  
Call these balls $\ball{U}_1 \dots \ball{U}_5$ and $\ball{V}_1 \dots \ball{V}_5$, respectively. 

\textbf{First phase.} Move the $5$ northern $\ball{U}_j$ balls towards $\ball{N}$, in such a way that their centers remain on their corresponding meridians, 
until each of them touches $\ball{N}$.   
Note that these ~$5$ ~balls do not touch each other, only $\ball{N}$. 
Indeed, because their centers are located at the latitude $\vartheta=\frac{\pi}{6},$ when viewed from the $z$-axis each of the $5$ balls subtends the dihedral angle $\arccos \left(\frac{1}{3}\right)$ of a regular tetrahedron; but the longitude difference between the neighboring ball centers is $\frac{2\pi}{5}$, so there remains a tiny longitude gap 
\[
\zeta=\frac{2\pi}{5}-\arccos\left(\frac{1}{3}\right)\approx0.025
>0.
\]
\noindent 
The $5$ southern $\ball{V}_j$ balls may be moved into the southern hemisphere in the same manner.  

\textbf{Second phase.} Note that for $r=1$, the $6$ northern balls fit into the northern half-space, while the $6$ southern balls fit into the southern half-space. 
The union of all the $6$ balls in the northern hemisphere may be rotated by $\frac{2\pi}{5}$ as a rigid body, keeping the remaining balls fixed.  

\textbf{Third phase.} Reverse the first phase.

The net result of an $M_5$-move is a cyclic permutation $\sigma_{2}$ of $\DOD$ of length $5$, which is an even permutation. 

\begin{remark}
The halfway point of a $M_5$ as illustrated in the right side of Figure ~\ref{fig:5-move} is itself a ``remarkable'' critical configuration; it is found at the center of a 4-simplex of critical configurations for $r=1$. The family of such configurations is in many ways similar to the maximal stratified set that occurs for $N=5$. 
\end{remark}
%***************************
%  Subsection 6.3
%***************************
\subsection{Proof of Permutability Theorem at Radius $r=1$}\label{sec:63}
\begin{proof}[Proof of Theorem ~\ref{thm:58}]
The first step is to show that there is a continuous deformation of $\DOD$ to itself, which permutes the labels by an odd permutation.
To exhibit it, we use the move $M_{6},$ defined before, and deform $\DOD$ into an $\FCC$ configuration (note that we can do it for
all $r \leq1,$ but not for $r>1$). 
Note that the $\FCC$ configuration has three axes of 4-fold symmetry passing through the opposite squares of four balls. 
By rotating $\frac{\pi}{2}$ around any such an axis and then deforming our configuration back to $\DOD$ via $M_{6}^{-1},$ 
we induce a  a permutation $\sigma_{1}$ of 12 balls, which is a product of three (disjoint) cyclic permutations, each of length 4. 
Every such cycle is an odd permutation, hence their product $\sigma_{1}$ is also odd. 

The second step uses  $M_{5}$-moves. Each such move gives a cyclic permutation of order ~$5$.
Since there are $12$ options for choosing $\ball{N}$, we get $12$ such $5$-cycles $\sigma_{2}^{\left( i \right)}$.  
It is shown in Conway and Sloane (using an elegant argument about the Mathieu group $M_{12}$, see ~\cite[pp. 328-330]{Splag3}) 
that all such $\sigma_{2}^{\left(  i\right)}$ generate the alternating group $A_{12}$, the subgroup of even permutations of ~$\Sigma_{12}.$  

Combined with any odd permutation $\sigma_{1}$, the full permutation group $\Sigma_{12}$ is generated.  
\end{proof}

%***************************
%  Subsection 6.4
%***************************
\subsection{Persistence of  $M_5$-moves to some $r>1$}\label{sec:64}
The move $M_{5}$ can be modified in such a way that it continues to work for all values $r\leq r_{1}$, for some $r_{1}$ slightly bigger than $1$.
We first explain the modification and then propose the value of $r_{1}.$ 
The modification deals only with the second phase of $M_5$. 
In order to explain it, it is enough to follow the $10$ longitude values of the touching points of our balls, which may be considered as points on the equator. 
For $r=1$, the northern $5$ balls $\ball{U}_{j}$ correspond to the longitude values $u_{j},$ for $j=1,...,5,$  
and we can suppose that at the initial moment these values are $u_{j}\left(  t=0\right)  =\left(j-1\right)  \frac{2\pi}{5}.$ 
The longitude values $v_{j}$ are defined similarly, corresponding to the southern balls $\ball{V}_j$, and $v_{j}\left(  t=0\right) =\left(  j-1\right)  \frac{2\pi}{5}+\frac{\pi}{5}.$ 
Our initial move looks now as follows:
\[
u_{j}\left(  t\right)  =\left(  j-1+t\right)  \frac{2\pi}{5},\ v_{j}\left(
t\right)  =v_{j}\left(  0\right)  .
\]
Of course there is no need for all the $u_{j}$ to move with the same speed; 
the only constraint is that the difference between consecutive $u_{j}$ should equal or exceed $\arccos \left(\frac{1}{3}\right)$ at all times. 
In particular, we can modify the speeds in such a way that at any time $t$, we have $u_{j}\left(t\right)  =v_{j}\left(  t\right)  $ for at most one value of $j.$ 

Now let the radius $r$ be slightly bigger than $1$. 
Then, at the moment $t$ when $u_{j}\left(  t\right)  =v_{j}\left(  t\right),$ the corresponding balls $\ball{U}_j,$ $\ball{V}_j$ will overlap.  

This, however, can be remedied by making the following small deformation of our $12$-configuration: 

\begin{itemize}

\item the ball $\ball{U}_j$ moves up along its meridian, by the distance $\left(r-1\right).$ 

\item the ball N moves along the same meridian in the same direction by the distance $2\left(  r-1\right).$ 

\item the ball $\ball{V}_j$ moves down along its meridian, by the distance $\left(r-1\right).$ 

\item the ball S moves along the same meridian in the same direction by the distance $2\left(  r-1\right).$ 

\item other balls may be rearranged in such a way that they do not intersect.

\end{itemize}

The non-overlap condition can be satisfied when $\left(r-1\right)>0$ is small enough, since there were no other collisions.  

Below we will show there will be  $5$  {\em bottleneck configurations} that  one encounters on the way to perform the modified $M_{5}$ move.
Each one defines a value $r_{1}^{\left(j\right)}>1$, for $1 \le j \le 5$ which is the maximal radius for which this configuration is allowed. 
We set  $r_{1} :=\min_{j=1,...,5}r_{1}^{\left(  j\right)  }>1.$  

%***************************
%  Theorem 6.2
%***************************
\begin{theorem}\label{thm:62}
For every $r \leq r_{1}$ the move $M_{5}$ can be modified in such a way that one can reach from an initial labeled $\DOD$ configuration any labeled  $\DOD$ configuration whose labels are an even permutation of the initial labels.  
That is, the alternating  group  $ A_{12}$ is generated by the compositions of different  $M_{5}$ moves.
\end{theorem}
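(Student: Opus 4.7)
The proof plan separates into two independent tasks: (i) certifying that the modified $M_5$-move sketched immediately before the statement is actually realizable as a path in the configuration space for every $r \in (1, r_1]$, and (ii) showing that the $5$-cycles so obtained generate $A_{12}$. Task (ii) is essentially free, since the Conway--Sloane argument for $r=1$ reproduced in Section~\ref{sec:63} depends only on the combinatorics of the produced permutations: the $12$ possible choices of polar axis $\ball{N}$ give $12$ five-cycles $\sigma_2^{(i)}$, and these $12$ elements of the Mathieu group $M_{12}$ generate $A_{12}$ by \cite[Chap.~1, Appendix]{Splag3}. Nothing in that group-theoretic step uses the value $r=1$; what is needed is only that the modified move realizes the same combinatorial $5$-cycle, which it manifestly does by construction.

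The substantive task is (i). First I would verify that the unchanged first and third phases of $M_5$ persist for radii slightly above $1$: these require only that the five balls $\ball{U}_1, \ldots, \ball{U}_5$ touching $\ball{N}$ do not touch each other, and at $r=1$ there is the explicit longitude slack $\zeta = \frac{2\pi}{5} - \arccos(\frac{1}{3}) > 0$ between consecutive $\ball{U}_j$'s. By continuity this non-overlap holds for $r$ in an open neighborhood of $1$. The delicate phase is the modified second phase. Choosing the longitude speeds $\dot u_j(t)$ generically (subject to $u_{j+1}(t) - u_j(t) \ge \arccos(\frac{1}{3})$ at all times), one arranges that the five crossing equations $u_j(t) = v_j(t)$ are satisfied at five disjoint bottleneck instants $t_j$. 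Away from these instants, every pairwise non-overlap constraint is strict at $r=1$, so by compactness and continuity each remains satisfied for $r$ in an open neighborhood of $1$.

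Near the $j$-th bottleneck I would apply the five-ingredient displacement described in the text: push $\ball{U}_j$ northward along its meridian by $(r-1)$, push $\ball{N}$ along the same meridian by $2(r-1)$, push $\ball{V}_j$ and $\ball{S}$ symmetrically southward, and smoothly adjust the remaining balls. The meridional component of the $\ball{U}_j$ displacement separates $\ball{U}_j$ from $\ball{V}_j$ to first order in $(r-1)$, while the doubled displacement of $\ball{N}$ preserves tangency of $\ball{N}$ to $\ball{U}_j$ without creating new overlaps with the other four $\ball{U}_i$'s. Let $r_1^{(j)} > 1$ denote the supremum of radii for which the displacement is collision-free at the $j$-th bottleneck; setting $r_1 := \min_j r_1^{(j)}$ gives the desired threshold, with $r_1 > 1$ by the icosahedral symmetry of $\DOD$ (all five $r_1^{(j)}$ coincide). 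Taper the displacement smoothly to zero away from $t_j$ so that the modified second phase interpolates continuously between the unmodified endpoints.

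The main obstacle is the bookkeeping in the last step: the clause ``other balls may be rearranged in such a way that they do not intersect'' hides a consistency check that the local displacement near the $j$-th bottleneck can be combined with the ongoing global rotation without introducing collisions elsewhere, and without conflicting with the similar displacements near the neighboring bottlenecks $t_{j\pm 1}$. Since we only need qualitative persistence of a positive gap, not a sharp value of $r_1$, a continuity-plus-compactness argument applied to the finite family of pairwise distance functions along the constructed path is sufficient; but isolating and stating the binding constraint that determines $r_1$ sharply would require an explicit geometric computation of the tightest non-bottleneck pairwise distance in the displaced configuration.
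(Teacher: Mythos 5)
Your proposal follows essentially the same plan as the paper's proof: split off the group-theoretic claim (which reduces to the Conway--Sloane $M_{12}$ argument already invoked in Section~\ref{sec:63}) from the geometric claim that a modified $M_5$-move persists for $r$ slightly above $1$, then identify the five ``bottleneck'' instants, define $r_1^{(j)}$ as the threshold radius for the $j$-th bottleneck, and set $r_1 := \min_j r_1^{(j)}$. The paper is a bit more concrete about what a bottleneck is: it records the exact contact pattern at each of the five bottleneck instants (the rows tagged $j=1,\ldots,5$), then defines $r_1^{(j)}$ as the maximal radius for which a configuration realizing the $j$-th contact pattern exists, observing that such a configuration is unique when it exists and does exist for $(r-1)$ small. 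This existence-and-uniqueness formulation is cleaner than tracking collision-freeness along a particular tapered path, and it largely sidesteps the ``bookkeeping'' worry you flag at the end: what pins down $r_1^{(j)}$ is the algebraic bottleneck configuration itself, not a specific choice of displacement schedule.

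One concrete claim in your argument is wrong, namely the parenthetical that ``all five $r_1^{(j)}$ coincide'' by icosahedral symmetry. Once the polar axis for $M_5$ is fixed, the icosahedral symmetry of $\DOD$ is broken to a dihedral subgroup, and the five bottleneck configurations are not mutually equivalent: reading off the paper's contact lists, the patterns for $j=1$ and $j=5$ contain two same-hemisphere and three cross-hemisphere touching pairs among the ten equatorial balls, whereas the patterns for $j=2,3,4$ contain three same-hemisphere and two cross-hemisphere pairs. At most a reflection $j\mapsto 6-j$ can pair up some thresholds; nothing forces the two structurally distinct types to share a common $r_1^{(j)}$, and the paper's use of $\min_j$ implicitly acknowledges they may differ. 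Fortunately the error is inconsequential: $r_1>1$ follows from $r_1^{(j)}>1$ holding for each $j$ separately, which is exactly what the continuity argument (or the paper's observation that each bottleneck configuration exists for $(r-1)$ small) supplies, so the theorem is unaffected.
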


\begin{proof}
There will occur $5$ bottleneck $12$-configurations of the $r$-balls touching the unit central ball, 
described by certain touching patterns that correspond to the configurations appearing 
during the move $M_{5}$ at the moment when the ball $\ball{U}_j$ passes due north of the ball~$\ball{V}_j.$   

The $5$ bottleneck configurations have a common pattern: $4$ touching balls centered on the same meridian, 
two in the northern half-space, and the remaining two in the southern half-space.   
We denote them by $\ball{N}$, $\ball{U}_j,$ $\ball{V}_j,$ $\ball{S}$. This set of $4$ balls is symmetric with respect to the plane $z=0.$ 
Strictly speaking, as $r$ is slightly bigger than~$1,$ the balls $\ball{N}$ and $\ball{S}$ are now centered   
on the meridian {\em opposite} the one containing $\ball{U}_j$ and $\ball{V}_j$.   

The eight other balls are the remaining ones from $\ball{U}_{1},...\,,\ball{U}_{5},\ball{V}_{1},...\,,\ball{V}_{5}.$
Each pair $\{\ball{U}_i$, $\ball{V}_i\}$ touches, as do the pairs $\{\ball{U}_i$, $\ball{N}$\}, as well as the pairs \{$\ball{V}_i$, $\ball{S}\}$.     
The $5$ bottleneck configurations differ in how the additional pairs of balls touch.  
Each of the rows in the following list completes a different touching pattern that occurs as the move $M_5$ is performed:

\begin{equation}\tag{$j=1$}
\{\ball{U}_{1},\ball{U}_{2}\}, \{\ball{V}_{2},\ball{U}_{3}\}, \{\ball{V}_{3}, \ball{U}_{4}\}, \{\ball{V}_{4},\ball{U}_{5}\}, \{\ball{V}_{5},\ball{V}_{1}\},
\end{equation}
\begin{equation}\tag{$j=2$}
\{\ball{U}_{2},\ball{U}_{3}\}, \{\ball{V}_{3},\ball{U}_{4}\}, \{\ball{V}_{4}, \ball{U}_{5}\}, \{\ball{V}_{5},\ball{V}_{1}\}, \{\ball{U}_{1},\ball{U}_{2}\},  
\end{equation}
\begin{equation}\tag{$j=3$}
\{\ball{U}_{3},\ball{U}_{4}\}, \{\ball{V}_{4},\ball{U}_{5}\}, \{\ball{V}_{5}, \ball{V}_{1}\}, \{\ball{U}_{1},\ball{V}_{2}\}, \{\ball{U}_{2},\ball{U}_{3}\},  
\end{equation}
\begin{equation}\tag{$j=4$}
 \{\ball{U}_{4},\ball{U}_{5}\}, \{\ball{V}_{5},\ball{V}_{1}\}, \{\ball{U}_{1}, \ball{V}_{2}\}, \{\ball{U}_{2},\ball{V}_{3}\}, \{\ball{U}_{3},\ball{U}_{4}\},  
\end{equation}
\begin{equation}\tag{$j=5$}
\{\ball{V}_{5},\ball{V}_{1}\}, \{\ball{U}_{1},\ball{V}_{2}\}, \{\ball{U}_{2}, \ball{V}_{3}\}, \{\ball{U}_{3},\ball{V}_{4}\}, \{\ball{U}_{4},\ball{U}_{5}\}.  
\end{equation}\newline

Observe that for any $r>1$, and for any of the $5$ touching patterns,  
such a configuration is unique if it exists, and that it {\em does} exist for $\left(r-1\right)$ sufficiently small.  
We define $r_{1}^{\left(  j\right) }$ as the maximal values for which the above configurations exist. 
\end{proof}

%***************************%%%%%%%%%%%%%%%%%
%  Figure 12: Bottleneck contact graphs
%***************************%%%%%%%%%%%%%%%%%%

\begin{figure}[htbp] %  figure placement: here, top, bottom, or page
   \centering
   \includegraphics[width=1.7in]{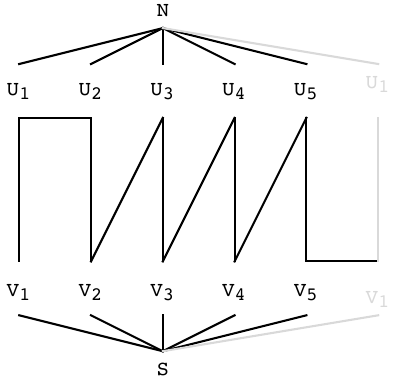} \quad\quad\quad
     \includegraphics[width=1.7in]{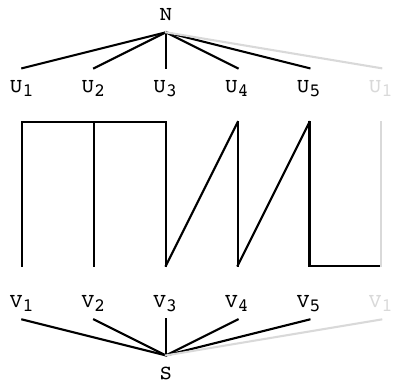} \quad \quad\quad
       \includegraphics[width=1.7in]{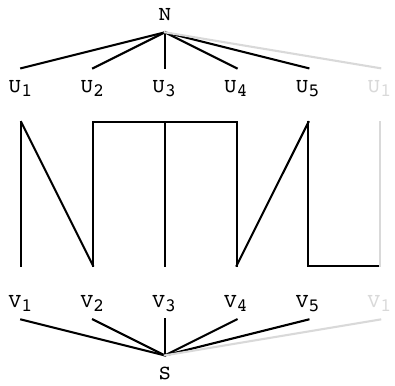} 
         \includegraphics[width=1.7in]{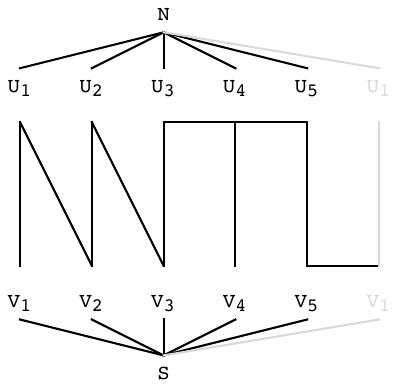} \quad \quad\quad
           \includegraphics[width=1.7in]{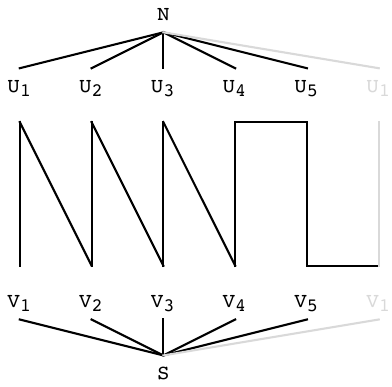} 
   \caption{Contact graphs for the bottleneck configurations $j=1$ to $j=5$}
   \label{fig:bottles}
\end{figure}

We are not asserting that the value $r_1>1$ defined above is the true critical value above which the (small perturbation of the) move $M_5$ cannot be performed. 
Indeed, we imposed some a priori constraints in making our construction of the modified $M_5$, and did not rule out the possibility of a more ``optimal'' modification of $M_5$.

%***************************
%  Definition 6.3
%***************************
\begin{definition}\label{defi:63}
{\em Let $R_1$ be the maximal value of the radius $r$ for which there exists some modified move $M_5$. We call it the {\it upper critical radius.}
}
\end{definition}

From the previous theorem we know that $R_1 \ge r_1 >1$. We expect $R_1$ to be a critical value for maximizing the radius function. 

%***************************
%  Subsection 6.5
%***************************
\subsection{Connectedness Conjectures for $\con(12)[1]$}\label{sec:65} 
Based on Theorems~\ref{th55} and ~\ref{thm:58} about deformation and permutability of labeled $\DOD$ configurations, it is natural to propose the following statement.

%***************************
%  Conjecture  6.4: Connectedness
%***************************
\begin{conjecture} {\rm (Connectedness Conjecture)}\label{conj95}
The configuration space $\con(12)[1]$ is connected. 
That is, every set of $12$ distinct labeled points on the $2$-sphere pairwise separated by spherical angle at least $\frac{\pi}{3}$ can be deformed into $12$ other distinct labeled points, with all points maintaining a spherical angle at least $\frac{\pi}{3}$ apart during the deformation. \end{conjecture}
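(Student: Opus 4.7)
The plan is to reduce the conjecture to path-connectedness of the reduced space $\bcon(12)[1]$ (which implies connectedness of $\con(12)[1]$ since $SO(3)$ is path-connected and acts continuously on $\con(12)[1]$ with quotient $\bcon(12)[1]$), and to establish the latter in three stages: (i) show that every configuration in $\bcon(12)[1]$ can be deformed within $\bcon(12)[1]$, via the upward flow of the injectivity radius $\rho$, to a point of the critical set
\[
K_1 := \{[\bU] \in \bcon(12)[1] : \bU \text{ is balanced}\};
\]
(ii) show that each connected component of $K_1$ is path-connected in $\bcon(12)[1]$ to some $\DOD$ configuration; (iii) apply Theorem~\ref{thm:58} to deform any two labeled $\DOD$ configurations into each other.

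Stage (i) would be carried out using the min-type Morse theory outlined in Section~\ref{sec:42}. At any non-critical $[\bU]$, Theorem~\ref{thm:converse} (together with a Farkas-type duality argument) yields a variation $\bV$ strictly increasing $\rho$ to first order; a continuous selection of such variations, coherent with the semi-algebraic stratification of $\bcon(12)$ by contact graph type, produces a topological flow that strong-deformation retracts $\bcon(12)[1]$ onto $K_1$, using the finiteness of critical $\rho$-values noted after Theorem~\ref{thm:converse}. Stage (iii) is immediate from Theorem~\ref{thm:58}, which already provides deformations in $\bcon(12)[1]$ realizing every permutation of the twelve labels.

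The main obstacle is stage (ii), which requires sufficient control over the critical strata of $\rho$ at levels in $[\pi/6,\,\rho(\DOD)]$. At the minimum level $\rho = \pi/6$, the principal critical strata are the $SO(3) \times \Sigma_{12}$-orbits of the $\FCC$ and $\HCP$ configurations (shown to be balanced in Theorem~\ref{th913bb}), and Theorem~\ref{th55} already connects representatives of both to $\DOD$. What remains is to rule out (or explicitly connect to $\DOD$) any additional critical components --- possibly positive-dimensional strata reminiscent of the $N=5$ example of Section~\ref{sec:43a}, or configurations containing ``rattlers'' --- whose presence would obstruct connectedness. Equivalently, one must show that every balanced $12$-vertex geodesic graph on $\SS^2$ with all edge angular lengths $\geq \pi/3$ lies in the closure of a $\DOD$-deformation orbit. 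This is in principle a finite combinatorial problem by semi-algebraicity, but executing it will likely require either a complete enumeration of balanced $12$-vertex contact graphs at the relevant edge lengths, or a universal ``unjamming'' construction generalizing the $M_6$-move of Section~\ref{sec:53} so that it applies uniformly to every critical stratum. This combinatorial-geometric task is where the bulk of the difficulty lies, and is the reason the statement currently stands only as a conjecture.
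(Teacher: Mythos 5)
This statement is \emph{Conjecture}~\ref{conj95} in the paper, not a theorem: the paper offers no proof, explicitly remarking immediately after the statement that the problem ``appears to be approachable but difficult to prove,'' and suggests only vaguely (``cutting the space into many small path-connected `convex' pieces and gluing'') a possible line of attack. Your write-up is therefore not to be compared against a proof in the paper, because none exists; you correctly recognize this, and your submission is a strategy sketch rather than a proof.

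As a strategy sketch it is in the right spirit and uses the paper's ingredients sensibly: reduction to $\bcon(12)[1]$, the min-type Morse flow of Section~\ref{sec:42}, the permutability Theorem~\ref{thm:58} for stage~(iii), and the deformations of Theorem~\ref{th55} connecting $\DOD$ to $\FCC$ and $\HCP$. Your identification of stage~(ii) --- controlling \emph{all} critical strata of $\rho$ in $\bcon(12)[1]$ --- as the genuine obstruction is exactly where the difficulty lies and is consistent with the paper's framing.

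One structural slip worth flagging: you describe $\FCC$ and $\HCP$ as ``the principal critical strata'' at the minimum level $\rho=\pi/6$ and seem to treat them as targets of the upward $\rho$-flow. But the upward flow pushes configurations \emph{away} from the level $\rho=\pi/6$ toward higher injectivity radius; $\FCC$ and $\HCP$ sit at the very bottom of the superlevel set $\bcon(12)[1]$ (they are repellers, not attractors, of the upward flow). The critical configurations that actually obstruct the flow, and that stage~(ii) must control, are the interior saddles of $\rho$ with $\rho>\pi/6$ (co-index $\ge 1$), not $\FCC$ and $\HCP$ themselves. The paper's Conjecture~\ref{conj95b} instead assigns $\FCC$ and $\HCP$ a role as \emph{bottlenecks} on paths within the level $\rho=\pi/6$ between odd-permuted $\DOD$'s, which is a different (and compatible) mechanism from what your flow argument would need. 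Clarifying which critical set your flow retracts onto, and then enumerating the balanced contact graphs that can appear there, is precisely the open combinatorial-geometric problem you correctly identify at the end.
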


This problem appears to be approachable but difficult to prove, despite the supporting evidence of permutability in Theorem ~\ref{thm:58}.
One may approach it by cutting the space $\bcon(12)[r_0]$ into many small path-connected \textquotedblleft convex'' pieces and gluing them together in some fashion. 
The computational size of the problem, since the dimension of the space is ~$21$, and has a complicated boundary, is daunting.

We also propose a stronger statement.

%***************************
%  Conjecture  6.5: Strong Connectedness
%***************************
\begin{conjecture} {\rm (Strong Connectedness Conjecture)}\label{conj95a}
The radius $r=1$ is the largest radius value at which configuration space $\con(12)[r]$ is connected.
\end{conjecture}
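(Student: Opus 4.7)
The plan is to prove the conjecture in two parts: (a) connectedness of $\con(12)[1]$ (which is Conjecture \ref{conj95}), and (b) disconnectedness of $\con(12)[r]$ for every $r \in (1, r_{\max}(12)]$.

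For part (a), the approach is to show that every $\bU \in \con(12)[1]$ can be continuously deformed within $\con(12)[1]$ to some $\DOD$ configuration; combined with Theorem \ref{thm:58}, this yields connectedness. To do this, one runs the upward subgradient flow of $\rho$ starting at $\bU$; by the finiteness of critical values noted after Theorem \ref{thm:converse}, this flow reaches a critical configuration $\bU^*$ with $\rho(\bU^*) \ge \pi/6$. The critical strata with $\rho \ge \pi/6$ on $\bcon(12)$ include the $\DOD$ configurations (where $\rho$ is maximal), together with the $\FCC$ and $\HCP$ strata at $\rho = \pi/6$ (shown critical in Theorem \ref{th913bb}), plus possibly further intermediate strata. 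It then suffices to verify, stratum by stratum, that each such critical configuration lies in the closure of a $\DOD$ basin at $r = 1$; for $\FCC$ and $\HCP$ this is precisely the content of Theorem \ref{th55}.

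For part (b), the approach is to construct, for each $r \in (1, r_{\max}(12)]$, a locally constant function $\epsilon_r \colon \con(12)[r] \to \{\pm 1\}$ taking both values, thereby witnessing disconnectedness. For $r$ close to $r_{\max}(12)$, the Purity Property of Section \ref{sec:43a} together with Danzer's rigidity (Theorem \ref{th73}) imply that each component of $\con(12)[r]$ contains a unique labeled $\DOD$, so we set $\epsilon_r(\bU) := \sgn{\sigma}$, where $\sigma \in \Sigma_{12}$ is the permutation relating the labels of that $\DOD$ to a fixed reference labeling. Propagating $\epsilon_r$ downward to radii $r > 1$ requires verifying continuity across each intermediate critical value of $\rho$. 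By the construction of Section \ref{sec:64}, every permutation produced by the modified $M_5$ move (which exists for $r \le R_1$) is a $5$-cycle, hence even, and so such moves preserve $\epsilon_r$; one must further check that every other critical stratum with $r > 1$ likewise produces only even monodromy. Precisely at $r = 1$, the $\FCC$ and $\HCP$ strata support the move $M_6$ of Section \ref{sec:53}, whose induced permutation is a product of three $4$-cycles and hence \emph{odd}; this is the mechanism by which the $\epsilon_r = +1$ and $\epsilon_r = -1$ components merge into a single component at $r = 1$.

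The principal obstacle is the complete classification of the critical strata in the open interval $r \in (1, r_{\max}(12))$, together with the verification that the monodromy permutation attached to each such stratum lies in $A_{12}$. The finiteness of the set of critical values (Section \ref{sec:42}) reduces this in principle to a bounded computation, but in practice the enumeration of balanced contact graphs on $12$ vertices with $\rho$ in this range appears to demand a computer-assisted analysis in the spirit of the Musin--Tarasov treatment of the Tammes problem for $N = 13, 14$. A closely related subtlety in part (a) is ruling out exotic critical strata above $\rho = \pi/6$ whose basins fail to abut any $\DOD$ stratum; absent such strata, the subgradient-flow argument goes through cleanly.
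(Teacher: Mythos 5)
The statement you are addressing is a \emph{conjecture} in the paper (Conjecture~\ref{conj95a}); the paper gives only heuristic evidence for it, not a proof. Your proposal is likewise not a proof but a strategy with several acknowledged open steps, so it cannot be checked against any argument in the paper — there isn't one. That said, your roadmap is sensible and consistent with the authors' framing, and the honest flagging of obstacles is the right posture. A few gaps deserve sharper emphasis beyond what you already note.

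First, part (a) is \emph{exactly} Conjecture~\ref{conj95}, which the paper explicitly leaves open and describes as ``approachable but difficult.'' Running the upward subgradient flow of $\rho$ and analyzing where it lands reduces (a) to a finite but unperformed classification of the critical strata with $\rho \ge \pi/6$; one would also need to know that every such non-$\DOD$ critical stratum has, on its descending side, a path back into a regular region that reaches a $\DOD$ basin, which is more than ``abuts a $\DOD$ stratum.'' Second, and more seriously for part (b): the sign function $\epsilon_r(\bU) := \sgn{\sigma}$ is defined via the labeled $\DOD$ contained in the component of $\bU$. But the paper's own Conjecture~\ref{conj:65} posits that for some interval of $r>1$ there exist components of $\bcon(12)[r]$ containing \emph{no} $\DOD$ configuration at all. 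If that happens, your $\epsilon_r$ is undefined on those components, and the ``two components'' dichotomy (which you implicitly invoke) becomes Conjecture~\ref{conj:66}, itself open. You would need either to rule out such non-$\DOD$ components or to extend $\epsilon_r$ to them in a locally constant way compatible with the merges at critical values — neither of which follows from the material in the paper. Third, the claim that the $M_6$ move is impossible for $r>1$ (the paper only says it ``appears to be possible only when $r\le 1$'') must be upgraded to a proof that \emph{no} deformation inducing an odd permutation exists for any $r>1$; the nonexistence of one particular move does not rule out others. In short, the proposal correctly identifies the relevant machinery (subgradient flow, balanced critical configurations, monodromy parity), but each of parts (a) and (b) currently rests on an open conjecture or an unverified enumeration, so it does not constitute a proof.
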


In support  of Conjecture ~\ref{conj95a}, the $M_{6}$-move appears to be possible only when $r \leq1$. 
At one time instant it  has $6$ spheres fitting in a ring around the equator, a condition which is allowed only for $r \le 1$.
We also  know that  the  $r=1$  satisfies the necessary condition of being  a \textquotedblleft critical value'' for the $r$-parameter.

We formulate one further conjecture concerning the connectivity structure of $\bcon(12)[1]$.
It is based on a further analysis not included here (see ~\cite{KKLS16+}), 
which indicates that  $\bcon(12)[1]$ has at each of the $\frac{12!}{24}$ $\FCC$-configurations, 
and at each of the $\frac{12!}{6}$ $\HCP$-configurations, a unique tangent line along which it can be approached from the interior $\bcon^{+}(12)[1]$. 
In addition, this analysis shows that each of these is a {\em local cut point},  
a point of a space which when removed, disconnects a small   
open neighborhood of the point.
That is, these  configurations are  points at which the space $\bcon(12)[r]$  locally disconnects as $r$ increases past $1$.
The following conjecture asserts  that these local cut points form unavoidable bottlenecks in $\bcon(12)[1]$ in making certain rearrangements of spheres in the configuration space. 

%***************************
%  Conjecture  6.6: Bottlenecks
%***************************
\begin{conjecture} {\rm ($\FCC$ and $\HCP$ Bottlenecks)}\label{conj95b}
Any piecewise smooth continuous curve in the reduced configuration space $\bcon(12)[1]$ which starts at a labeled $\DOD$ configuration and ends at another labeled $\DOD$ configuration  with the labels permuted by an odd permutation must necessarily  pass through either an $\FCC$ configuration or a $\HCP$ configuration. 
\end{conjecture}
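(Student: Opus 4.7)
The plan is to interpret the conjecture as a monodromy statement for the double cover $W \to U$, where $U := \bcon(12)[1]/\Sigma_{12}$ and $W := \bcon(12)[1]/A_{12}$, with deck group $\Sigma_{12}/A_{12} \cong \ZZ/2$ detecting permutation parity. A path from $p_1$ to $p_2 = \sigma p_1$ in $\bcon(12)[1]$ with $\sigma$ odd projects to a loop in $U$ whose lift in $W$ connects the two distinct sheets above the base point, so the reformulated goal is that every loop in $U$ avoiding the $\Sigma_{12}$-images of $\FCC$ and $\HCP$ has trivial $\ZZ/2$-monodromy in this double cover.

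First I would identify the branch locus of $W \to U$ as the images of configurations whose $\Sigma_{12}$-stabilizer contains an odd permutation. The $\DOD$ stabilizer $A_5 \subset A_{12}$ is entirely even, so $\DOD$ is unramified; $\FCC$ has stabilizer $\Sigma_4$ whose $4$-fold rotations act as three disjoint $4$-cycles on the $12$ spheres (odd); $\HCP$ has stabilizer $D_3$ whose three $2$-fold rotations, passing through a pair of middle-layer sphere centers, act as products of five transpositions (odd). All further branching comes from lower-dimensional strata of configurations admitting a rotation that induces an odd permutation, principally those with a $2$-fold axis passing through exactly two sphere centers. A parameter count, modding out by $SO(3)$ and the residual $SO(2)$, shows these and analogous higher-symmetry strata have codimension at least $3$ in the $21$-dimensional $\bcon(12)[1]$.

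Next I would invoke the local structure at $\FCC$ and $\HCP$ that underlies the preamble to this conjecture and is developed in [KKLS16+]: each such configuration is a local cut point, approached from the interior $\bcon^{+}(12)[1]$ along a unique tangent line, so a small punctured neighborhood locally disconnects into exactly two components. Matching these two local components with the two local sheets of $W \to U$ realizes the nontrivial local monodromy of the double cover precisely at $\FCC$ and $\HCP$. The conjecture would then follow by constructing a continuous parity function $\varepsilon \colon \bcon(12)[1] \setminus (\FCC \cup \HCP) \to \{\pm 1\}$ satisfying $\varepsilon(\sigma p) = \mathrm{sgn}(\sigma)\,\varepsilon(p)$, obtained by parallel transport of parity from a fixed reference labeled $\DOD$: a continuous path joining $p_1$ with $\varepsilon(p_1) = +1$ to $p_2$ with $\varepsilon(p_2) = -1$ cannot remain in the domain of $\varepsilon$.

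The hard part will be the global well-definedness of $\varepsilon$ across the $2$-fold (and higher) symmetric strata. Although their codimension in $U$ is high enough that $\pi_1$ is unaffected by their removal, branched cover theory shows each such stratum still a priori carries potential local monodromy. The required input is that any loop encircling such a stratum can be homotoped within $\bcon(12)[1] \setminus (\FCC \cup \HCP)$ to a loop with trivial parity monodromy, by exploiting the fact that these strata lie in closures that adjoin $\FCC$ or $\HCP$ and that their deformation data is compatible with the cut-point structure at those limit points. Making this cancellation precise demands detailed control of the singular stratification of $\bcon(12)[1]$ as a real semi-algebraic variety, which is the principal technical obstacle in converting this sketch into a complete proof.
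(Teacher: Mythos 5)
This statement is a \emph{conjecture} in the paper, not a theorem: the authors offer no proof, and the local cut-point structure at $\FCC$ and $\HCP$ on which they base it is itself only asserted and deferred to the forthcoming work \cite{KKLS16+}. So there is no ``paper's proof'' for your proposal to match; what you have written is a strategy sketch for an open problem, and you are candid about that.

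Evaluating the sketch on its own terms, the covering-space reformulation and the branch-locus computations are sensible bookkeeping: the icosahedral stabilizer $A_5$ does land inside $A_{12}$, while the $4$-fold rotations in the $\Sigma_4$-stabilizer of $\FCC$ and the $2$-fold axial rotations in the $D_3$-stabilizer of $\HCP$ are odd on the twelve labels, so these are genuine branch points of $W \to U$. But the core move --- ``construct a parity function $\varepsilon\colon \bcon(12)[1]\setminus(\FCC\cup\HCP)\to\{\pm1\}$ with $\varepsilon(\sigma p)=\mathrm{sgn}(\sigma)\varepsilon(p)$'' --- is a restatement of the conjecture, not a reduction of it. Such an $\varepsilon$ must be constant on each connected component of the punctured space, and whether the rule $\varepsilon(\sigma\cdot\DOD_0)=\mathrm{sgn}(\sigma)$ extends consistently to a function constant on components is precisely the question being asked. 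You acknowledge this at the end, but the framing risks giving the impression of progress where none has been made.

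Two further gaps. First, the claim that the other odd-stabilizer strata have codimension at least $3$ in $\bcon(12)[1]$ deserves justification; a crude ``modding out by $SO(3)$ and $SO(2)$'' count is not enough, because the relevant strata are defined by incidence of a symmetry axis with sphere centers and their codimension depends on the contact structure, not just on group dimension. Second, and more importantly, the appeal to codimension to control $\pi_1$ is being used in a setting where the space is not a manifold at exactly the points you care about: $\FCC$ and $\HCP$ are asserted to be local cut points, which is a far more singular situation than high codimension in a manifold, and it is only this extraordinary local structure that makes it even conceivable for a finite set of points in a $21$-dimensional space to disconnect it. Your proposed ``cancellation'' of monodromy around the other strata by homotoping loops toward $\FCC$ or $\HCP$ is speculative; it is not even clear that every such stratum closure adjoins $\FCC$ or $\HCP$, and nothing in the paper supports that. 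In short, the sketch reformulates the problem in reasonable language but does not supply the missing content, which is a genuine analysis of the singular semi-algebraic stratification of $\bcon(12)[1]$ near its symmetric configurations.
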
 

This conjecture asserts a specific way in which the $\FCC$ and $\HCP$ configurations may play a remarkable role in rearrangements of $12$-configurations, 
illuminating the assertion of Frank ~(1952) in Section ~\ref{sec:27}. 

%***************************
%  Subsection 6.6: Disconnectedness
%***************************
\subsection{Disconnectedness Conjectures for $r>1$}\label{sec66}
For the region $1<r\leq R_{1}$ we propose the following conjecture.

%***************************
%  Conjecture  6.7: Two components
%***************************

\begin{conjecture}\label{conj:66} {\rm (Two Connected Components)}
Let $R_1$ be the upper  critical  radius defined after Theorem ~\ref{thm:62}.
Then the space $\con(12)[r]$  for $1 < r < R_1$ has exactly two connected components. 
Two labeled configurations, $\DOD$ and $\sigma(\DOD)$, where $\sigma \in \Sigma_{12}$ is a permutation of twelve labels, belong to different connected components of $\con(12)[r]$ if and only if the permutation $\sigma$ is odd. 
\end{conjecture}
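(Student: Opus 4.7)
The strategy is to establish both an upper bound of two and a lower bound of two on the number of path components of $\con(12)[r]$ for $1<r<R_1$, and to identify the two components with the parity invariant of the permutation. I will first argue the upper bound by a gradient-type retraction to the set of $\DOD$ configurations, and then construct an explicit $\ZZ/2$-valued locally constant invariant for the lower bound.

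\textbf{At most two components.} Fix $r\in(1,R_1)$. I would first show that every $\bU\in\bcon(12)[r]$ can be continuously deformed (within $\bcon(12)[r]$) to some labeled $\DOD$ configuration. The mechanism is the upward ``gradient flow'' of the injectivity radius function $\ir$, as discussed in Section~\ref{sec:42}; the flow terminates at a configuration that is critical for maximizing $\ir$ on the superlevel set $\{\ir\ge\theta(r)/2\}$. The crux is to classify such critical configurations in the radius range $r\in(1,R_1)$. Using Theorem~\ref{thm:contact} and Theorem~\ref{thm:converse}, a critical configuration must carry a balanced contact graph whose edges all have angular length $\theta(r)>\pi/3$. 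In particular, the $\FCC$ and $\HCP$ balanced graphs (whose stresses realized at $r=1$ in Theorem~\ref{th913bb} have edge-length exactly $\pi/3$) are no longer admissible. I would argue, via an enumeration in the spirit of Danzer's irreducible-graph analysis (Section~\ref{sec:3}), that for $r$ strictly between $1$ and $R_1$ the only balanced contact graphs on $12$ points of $\SS^2$ are the icosahedral (equivalently $\DOD$) ones. Granted this, $\ir$-flow yields a continuous retraction of $\bcon(12)[r]$ onto the discrete set of labeled $\DOD$ configurations, and Theorem~\ref{thm:62} then shows that within this set the orbit of a reference $\DOD$ under modified $M_5$-moves is precisely $A_{12}\cdot\DOD$; so at most two components arise, indexed by the cosets of $A_{12}$ in $\Sigma_{12}$.

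\textbf{At least two components.} To realize these two cosets as genuinely distinct path components, I would define a $\ZZ/2$-valued parity invariant $\pi\colon \bcon(12)[r]\to\ZZ/2$ as follows. Pick any reference labeled $\DOD^0$. For generic $\bU$, let $\Phi_t(\bU)$ denote the $\ir$-flow; set $\pi(\bU)$ to be the parity of the permutation $\sigma\in\Sigma_{12}$ such that $\lim_{t\to\infty}\Phi_t(\bU)=\sigma(\DOD^0)$. Well-definedness on an open dense set follows from the classification of critical configurations above; continuous (in fact locally constant) extension to all of $\bcon(12)[r]$ uses the stratified-flow framework of min-type Morse theory (to be developed in ~\cite{KKLS16+}). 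Since $M_5$-moves produce $5$-cycles, which are even, any two $\DOD$ configurations related by an odd permutation have $\pi$-values differing by $1$, so they lie in distinct components. Combined with the previous step, this gives exactly two components, with the component of $\sigma(\DOD^0)$ determined by the parity of $\sigma$.

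\textbf{Main obstacle.} The decisive technical point is the classification of $\ir$-critical configurations for $r\in(1,R_1)$. One must enumerate all balanced planar graphs on $12$ vertices on $\SS^2$ whose edges have common length $\theta(r)$ and whose vertex forces, as in Definition~\ref{def:48}, can be balanced with non-negative weights, and show that no such graph other than $\DOD$ exists in this radius range. This is closely tied to the (also open) determination that $R_1$ itself is the first critical value of $\ir$ above $\pi/6$: if there were a critical configuration between $r=1$ and $r=R_1$ distinct from $\DOD$, it would introduce extra local maxima of $\ir$ and could spawn additional path components, invalidating the count of two. A secondary obstacle is giving a rigorous definition of the $\ir$-flow at non-manifold points of the stratified space $\bcon(12)[r]$ and verifying that the resulting retraction is continuous across strata; the framework of ~\cite{KKLS16+} is tailored to this, but its application to the specific geometry of $\bcon(12)[r]$ near its $\DOD$ maxima needs to be carried out explicitly.
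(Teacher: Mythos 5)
This statement is labeled a \emph{Conjecture} in the paper; it is not proved there, so there is no argument of the authors to compare against. The paper offers only the one-line heuristic that the $M_5$-moves (Theorem~\ref{thm:62}) give at most two components \emph{containing a $\DOD$ configuration}, and explicitly contemplates in Conjecture~\ref{conj:65} that further components containing no $\DOD$ at all might exist.

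Your plan contains an internal inconsistency that would make it fail even if the ``main obstacle'' you flag were resolved. You propose to show that the only $\ir$-critical configurations at levels above $\theta(1)/2$ are the $\DOD$ ones, and then to deduce that the upward $\ir$-flow retracts $\bcon(12)[r]$ onto the discrete set of labeled $\DOD$ configurations. But if that retraction existed, $\bcon(12)[r]$ would have exactly $12!/60 = 7983360$ components, one per $\DOD$ orbit --- not two. In Morse-theoretic terms, a superlevel set with no interior critical points except local maxima is a disjoint union of basins, one per maximum; for distinct $\DOD$ orbits to lie in the same component (which your step invoking $M_5$-moves requires), there must be additional critical configurations of saddle type in the range $1 < r < R_1$ whose attaching maps join those basins. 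So ``classify and find only $\DOD$'' would \emph{refute} the two-component count rather than establish it; the correct statement of the obstacle is to classify \emph{all} critical configurations (including saddles such as the $M_5$ bottlenecks) and show that the Morse complex they build has $H^0$ of rank two. The same issue undermines your parity invariant: the limiting endpoint of the $\ir$-flow is not continuous across separatrices of saddles, so ``$\pi(\bU) = $ parity of the limiting $\DOD$'' is not locally constant without further argument. A cleaner route to the ``at least two'' direction would be to show that for $r>1$ any path between $\DOD$ configurations realizing an odd permutation must pass through an $\FCC$ or $\HCP$ configuration (Conjecture~\ref{conj95b}), which are excluded from $\con(12)[r]$ when $r>1$; but that too is open.
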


In view of the existence of the $M_5$-move, the argument in Theorem ~\ref{thm:58} indicates that there can be at most $2$ connected components containing $\DOD$ configurations in  this region. 
Conjecture ~\ref{conj:66}  asserts there are exactly these two, and no other, connected components. 

We next note that  the  five ``bottlenecks'' in the $5$-move lead to the possibility  of connected components not containing any $\DOD$ configuration, for certain ranges of $r$. 
During the $M_{5}$ move joining two $\DOD$ configurations, there are $5$ bottlenecks all through which one can pass at least up to a radius $r_{1}>1$. 
There is however room for configurations of spheres of larger radius occurring between the bottlenecks.
If we increase the radius above the smallest two of the bottleneck radii, it may be possible for a sphere to get stuck in the middle of one of these regions, so it can neither go backwards nor forwards via the $M_5$-move to a $\DOD$ configuration.
Assuming that ``trapped'' configurations (from blocking of the $M_{5}$ move) exist containing no $\DOD$ configuration, eventually as we increase $r$ some ``trapped'' configuration must become a critical configuration. 
It would then be a local maximum in the configuration space, an isolated point in some $\bcon(12)[r]$. 
The  critical value at which this occurs would necessarily be strictly smaller than $r_{max}(12)$, using the result of Danzer (in Theorem ~\ref{th73})  that the extremal configuration for $N=12$ is unique. 

%***************************
%  Conjecture  6.8: Non-DOD
%***************************
\begin{conjecture}\label{conj:65}
{\rm (Non-$\DOD$ Components)}
There is  a nonempty interval of values of $r>1$ such that the reduced configuration space $\bcon(12)[r]$ has connected components that do not contain any copy of a $\DOD$ configuration.
\end{conjecture}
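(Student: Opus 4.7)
The plan is to produce a local maximum of the injectivity radius function $\ir$ on $\bcon(12)$ occurring at some radius $r^{\star}$ with $1 < r^{\star} < r_{max}(12)$, and then use min-type Morse theory together with Danzer's uniqueness theorem to deduce that a non-$\DOD$ connected component of $\bcon(12)[r]$ exists on an open subinterval of $(1, r_{max}(12))$.

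First, I would seek a critical configuration by examining the bottleneck configurations of Theorem~\ref{thm:62}. Each such configuration, at radius $r_1^{(j)}>1$, has a rich contact graph: the four-ball meridional string $\ball{N}$--$\ball{U}_j$--$\ball{V}_j$--$\ball{S}$, plus the touchings $\{\ball{U}_i,\ball{N}\}$, $\{\ball{V}_i,\ball{S}\}$, $\{\ball{U}_i,\ball{V}_i\}$ for all $i\neq j$, together with the five additional touchings listed in Theorem~\ref{thm:62}. Using the balancing criterion of Theorem~\ref{thm:converse}, I would construct a balanced stress graph with nonnegative weights (not all zero) on this contact graph, exploiting the $z\mapsto-z$ reflection symmetry of the bottleneck to reduce the problem to a small linear system. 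This would promote the bottleneck configuration to a critical configuration of $\ir$ at radius $r_1^{(j)}$.

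Second, I would need to determine the co-index of this critical configuration. For a new connected component to be born in the superlevel set $\bcon(12)[r]$ as $r$ decreases through $r_1^{(j)}$, the bottleneck must be of co-index zero, equivalently a local maximum of $\ir$ in the max-min sense of Section~\ref{sec:42}. I would verify this by a direct infinitesimal rigidity computation: the outer-normal directions provided by the balanced stress graph must span the tangent space of $\bcon(12)$ at the bottleneck modulo the $SO(3)$-action. Once co-index zero is established, the results of \cite{BBK:2014} (suitably adapted to superlevel sets) imply that a new connected component $C(r)\subset\bcon(12)[r]$ appears as $r$ descends below $r_1^{(j)}$. Since the bottleneck configuration has symmetry group of order at most $2$, it is isometrically distinct from any $\DOD$ configuration (whose symmetry group is the icosahedral group $A_5$ of order $60$); by continuity, no configuration in $C(r)$ is a $\DOD$ configuration for $r$ sufficiently close to $r_1^{(j)}$, giving the desired non-$\DOD$ component on an open subinterval immediately below $r_1^{(j)}$. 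As $r$ continues to decrease, $C(r)$ may eventually merge with $\DOD$ components at lower critical values, but this does not affect the conjecture's conclusion on the initial subinterval. Danzer's Theorem~\ref{th73} guarantees consistency of the overall picture, since every such $C(r)$ must disappear before $r$ reaches $r_{max}(12)$.

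The main obstacle is the second step: establishing that the bottleneck configuration has co-index zero rather than being a higher co-index saddle. If the bottleneck were a saddle, then the change at $r_1^{(j)}$ is a cell attachment of positive dimension and no new component is born from this critical point alone. In that scenario one would need either to identify a different non-$\DOD$ co-index zero critical configuration (perhaps a variant or composition of bottleneck-type configurations), or to run a global argument comparing the Euler characteristic and Betti numbers of $\bcon(12)[r]$ across $(1, r_{max}(12))$: counting the many contractible $\DOD$ components at $r$ just below $r_{max}(12)$ via the Purity Property of Section~\ref{sec:43a}, and then deducing by Morse-type inequalities that if no non-$\DOD$ components ever arose, the merging of those components down to $\bcon(12)[1]$ would require more saddle critical points than are combinatorially available from $\DOD$-to-$\DOD$ transitions alone.
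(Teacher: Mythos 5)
This statement is labeled a \emph{conjecture} in the paper, and the authors do not prove it. Immediately before Conjecture~\ref{conj:65} they give only an informal heuristic, noting that configurations might become ``trapped'' between bottlenecks of the $M_5$-move once the radius exceeds two of the bottleneck radii $r_1^{(j)}$, and that such a trapped region would shrink as $r$ increases until it degenerates to an isolated local maximum of $\ir$ at some critical value below $r_{\max}(12)$. So any complete argument you produce here would be new to the paper, and your own proposal (honestly) stops short of one, flagging the co-index computation as the unresolved obstacle.

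Beyond that, there is a conceptual mismatch between the candidate critical configuration you propose and the one the paper's heuristic points to. You take the bottleneck configurations themselves, at radius $r_1^{(j)}$, as the candidate co-index-zero critical points. But a bottleneck is, almost by definition, the narrowest passage along a path joining two $\DOD$ configurations: it is where $\ir$ is \emph{smallest} along the $M_5$-track, and raising $r$ past $r_1^{(j)}$ closes that passage. Such configurations are the natural candidates for \emph{saddles} (positive co-index), not local maxima; indeed, for the Connectedness and Bottleneck Conjectures~\ref{conj95}--\ref{conj95b} to be consistent with a single connected component at $r=1$, the $\FCC$ and $\HCP$ bottleneck-type critical points at $r=1$ should behave as saddles rather than local maxima. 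The paper's heuristic instead locates the putative local maximum among \emph{different} configurations: those lying on the $M_5$-track strictly \emph{between} two closed bottlenecks, at a radius strictly larger than the two bottleneck radii that trap them. Your fallback plan (search for a different co-index-zero configuration, or run a global Euler-characteristic/Betti-number accounting) is the more promising direction, but neither is carried out here or in the paper, so the gap remains genuine.

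One smaller caveat: even if a non-$\DOD$ co-index-zero critical configuration $\bU^{\star}$ at radius $r^{\star}$ were exhibited and balanced, the min-type Morse framework of Section~\ref{sec:42} and~\cite{BBK:2014} gives you a new contractible component of $\bcon(12)[r]$ just below $r^{\star}$, but you must also verify that this component remains disjoint from every $\DOD$-containing component on a nonempty open interval below $r^{\star}$. This requires checking that $r^{\star}$ is an isolated critical value and that no saddle merging $\bU^{\star}$'s component with a $\DOD$ component occurs at $r^{\star}$ itself; the finiteness-of-critical-values remark following Theorem~\ref{thm:converse} supplies the isolation, but it should be cited explicitly, since it is deferred to~\cite{KKLS16+} and only ``directly verified for small $N$.''
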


A positive answer to this question  raises the possibility of a value of $r$ for which the number of connected components of $\bcon(12)[r]$  exceeds the number of labeled $\DOD$ configurations, which is $\frac{12!}{12\times5}=7983360$.
To obtain the latter number, let  us take the $\DOD$ configuration and label its $12$ balls. There are $12!$ such labelings.  
Two labeled configurations are equivalent (i.e.~the same in $\bcon(12)[r]$) iff one can be obtained from the other by the $SO(3)$ rotation action. 
Clearly, there are $12\times5$ labelings in every equivalence class. 

Finally, for the region of $r$-values very close to $r_{max}(12)$, we assert that each of the spaces $\con(12)[r]$ and $\bcon(12)[r]$  has exactly $ \frac{12!}{12\times5}=7983360$ connected components, with each component containing a   $\DOD$ configuration.
This fact follows assuming the finiteness of the set of critical radius values $r$ for $\bcon(12)$, since at the point $r_{max}(12)$ only the $\DOD$ configurations survive, according to the uniqueness result of Danzer (Theorem ~\ref{th73}), and the topology of $\bcon(12)[r]$ does not change above the next largest critical value of $r$ below~$r_{max}(12)$. 

%\begin{center}
%\rule{.5\textwidth}{0.4pt}
%\end{center}

%%%%%%%%%%%%%%%%%%%%%%%%%%%%%%%%%%%%%%%%%%%%%%%%%%%%%%%%%%%%%%%%%%%%%%%%%%%%%%%%%
%%                                          %%                                                        %%                                                        %%                                                                         %%
%%   7 CONCLUSION           %%                                                         %%                                                        %%                                                                       
%%                                          %%                                                         %%                                                         %%                                                                          %%
%%%%%%%%%%%%%%%%%%%%%%%%%%%%%%%%%%%%%%%%%%%%%%%%%%%%%%%%%%%%%%%%%%%%%%%%%%%%%%%%%%
\section{Concluding Remarks }\label{sec:7}
This paper treats configuration spaces of touching spheres for  very small values of $N$.
We have shown that the configuration space of $12$ equal spheres touching a central $13$-th sphere is already large enough to exhibit interesting behavior 
in its critical points. 
Concerning $12$-sphere configurations in the equal radius case $r=1$ we have made the following observations. 
\begin{itemize}
\item
We have clarified an assertion of Frank (1952) given in Section ~\ref{sec:27}, showing that in the space $\bcon(12)[1]$ there are deformations interconnecting all  $\FCC$, $\HCP$ and $\DOD$ configurations.
\item
We have given evidence suggesting that $\bcon(12)[1]$ is a connected space, and conjectured that $r=1$ is the largest parameter value where $\bcon(12)[r]$ is connected.
\item
We have shown that all elements of the finite set of $\FCC$ and $\HCP$ configurations lie on the boundary 
of the topological space $\bcon(12)[1]$ and are critical points for maximizing the radius parameter. 
\item 
We have conjectured that a continuous deformation of 12 spheres in a $\DOD$ configuration 
to a permutation of itself that is an 
odd permutation of its elements,  then the deformation must
pass through one of  the
 (finite set of) $\FCC$ and $\HCP$ configurations in $\bcon(12)[1]$; 
 they  are ``unavoidable'' points.  
\end{itemize}
Many challenging and computationally difficult problems remain to better understand the constrained configuration space $\bcon(12)[1]$. 

As mentioned in the introduction, configuration spaces are of interest in physics and materials science, particularly in connection with jamming in materials. 
Hard sphere models which view spheres packed inside a box have been extensively studied for jamming. 
Materials scientists have studied configuration spaces of small numbers of hard spheres by simulation in connection with nanomaterials. 
Recently, Holmes-Cerfon ~\cite{Holmes-C:2016} developed an algorithm that enumerates rigid sphere clusters and has determined those with up to $16$ spheres. 
The cases of small numbers (but larger than the $N$ treated here) of spheres were studied in Phillips ~et ~al. ~\cite{PJG12} and Glotzer et al. ~\cite{PJG14}, giving estimates for extremal configurations at values of $N$ larger than can be currently treated mathematically. 
We note that simulations of phase space can  sample only a small part of it.
In the simulation experiments reported in ~\cite{PJG12}  for $N=12$ equal spheres, the experimenters were unable to detect that the radii at which the $M_5$-move and the $M_6$-move permutation cease being feasible are in fact different (as discussed in Section ~\ref{sec:64}).  

Study of the jamming problem leads to the sub-problem concerning what is a good notion of rigidity for such configurations. 
There is a notion of ``locally jammed configuration'' in which no particle can move if its neighbors are fixed. 
The Tammes problem --- also called the (extremal) spherical codes problem --- of determining $r_{max}(N)$ is analogous to 
the problem of determining maximally dense jammed configurations of spheres in a box. 
Various notions of rigidity for spherical codes were formulated in Tarnai and G\'{a}sp\'{a}r ~\cite{TarnaiG:1983}.
More recently Cohn et al.~\cite{CohnT:2011} give a mathematical treatment of rigidity of extremal $n$-dimensional spherical codes. 

In configuration theory models like $\bcon(N)[r]$  of this paper, critical configurations at critical values  of the radius parameter might serve as a proxy  for locally jammed configurations. One can view  the  balancing condition in Theorem ~\ref{thm:converse} as a weak 
form  of the  locally jammed condition.  
However only a subclass of critical configurations will be locally jammed in the sense above. 

%%%%%%%%%%%%%%%%%%%%%%%%%%%%%%%%%%%%%%%%%%%%%%%%%%%%%%%%%%%%%%%%%%%%%%%%%%%%%%%%%
%%                                          %%                                                        %%                                                        %%                                                                         %%
%%   SECTION 8 UNLOCKING APPENDIX     %%                                                         %%                                                        %%                                                                      
%%                                          %%                                                         %%                                                         %%                                                                          %%
%%%%%%%%%%%%%%%%%%%%%%%%%%%%%%%%%%%%%%%%%%%%%%%%%%%%%%%%%%%%%%%%%%%%%%%%%%%%%%%%%%

\section{Appendix: Unlocking Manual  for  the FCC and HCP Configurations \label{manual}}
%***************************
% Subsection 8.1: FCC
%***************************
\subsection{The $\FCC$ Configuration}
To unlock the $\FCC$ configuration, a good way is to do it with the help of a friend, hereafter called Charles\footnote{\ Apr\'{e}s Charles Radin}. 
Please follow these steps:

\begin{enumerate}[leftmargin=1.6em]
\item[(1)]  
Ask Charles to hold the $3$ north balls and the $3$ south balls firmly in their positions. 
These $6$ polar balls remain fixed during the whole process. 
As a result, the $13$-th central ball stays fixed as well. 

\item[(2)]  
Roll the remaining $6$ equatorial balls in a direction roughly parallel to the equator. 
If properly lubricated, this does not require a big effort. 

\item[(3)] 
The equatorial balls can all be pushed either to the east or to the west, in a coordinated way.

\item [(4)] 
At all times you must ensure the $6$ rolling balls touch the central ball. 
This requires some practice, but it is possible and not terribly hard. 

\item[(5)]  
Observe that the $6$ balls roll around the central ball along the equatorial ``valley'' between the polar balls kept fixed by Charles. 
These rolling balls cannot always move equatorially, but instead move north and south slightly, in an alternating manner, as you roll them.

\item[(6)] 
Because the $6$ rolling balls move north and south, some of them do not touch each other any more: 
free space may appear between them.
Also, some space can be created between them and the $6$ balls kept fixed by Charles. This is normal.

\item[(7)]  
As you proceed by $\frac{\pi}{3},$ the $6$ rolling balls realign in the equatorial plane, touching each other and the polar balls. 
Note that at this moment the configuration is locked back into $\FCC$. Each of the $6$ rolled balls is touching two of its equatorial neighbors, one ball to the north and one ball to the south.
\end{enumerate}

%***************************
% Subsection 8.2: HCP
%***************************
\subsection{The $\HCP$ Configuration}
Unlocking the $\HCP$ configuration is similar to the $\FCC$ configuration, except that Charles has somewhat more to do. 
Please follow these steps: 
\begin{enumerate}[leftmargin=1.6em]
\item[(1)]  
Ask Charles to hold firmly the three north balls and the three south balls. 
The three south balls will remain fixed during the whole process. 
But the north triangle has to be rotated as a whole in its plane, at some constant speed, which can be either eastward or westward (there are two choices). 
It will move through an angle~$\frac{2 \pi}{3}$. The $13$-th central ball stays fixed as before. 

\item[(2)] 
Roll all the remaining $6$ balls in the (roughly same) equatorial direction as the north triangle is rotating. 
This movement direction is forced on all six equatorial balls by the motion of the north triangle. 

\item[(3)]  
The rest of the process goes basically in the same way as for the $\FCC$ configuration.

\item[(4)]  
As Charles proceeds to rotate the north triangle by $\frac{2\pi}{3}$, you proceed by $\frac{\pi}{3},$ the six middle balls align back into the equatorial plane, touching each other and the six polar balls. Note that at this moment the configuration is locked back into the $\HCP$ configuration. 
Each of the $6$ rolled balls is touching two of its equatorial neighbors, one ball to the north and one ball to the south.  

\end{enumerate}
Note that for $\FCC$, the equatorial balls underwent cyclic permutation of length $6$. For $\HCP$, the equatorial balls underwent a cyclic permutation of length $6$ and the $3$ northern balls a cyclic permutation of length $3$. These give odd permutations of $\FCC$ and $\HCP$.

%%%%%%%%%%%%%%%%%%%%%%%%%%%%%%%%%%%%%%%%%%%%%%%%%%%%%%%%%%%%%%%%%%%%%%%%%%%%%%%%%
%%                                          %%                                                        %%                                                        %%                                                                         %%
%%   ACKNOWLEDGMENTS       %%                                                         %%                                                        %%                                                                   
%%                                          %%                                                         %%                                                         %%                                                                          %%
%%%%%%%%%%%%%%%%%%%%%%%%%%%%%%%%%%%%%%%%%%%%%%%%%%%%%%%%%%%%%%%%%%%%%%%%%%%%%%%%%%

\section*{Acknowledgments}

The  authors were each  supported by ICERM in the Spring 2015 program on ``Phase Transitions and Emergent Properties.'' 
R.~K. was also supported by the University of Pennsylvania Mathematics Department sabbatical visitor fund and by MSRI via NSF grant DMS-1440140.  
W.~K. was also supported by Austrian Science Fund (FWF) Project 5503.  
J.~L. was supported by NSF grant DMS-1401224 and by a Clay Senior Fellowship at ICERM. 
Part of the work of S.~S. has been carried out in the framework of the Labex Archimede (ANR-11-LABX-0033) and of the A*MIDEX project (ANR-11-IDEX-0001-02), funded by the ``Investissements d'Avenir'' French Government programme managed by the French National Research Agency (ANR). 
Part of the work of S.~S. has been carried out at IITP RAS. 
The support of Russian Foundation for Sciences (Project No. 14-50-00150) is gratefully acknowledged. 
The authors thank Bob Connelly, Sharon Glotzer, Mark Goresky, Tom Hales and Oleg Musin for helpful comments. 
Parts of Section ~\ref{sec:41} are adapted from unpublished notes by R.~K. and John~Sullivan (MSRI, 1994) about critical configurations of ``electrons'' on the sphere. 

%%%%%%%%%%%%%%%%%%%%%%%%%%%%%%%%%%%%%%%%%%%%%%%%%%%%%%%%%%%%%%%%%%%%%%%%%%%%%%%%%
%%                                          %%                                                        %%                                                        %%                                                                         %%
%%   9. BIBLIOGRAPHY/REFS        %%                                                         %%                                                        %%                                                                    
%%                                          %%                                                         %%                                                         %%                                                                          %%
%%%%%%%%%%%%%%%%%%%%%%%%%%%%%%%%%%%%%%%%%%%%%%%%%%%%%%%%%%%%%%%%%%%%%%%%%%%%%%%%%%


\begin{thebibliography}{99}

\bibitem{AbramsG:2002}
%sec 1.1
A. Abrams and R. Ghrist,
\emph{Finding topology in a factory: Configuration spaces},
Amer. Math. Monthly {\bf 109} (2002), No. 2, 140--150.

\bibitem{Alpert:2016}
%sec 4.2
H. Alpert,
\emph{Restricting cohomology classes to disk and segment
configuration spaces},
Topoogy and its Applications {\bf 230} (2017), 51--76.
%{\tt arXiv:1608.06612}.

\bibitem{Anderson:1995}
%sec 1.2
P. W. Anderson,
\emph{Through the glass lightly},
Science {\bf 267} (1995), 1615.

\bibitem{Anst:2004}
%sec 2.8
K. Anstreicher,
\emph{The thirteen spheres: a new proof},
Disc. Comp. Geom. {\bf 31} (2004), 613--625.

\bibitem{Angell:2008}
%sec 2.7
C. Austin Angell,
\emph{Insights into Phases of Liquid Water from
Study of its Unusual Glass-Forming Properties},
Science 1 Feb 2008  {\bf 319}, 582--587.

\bibitem{Arnold:1969}
%sec 4.1
V. I. Arnold,
\emph{The cohomology ring of dyed braids}, (Russian)
Mat. Zametki {\bf 5} (1969), 227--231.

\bibitem{AstW:2000}
%sec 3.3
T. Aste and D. Weaire,
\emph{The Pursuit of Perfect Packing},
Institute of Physics Publishing: London 2000.

\bibitem{Barlow:1883}
%sec 2.4
W. Barlow,
\emph{Probable nature of the internal symmetry of crystals},
Nature {\bf 29} (1883), 186--188, 205--207.

\bibitem{BBK:2014}
%sec 4.2, 4.3
Y. Baryshnikov, P. Bubenik and M. Kahle,
\emph{Min-type Morse theory for configuration spaces of
hard spheres},
Int. Math. Res. Not. IMRN {\bf 2014} (2014), No. 9, 2577--2592. 

\bibitem{Bender:1874}
%sec 2.3
C. Bender,
\emph{Bestimmung der gr\"{o}ssten Anzahl gleich
grosser Kugeln, welche sich auf eine Kugel von demselben Radius,
wie die \"{u}brigen, auflegen lassen},
Acrhiv der Mathematik und Physik {\bf 56} (1874), 302--306.

%\bibitem{Bez06}
%%
%K. Bezdek,
%\emph{Sphere Packings Revisited,}
%European J. Combinatorics {\bf 27} (2006), No. 6, 864--883.

\bibitem{Bor83}
%sec 3.2.3
K. B\"{o}r\"{o}czky,
\emph{The problem of Tammes for $n=11$},
Studia Sci. Math. Hung. {\bf 18} (1983), No. 2--4, 165--171.

\bibitem{BorS03}
%sec 3.2.4
K. B\"{o}r\"{o}czky and L. Szab\'{o},
\emph{Arrangements of $13$ Points on a Sphere},
pp. 111-184 in A. Bezdek, Ed. {\em Discrete Geometry},
Marcel Dekker: New York 2003.

\bibitem{BorS03b}
%sec 3.2.4
K. B\"{o}r\"{o}czky and L. Szab\'{o},
\emph{Arrangements of $14, 15, 16$ and $17$  Points on a Sphere},
Studi. Sci. Math. Hung. {\bf 40} (2003), 407--421.

%\bibitem{BowlesA:2011}
%%
%R. K. Bowles and S. S.  Ashwin,
%\emph{Edwards entropy and compactivity in a model of granular matter,}
%Phys. Rev. E. {\bf 83}, 031302, Published 3 March 2011.

%\bibitem{Bragg12}
%%
%W. L. Bragg  B.A., Trinity College.
%\emph{The Diffraction of Short Electromagnetic Waves By A Crystal}
%Proc. Camb. Phil. Soc., XVII (I), (1912), 43--57.

\bibitem{CFKSW:2006}
%sec 4.2
J. Cantarella, J. H. Fu, R. Kusner, J. M. Sullivan and N. C. Wrinkle,
\emph{Criticality for the Gehring link problem},
Geom. Topol. {\bf 10} (2006), 2055--2116.

\bibitem{CGZKM:2012}
%sec 4.2
G. Carlsson, J. Gorham, M. Kahle and J. Mason,
\emph{Computational topology for configuration spaces of hard disks},
Phys. Rev. E {\bf 85}, 011303, Published 9 January 2012.

\bibitem{Cohen:1988}
%sec 4.1
F. R. Cohen,
\emph{Artin's braid groups, classical homotopy theory, and sundry other curiosities},
167--206 in {\em Braids}, Contemp. Math. Vol. 78:
Amer. Math. Soc. 1988.

\bibitem{Cohen:2010}
%sec 4.1
F. R. Cohen,
\emph{Introduction to configuration spaces and their applications},
Lect. Notes Ser. Inst. Math. Sci. Natl. Univ. Singap., 19, World Sci. Publ.,
Hackensack, NJ, 2010.

%\bibitem{CohnP:}
%%
%F. R. Cohen and J. Pakianathan,
%\emph{Configuration spaces and braid groups,}
%preprint.

%\bibitem{CohenWu:2003}
%%
%F. R. Cohen and J. Wu,
%\emph{On braid groups, free groups and the loop space of the $2$-sphere,}
%Progress in Mathematics {\bf 215} (2003), 93--105.

%\bibitem{CohenWu:2008}
%%
%F. R. Cohen and J. Wu,
%\emph{On braid groups and homotopy groups,}
%Geometry \& Topology Monographs {\bf 13} (2008), 169--193.

%\bibitem{CohenWu:2013}
%%
%F. R. Cohen and J. Wu,
%\emph{Artin's braid groups, free groups, and the loop space of the $2$-sphere,}
%Quart. J. Math. {\bf 62} (2011), No. 4, 891--921.

\bibitem{CohnT:2011}
%sec 7
H. Cohn, Yang Jian,  A. Kumar and S. Torquato, 
\emph{Rigidity of spherical codes},
Geometry and Topology {\bf 15} (2011), 2235--2273.

\bibitem{Connelly:2008}
%sec 5.2
R.  Connelly,
\emph{Rigidity of packings},
European Journal of Combinatorics, {\bf 29} (2008), no, 8, 1862--1871.

\bibitem{Splag3}
%sec 2.10, 3, 6.1, 6.2
J. H. Conway and N. J. A. Sloane,
\emph{Sphere Packings, Lattices and Groups. Third Edition.}
Springer-Verlag: New York 1998.
(First Edition: 1988).

\bibitem{Cox62}
%sec 3.2.5
H. S. M. Coxeter,
\emph{The problem of packing a number of equal non-overlapping circles
on a sphere}, Trans. New. York Acad. Sci., Ser. II, {\bf 24} (1962), 220--231.

%\bibitem{Dan61}
%%
%L. Danzer,
%\emph{\"{U}ber Durschnitteseigenschafen $n$-dimensionaler Kugelfamilien,}
%J. reine angew. Math. {\bf 208} (1961), 181--203.
%[Bemerken der Hilffsatz 2, p. 185]

\bibitem{Danz:1963}
%sec 3.3.3
L. Danzer,
\emph{Endliche Punktmengen auf der $2$-Sph\"{a}re mit m\"{o}glichst grossen Minimalabstand},
Habilitationsschrift, Univ. of G\"{o}ttingen, G\"{o}ttingen 1963.

\bibitem{Danz:1986}
%sect 3.2.3
L. Danzer,
\emph{Finite Point-Sets on ${\bf S}^2$ with Minimum Distance
as Large as Possible},
Discrete Math. {\bf 60}(1986), 3--66.
[English translation of Danzer Habilitationsschrift, with extra references added.]

\bibitem{Dennison:1921}
%sec 2.7
D. M. Dennison,
\emph{The crystal structure of ice,}
 Physical Review
{\bf 17} (1921), 20-22. (Science 24 Sept. 1920 {\bf 52} (1343),296--297.

\bibitem{DonevTSC:2004}
%sec 1.2
A. Donev, S. Torquato, F. H. Stillinger and R. Connelly,
\emph{Jamming in hard sphere and hard disk packings},
Journal of Applied Physics {\bf 95} (2004), No. 3, 989--999.

\bibitem{EdigerAN:1996}
%sec 1.2
M. D. Ediger, C. A. Angell and S. R. Nagel,
\emph{Supercooled liquids and glasses},
J. Phys. Chem. {\bf 100} (1996), 13200--13212.

\bibitem{Edmondson:1987}
%sec 5.5
%Amy C. Edmondson,
A. C. Edmondson, 
\emph{A Fuller Explanation: The Synergetic Geometry of R. Buckminster Fuller},
Birkh\"{a}user: Boston 1987.

\bibitem{Fadell:1962}
%sec 4.1
E. R. Fadell,
\emph{Homotopy groups of configuration spaces and the string problem
of Dirac},
Duke Math. J. {\bf 29} (1962), 231--242.

\bibitem{FadellH:2001}
%sec 4.1
E. R. Fadell and S. Y. Husseini,
\emph{Geometry and Topology of configuration spaces},
Springer Monographs in Mathematics. Springer-Verlang, Berlin 2001.

\bibitem{FadellN:1962}
%sec 4.1
E. R. Fadell and L. Newirth,
\emph{Configuration spaces},
Math. Scand. {\bf 10} (1962), 111--118.

%\bibitem{FadellvB:1962}
%%
%E. Fadell and J. van Buskirk,
%\emph{The braid groups of $E^2$ and $S^2$,}
%Duke Math. J.  {\bf 29} 1962), 243--257.

\bibitem{Farber:2008}
%sec 1.1
M. Farber,
\emph{Invitation to Topological Robotics},
Z\"{u}rich Lectures in Advanced Mathematics,
European Math.  Soc.: 2008.

%\bibitem{Feichtner:1997}
%%
%E. M. Feichtner,
%\emph{Cohomology algebras of subspace arrangements and of classical
%configuration spaces, } Doctoral thesis: TU Berlin 1997. (Cuvillier Verlag,  G\"{o}ttingen, 1997).

\bibitem{FZ:2000}
%sec 4.1
E. M. Feichtner and G. M. Ziegler,
\emph{The integral cohomology algebras of ordered configuration
spaces of spheres},
Doc. Math. {\bf 5} (2000), 115--139.

%\bibitem{FZ:2000b}
%%
%E. M. Feichtner and G. M. Ziegler,
%\emph{On cohomology algebras of complex subspace arrangements,}
%Trans. Amer. Math. Soc. {\bf 352} (2000), No. 8, 3523--3555.

\bibitem{FT:1943}
%sec 3.2.1
L. Fejes T\'{o}th,
\emph{ \"{U}ber die Absch\"{a}tzung des k\"{u}rzesten
Abstandes zweier Punkte eines auf einer Kugelf\"{a}ches liegenden
Punktsystems}, Jber. Deutsch. Math. Verein. {\bf 53} (1943), 66--68.
%[MR 8, 167c]

\bibitem{FT:1943b}
%sec 2.6, 5.3
L. Fejes T\'{o}th,
\emph{\"{U}ber die dichteste Kugellagerung},
Math. Z. {\bf 48} (1943), 676--684.

\bibitem{FT:1949}
%sec 3.2.1
L. Fejes T\'{o}th,
\emph{On the densest packing of spherical caps},
Amer. Math. Monthly {\bf 56} (1949), 330--331.

\bibitem{FT:1953}
%sec 3.2.2
L. Fejes T\'{o}th,
\emph{Lagerungen in der Ebene, auf der Kugel und in Raum},
Springer-Verlag: Berlin, Heidelberg 1953. (Second Edition: 1972).

\bibitem{FT:1959}
%sec 3.2.2
L. Fejes T\'{o}th,
\emph{Kugelunterdeckungen und Kugel\"{u}berdeckungen in R\"{a}umen konstanter Kr\"{u}mmung},
Archiv der Math. {\bf 10} (1959), 307--313.

\bibitem{FT:1959b}
%sec 3.2.5
L. Fejes T\'{o}th,
\emph{Er{\"a}it{\"a} ``kauniita'' extremaalikuvioita}, (Finnish) [On some ``nice'' extremal figures]
Arkhimedes {\bf 1959} (1959)  No. 2, 1--10.

\bibitem{FT:1969}
%sec 2.9
L. Fejes T\'{o}th,
\emph{Remarks on a theorem of R. M. Robinson},
Studia Scientiarum  Mathematicarum Hungarica {\bf 4} (1969), 441--445.

%\bibitem{FT:1989}
%%
%L. Fejes T\'{o}th, %rbk149
%\emph{Research problems,}
%Periodica Mathematica Hungaria {\bf 20} (1989), 89--91.

\bibitem{Frank:1952}
%sec 2.7, 5.2
F. C. Frank,
\emph{Supercooling of liquids},
Proc. Roy Soc. London A Math. Phys. Sci.
{\bf 215} (1952), 43--46.

%\bibitem{Friedman:1997}
%%
%Avner Friedman,
%\emph{Mathematics in industrial problems. Part 9.}
%The IMA Volumes in Mathematics and its Applications,, 88,
%Springer-Verlag: New York 1988.

\bibitem{Fuks:1970}
%sec 4.1
D. B. Fuks,
\emph{Cohomologies of the braid group mod $2$},
Funct. Anal. Appl. {\bf 4} (1970), 143--151.

%\bibitem{Fuller:1948}
%%
%R. Buckminster Fuller,
%\emph{Research Papers in Geometry,}
%%[there is no actual reference that I know of].
%Stated in [Schw10], page 273.

\bibitem{Fuller:1976}
%sec 5.5
R. Buckminster Fuller,
\emph{Synergetics: The Geometry of Thinking},
MacMillan: New York 1976.

%\bibitem{Fuller:1979}
%%Sect. 5.6
%R. Buckminster Fuller,
%\emph{Synergetics 2: Further Explorations in the Geometry of Thinking,}
%MacMillan: New York 1979.

\bibitem{GR97}
%sec 4.2
V. Gershkovich and H. Rubinstein,
\emph{Morse theory for Min-type functions},
The Asian Journal of Mathematics {\bf 1} (1997), No. 4, 696--715.

\bibitem{GoreskyM:1988}
% sec 4.2
M. Goresky and R. MacPherson,
\emph{Stratified Morse Theory}, Ergebnisse der Mathematik und
ihrer Grenzgeiete 14. Berlin: Springer 1988.

\bibitem{GKP:1994}
%  sec 4.1
R. L. Graham, D. Knuth and O. Patashnik,
\emph{Concrete Mathematics: A Foundation For Computer Science},
Addison-Wesley: Reading MA 1994.

\bibitem{Gregory:1715}
% sect 2.2
D. Gregory, 
{\em The Elements of Astronomy, Physical and Geometrical. Done into
English, with Additions and Corrections. To which is annex'd Dr. Halley's
Synopsis of the Astronomy of Comets. In Two Volumes}, 
Printed for John Morphew near Stationers Hall: London MDCCXV.



\bibitem{Gunther:1875}
%sec 2.3
S. G\"{u}nther,
\emph{Ein sterometrisches Problem},
Archiv Math. Physik (Grunert) {\bf 57} (1875), 209--215.

\bibitem{HvdW:1951}
%sec 3.2.1, 3.2.2
W. Habicht and B. L. van der Waerden,
\emph{Lagerungen von Punkten auf der Kugel},
Math. Ann. {\bf 123} (1951), 223--234.

\bibitem{Hales:1994}
%sec 2.3
T. Hales,
\emph{The status of the Kepler conjecture},
Math. Intelligencer {\bf 16} (1994), 47--58.

\bibitem{Hales:2013}
%sec 2.9
T. Hales,
\emph{The strong dodecahedral conjecture and Fejes T\'{o}th's conjecture
on sphere packings with kissing number twelve},
pp. 121--132 in: \emph{Discrete Geometry and Optimization},
(K. Bezdek, A. Deza and Y. Ye, Eds.)
Fields Institute Communications
{\bf 69}: Fields Institute, Toronto, ON 2013.

\bibitem{HalesA:2015}
% sec 2.1
T. Hales, et. al.
M. Adams, G. Bauer, Dat Tat Dang, T. Harrison, Truong Le Hoang, C. Kaliszk, V. Magron,
S. McLaughlin, Thang Tat Nguyen, Truong Quang  Nguyen, T. Nipkow, S. Obua, J. Pleso, J. Rute, A. Solovyev, Hoai Thi Ta, Trung Nam Tran, Diep Thi Trieu, J. Urban, Ky Khac Vu, R. Zumkeller,
\emph{ A formal proof of the Kepler conjecture}, {\tt arXiv:1501.02155}.

\bibitem{HalesM:2010}
%sec 5.3
T. Hales and S. McLaughlin,
\emph{The dodecahedral conjecture},
J. Amer. Math. Soc. {\bf 23} (2010), No. 2, 299--344.

\bibitem{Harriott:1590}
% sect. 2.1
T.  Hariot,
\emph{A Briefe and True Report of the New Found Land of Virginia},
Frankfort, Johannis Wecheli, 1590.

%\bibitem{Harrison:1978}
%%
%J. Harrison,
%\emph{The Library of Isaac Newton,}
%Cambridge Univ. Press: Cambridge 1978.

\bibitem{Hars86}
%sec 3.2.3
L. H\'{a}rs,
\emph{The Tammes problem for $n=10$},
Studia Sci. Math. Hungar. {\bf 21} (1986), No. 3-4, 439--451.

\bibitem{Hicks:1965}
%sec 4.2
%Noel J. Hicks, 
N. J. Hicks, 
\emph{Notes on Differential Geometry},
Van Nostrand Co, Inc, Princeton NJ-Toronto-London  1965.

\bibitem{Hiscock:1937}
%sec 2.2
W. G. Hiscock (Editor),
\emph{David Gregory, Isaac Newton and the Circle. Extracts from
David Gregory's Memoranda 1677--1708},
Oxford: Printed for the Editor 1937.

\bibitem{Holmes-C:2016}
%sec 7
M. Holmes-Cerfon,
\emph{Enumerating rigid sphere packings},
SIAM Review {\bf 58} (2016), No. 2, 229--244.

\bibitem{Hoppe:1874}
%sec 2.3
R. Hoppe,
\emph{Bemerkung der Redaktion},
Archiv der Mathematik und Physik  (Grunert) {\bf 56} (1874), 307--312.
%[Comment on paper: C. Bender, Bestimmung der gr\"{o}ssten Anzahl gleich
%grosser Kugeln, welche sich auf eine Kugel von demselben Radius,
%wie die \"{u}brigen, auflegen lassen,
%Archiv der Mathematik und Physik {\bf 56} (1874), 302-306.]

\bibitem{Hoskin:1977}
%sec 2.2
M. A. Hoskin,
\emph{Newton, providence and the universe of stars},
Journal for the History of Astronomy (JHA) {\bf 8} (1977), 77--101.

%\bibitem{JNB:1996}
%%
%H. M. Jaeger, S.R. Nagel and R. P. Behringer,
%\emph{Granular solids, liquids and gases,}
%Rev. Modern. Phys. {\bf 68}, (1996), No. 4, 1259--1273.

%\bibitem{Kaloshin:2005}
%%
%5.6 omitted section
%V. Yu. Kaloshin,
%{\em A Geometric proof of the existence of Whitney stratifications,}
%Moscow Math. J. {\bf 5} (2005), No. 1, 125--133

\bibitem{Kar66}
%sec 2.1
R. H. Kargon,
{\em Atomism in England from Hariot to Newton},
Oxford: Clarendon Press 1966.

%\bibitem{Kepler:1596}
%%Not  used
%J. Kepler,
%{\em Prodromus dissertatonum comogrphicarum,
%Mysterium Cosmographicum de Admirabili Proportione orbium
%coelestium, etc.}, Tubingen, Georgius Gruppenbachius 1596.
%(Second Edition: Frankfurt, 1621.)
%[ English Translation: Johannes Kepler, {\em Mysterium Cosmographicum.
%The Secret of the Universe,} A. M. Duncan, Translator. 
%%, Introduction by A. M. Duncan,Preface by I. Bernard Cohen, 
%Abaris Books: New York 1981.]

\bibitem{Kepler:1611}
%sec 2.1
J. Kepler,
\emph{Strena seu de nive Sexangula},
Frankfurt, Jos. Tampach 1611.
Translation as:  \emph{The Six-Cornered Snowflake: A New Year's Gift} (Colin Hardie, Translator),
Clarendon Press: Oxford 1966.

\bibitem{Kepler:1618}
%sec 2.2
J. Kepler,
{\em Epitome Astronomiae Copernicae, usitat\^{a} form\^{a} Quaestionum \& Responsionum conscripta, inq; VII. Libros
digesta, quorum TRES hi priores sunt de Doctrina Sphaeric\^{a} },
Lentijs ad Danubium, excudebat Johannes Plancus, MDCXVIII.

\bibitem{Koyre:1957}
%sec 2.2
%Alexander  Koyr\'{e},
A. Koyr\'{e},
\emph{From the closed world to the infinite universe},
The Johns Hopkins Press: Baltimore 1957.

%\bibitem{KLZ:1995}
%%
%K. Kurdyka, S \L ojasiewicz, M. A. Zurro,
%\emph{Stratifications distingu\'{e}es comme outil en g\'{e}om\'{e}trie semi-analytique,}
%Manuscripta Math. {\bf 86} (1995), No. 1, 81--102.

\bibitem{KKLS16+}
%sec 1.1, 4.2, 6.5
R. Kusner, W. Kusner, J. C. Lagarias, and S. Shlosman, 
Max-min Morse theory for configurations on the $2$-sphere,
paper in preparation.
%[Paper to be written on morse critical points algorithm]

\bibitem{Lagarias:2011}
%sec 2.1
%J. C. Lagarias, Editor,
\emph{The Kepler Conjecture: The Hales-Ferguson Proof,
by Thomas C. Hales, Samuel P. Ferguson}
(J. C. Lagarias, Ed.), Springer-Verlag: New York 2011.

\bibitem{Leech56}
%sec 1, 2.2, 2.8
J. Leech,
\emph{The problem of the thirteen spheres},
Math. Gaz. {\bf 40} (1956), 22--23.

\bibitem{LiuN:2010}
%sec 1.2
A. J. Liu and S. R. Nagel,
\emph{The jamming transition and the marginally jammed solid},
Annual Reviews on Condensed Matter Physics 2010, 1, 347--369.

%\bibitem{Loj:1970}
%%
%S. \L ojasiewicz,
%\emph{Sur les ensembles semi-analytiques,}
%Proc.   Intern. Math.  Congress, Paris 1970, Volume 2, 237--241.

%\bibitem{Loj:1989}
%%
%S. \L ojasiewicz,
%\emph{Sure l'adherance d'un ensemble partiellement semi-alg\'{e}brique,}
%IHES Publ. Math. No. 68 )1988), 205--210.

%\bibitem{Loj:1993}
%%
%S. \L ojasiewicz,
%\emph{Sur la g\'{e}om\'{e}trie semi- et sous-analytique,}
%Annales Inst. Fourier {\bf 43} (1993), No. 5, 1575--1595.

\bibitem{Lowen:1999}
%sec 1.2
%Hartmut L\"{o}wen,
H. L\"{o}wen,
\emph{Fun with Hard Spheres,}
in: \emph{Statistical physics and spatial statistics (Wuppertal, 1999)},
295--331,
Lecture Notes in Physics, 554, Springer, Berlin 2000.

\bibitem{LubG:1995}
%sec 4.4
B. Lubachevsky and R. L. Graham,
\emph{Dense packings of $3k(3k+1) +1$ equal disks in a circle for $k=1,2,3, 4,$ and $5$},
pp. 302--311 in: (Ding-Zhu Du and Ming Li, Eds.) Computing and Combinatorics, First Annual Conference, COCOON '95, 
Lecture Notes in Comp. Sci., Vol. 959. Springer: New York 1995.

\bibitem{LubachevskyS:1990}
%sec 1.2
B. Lubachevsky and F. H. Stillinger,
\emph{Geometric properties of hard disk packings,}
J. Stat. Phys. {\bf 60} (1990), No. 5-6, 561--583.

\bibitem{Maeh:2001}
%sec 2.8
H. Maehara,
\emph{Isoperimetric problem for spherical polygons and the problem of $13$ spheres},
Ryukyu Math. J. {\bf 14} (2001), 41--57.

\bibitem{Maeh:2007}
%sec 2.8
H. Maehara,
\emph{The problem of thirteen spheres--a proof for undergraduates},
European J. Combinatorics {\bf 28} (2007), 1770--1778.

%\bibitem{Massey:2006}
%%
%D. Massey,
%\emph{Stratified Morse theory: past and present,}
%Pure Appl. Math. Q. {\bf 2} (2006), n. 4, part 2, 1053--1084.

%\bibitem{Mather:2012}
% %
%J. Mather,
%{\em Notes on topological stability,}
%Bull. Amer. Math. Soc. {\bf 49} (2012), no, 4, 475--506.

\bibitem{MelnykKS:1977}
%sec 3.3
T. W. Melnyk, O. Knop and W. R. Smith,
\emph{Extremal arrangements of points and unit charges on a sphere:
equilibrium configurations revisited},
Canad. J. Chemistry {\bf 55} (1977), 1745--1761.

\bibitem{Milnor:1963}
%sec 4.2
J. Milnor,
\emph{Morse theory.} Based on lecture notes by M. Spivak and R. Wells.
Annals of Mathematics Studies No. 51. Princeton University Press, Princeton, NJ 1963.

%\bibitem{Milnor:1976}
%%
%J. Milnor,
%\emph{Hilbert's problem 18: On Crystalographic groups, fundamental domains,
%and on sphere packing},
%pp. 491--506 in: F. W. Browder, Ed., {\em Mathematical Developments Arising from the Hilbert Problems,}
%Proc. Symp. Pure Math. {\bf 28}, American Math. Soc.: Providence, 1976.

%\bibitem{Mooe:1994}
%%
%E. Mooers,
%\emph{Tammes's Problem,}
%manuscript

\bibitem{Mus:2006}
%sec 2.8
O. Musin,
\emph{The kissing problem in three dimensions},
Disc. Comput. Geom. {\bf 35} (2006), 375--384.

\bibitem{MusT:2012}
%sec 3.2.4
O. Musin and A. S. Tarasov,
\emph{The strong thirteen spheres problem},
Disc. Comput. Geom. {\bf 48} (2012), No. 1, 128--141.

\bibitem{MusT:2013}
%sec 3.2.4
O. Musin and A. S. Tarasov,
\emph{Enumerations of irreducible contact graphs on the sphere},
Fundam. Prikl. Mat. {\bf 18} (2013), No. 2, 125--145.

\bibitem{MusT:2015}
%sec 3.2.4
O. Musin and A. S. Tarasov,
\emph{The Tammes problem for $N=14$},
Experimental Math. {\bf 24} (2015), 460--468.

\bibitem{OhernSLN:2003}
%sec 1.2
C. S. O'Hern, L. E. Silbert, A. J. Liu and S. R. Nagel,
\emph{Jamming at zero temperature and zero applied stress: the epitome of disorder},
Physical Review E {\bf 68} (2003), 011306.

%\bibitem{NBK14}
%%
%R. Nerattini, J. S. Brauchart and M. K.-H. Kiessling,
%\emph{Optimal $N$-Point configurations on the sphere:``magic" numbers and Smale's 7-th problem,}
%J. Stat. Phys. {\bf 7} (2014), No. 6, 1138--1206.

\bibitem{Newt}
%sec 2.2
I. Newton,
{\em The Correspondence of Isaac Newton} (9 volumes,
H. W. Turnbull, F. R. S., Ed.), Cambridge University Press 1961. 
%(1694??)

\bibitem{Pauling:1935}
%sec 2.7
L. Pauling,
\emph{The structure and entropy of ice and other crystals
with some randomness of atomic arrangement},
J. Amer. Chemical Soc. {\bf 57} (1935), 2680--2684.

%\bibitem{Pflaum:2001}
%%
%M. J. Pflaum,
%\emph{Analytic and Geometric Study of Statified Spaces,}
%Lecture Notes in Math. 1768, Springer: Berlin 2001.

\bibitem{PJG12}
%sec 7
C. L. Phillips, E. Jankowski, M. Marval and S. C. Glotzer,
\emph{Self-assembled clusters of spheres related to spherical codes},
Phys. Rev. E {\bf 86}, (2012), 041124. 15 October 2012.
%arXiv:1201.5131v1.

\bibitem{PJG14}
%sec 7
C. L. Phillips, E. Jankowski, B. J. Krishnatreya, K. V. Edmond,
S. Sacanna, D. G. Grier, D. J. Pine and S. C. Glotzer,
\emph{Digital colloids: reconfigurable clusters as high information
density elements},
Soft Matter {\bf 10} (2014), 7468--7479.

\bibitem{PostnikovS:2000}
%sec 4.1
A. Postnikov and R. Stanley,
\emph{Deformations of Coxeter hyperplane arrangements.
In memory of Gian-Carlo Rota.}
J. Comb Th. Series A {\bf 91} (2000), n. 1--2, 544--597.

\bibitem{Rob:1961}
%sec 3.2.5
R. M. Robinson,
\emph{Arrangements of $24$ points on a sphere},
Math. Ann. {\bf 144} (1961), 17--48.

\bibitem{Robinson:1969}
%sec 3.4
R. M. Robinson,
\emph{Finite sets of points on a sphere with each nearest to five others},
Math. Ann. {\bf 179} (1969), 296--318.

\bibitem{SvdW51}
%sec 3.2.2, 3.2.3
K. Sch\"{u}tte and B. L. van der Waerden,
\emph{Auf welcher Kugel haben $5, 6, 7, 8$ oder $9$ Punkte
mit Mindestabstand $1$ Platz?},
Math. Ann. {\bf 123} (1951), 96--124.

\bibitem{SvdW53}
%sec 1, 2.8
K. Sch\"{u}tte and B. L. van der Waerden,
\emph{Das Problem der dreizehn Kugeln},
Math. Ann. {\bf 125} (1953), 325--334.

\bibitem{Schw10}
%sec 5.5
%Caspar Schwabe,
C. Schwabe,
\emph{Eureka and Serendipity: The Rudolf van Laban Icosahedron
and Buckminster Fuller's Jitterbug},
Bridges 2010, Mathematics, Music, Art, Architecture, Culture,
271--278.

\bibitem{ScottK:1969}
%sec 1.2
G. D. Scott and D. M. Kilgour,
\emph{The density of random close packing of spheres},
Brit. J. Appl. Phys. (J. Phys. D) {\bf 2} (1969), 863--866,

\bibitem{Shi83}
%sec 2.1
John W. Shirley,
{\em Thomas Hariot: A Biography},
Clarendon Press: Oxford 1983.

%\bibitem{Szpiro03}
%%
%G. Szpiro,
%\emph{Kepler's Conjecture: How Some of the Greatest Minds in History Helped Solve One of the Oldest Math. Problems in the World,}
%John Wiley \& Sons. 2003.

\bibitem{Tam30}
%sec 2.5
% Pieter Merkus Lambertus Tammes
P. M. L. Tammes,
\emph{On the origin of number and arrangement of the places of exit on the surface
of pollen-grains},
Recueil des travaux botaniques n\'{e}erlandais {\bf 27} (1930), 1--84.

\bibitem{TarnaiG:1983}
%sec 7
T. Tarnai and Zs. G\'{a}sp\'{a}r,
\emph{Improved packing of equal circles on a sphere and rigidity of its
graph},
Math. Proc. Camb. Phil. Soc. {\bf 93} (1983), 191--218.

\bibitem{TarnaiG:1991}
%sec 3.4
T. Tarnai and Zs. G\'{a}sp\'{a}r,
\emph{Arrangements of $23$ points on a sphere (on a conjecture of R. M. Robinson)},
Proc. Roy. Soc. London Ser. A  {\bf 433} (1991), 257--267.

%\bibitem{TeichG:2015}
%%
%E. G. Teich, G van Andes, D. Klotsa, J. Dshemuchadse and S. C. Glotzer, 
%\emph{Clusters of polyhedra in spherical confinement,}
%PNAS {\bf 113} (2015), No. 6, E669--E678.

\bibitem{TorquatoS:2010}
%sec 1.2
S. Torquato and F. Stillinger,
\emph{Jammed hard-particle packings: From Kepler to Bernal and beyond},
Rev. Mod. Physics {\bf 82} (2010), 2633--2672.

\bibitem{Totaro:1996}
% sect 4.1
B. Totaro,
\emph{Configuration spaces of algebraic varieties},
Topology {\bf 35} (1996), No. 4, 1057--1067.

 \bibitem{Vehr89}
 %sec 5.5
 H. Verheyen,
 \emph{The complete set of jitterbug transformers and the analysis of their motion},
 Symmetry 2: Unifying human understanding,
 Comput. Math. Appl. {\bf 17} (1989), No. 1--3 , 203--250.

%\bibitem{vDW52}
%%
%B. L. van der Waerden,
%\emph{Punkte auf der Kugel. Drei Zus\"{a}tze,}
%Math. Ann. {\bf 125} (1952), 213--222.
 
%\bibitem{WeirAste:2000}
%%
%D. Weaire, T. Aste,
%\emph{The Pursuit of Perfect Packing,} Second Edition.
%IOP: Bristol 2000.

 \bibitem{Whitney:1964}
 %sec 4.2
 H. Whitney,
 \emph{Tangents to an analytic variety},
 Ann. Math. {\bf 81} (1964), 496--549.
  
\end{thebibliography}
\end{document}